\documentclass{article}
\usepackage{prelude}

\newkeytheorem{definition}[name = Definition, numberwithin=section]
\newkeytheorem{example}[name = Example, style = example, numberwithin=section]
\newkeytheorem{theorem}[name = Theorem, style = theorem, numberwithin=section]
\newkeytheorem{conjecture}[name = Conjecture, style = theorem, numberwithin=section]
\newkeytheorem{proposition}[name = Proposition, style = theorem, numberwithin=section]
\newkeytheorem{corollary}[name = Corollary, style = theorem, numberwithin=section]
\newkeytheorem{lemma}[name = Lemma, style = theorem, numberwithin=section]
\newkeytheorem{proof}[name = Proof, style = proof, numbered=no]
\newkeytheorem{remark}[name = Remark, style = remark]

\usepackage{multicol}

\usetikzlibrary{hobby}
\usepackage[outline]{contour}
\contourlength{1pt}

\renewcommand{\L}{\mathcal L}
\newcommand{\K}{\mathcal K}
\newcommand{\W}{\mathcal W}
\newcommand{\E}{\mathcal E}

\DeclareMathOperator{\Kappa}{K}
\let\H\relax 
\DeclareMathOperator{\H}{H}
\DeclareMathOperator{\conv}{conv}
\DeclareMathOperator{\epi}{epi}
\newcommand{\dTheta}{\mathop{d_\Theta}\nolimits}

\renewcommand{\k}{\kappa}
\renewcommand{\t}{\theta}

\usepackage[x11names]{xcolor}

\newcommand{\InitializeExtendedComplexPlane}[1]{%
    \pgfmathtruncatemacro{\ComplexPlaneSides}{16}%
    \pgfmathtruncatemacro{\ComplexPlaneLastVertex}{\ComplexPlaneSides-1}%
    \pgfmathsetmacro{\ComplexPlaneRadius}{#1}%
    \pgfmathsetmacro{\ComplexPlaneRotation}{180/\ComplexPlaneSides}%
    \pgfmathsetmacro{\ComplexPlaneAxisHit}{\ComplexPlaneRadius*cos(\ComplexPlaneRotation)}%
    \colorlet{sectorcolor}{red}%
    \coordinate (CPOrigin) at (0,0);%
    \coordinate (CPEastInfinity) at (\ComplexPlaneAxisHit,0);%
    \coordinate (CPWestInfinity) at (-\ComplexPlaneAxisHit,0);%
    \coordinate (CPNorthInfinity) at (0,\ComplexPlaneAxisHit);%
    \coordinate (CPSouthInfinity) at (0,-\ComplexPlaneAxisHit);%
}

\newcommand{\SetExtendedComplexPlaneStandardEdges}{%
    \def\ComplexPlaneDrawPolyEdge##1##2##3{%
        \foreach \EdgeIndex in {0,...,5} {%
            \pgfmathsetmacro{\EdgeA}{0.34*(\EdgeIndex/6)^1.5}%
            \pgfmathsetmacro{\EdgeB}{\EdgeA + 0.06*(\EdgeIndex/6)}%
            \draw[##1] ($(##2)!\EdgeA!(##3)$) -- ($(##2)!\EdgeB!(##3)$);%
            \draw[##1] ($(##2)!{1-\EdgeB}!(##3)$) -- ($(##2)!{1-\EdgeA}!(##3)$);%
        }%
        \draw[##1] ($(##2)!0.34!(##3)$) -- ($(##2)!0.66!(##3)$);%
    }%
}

\newcommand{\DrawExtendedComplexPlaneRay}[3][]{%
    \pgfmathsetmacro{\DashGap}{4}%
    \pgfmathtruncatemacro{\DashCount}{7}%
    \pgfmathtruncatemacro{\LastDash}{\DashCount-1}%
    \coordinate (RayBreak) at ($(CPOrigin)+({#3*\ComplexPlaneAxisHit*cos(#2)},{#3*\ComplexPlaneAxisHit*sin(#2)})$);
    \coordinate (RayEnd) at ($(CPOrigin)+({\ComplexPlaneAxisHit*cos(#2)},{\ComplexPlaneAxisHit*sin(#2)})$);
    \draw[#1] (CPOrigin) -- (RayBreak);
    \path let \p1 = (RayBreak), \p2 = (RayEnd), \n1 = {veclen(\x2-\x1,\y2-\y1)}
        in \pgfextra{\xdef\DashSpan{\n1}};
    \pgfmathsetmacro{\DashMax}{2*(\DashSpan-\LastDash*\DashGap)/\DashCount}%
    \foreach \DashIndex in {0,...,\LastDash} {%
        \pgfmathsetmacro{\DashLength}{\DashMax*(\LastDash-\DashIndex)/max(1,\LastDash)}%
        \pgfmathsetmacro{\DashStart}{\DashIndex*\DashGap+\DashMax*\DashIndex-\DashMax*\DashIndex*(\DashIndex-1)/(2*max(1,\LastDash))}%
        \pgfmathsetmacro{\DashStop}{min(\DashStart+\DashLength,\DashSpan)}%
        \pgfmathparse{\DashLength>0 && \DashStart<\DashSpan ? 1 : 0}%
        \ifnum\pgfmathresult=1
            \pgfmathsetmacro{\DashStartFrac}{\DashStart/\DashSpan}%
            \pgfmathsetmacro{\DashStopFrac}{\DashStop/\DashSpan}%
            \draw[#1] ($(RayBreak)!\DashStartFrac!(RayEnd)$)
                -- ($(RayBreak)!\DashStopFrac!(RayEnd)$);
        \fi
    }%
}

\newcommand{\DrawExtendedComplexPlaneAxesStandard}{%
    \foreach \ComplexPlaneAxisAngle in {0,90,180,270} {%
        \DrawExtendedComplexPlaneRay{\ComplexPlaneAxisAngle}{0.46}%
    }%
}

\newcommand{\DrawExtendedComplexPlaneLabelsStandard}{%
    \node[above left] at (CPOrigin) {$0$};
    \node[right] at (CPEastInfinity) {$\infty$};
    \node[left] at (CPWestInfinity) {$-\infty$};
    \node[above] at (CPNorthInfinity) {$i\infty$};
    \node[below] at (CPSouthInfinity) {$-i\infty$};
}

\newcommand{\CreateExtendedComplexPlaneVertices}{%
    \foreach \VertexIndex in {0,...,\ComplexPlaneLastVertex} {
        \coordinate (CP\VertexIndex) at ({\ComplexPlaneRadius*cos(\ComplexPlaneRotation+360*\VertexIndex/\ComplexPlaneSides)},
            {\ComplexPlaneRadius*sin(\ComplexPlaneRotation+360*\VertexIndex/\ComplexPlaneSides)});
    }
}

\newcommand{\DrawExtendedComplexPlaneCirclesStandard}{%
    \foreach \CircleIndex in {1,...,20} {
        \pgfmathsetmacro{\CircleT}{\CircleIndex/20}
        \pgfmathsetmacro{\CircleRadius}{0.97*\ComplexPlaneRadius*(2*\CircleT - \CircleT*\CircleT)}
        \pgfmathsetmacro{\CircleOpacity}{0.115 - 0.11*\CircleT}
        \draw[gray,opacity=\CircleOpacity] (CPOrigin) circle (\CircleRadius);
    }
}

\newcommand{\DrawExtendedComplexPlaneBoundary}{%
    \foreach \VertexIndex in {0,...,\ComplexPlaneLastVertex} {
        \pgfmathtruncatemacro{\NextVertex}{mod(\VertexIndex+1,\ComplexPlaneSides)}
        \ComplexPlaneDrawPolyEdge{}{CP\VertexIndex}{CP\NextVertex}
    }
}

\newcommand{\DrawExtendedComplexPlaneInfinityDots}{%
    \fill (CPEastInfinity) circle (1.1pt);
    \fill (CPWestInfinity) circle (1.1pt);
    \fill (CPNorthInfinity) circle (1.1pt);
    \fill (CPSouthInfinity) circle (1.1pt);
}

\newcommand{\DrawExtendedComplexPlane}[2][]{%
    \InitializeExtendedComplexPlane{3.1}%
    \SetExtendedComplexPlaneStandardEdges
    #1%
    \DrawExtendedComplexPlaneAxesStandard
    \DrawExtendedComplexPlaneLabelsStandard
    \CreateExtendedComplexPlaneVertices
    \DrawExtendedComplexPlaneCirclesStandard
    \DrawExtendedComplexPlaneBoundary
    \DrawExtendedComplexPlaneInfinityDots
    #2%
}

\title{The indicator diagram theory for maximal type entire functions and the characterization of analytic continuability}
\author{Kei Beauduin}
\date{}

\begin{document}

\maketitle

\begin{abstract}
    We extend Pólya's indicator diagram theory to encompass entire functions of order at most $1$, allowing functions of maximal type. To do so, we introduce an extension of the complex plane in which indicator diagrams may be unbounded or even purely infinite. In this framework, we establish the Laplace and inverse Laplace transforms, and prove an analog of Pólya's theorem. As an application, we extend a characterization of the analytic continuability of power series. The resulting theory is illustrated by a range of classical examples of special functions.
\end{abstract}

\setcounter{tocdepth}{3}\tableofcontents

\section{Introduction}

\subsection{Overview}

Pólya's indicator diagram theory gives a geometric description of the growth of an entire function of exponential type in terms of the singularities of its \emph{Laplace transform}. If $\phi$ is entire of exponential type, then its \emph{indicator function}
\begin{equation}\label{e:hintro}
    h_\phi(\t) := \limsup_{r\to\infty} \frac{\log|\phi(re^{i\t})|}{r}
\end{equation}
describes the exponential growth of $\phi$ along rays in the complex plane $\C$. On the other hand, a closed convex subset $K$ of $\C$ is characterized by its \emph{support function}
\begin{equation}\label{e:kappaintro}
    \k_K(\t) := \sup_{p\in K}\Re(pe^{-i\t}).
\end{equation}
The \emph{conjugate diagram} is the convex hull of the singularities of the Laplace transform of $\phi$, whose complex conjugate is called the \emph{indicator diagram}, denoted by $\K_\phi$. The classical \emph{Pólya theorem} states the equality
\begin{equation}\label{e:polyaintro}
    h_\phi(\t) = \k_{\K_\phi}(\t),
\end{equation}
for all $\t$.

Entire functions of exponential type are precisely the entire functions of \emph{order} at most $1$ and finite \emph{type}, and their corresponding indicator diagrams are necessarily bounded. The aim of this paper is to extend this theory to entire functions of order at most $1$ that may have \emph{infinite} (or \emph{maximal}) type, and thus possibly unbounded indicator diagrams. Typical examples are
\begin{equation}\label{e:phi123}
    s \longmapsto \inv{\Gamma(s+1)},\quad \frac{-\zeta(-s)}{\Gamma(s+1)}, \quad s\zeta(1+s),
\end{equation}
where $\Gamma$ is the Gamma function and $\zeta$ is the Riemann zeta function. For $\phi(s)=1/\Gamma(s+1)$, Stirling's approximation yields
\begin{equation}\label{e:hRGammaintro}
    h_\phi(\t) =
    \begin{cases}
        -\infty &\com{if} \t \in \pa{-\frac\pi{2},\frac\pi{2}}, \\
        \frac\pi{2} &\com{if} \t = \pm \frac\pi{2}, \\
        \infty &\com{otherwise}.
    \end{cases}
\end{equation}
Thus, if \cref{e:polyaintro} is to remain valid, the indicator diagram $\K_\phi$ cannot merely be an unbounded subset of $\C$; it must contain points at infinity, formally represented here by "$-\infty + i[-\frac\pi2,\frac\pi2]$".

\medskip

To formalize this phenomenon, we introduce in \Cref{s:Cbar} an extension $\bar\C$ of the complex plane, distinct from the Riemann sphere, in which unbounded and purely infinite indicator diagrams can still be treated as closed convex sets. The corresponding convex geometry is developed in \Cref{s:cvxset,s:cvxanalysis}: in particular, we extend support functions to subsets of $\bar\C$, relate them to convex hulls, and characterize them by \emph{trigonometric convexity}.

After collecting the necessary growth estimates for entire functions of order at most $1$ in \Cref{s:growth1,s:growth2}, we develop the indicator diagram theory in \Cref{s:IDT}. The central results are the definition of the Laplace transform via \Cref{t:WN}, a Watson--Nevanlinna-type theorem, the inversion formula \Cref{t:invlapl}, and the extension of Pólya's theorem in \Cref{t:polya}.

We then apply the theory to analytic continuation problems. In \Cref{t:AC} we extend a theorem of Carlson, Dufresnoy, and Pisot characterizing the analytic continuability of power series. Finally, \Cref{s:ram1} derives a version of Ramanujan's master theorem within the same framework.

The exposition is intended to be as self-contained as possible, although some familiarity with the standard language of complex analysis and convexity will be helpful. The longer convex-analytic proofs, together with several supplementary results, are collected in the appendix.

\subsection{Historical background and survey}

In 1896, Borel introduced the resummation method now known as \emph{Borel summation} \cite{borel1896,borel1899}, together with the \emph{Borel transform} and the notion of \emph{summability polygon} \cite[Chap.~VIII]{hardy1949}. Soon afterward, Servant gave a description of Borel's polygon in terms of the growth of a function along rays in $\C$ \cite{servant1899,servant1899a}. This led Borel to formulate two questions about these growth quantities in 1901 \cite[\S 72]{borel1928}. In 1908, Phragmén and Lindelöf rediscovered the same quantities as the values of the indicator function \eqref{e:hintro}, and thereby resolved Borel's first problem.

A major conceptual advance came in Chapter 2 of Pólya's 1929 memoir \cite{polya1929}. There, Pólya developed a simple and elegant framework for the study of entire functions $\phi$ of exponential type, in which a central role was played by the Phragmén--Lindelöf indicator function. Pólya used it to define the Laplace transform $\Phi$ of $\phi$ outside a bounded set containing its singularities; he called the convex hull of the complex conjugates of these singularities the indicator diagram $\K_\phi$. By adopting this point of view, Pólya refined Borel's approach. His framework was not only geometric, but also \emph{convex}-geometric, which allowed him to bring in the support function \eqref{e:kappaintro} introduced by Minkowski in his foundational 1903 work on convex bodies \cite{minkowski1903}. Within this framework, Pólya resolved Borel's second problem and, most importantly, proved the Pólya theorem \eqref{e:polyaintro}.

Pólya's approach was rapidly adopted and became a standard reference in the study of entire functions: clear expositions of the foundational theory are given in the books \cite{bernstein1933,doetsch1937,chebotarev1949,doetsch1950,boas1954,bieberbach1955,cartwright1956,levin1964,evgrafov1961,henrici1991,leontev1983,levin1996,rubel1996,maergoiz2003}.

Subsequent work aimed to broaden the class of functions $\phi$ and to adapt both the underlying theory and Pólya's theorem to the wider setting. The first extensions concerned entire functions of order $\rho\ge0$ and finite type, that is, functions satisfying a bound of the form $A e^{B |s|^\rho}$ with $A,B\ge0$; the case $\rho=1$ is precisely exponential type. The notion of \emph{proximate order} further generalizes this framework by allowing $\rho$ to depend on $|s|$ in a suitably convergent manner (see, e.g., \cite{levin1996}).

In 1933 and 1936, Bernstein obtained two distinct extensions. The first \cite{bernstein1933a}, formulated in terms of proximate order, improves the earlier order-$\rho$ framework of \cite{subbotin1931}, but does not provide a complete analog of Pólya's theorem: for functions with exponential decay in some direction, \cref{e:polyaintro} applies only to the positive cutoff $\max(0,h_\phi)$ of the indicator. The same approach was later rediscovered in the 1950s in the order-$\rho$ setting \cite{lokhin1955,avetisyan1955,evgrafov1961} (see also \cite{dzhrbashyan1966,maergoiz1984,maergoiz2003}), and in the 1970s in the proximate-order setting \cite{evgrafov1976,maergoiz1978,maergoiz1980}.

Bernstein's second approach \cite{bernstein1936} recasts Pólya's theory for entire functions defined on suitable Riemann surfaces, allowing a function $\phi(s)$ of order $\rho$ to be studied via $\phi(s^{1/\rho})$, which has order $1$. This idea was independently discovered earlier by Pfluger \cite{pfluger1933,pfluger1935} and was later developed further by Maergoiz \cite{maergoiz1975,maergoiz1987,maergoiz2003,maergoiz2018,maergoiz2022}.

In the 1950s Ivanov and Stavskij extended Pólya's theorem to several complex variables, but only in a componentwise manner \cite{ivanov1957,ivanov1962,stavskij1963,stavskij1961}. The corresponding higher-order extensions followed in the 1960s: the two-dimensional and $n$-dimensional order-$\rho$ theories are due, respectively, to Kobeleva and Ronkin \cite{kobeleva1962,ronkin1963,ronkin1974}, and Geche treated the two-dimensional proximate-order case \cite{geche1965,geche1968}. Radically different ideas were required for the complete $n$-dimensional theorem, a case achieved by Martineau \cite{martineau1963} (see also \cite{martineau1967,lelong1968,maergoiz2003}).

Pólya's theory has since found numerous applications in the theory of entire functions; see, for example, \cite{boller1932,gelfond1938,matison1938,boas1954,bieberbach1955,komatsu1987,levin1996}. One application, especially relevant here, is the refinement by Dufresnoy and Pisot \cite{dufresnoy1951} of Carlson's characterization of the analytic continuability of power series outside a bounded region \cite{carlson1914,carlson1921}; see \cite[\S 1.3]{bieberbach1955}.

Carlson had also proved in his thesis \cite{carlson1914} a second analytic-continuation theorem, concerning continuation outside an unbounded region and refining earlier work. The sufficient part of that theorem was later presented in detail by Hille \cite[Sec.~11.3]{hille1962}, who used P\'olya's terminology and introduced \emph{unbounded} indicator diagrams. The indicator-diagram theory underlying this unbounded setting was subsequently developed by Henrici \cite[Sec.~10.9--10.10]{henrici1991}.

\subsection{Scope of the extension}

The present paper continues this latter direction. More precisely, our extension is guided by the following hierarchy which reflects successive settings in which the indicator diagram theory can be developed, each step bringing new phenomena and considerable difficulties.
\begin{itemize}
    \item $\E_0$: the singleton containing the zero function.
    \item $\E_1$: the set of \emph{exponential sums}, i.e., finite sums of terms $p(s)e^{as}$ where $p$ is a polynomial and $a$ is a complex number.\footnote{This is a significant subject in its own right, whose study was also initiated by P\'olya; see the recent survey \cite{heittokangas2023}.}
    \item $\E_2$: the set of entire functions of exponential type.
    \item $\E_3$: the set of entire functions of order $\le 1$ and of exponential type only in some sector.
\end{itemize}
These classes satisfy the strict inclusions
\begin{equation}\label{e:hierarchy1}
    \E_0 \subset \E_1 \subset \E_2 \subset \E_3.
\end{equation}

Historically, Pólya first developed the theory for exponential sums $\E_1$ in 1923 \cite{polya1923} before extending it to entire functions of exponential type $\E_2$ in 1929 \cite{polya1929}. Henrici later considered functions defined in a sector and of exponential type there \cite{henrici1991}. Here we focus on the class $\E_3$, a globally constrained subclass of this sectorial setting, and show that it supports a precise extension of indicator diagram theory. The extended complex plane $\bar\C$ provides the natural framework for the "virtual singularities" at infinity that arise in Henrici's study \cite{henrici1991}. Since the finite-type functions in $\E_3$ are precisely those in $\E_2$, the new content of the theory concerns entire functions of infinite (or maximal) type.

The inclusion of $\E_0$ is also natural from the point of view of type: the zero function is the unique function in $\E_3$ of negative type, namely $-\infty$. Since indicator functions in $\E_3$ may themselves take the value $-\infty$ (see \cref{e:hRGammaintro}), the zero function no longer needs to be treated as a completely separate case.

The reader is especially encouraged to consult \Cref{s:ex}; its examples include the functions in \eqref{e:phi123} and provide much of the motivation for the theory developed here.

\section{Preliminaries}

\subsection{Angles}

Let $\Theta := \R/2\pi\Z$ denote the \emph{set of angles}. When a numerical representative is needed, we identify $\t\in\Theta$ with its canonical representative in $[0, 2\pi)$. For $\alpha, \beta\in\Theta$, an interval of $\Theta$ of length $\beta - \alpha$ is one of
\begin{equation}\label{e:intervals}
    \begin{gathered}
        (\alpha, \beta) := \cond{\t\in\Theta}{0 < \t - \alpha < \beta - \alpha}, \quad [\alpha, \beta) := \cond{\t\in\Theta}{\t - \alpha < \beta - \alpha}, \\
	    (\alpha, \beta] := \cond{\t\in\Theta}{0 < \t - \alpha \le \beta - \alpha}, \quad [\alpha, \beta] := \cond{\t\in\Theta}{\t - \alpha \le \beta - \alpha},
    \end{gathered}
\end{equation}
or $\Theta$ itself, which cannot be written in the form of \cref{e:intervals}. Intervals of $\Theta$ are closed under complements: for instance, the complement of $[\alpha, \beta]$ is $(\beta, \alpha)$.

The topology on $\Theta$ is induced by the metric
\begin{equation}\label{e:dTheta}
    \dTheta :
    \left\{
    \begin{aligned}
        \Theta \times \Theta &\longrightarrow [0, \pi] \\
        (\t_1, \t_2) &\longmapsto \min(\t_2 - \t_1, \t_1 - \t_2).
    \end{aligned}
    \right.
\end{equation}

\subsection{Set notation}\label{s:setnot}
For $S, S'\subset\C$ and $u, v\in\C$, we write
\begin{multicols}{2}
\begin{itemize}
    \item $S^* := S \setminus \{0\}$.
    \item $S^\dagger := \cond{\bar z}{z\in S}$.
    \item $u + vS := \cond{u + vz}{z\in S}$.
    \item $S + S' := \cond{z+z'}{z\in S, z'\in S'}$.\footnote{This is usually referred to as the \emph{Minkowski addition} \cite{schneider2014}.}
\end{itemize}
\end{multicols}
Here $\bar z$ denotes the complex conjugate of $z$. Throughout the paper, we use the argument function in the form $\arg : \C^* \to \Theta$. With this convention, $\arg(uv) = \arg u + \arg v$ and $u = |u| e^{i \arg u}$ for all nonzero $u$ and $v$. For $r>0$, $h\in\R$, and $\t\in\Theta$, we define the following sets.
\begin{multicols}{2}
\begin{itemize}
    \item The nonnegative real axis
    \[
    \R_+ := \cond{x\in\R}{x\ge0}.
    \]
    \item The extended real line
    \[
    \bar\R := \R \cup \{-\infty, \infty\} = [-\infty, \infty].
    \]
    \item The circle
    \[
     C_r := \cond{z\in\C}{|z| = r}.
    \]
    \item The open disk
    \[
    D_r := \cond{z\in\C}{|z| < r}.
    \]
    \item The rotated open half-plane
    \[
    \Pi_{h,\t} := \cond{z\in\C}{\Re(ze^{-i\t}) > h}.
    \]
    \item For a proper subset $A$ of $\Theta$,
    \[
    S_A := \cond{z\in\C^*}{\arg z \in A},
    \]
    and
    \[
    S_\Theta := \C.
    \]
    When $A$ is an interval, $S_A$ is called a sector.
\end{itemize}
\end{multicols}

\begin{remark}\label{r:sector}
    The distinction in the definition of $S_A$ is motivated by practical considerations in the theory developed below. It also admits the following heuristic interpretation: one may regard "$\arg 0$" as encompassing \emph{all} values of $\Theta$, since $0 = 0 e^{i\t}$ for every $\t \in \Theta$. Thus $0$ belongs to $S_A$ if and only if $A = \Theta$.
\end{remark}

The \emph{effective domain} of a function $g:X\to\bar\R$ is defined as 
\begin{equation}
    \dom g := \cond{x\in X}{g(x) < \infty}.
\end{equation}
When real variables are introduced by inequalities such as "$x > 0$", infinities are not included.

\subsection{Trigonometric convexity}\label{s:tcvx}

In the sense of Valiron \cite{valiron1932}, trigonometric convexity is the analog of ordinary convexity obtained by replacing affine functions with trigonometric functions. It is an important property shared by both \emph{support functions} and \emph{indicator functions}, which are introduced in \Cref{s:support} and \Cref{s:indic}, respectively, and shown to be trigonometric-convex in \Cref{t:tcvxK} and \Cref{t:hphitcvx}.

\begin{definition}[Trigonometric convexity]\label{d:tcvx}
	A function $k : \Theta \to \bar\R$ is \emph{trigonometric-convex} if either $k\equiv-\infty$ or the following condition holds: for
	\begin{equation}\label{e:thetaimp}
		\t_1, \t_3 \in \dom k, \enspace\text{such that}\enspace \t_3 - \t_1\le\pi, \enspace\text{and}\enspace \t_2 \in [\t_1, \t_3],
	\end{equation} 
	one has
	\begin{equation}\label{e:tcvximp}
		k(\t_2) \sin(\t_3 - \t_1) \le k(\t_1) \sin(\t_3 - \t_2) + k(\t_3) \sin(\t_2 - \t_1).
	\end{equation}
\end{definition}

This definition includes the constant function equal to $-\infty$, for reasons that will become clear in the contexts of support and indicator functions. In these settings, the following characterization will be useful.

\begin{proposition}\label{p:diameter}
	$k:\Theta\to\bar\R$ is trigonometric-convex if and only if \cref{e:tcvximp} holds for
    \begin{equation}\label{e:thetaimp2}
		\t_1, \t_3 \in \dom k, \enspace\text{such that}\enspace 0 < \t_3 - \t_1<\pi, \enspace\text{and}\enspace \t_2 \in [\t_1, \t_3]
	\end{equation} 
    and either $k\equiv -\infty$ or for all $\t,\t+\pi\in\dom k$,
    \begin{equation}\label{e:diameter}
		k(\t) + k(\t+\pi) \ge 0.
	\end{equation}
\end{proposition}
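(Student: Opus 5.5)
The statement is essentially a reorganisation of the definition by the length of the interval $[\t_1,\t_3]$, so I would prove it by direct case analysis on $\t_3-\t_1\in[0,\pi]$. Condition \cref{e:thetaimp} allows exactly three kinds of pairs: $\t_3-\t_1=0$, $0<\t_3-\t_1<\pi$, or $\t_3-\t_1=\pi$. The middle case is precisely \cref{e:thetaimp2}. So the proof reduces to two checks: the degenerate case $\t_3=\t_1$ is vacuous, and the antipodal case $\t_3=\t_1+\pi$ is equivalent to \cref{e:diameter}. Throughout I use the convention $0\cdot(\pm\infty)=0$. I would make this convention explicit, since $k$ may take the value $-\infty$ on $\dom k$.

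\emph{Degenerate case.} If $\t_3=\t_1$, then $[\t_1,\t_3]=\{\t_1\}$, so $\t_2=\t_1$. All three sines in \cref{e:tcvximp} then vanish, and the inequality reads $0\le 0$.

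\emph{Antipodal case.} Let $\t_1,\t_1+\pi\in\dom k$ and $\t_2=\t_1+t$ with $t\in[0,\pi]$. Since $\sin\pi=0$ and $\sin(\pi-t)=\sin t$, inequality \cref{e:tcvximp} becomes
\[
0\le \sin t\,\bigl(k(\t_1)+k(\t_1+\pi)\bigr).
\]
The sum is well defined: neither term equals $+\infty$, because both angles lie in $\dom k$. For $t\in\{0,\pi\}$ this holds trivially. For $t\in(0,\pi)$ we have $\sin t>0$, so it is equivalent to $k(\t_1)+k(\t_1+\pi)\ge0$. This also covers the case where one of the two values is $-\infty$: then the sum is $-\infty$ and both conditions fail together.

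\emph{Assembling the equivalence.} For "$\Rightarrow$": if $k\equiv-\infty$ there is nothing to show, since \cref{e:thetaimp2} is a subcase of \cref{e:thetaimp} and the alternative "$k\equiv-\infty$" holds. Otherwise \cref{e:tcvximp} holds for all pairs in \cref{e:thetaimp}. Restricting to $0<\t_3-\t_1<\pi$ gives the first condition, and choosing $\t_3=\t_1+\pi$, $\t_2=\t_1+\pi/2$ in the antipodal computation gives \cref{e:diameter}.

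For "$\Leftarrow$": if $k\equiv-\infty$, then $k$ is trigonometric-convex by definition. Otherwise, take any pair as in \cref{e:thetaimp} and split by $\t_3-\t_1$. The case $0$ is trivial, the case in $(0,\pi)$ is the hypothesis, and the case $\pi$ follows from \cref{e:diameter} via the antipodal computation.

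The only delicate point is the bookkeeping with infinite values, namely the products $0\cdot(-\infty)$ and the sum $k(\t)+k(\t+\pi)$ when one term is $-\infty$. There are no analytic obstacles.
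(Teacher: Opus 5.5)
Your proof is correct and follows the paper's argument: \eqref{e:diameter} is extracted from \eqref{e:tcvximp} with $\theta_2=\theta_1+\pi/2$, $\theta_3=\theta_1+\pi$, and conversely the antipodal case of \eqref{e:tcvximp} is recovered from \eqref{e:diameter}; your treatment of the degenerate case $\theta_3=\theta_1$ and of the extended-real arithmetic fills in details the paper leaves implicit. One small fix: when $k\equiv-\infty$ in the forward direction, the definition does not assert \eqref{e:tcvximp} at all, so instead of appealing to \eqref{e:thetaimp2} being a subcase of \eqref{e:thetaimp}, note directly that the left-hand side equals $-\infty$ because $\sin(\theta_3-\theta_1)>0$.
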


\begin{proof}
    \Cref{e:diameter} follows from \cref{e:tcvximp} with $\t_1 = \t$, $\t_2 = \t + \pi/2$, and $\t_3 = \t + \pi$. Conversely, when $\t_3 - \t_1 = \pi$, \cref{e:tcvximp} follows from \cref{e:diameter}.
\end{proof}

Note that if $k \not\equiv -\infty$ is trigonometric-convex, then for all $\t \in \Theta$
\begin{equation}\label{e:k>-k}
    k(\t) \ge -k(\t+\pi),
\end{equation}
since, whenever both $\t$ and $\t+\pi$ belong to $\dom k$, the inequality follows from \cref{e:diameter}. Otherwise, at least one of $k(\t)$ or $k(\t+\pi)$ is equal to $\infty$, and \cref{e:k>-k} holds trivially.

\Cref{d:tcvx} extends the classical notion of trigonometric convexity \cite{chebotarev1949,boas1954,levin1964,levin1996}, originally defined for real-valued functions.

\begin{proposition}\label{p:tcvx}
	A function $k : \Theta \to \R$ is trigonometric-convex if and only if, for all $\t_1, \t_2, \t_3\in\Theta$ such that
	\begin{equation}\label{e:theta}
		\t_3 - \t_1 \le \pi \quad\text{and}\quad \t_2\in[\t_1, \t_3],
	\end{equation}
	the inequality
	\begin{equation}\label{e:tcvx}
		k(\t_1) \sin(\t_2 - \t_3) + k(\t_2) \sin(\t_3 - \t_1) + k(\t_3) \sin(\t_1 - \t_2) \le 0.
	\end{equation}
\end{proposition}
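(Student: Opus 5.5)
For real-valued $k$ we have $\dom k=\Theta$ and $k\not\equiv-\infty$. So by \Cref{d:tcvx}, $k$ is trigonometric-convex exactly when \cref{e:tcvximp} holds for every triple satisfying \cref{e:theta}. The plan is therefore to show that, for real values and triples satisfying \cref{e:theta}, inequality \cref{e:tcvximp} is equivalent to \cref{e:tcvx}. This is a rearrangement of terms, once the angle differences are handled correctly.

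The key step is to fix the meaning of the sines. Here $\t_3-\t_1$ and $\t_2-\t_1$ are elements of $\Theta$, and $\sin$ is well defined on $\Theta$ since it is $2\pi$-periodic, so no choice of representative is needed. The relations we need come from oddness of $\sin$ on $\Theta$: since $\t_2-\t_3=-(\t_3-\t_2)$ and $\t_1-\t_2=-(\t_2-\t_1)$ in $\Theta$, we get
\[
\sin(\t_2-\t_3)=-\sin(\t_3-\t_2),\qquad \sin(\t_1-\t_2)=-\sin(\t_2-\t_1).
\]

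Substituting these into \cref{e:tcvx} turns it into
\[
-k(\t_1)\sin(\t_3-\t_2)+k(\t_2)\sin(\t_3-\t_1)-k(\t_3)\sin(\t_2-\t_1)\le 0 .
\]
Moving the two negative terms to the right-hand side gives exactly \cref{e:tcvximp}. Because every $k(\t_j)$ is a finite real number, these manipulations are legitimate: no expression of the form $\infty-\infty$ arises, and multiplying by the sines needs no sign convention for infinities. Each step is reversible, so the two inequalities are equivalent under \cref{e:theta}.

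For the statement to hold, both conditions must range over the same set of triples, and they do. \Cref{e:thetaimp} asks for $\t_1,\t_3\in\dom k=\Theta$ with $\t_3-\t_1\le\pi$ and $\t_2\in[\t_1,\t_3]$, which is exactly \cref{e:theta}.

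I expect no real obstacle. The only point needing care is to interpret $\t_3-\t_1$ and the other differences consistently, either in $\Theta$ or via the canonical representatives in $[0,2\pi)$. For the sine identities this is harmless, since sine is $2\pi$-periodic and odd. The degenerate cases $\t_2=\t_1$, $\t_2=\t_3$, and $\t_3-\t_1=\pi$ need no separate treatment, because the identity argument is uniform.
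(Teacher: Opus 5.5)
Your proposal is correct and takes essentially the paper's route. The paper gives no separate proof, treating the proposition as a direct reformulation of \Cref{d:tcvx}: for real-valued $k$ one has $\dom k=\Theta$ and $k\not\equiv-\infty$, so the admissible triples coincide with \cref{e:theta}, and \cref{e:tcvx} is \cref{e:tcvximp} rearranged using the oddness of $\sin$ on $\Theta$, exactly as you do.
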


The indices of the $\t$ variables in the three terms of \cref{e:tcvx} are cyclic permutations of $1,2,3$ when read from left to right; this gives a simple rule for their placement. By commutativity, \cref{e:tcvx} remains valid after cyclically shifting these indices. It also has the equivalent determinantal form \cite{valiron1932,gelfond1938,levin1964}
\begin{equation}\label{e:tcvxdet}
	\begin{vmatrix}
		k(\t_1) & \cos\t_1 & \sin\t_1 \\
		k(\t_2) & \cos\t_2 & \sin\t_2 \\
		k(\t_3) & \cos\t_3 & \sin\t_3
	\end{vmatrix} \le 0
\end{equation}
because the left-hand sides of \cref{e:tcvxdet,e:tcvx} coincide. The usual notion of convexity corresponds to replacing the sine and cosine functions by the identity and the constant $1$ function, respectively, in \cref{e:tcvx} or \cref{e:tcvxdet} \cite{valiron1932}.

\begin{example}\label{x:sincos}
	From \cref{e:tcvxdet}, we see that linear combinations of the cosine and sine functions are the only bounded trigonometric-convex functions for which equality holds in \cref{e:tcvx}. The following proposition gives a slightly more general statement.
\end{example}

\begin{proposition}\label{p:sincos}
	Let $s:\Theta\to\C$. The equation
	\begin{equation}
		s(\t_1) \sin(\t_2 - \t_3) + s(\t_2) \sin(\t_3 - \t_1) + s(\t_3) \sin(\t_1 - \t_2) = 0
	\end{equation}
	holds for all $\t_1, \t_2, \t_3\in\Theta$ satisfying \cref{e:theta} if and only if $s(\t) = A \cos\t + B \sin\t$, for some $A, B\in\C$.
\end{proposition}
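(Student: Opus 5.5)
The plan is to treat the two directions separately, the converse being a direct verification and the forward direction a reconstruction argument.

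For the converse, assume $s(\t) = A\cos\t + B\sin\t$. The left-hand side is then linear in the coefficients $(A,B)$, so it suffices to check the two cases $s = \cos$ and $s = \sin$. These are exactly the expansions along the second and third columns of the determinant in \cref{e:tcvxdet}, with the first column replaced by $(s(\t_j))_j$. Since that first column then coincides with one of the other columns, the determinant vanishes. This holds for all $\t_1,\t_2,\t_3$, and in particular for those satisfying \cref{e:theta}.

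For the forward direction, I will fix two angles with nondegenerate sine, namely $\t_1 = 0$ and $\t_3 = \pi/2$. Then $\t_3-\t_1 = \pi/2 \le \pi$, so every $\t_2 \in [0,\pi/2]$ is admissible. Since $\sin(\t_3-\t_1) = 1$, the identity solves for $s(\t_2)$:
\[
s(\t_2) = s(0)\sin(\pi/2-\t_2) + s(\pi/2)\sin(\t_2) = s(0)\cos\t_2 + s(\pi/2)\sin\t_2 .
\]
Setting $A := s(0)$ and $B := s(\pi/2)$, the representation holds on the quarter circle $[0,\pi/2]$.

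The remaining step is to propagate the representation to all of $\Theta$. I will do this by repeating the argument on the consecutive quarter intervals $[\pi/2,\pi]$, $[\pi,3\pi/2]$ and $[3\pi/2,2\pi]$. On each interval $[\alpha,\alpha+\pi/2]$ the same computation gives
\[
s(\t) = s(\alpha)\cos(\t-\alpha) + s(\alpha+\pi/2)\sin(\t-\alpha).
\]
The right-hand side is again a combination of $\cos$ and $\sin$, by the angle-addition formulas. Its endpoint values are determined by those from the previous step, since consecutive intervals share an endpoint. Because $\cos(\t-\alpha)$ and $\sin(\t-\alpha)$ span the same two-dimensional space as $\cos$ and $\sin$, two trigonometric combinations that agree at $\alpha$ and at $\alpha+\pi/2$ must coincide. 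This uniqueness follows because the matrix with rows $(\cos\alpha,\sin\alpha)$ and $(\cos(\alpha+\pi/2),\sin(\alpha+\pi/2))$ has determinant $1$. So by induction the formula $A\cos\t + B\sin\t$ extends across each quarter.

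The one point needing care is the wraparound at $2\pi$. The last quarter ends at $2\pi \equiv 0$, and there the value forced by the representation must agree with $s(0) = A$. This holds automatically, because $s(0)$ is simply the value of $s$ at that point and the combination $A\cos\t + B\sin\t$ is $2\pi$-periodic. I therefore expect no genuine obstacle; the main work is checking admissibility of the chosen triples under the interval convention of \cref{e:intervals}, where $\t_2 \in [\t_1,\t_3]$ is measured modulo $2\pi$.
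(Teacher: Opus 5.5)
Your converse is fine (strictly, the left-hand side is, up to sign, the cofactor expansion of that determinant along its \emph{first} column, which for $s=\cos$ or $s=\sin$ duplicates the second or third column). The first step of the forward direction, giving the formula on $[0,\pi/2]$, is also fine.

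The gap is in the propagation: consecutive quarters share only \emph{one} endpoint. When you move to $[\pi/2,\pi]$, the value $s(\pi/2)=B$ is known, but $s(\pi)$ is a new unknown. Nothing you have proved says the two combinations agree at $\alpha+\pi/2=\pi$, so the determinant-$1$ uniqueness argument has no input; the same happens at $3\pi/2$.

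This cannot be fixed by bookkeeping. The only instances of the hypothesis you use are the triples $(\alpha,\theta,\alpha+\pi/2)$ with $\alpha\in\{0,\pi/2,\pi,3\pi/2\}$. The function $s(\theta)=|\cos\theta|$ satisfies every one of them, yet is not of the form $A\cos\theta+B\sin\theta$. For example, on $[\pi/2,\pi]$ your formula gives $s(\pi/2)\cos(\theta-\pi/2)+s(\pi)\sin(\theta-\pi/2)=-\cos\theta=|\cos\theta|$. So the wraparound at $2\pi$, which you flag, is not where the difficulty lies.

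The missing idea is to use triples in which the new angle has a nonzero coefficient while the other two angles lie where the formula is already established.

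\begin{itemize}
\item For $\theta\in[\pi/2,\pi]$, the admissible triple $(0,\pi/2,\theta)$ gives $A\cos\theta+B\sin\theta-s(\theta)=0$ directly.
\item The triples $(\pi/2,\pi,\theta)$ and $(\pi,3\pi/2,\theta)$ then handle the remaining two quarters in the same way.
\end{itemize}

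Alternatively, the diametral triple $(\theta,\theta+\pi/2,\theta+\pi)$ is admissible since $\theta_3-\theta_1=\pi$, and it yields $s(\theta+\pi)=-s(\theta)$. This supplies exactly the missing values $s(\pi)=-A$ and $s(3\pi/2)=-B$, after which your endpoint-matching argument does go through. Note that $|\cos\theta|$ violates precisely this triple at $\theta=0$.

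A tidy way to write the argument is to work with $d:=s-A\cos-B\sin$. It satisfies the identity by the converse direction and vanishes on $[0,\pi/2]$, and the triples above propagate its zeros around $\Theta$.
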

\noindent The function $s$ is trigonometric-convex only when $A, B\in\R$.

Several further properties of trigonometric-convex functions, including continuity and the existence of left and right derivatives, are established in \Cref{s:tcvxprop}. Only the following proposition will be needed in the subsequent sections.

\begin{proposition}\label{p:domk}
	For a trigonometric-convex function $k:\Theta\to\bar\R$, $\dom k$ is either equal to $\Theta$, an interval of $\Theta$ of length at most $\pi$, or of the form $\{\alpha, \alpha+\pi\}$ for some $\alpha\in\Theta$.
\end{proposition}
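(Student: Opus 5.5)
The plan is to show that $D := \dom k$ has the following closure property: whenever $\t_1,\t_3\in D$ and $0<\t_3-\t_1<\pi$, the whole arc $[\t_1,\t_3]$ lies in $D$. Indeed, for $\t_2\in[\t_1,\t_3]$, \cref{e:tcvximp} gives
\[
k(\t_2)\sin(\t_3-\t_1)\le k(\t_1)\sin(\t_3-\t_2)+k(\t_3)\sin(\t_2-\t_1).
\]
The right-hand side is $<\infty$, because $k(\t_1),k(\t_3)<\infty$ and the sines are nonnegative. (If $k(\t_1)$ or $k(\t_3)$ equals $-\infty$ with a zero coefficient, we adopt the convention $0\cdot(-\infty)=0$; if a $-\infty$ term appears with a positive coefficient, the bound is $-\infty$, which is still finite from above.) Since $\sin(\t_3-\t_1)>0$, we get $k(\t_2)<\infty$. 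If $k\equiv-\infty$, then $D=\Theta$ and there is nothing to prove. So $D$ is "short-arc convex": any two points of $D$ at angular distance strictly less than $\pi$ are joined inside $D$ by the shorter arc.

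The second step is a purely combinatorial classification of subsets $D\subset\Theta$ with this property. The empty set is an interval of length $0$ in the sense of the statement (or is handled separately), and a single point is a degenerate interval. If $D$ contains no two points at distance in $(0,\pi)$, then every pair of distinct points of $D$ is antipodal, so $D\subset\{\alpha,\alpha+\pi\}$. Otherwise, we consider the connected components of $D$. By the closure property, each component is an arc, and any two points of distinct components must be at distance $\ge\pi$, hence exactly antipodal. From this we show that either $D$ is connected, or it consists of exactly two antipodal points. For the latter, note that a nondegenerate arc component has a neighbour of its antipode strictly within distance $<\pi$ of some of its points, which would force merging. So $D$ is an interval of $\Theta$.

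The final step is to bound the length of this interval. Suppose $D$ is an arc of length $L$ with $\pi<L$ and $D\neq\Theta$. We show this forces $D=\Theta$. Write $D\supset(\alpha,\alpha+L)$ with $L>\pi$. Pick $\t_1$ slightly after $\alpha$ and $\t_3$ slightly before $\alpha+L$. Since $L>\pi$, the complementary arc from $\t_3$ to $\t_1+2\pi$ has length $2\pi-(\t_3-\t_1)<\pi$. Applying the closure property to this complementary short arc fills in the missing gap, so $D=\Theta$.

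Arcs of length exactly $\pi$ are allowed, because an open, half-open, or closed arc of length $\pi$ contains no pair of points at distance $<\pi$ spanning the complement. This yields the trichotomy of the statement. The main obstacle is the careful bookkeeping in the middle step: excluding configurations such as an arc together with an isolated antipodal point. There I would argue directly that such a point lies within distance $<\pi$ of some interior point of the arc, so the closure property absorbs it, giving a connected interval. I expect that the $\pm\infty$ arithmetic conventions in \cref{e:tcvximp} need to be checked against how \Cref{d:tcvx} is intended.
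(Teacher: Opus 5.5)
Your proposal is correct and follows essentially the same route as the paper. The three-point inequality gives closure under arcs shorter than $\pi$; if no two points lie at distance in $(0,\pi)$, you get the antipodal pair; and a domain of length exceeding $\pi$ is forced to be all of $\Theta$ by filling in the complementary short arc. Your connected-component bookkeeping makes explicit the step of excluding an arc together with an isolated antipodal point, which the paper compresses into ``Therefore, the domain is an interval.''
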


\subsection{Extensions of the complex plane}\label{s:Cbar}

For $A\subset\C$ and $B \subset\C^*$, let $\infty = \infty_{A, B}$ be an infinity symbol satisfying the following rules:
\begin{multicols}{2}
\begin{enumerate}[(i)]
    \item $z + \infty = \infty + z$ for $z\in\C$.
    \item $z \cdot \infty = \infty \cdot z$ for $z\in\C$.
    \item $0\cdot\infty = 0$.
    \item $x+\infty = \infty$ for $x\in A$.
    \item $r\cdot\infty = \infty$ for $r\in B$.
    \item[]
\end{enumerate}
\end{multicols}

We denote by $\hat\infty$ the case $A = \C$ and $B = \C^*$. The \emph{one-point compactification} of the complex plane, $\hat\C := \C \cup \{\hat\infty\}$, is the \emph{Riemann sphere}. In this setting, one has $-\hat\infty = \hat\infty$. This property no longer holds for the \emph{radial compactification} of $\C$, defined by $\tilde\C := \C \cup \tilde C_\infty$, where $\tilde C_\infty := \cond{e^{i\t} \tilde\infty}{\t\in\Theta}$ and the symbol $\tilde\infty$ corresponds to $A = \C$ and $B = \R_+^*$.

The infinity symbol used in the definition of the extended real line $\bar\R$ corresponds to the case $A = \R$ and $B = \R_+^*$. Recall that expressions such as "$\infty - \infty$" and "$-\infty + \infty$" are undefined. With the same convention, we define the \emph{extended complex plane} by
\begin{equation}\label{e:barC}
\bar\C := \C + \infty\cdot\C,
\end{equation}
which contains both $\C$ and $\bar\R$. In this setting, operations of the form "$\infty e^{i\alpha} + \infty e^{i\beta}$" are undefined whenever $\alpha \neq \beta$ in $\Theta$. In particular, one must not distribute the infinity in expressions such as $\infty e^{i\t} = \infty(\cos\t + i\sin\t)$.

From this point on, the set notation of the previous \Cref{s:setnot} is used for subsets of $\bar\C$ whenever the operations involved are defined. We also extend the notation for circles by setting $C_\infty := \bar\C \setminus \C$, so that
\begin{equation}
\bar\C = \C \cup C_\infty,
\end{equation}
and elements $z \in C_\infty$ are characterized by the condition $|z| = \infty$.

By definition \eqref{e:barC}, for any $z \in \bar\C$, there exist $u, v \in \C$ such that $z = u + \infty v$. If $|z| = \infty$, then $v \neq 0$. Hence we may write $v = r e^{i\t}$ and $u = (x + i y)e^{i\t}$ with $r > 0$, $\t \in \Theta$, and $x, y \in \R$. Using properties (iv) and (v), it follows that $z = (\infty + i y)e^{i\t}$, which we call its \emph{canonical form}. Thus, elements of $C_\infty$ are characterized by two parameters: their argument $\arg z := \t \in \Theta$ and their offset $\Im(z e^{-i\t}) := y \in \R$. Consequently,
\begin{equation}\label{e:Cinfty}
C_\infty = \cond{(\infty + i y)e^{i\t}}{y \in \R, \t \in \Theta} = \bigcup_{\t \in \Theta} (\infty + i\R)e^{i\t}.
\end{equation}

We now define a topology on $\bar\C$ by specifying neighborhood bases. For $\delta > 0$ and $z \in \C$, set $U_\delta(z) := z + D_\delta$. For $z = (\infty + i y)e^{i\t} \in C_\infty$, set
\begin{equation}\label{e:nhood}
U_\delta(z) := \cond{s \in \bar\C}{\Re(s e^{-i\t}) > \inv\delta,\; \abs{\Im(s e^{-i\t}) - y} < \delta},
\end{equation}
so that $(U_\delta(z))_{\delta > 0}$ forms a neighborhood basis at $z$. The resulting notion of convergence coincides with the usual one in $\C$. For points in $C_\infty$, \cref{e:nhood} implies that a sequence $(z_n)_{n \in \N} \subset \bar\C$ converges to $(\infty + i y)e^{i\t}$ as $n \to \infty$ if and only if
\begin{equation}
\Re(z_n e^{-i\t}) \to_{n\to\infty} \infty,\quad \Im(z_n e^{-i\t}) \to_{n\to\infty} y.
\end{equation}
Note that, under this topology, $\bar\C$ is not compact; indeed, $\C$ is not relatively compact in $\bar\C$\footnote{For example, if $\theta\in\Theta$ and $u_n := (n+i\sqrt{n})e^{i\theta}$, then $(u_n)_{n\in\N}$ has no convergent subsequence in $\bar\C$: its argument tends to $\theta$, while its offset $\Im(u_n e^{-i\theta}) = \sqrt n$ diverges. One could add further boundary points to compactify $\bar\C$, but we avoid doing so because these points introduce geometric and topological complications that are not needed for the analytic results below.}.

For a subset $S \subset \bar\C$, a set $U \subset \bar\C$ is called a \emph{neighborhood} of $S$ if, for every $z \in S$, there exists $\delta > 0$ such that $U \supset U_\delta(z)$. If $\delta$ can be chosen independently of $z$, we say that the neighborhood is \emph{uniform}. A subset of $\bar\C$ that is a neighborhood of itself is thus \emph{open}, and a set is \emph{closed} if its complement is open. We denote the \emph{closure} and \emph{boundary} of $S$ by $\bar S$ and $\partial S$, respectively.

The topologies on $\tilde\C$ and $\hat\C$ are obtained by identifying $\infty + i\R$ with $\tilde\infty$ and $\tilde C_\infty$ with $\hat\infty$, respectively.

\section{Convex analysis in the extended complex plane}\label{s:cvxanalysis}

\subsection{Geodesics and convex sets}\label{s:cvxset}

For $u, v\in\C$, define the complex segment
\begin{equation}
[u, v] := \cond{tu + (1-t)v}{t\in [0, 1]}.
\end{equation}
In $\C$, the set $[u, v]$ is a geodesic segment; we extend this notion to $\bar\C$ below.
\begin{definition}
    For $u, v\in\bar\C$, a set $G \subset\bar\C$ is a geodesic segment connecting $u$ and $v$ if there exist sequences $u_n \to u$ and $v_n \to v$ of complex numbers such that
    \begin{equation}
        [u_n, v_n] \to_{n\to\infty} G,
    \end{equation}
    where the convergence is in the sense of Painlevé--Kuratowski.\footnote{See Chapter 4 of \cite{rockafellar2004}.}
\end{definition}

The Painlevé--Kuratowski limit is taken with respect to the topology introduced in \Cref{s:Cbar}. This construction of geodesic segments of $\bar\C$ provides the appropriate definition for developing the theory presented in this paper.\footnote{Our construction is in fact quite general: given a geodesic metric space $X$ and an extension $\bar X$, one can define a geodesic segment of $\bar X$ as the Painlevé--Kuratowski limit of geodesic segments of $X$. A similar construction appears in \cite[Sec.~II.9]{bridson1999}.} They are explicitly described in the following theorem.

\begin{theorem}[Classification of geodesic segments of $\bar\C$]\label{t:geodesic}
    Let $u, v\in\bar\C$.
    \begin{alphabetize}
    \item If $u, v\in\C$, the only geodesic connecting $u$ and $v$ is $[u, v]$.
    \item If $u = (x + iy_1) e^{i\t} \in\C$ and $v = (\infty + iy_2) e^{i\t}\in C_\infty$, the only geodesic connecting $u$ and $v$ is
    \begin{equation}\label{e:geob}
        (([x, \infty) + i y_1) \cup (\infty + i[y_1,y_2]))e^{i\t}.
    \end{equation}
    \item Assume $u=(\infty + iy_1)e^{i\alpha}$ and $v = (\infty + iy_2) e^{i\beta}$ belong to $C_\infty$.
    \begin{enumerate}[(\alph{enumi}.1)]
        \item If $\alpha = \beta$, the only geodesic connecting $u$ and $v$ is
        \begin{equation}\label{e:geoc1}
            (\infty + i [y_1, y_2])e^{i\alpha}
        \end{equation}
        \item If $0 < \beta - \alpha < \pi$, the only geodesic connecting $u$ and $v$ is
        \begin{equation}\label{e:geoc2}
            (\infty + i [y_1, \infty))e^{i\alpha} \cup \bigcup_{\t\in(\alpha, \beta)} (\infty + i \R)e^{i\t} \cup (\infty + i (-\infty, y_2])e^{i\beta}.
        \end{equation}
        \item If $\beta - \alpha = \pi$, the geodesics connecting $u$ and $v$ are
        \begin{equation}\label{e:geoc3}
            ((\infty + i [y_1, y]) \cup (\R + iy) \cup (-\infty + [y, y_2]))e^{i\alpha},
        \end{equation}
        for each $y\in [y_1, y_2]$.
    \end{enumerate}
    \end{alphabetize}
\end{theorem}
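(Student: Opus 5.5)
Throughout, the Painlevé--Kuratowski limit is computed in the topology of \Cref{s:Cbar} through the two standard sets. The \emph{lower limit} consists of the $z\in\bar\C$ such that every $U_\delta(z)$ meets $[u_n,v_n]$ for all large $n$. The \emph{upper limit} consists of those $z$ for which this happens for infinitely many $n$. In each case we take arbitrary complex sequences $u_n\to u$, $v_n\to v$ and show two things: (1) every point of the claimed set $G$ lies in the lower limit; (2) no point outside $G$ lies in the upper limit. In case (c.3) only a subsequence version of (2) will hold. A rotation $z\mapsto ze^{-i\t}$ (or $ze^{-i\alpha}$) is a homeomorphism of $\bar\C$ that maps the neighborhoods \eqref{e:nhood} to neighborhoods of the same form. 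So we normalize $\t=0$, respectively $\alpha=0$, and write $u_n=a_n+ib_n$, $v_n=c_n+id_n$.

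\textbf{Cases (a) and (b).} In case (a), the segments $[u_n,v_n]$ converge in the Hausdorff metric to $[u,v]$. They also stay in a fixed disk, so no point of $C_\infty$ is in the upper limit, since the neighborhoods \eqref{e:nhood} avoid bounded sets for small $\delta$.

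In case (b), $a_n\to x$, $b_n\to y_1$, $c_n\to\infty$ and $d_n\to y_2$. The segment is the graph of the affine function
\[
t\mapsto b_n+(d_n-b_n)\frac{t-a_n}{c_n-a_n},\qquad t\in[a_n,c_n].
\]
For fixed finite $t\ge x$ its value tends to $y_1$, which gives the finite ray $[x,\infty)+iy_1$ and excludes every other finite point. Taking $t=a_n+\lambda(c_n-a_n)$ with $\lambda\in(0,1)$ fixed, we get points whose real part tends to $\infty$ and whose imaginary part tends to $(1-\lambda)y_1+\lambda y_2$. This gives the segment $\infty+i[y_1,y_2]$, and its endpoints come from the closedness of the lower limit. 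Conversely, imaginary parts on the segment stay between $b_n$ and $d_n$. Hence a point $(\infty+iy)e^{i\varphi}$ with $\varphi\ne0$, or with $y\notin[y_1,y_2]$, has a neighborhood missing the segments eventually. Case (c.1) is the same computation with both real parts tending to $\infty$. Since $\min(a_n,c_n)\to\infty$, there are no finite points.

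\textbf{Case (c.2).} This is the main bookkeeping step. With $\alpha=0<\beta<\pi$, let $\gamma=\beta/2$. Both $u_n$ and $v_n$ lie in the half-plane $\Pi_{R_n,\gamma}$, where $R_n:=\min(\Re(u_ne^{-i\gamma}),\Re(v_ne^{-i\gamma}))\to\infty$. By convexity the whole segment lies in $\Pi_{R_n,\gamma}$, which excludes finite points. It also excludes directions outside $[0,\beta]$, since near $\varphi\notin[0,\beta]$ the neighborhoods \eqref{e:nhood} eventually lie outside that half-plane.

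For $\varphi\in(0,\beta)$, the segment crosses the ray of argument $\varphi$ at a point $p_n$ with $|p_n|\to\infty$. Moving along the segment changes the offset $\Im(ze^{-i\varphi})$ continuously from a value tending to $-\infty$ (at $u_n$) to one tending to $+\infty$ (at $v_n$). Hence for every $y$ there is a segment point with offset exactly $y$ and real part $\Re(ze^{-i\varphi})\to\infty$, which puts $(\infty+i\R)e^{i\varphi}$ in the lower limit.

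Near the endpoint directions: $\Im z$ increases monotonically along the segment from $b_n\to y_1$ to $\Im v_n\to+\infty$. This yields $\infty+i[y_1,\infty)$ at argument $0$ and excludes offsets $<y_1$. The symmetric argument at $\beta$ gives $(\infty+i(-\infty,y_2])e^{i\beta}$. The delicate point I expect is checking that the offset parametrization near direction $0$ actually has real part tending to $\infty$ for each intermediate offset. I would handle it by writing the segment explicitly as $u_n+t(v_n-u_n)$ and choosing $t_n\to0$ with $t_n|v_n|\to\infty$ suitably.

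\textbf{Case (c.3).} Here $\beta=\pi$ and the endpoints are near $a_n+iy_1$ and $-c_n+iy_2'$, with $a_n,c_n\to\infty$. I read the statement's last piece, up to its sign convention for the offset of $v$, as the matching segment at argument $\pi$. The segment is the graph of an affine function whose slope tends to $0$. Its value at real part $0$ is the convex combination
\[
\eta_n=\frac{c_n b_n+a_n d_n}{a_n+c_n},
\]
which lies between the two endpoint imaginary parts and is therefore bounded.

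If the Painlevé--Kuratowski limit exists, then $\eta_n$ must converge to some $y\in[y_1,y_2]$. Otherwise two subsequential limits would give distinct horizontal lines in the upper limit, but neither would be in the lower limit. With $\eta_n\to y$, the analysis of case (b) applied to each half of the segment (split at real part $0$) yields exactly \eqref{e:geoc3}.

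For existence, for each $y\in[y_1,y_2]$ choose $a_n=(1-\lambda)n$ and $c_n=\lambda n$ with $\lambda$ chosen so that $\eta_n\to y$. This shows every such geodesic occurs, which gives the non-uniqueness.
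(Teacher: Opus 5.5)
Your framework is sound: for arbitrary $u_n\to u$, $v_n\to v$, you show that the lower limit contains $G$ and the upper limit is contained in $G$. In (c.2) your intermediate-value argument for the lower limit even handles arbitrary sequences. The paper's attainment step there uses only $u_n=(n+iy_1)e^{i\alpha}$, $v_n=(n+iy_2)e^{i\beta}$.

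However, the exclusion of directions in (c.2) fails as written. The half-plane $\{\Re(ze^{-i\gamma})\ge R_n\}$ does not rule out every $\varphi\notin[0,\beta]$. A neighborhood \eqref{e:nhood} of $(\infty+iy)e^{i\varphi}$ contains $re^{i\varphi}$ for all large $r$, and $\Re(re^{i\varphi}e^{-i\gamma})=r\cos(\varphi-\gamma)\to\infty$ whenever $|\varphi-\gamma|<\pi/2$. Since $\gamma=\beta/2<\pi/2$, the directions in $(\gamma-\frac{\pi}{2},0)\cup(\beta,\gamma+\frac{\pi}{2})$ meet the half-plane for every $n$; for example, take $\beta=\pi/2$ and $\varphi=-\pi/8$. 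So your argument only excludes directions with $\cos(\varphi-\gamma)\le 0$.

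The repair uses the monotonicity you invoke in the next paragraph. Because $\Im z$ is affine along the segment, $\Im z\ge\min(\Im u_n,\Im v_n)\to y_1$ there, and this excludes every $\varphi\in(\pi,2\pi)$. Symmetrically, $\Im(ze^{-i\beta})\le\max(\Im(u_ne^{-i\beta}),\Im(v_ne^{-i\beta}))\to y_2$, and this excludes $\varphi\in(\beta,\beta+\pi)$. Alternatively, as the paper does, note that for large $n$ the segment lies in the closed sector between $\arg u_n$ and $\arg v_n$, since their angular gap tends to $\beta<\pi$. Hence every limit point has argument in $[0,\beta]$.

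The point you flagged as delicate is in fact immediate from your half-plane. A segment point with $\Im z=y_0$ satisfies $\Re z\cos\gamma+y_0\sin\gamma\ge R_n$, so $\Re z\to\infty$.

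The existence step in (c.3) also has a gap. With constant $\lambda$, $a_n=(1-\lambda)n$, $c_n=\lambda n$ and constant imaginary parts give $\eta_n\to\lambda y_1+(1-\lambda)y_2$ with $\lambda\in(0,1)$. When $y_1\ne y_2$, this reaches only the values of $y$ strictly between the two offsets. The choices $\lambda\in\{0,1\}$ make $a_n\equiv 0$ or $c_n\equiv 0$, so then $u_n\not\to u$ or $v_n\not\to v$. The statement includes the endpoint values of $y$, and these require both real parts to diverge at different rates. For instance, $a_n=\sqrt{n}$, $c_n=n$ gives $\eta_n\to y_1$; this is the role of $\Delta_n=\Delta(1-1/\sqrt{n})$ in the paper.

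With these two repairs, and with your correct reading of the sign of the offset of $v$ in (c.3), the argument is complete.
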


The Riemann sphere $\hat\C$ and the radial compactification $\tilde\C$ admit natural depictions as a sphere and a closed disk, respectively. For the extended complex plane $\bar\C$, case (c.2) of \Cref{t:geodesic} suggests viewing $C_\infty$ as a polygon with uncountably many sides of the form $(\infty + i\R)e^{i\theta}$; see \cref{e:Cinfty}. \Cref{f:Cbar} illustrates this viewpoint by replacing $C_\infty$ with a discretized collection of sides and showing the five types of geodesics described in \Cref{t:geodesic}.

\begin{figure}[ht]
    \centering
    \begin{tikzpicture}[line cap=round, line join=round, thick, scale=1]
        \DrawExtendedComplexPlane{
            \colorlet{caseacolor}{Red3}
            \colorlet{casebcolor}{Purple4}
            \colorlet{caseconecolor}{Blue3}
            \colorlet{casectwocolor}{Green4}
            \colorlet{casecthreecolor}{DarkOrange3}
            
            \pgfmathsetmacro{\gap}{2}
            \pgfmathsetmacro{\ticklen}{2}
            \pgfmathsetmacro{\outeroff}{\gap+\ticklen}
            \coordinate (SideMid) at ($(CP0)!0.5!(CP1)$);
            \path let \p1 = ($(CP1)-(CP0)$), \n1={veclen(\x1,\y1)}
                in coordinate (StartInner) at ($(CP0)+({\gap*\y1/\n1},{-\gap*\x1/\n1})$)
                   coordinate (StartOuter) at ($(CP0)+({\outeroff*\y1/\n1},{-\outeroff*\x1/\n1})$)
                   coordinate (EndInner) at ($(CP1)+({\gap*\y1/\n1},{-\gap*\x1/\n1})$)
                   coordinate (EndOuter) at ($(CP1)+({\outeroff*\y1/\n1},{-\outeroff*\x1/\n1})$)
                   coordinate (LabelMid) at ($(StartOuter)!0.5!(EndOuter)$);
            \DrawExtendedComplexPlaneRay[draw=gray!70,line width=0.6pt]{2*\ComplexPlaneRotation}{0.46}
            \draw[->,draw=gray!70,line width=0.6pt] (0.85,0) arc[start angle=0,end angle={2*\ComplexPlaneRotation},radius=0.85];
            \node[text=gray!70,font=\scriptsize] at ({\ComplexPlaneRotation}:1.08) {$\theta$};
            \draw[line cap=butt, line width=0.6pt] (StartInner) -- (StartOuter);
            \draw[line cap=butt, line width=0.6pt] (StartOuter) -- (EndOuter);
            \draw[line cap=butt, line width=0.6pt] (EndOuter) -- (EndInner);
            \node[font=\scriptsize,right] at (LabelMid) {$(\infty + i\R)e^{i\theta}$};

            \coordinate (CaseAU) at (-1.55,1.57);
            \coordinate (CaseAV) at (-0.72,2.09);
            \draw[caseacolor] (CaseAU) -- (CaseAV);
            \fill[caseacolor] (CaseAU) circle (1.1pt);
            \fill[caseacolor] (CaseAV) circle (1.1pt);
            \node[text=caseacolor, left] at (CaseAU) {$u_a$};
            \node[text=caseacolor, right] at (CaseAV) {$v_a$};

            \coordinate (CaseBFoot) at ($(CP2)!0.34!(CP3)$);
            \coordinate (CaseBV) at ($(CP2)!0.75!(CP3)$);
            \path let \p1 = ($(CP3)-(CP2)$) in coordinate (CaseBU) at ($(CaseBFoot)+(-1.6*\y1,1.6*\x1)$);
            \draw[casebcolor] (CaseBU) -- (CaseBFoot) -- (CaseBV);
            \fill[casebcolor] (CaseBU) circle (1.1pt);
            \fill[casebcolor] (CaseBV) circle (1.1pt);
            \node[text=casebcolor, below] at (CaseBU) {$u_b$};
            \node[text=casebcolor, above] at (CaseBV) {$v_b$};

            \coordinate (CaseCOneU) at ($(CP8)!0.22!(CP9)$);
            \coordinate (CaseCOneV) at ($(CP8)!0.78!(CP9)$);
            \draw[caseconecolor] (CaseCOneU) -- (CaseCOneV);
            \fill[caseconecolor] (CaseCOneU) circle (1.1pt);
            \fill[caseconecolor] (CaseCOneV) circle (1.1pt);
            \node[text=caseconecolor, left] at (CaseCOneU) {$u_1$};
            \node[text=caseconecolor, left] at (CaseCOneV) {$v_1$};

            \coordinate (CaseCTwoU) at ($(CP9)!0.28!(CP10)$);
            \coordinate (CaseCTwoV) at ($(CP12)!0.68!(CP13)$);
            \draw[casectwocolor] (CaseCTwoU) -- (CP10) -- (CP11) -- (CP12) -- (CaseCTwoV);
            \fill[casectwocolor] (CaseCTwoU) circle (1.1pt);
            \fill[casectwocolor] (CaseCTwoV) circle (1.1pt);
            \node[text=casectwocolor, below left] at (CaseCTwoU) {$u_2$};
            \node[text=casectwocolor, below right] at (CaseCTwoV) {$v_2$};

            \coordinate (CaseCThreeU) at ($(CP7)!0.18!(CP6)$);
            \coordinate (CaseCThreeLeft) at ($(CP7)!0.34!(CP6)$);
            \coordinate (CaseCThreeRight) at ($(CP14)!0.34!(CP15)$);
            \coordinate (CaseCThreeV) at ($(CP14)!0.84!(CP15)$);
            \draw[casecthreecolor] (CaseCThreeU) -- (CaseCThreeLeft) -- (CaseCThreeRight) -- (CaseCThreeV);
            \fill[casecthreecolor] (CaseCThreeU) circle (1.1pt);
            \fill[casecthreecolor] (CaseCThreeV) circle (1.1pt);
            \node[text=casecthreecolor, left] at (CaseCThreeU) {$u_3$};
            \node[text=casecthreecolor, right] at (CaseCThreeV) {$v_3$};

            \node[
                overlay,
                anchor=north east,
                draw,
                rounded corners,
                fill=white,
                fill opacity=0.92,
                text opacity=1,
                inner sep=4pt,
                font=\scriptsize
            ] at ($(current bounding box.north west)+(-0.2,0.15)$) {
                \begin{tabular}{@{}ll@{}}
                    \textcolor{caseacolor}{\rule[0.55ex]{1.4em}{1.1pt}} & (a) \\
                    \textcolor{casebcolor}{\rule[0.55ex]{1.4em}{1.1pt}} & (b) \\
                    \textcolor{caseconecolor}{\rule[0.55ex]{1.4em}{1.1pt}} & (c.1) \\
                    \textcolor{casectwocolor}{\rule[0.55ex]{1.4em}{1.1pt}} & (c.2) \\
                    \textcolor{casecthreecolor}{\rule[0.55ex]{1.4em}{1.1pt}} & (c.3)
                \end{tabular}
            };
        }
    \end{tikzpicture}
    \caption{A polygonal depiction of the extended complex plane and the five types of geodesics from \Cref{t:geodesic}.}
    \label{f:Cbar}
\end{figure}

\begin{proof}[of \Cref{t:geodesic}]
    See \Cref{s:cvxinf}.
\end{proof}

The classical notion of convexity is not well-defined on the extended complex plane, since the addition of two distinct points at infinity is not permitted. We therefore define convexity in $\bar\C$ using the corresponding notion from geodesic metric spaces \cite{bridson1999}.

\begin{definition}[Convexity]\label{d:cvx}
    A subset $K$ of $\bar\C$ is said to be \emph{(geodesically) convex} if, for all $u,v\in K$, every geodesic segment joining $u$ and $v$ is contained in $K$.
\end{definition}

An equivalent sequential characterization is given below.

\begin{proposition}[Sequential characterization of convexity]\label{p:cvxdef}
	$K\subset\bar\C$ is convex if and only if, for all sequences $(u_n)_{n\in\N}$ and $(v_n)_{n\in\N}$ in $\C$ converging in $\bar\C$ to points of $K$, and every convergent sequence $(t_n)_{n\in\N}$ in $[0, 1]$ such that
	\begin{equation}\label{e:cvxseq}
		t_n u_n + (1-t_n) v_n \to_{n\to\infty} w,
	\end{equation}
	for some $w\in\bar\C$, one has $w\in K$.
\end{proposition}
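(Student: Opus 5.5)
The plan is to show that both conditions say the same thing: a point $w$ lies in some geodesic segment joining two points of $K$ exactly when it arises as a limit of convex combinations as in \cref{e:cvxseq}. This is essentially an unpacking of the Painlevé--Kuratowski limit in the definition of geodesic segments. Recall that for closed sets $G_n$ the limit $G$, when it exists, equals both the inner limit $\liminf G_n$ and the outer limit $\limsup G_n$. Here $\limsup G_n$ is the set of cluster points of sequences $w_{n_k}\in G_{n_k}$ along subsequences. This equality gives one direction via the inner limit and the other via the outer limit.

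\emph{($\Leftarrow$).} Assume the sequential condition holds. Let $u,v\in K$ and let $G$ be a geodesic segment joining them, so $[u_n,v_n]\to G$ for some complex sequences $u_n\to u$, $v_n\to v$. Take $w\in G$. Since $G$ is the inner limit, there are points $w_n\in[u_n,v_n]$ with $w_n\to w$. Write $w_n=t_nu_n+(1-t_n)v_n$ with $t_n\in[0,1]$. By compactness of $[0,1]$, pass to a subsequence along which $t_n$ converges. Along this subsequence we still have $u_n\to u$, $v_n\to v$ and $w_n\to w$. The hypothesis then gives $w\in K$, so $G\subset K$ and $K$ is convex.

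\emph{($\Rightarrow$).} Assume $K$ is convex, and take $u_n\to u\in K$, $v_n\to v\in K$ and $t_n\to t$ with $w_n:=t_nu_n+(1-t_n)v_n\to w$. We must produce a geodesic segment joining $u$ and $v$ that contains $w$. The natural tool is a selection (Blaschke/Zarankiewicz-type) compactness theorem: every sequence of sets has a Painlevé--Kuratowski convergent subsequence. This holds in any second-countable space (see \cite{rockafellar2004}). The topology of $\bar\C$ given by the neighborhoods \cref{e:nhood} has a countable base, obtained by taking rational centres, offsets, angles and $\delta$, so the theorem applies. Extract a subsequence along which $[u_n,v_n]\to G$. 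By definition $G$ is then a geodesic segment joining $u$ and $v$. Since $w_n\in[u_n,v_n]$ and $w_n\to w$, the point $w$ lies in the outer limit, which equals $G$. Convexity of $K$ gives $G\subset K$, hence $w\in K$.

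The only delicate step is justifying the compactness of Painlevé--Kuratowski convergence in $\bar\C$, which is not compact (and in particular not locally compact at infinity in the usual sense). The statement I would use needs only second countability, not compactness: a possibly empty limit is allowed. I would check this by a diagonal argument over a countable base $(B_j)$. For each $j$, pass to a subsequence so that either $[u_n,v_n]\cap B_j\neq\emptyset$ eventually, or $[u_n,v_n]\cap B_j=\emptyset$ infinitely often. Diagonalizing over $j$ forces the inner and outer limits to coincide. Nonemptiness of $G$ is not even needed, since $w\in G$ is shown directly. If a self-contained route is preferred, one could instead invoke \Cref{t:geodesic} and check case by case that $w$ lies on one of the listed segments. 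This would be much longer, so I would use the compactness argument.
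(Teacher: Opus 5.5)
Your proof of ($\Leftarrow$) is fine. $\bar\C$ is first countable (the sets $U_{1/m}(w)$ form a neighborhood base), so every point of the inner limit is a limit of points $w_n\in[u_n,v_n]$, and compactness of $[0,1]$ does the rest.

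The direction ($\Rightarrow$), however, rests on a false claim: $\bar\C$ is \emph{not} second countable. By \eqref{e:nhood}, a neighborhood $U_\delta((\infty+iy)e^{i\theta})$ meets $C_\infty$ only inside the single side $(\infty+i\R)e^{i\theta}$; this is the ``polygon with uncountably many sides'' picture of \Cref{s:Cbar}. Consequently, no basic set centred at an angle from your countable set contains a point at infinity whose angle lies outside that set. In fact any base must contain, for each $\theta\in\Theta$, a member containing $\infty e^{i\theta}$ and contained in $U_1(\infty e^{i\theta})$. These members are pairwise distinct, so there are uncountably many of them.

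Worse, the selection theorem itself is false in $\bar\C$. Let $A_n\subset\Theta$ be the set of angles whose $n$th binary digit of $\theta/2\pi$ equals $1$, and let $C_n:=\{\infty e^{i\theta} : \theta\in A_n\}$. Then a subsequence $(C_{n_k})$ converges in the Painlevé--Kuratowski sense only if $\mathbf 1_{A_{n_k}}$ converges pointwise on all of $\Theta$, which fails for a suitable $\theta$. So the step ``extract a subsequence along which $[u_n,v_n]\to G$'' is unjustified as written.

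What is true, and what you actually need, is the selection property for \emph{segments whose endpoints converge in $\bar\C$}. Proving it requires the geometry of such segments.

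In cases (a), (b), (c.1) and (c.2) of \Cref{t:geodesic}, the segments converge for every choice of sequences. In (c.2), for instance, $\Re(se^{-i(\alpha+\beta)/2})\to\infty$ uniformly on $[u_n,v_n]$. Meanwhile, for each $\theta_0\in(\alpha,\beta)$, the function $s\mapsto\Im(se^{-i\theta_0})$ runs from $-\infty$ to $+\infty$ along the segment. The intermediate value theorem then puts the whole side at angle $\theta_0$ into the inner limit.

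In the antipodal case (c.3), normalize $\alpha=0$. You must pass to a subsequence along which the height at which $[u_n,v_n]$ crosses the imaginary axis converges to some $y$. Along it, the segments converge to the set \eqref{e:geoc3} with this $y$, which is then a geodesic segment containing $w$.

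This is the bookkeeping the appendix proof of \Cref{t:geodesic} carries out: it works directly with limits of $t_nu_n+(1-t_n)v_n$ and determines case by case where they can lie. The paper states the present proposition without a separate proof, relying on that analysis. So the case-by-case route you set aside as ``much longer'' is the one that actually works.

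A smaller slip: in your diagonal argument the dichotomy must be ``eventually meets $B_j$'' versus ``eventually misses $B_j$''. As you wrote it, ``misses infinitely often'' is just the negation of the first alternative, so no extraction is forced. It is also not preserved under further extraction.
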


When $K\subset\C$, if $u_n \to u$, $v_n \to v$, and $t_n\to t\in [0, 1]$, then $w = t u + (1-t) v$, so \Cref{p:cvxdef} coincides with the usual notion of convexity in $\C$. The nonstandard cases occur when $u\in C_\infty$ and $t = 0$, when $v\in C_\infty$ and $t=1$, or when both $u, v\in C_\infty$, since $w$ is then not uniquely determined.

We now classify the infinite points of a general closed unbounded convex subset of $\bar\C$.

\begin{proposition}\label{p:clcvxinf}
    Let $K$ be a closed unbounded convex subset of $\bar\C$. Then $\cond{\arg p}{p\in K\cap C_\infty}$ is either equal to $\Theta$, an interval of $\Theta$ of length at most $\pi$, or of the form $\{\alpha, \alpha+\pi\}$ for some $\alpha\in\Theta$.
\end{proposition}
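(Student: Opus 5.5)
Let $A := \cond{\arg p}{p\in K\cap C_\infty}$. The plan is to show that $A$ has the same closure property as the effective domain in \Cref{p:domk}. Once that holds, the same combinatorial case analysis on subsets of $\Theta$ gives the trichotomy.

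The key lemma is the following: if $\alpha,\beta\in A$ with $0<\beta-\alpha<\pi$, then $[\alpha,\beta]\subset A$. Pick $u=(\infty+iy_1)e^{i\alpha}$ and $v=(\infty+iy_2)e^{i\beta}$ in $K\cap C_\infty$. By case (c.2) of \Cref{t:geodesic}, the unique geodesic joining $u$ and $v$ contains $(\infty+i\R)e^{i\t}$ for every $\t\in(\alpha,\beta)$. By \Cref{d:cvx}, this geodesic lies in $K$, so $(\alpha,\beta)\subset A$. The endpoints are already in $A$.

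First, I need $A\neq\emptyset$, which is where unboundedness and closedness enter. Take $z_n\in K$ with $|z_n|\to\infty$; if infinitely many $z_n$ lie in $C_\infty$ we are done. Otherwise fix a finite point $z_0\in K$. If $K$ contains no finite point, then $K\subset C_\infty$ and it is nonempty, so again we are done. Pass to a subsequence with $\arg(z_n-z_0)\to\t$. The segments $[z_0,z_n]$ lie in $K$ by convexity. Along them, the points $z_0+re^{i\arg(z_n-z_0)}$ converge, for each fixed $r>0$, to $z_0+re^{i\t}$. Since $K$ is closed, $z_0+\R_+e^{i\t}\subset K$. Then $z_0+re^{i\t}\to(\infty+i\Im(z_0e^{-i\t}))e^{i\t}$ as $r\to\infty$ in the topology \eqref{e:nhood}, and closedness puts this point in $K$. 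Hence $\t\in A$. This step is the main technical point, and I would check it carefully against the neighborhood bases.

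Now classify $A$ as a subset of $\Theta$ closed under short arcs, in the sense of the key lemma. I expect this to repeat the proof of \Cref{p:domk}, and I would mimic that argument.

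\begin{itemize}
\item If $A$ contains three angles not all within a closed half-circle, then the short arcs between them cover $\Theta$, so $A=\Theta$.
\item Otherwise, suppose $A$ lies in a closed half-circle $[\gamma,\gamma+\pi]$. If $A$ contains an antipodal pair, then either $A=\{\alpha,\alpha+\pi\}$, or $A$ contains a further angle. That angle lies strictly inside the half-circle, and the short arcs to both endpoints give the whole interval $[\alpha,\alpha+\pi]$, of length $\pi$.
\item If $A$ has no antipodal pair, any two of its points are joined by a short arc inside $A$. Then $A$ is connected in the half-circle, hence an interval of length at most $\pi$.
\end{itemize}

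One point needs care: a lone antipodal pair is genuinely possible, as in a strip, and intervals may be open or half-open. This is consistent with the interval types in \cref{e:intervals}. The subtle part of the classification is ruling out configurations where $A$ is not contained in any closed half-circle but contains no triple of the above kind. I would handle this by taking a minimal covering and using the key lemma on consecutive points.
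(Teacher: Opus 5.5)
Your key step is the same as the paper's: if $\alpha,\beta\in A$ with $0<\beta-\alpha<\pi$, then the geodesic of case (c.2) in \Cref{t:geodesic} lies in $K$, so $[\alpha,\beta]\subset A$. The gap is in the classification, at the point you flagged yourself. Your split between ``$A$ contains three angles not in a closed half-circle'' and ``$A$ lies in a closed half-circle $[\gamma,\gamma+\pi]$'' is not exhaustive for general sets. For example, every triple of $\{0,\frac\pi2,\pi,\frac{3\pi}2\}$ lies in a closed half-circle, but the whole set does not.

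For sets closed under short arcs the dichotomy does hold, but this needs proof, and your ``minimal covering'' sketch does not say where a finite configuration comes from. One way to get it:
\begin{enumerate}
\item Suppose $A$ lies in no closed half-circle. The admissible centres for $a\in A$ form the closed arc $[a-\pi,a]$, and these arcs, $a\in A$, have empty intersection.
\item By compactness of $\Theta$, finitely many of them already have empty intersection. This gives a finite $F\subset A$ lying in no closed half-circle.
\item Every cyclic gap of $F$ then lies in $(0,\pi)$, since a gap $\ge\pi$ would put $F$ inside the complementary closed half-circle.
\item Your key lemma fills each gap, so $A=\Theta$.
\end{enumerate}
With this, your triple bullet becomes unnecessary. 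Your other two bullets, for $A\subset[\gamma,\gamma+\pi]$ with or without an antipodal pair, are correct.

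The paper avoids half-circles and reuses the argument of \Cref{p:domk}. Closure under short arcs makes $A$ connected unless all distinct pairs in $A$ are antipodal, which gives $\{\alpha,\alpha+\pi\}$. If $A\neq\Theta$ is connected with closure $[\alpha,\beta]$ and $\beta-\alpha>\pi$, it contains $\alpha',\beta'$ with $\alpha'-\beta'<\pi$. Then the complementary short arc $[\beta',\alpha']$ also lies in $A$, which forces $A=\Theta$. This route needs no compactness.

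Finally, your nonemptiness argument is correct but not needed. Under \cref{e:intervals}, $\emptyset=(\alpha,\alpha)$ already counts as an interval of length $0$, and the paper's proof uses neither closedness nor unboundedness of $K$. Your extra step does give the slightly stronger fact that $A\neq\emptyset$.
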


\begin{proof}
    Let $I := \cond{\arg s}{s\in K\cap C_\infty}$ and $\alpha, \beta\in I$.
    \begin{itemize}
        \item If $\beta - \alpha < \pi$, the geodesic \eqref{e:geoc2} is included in $K$, and it follows that $[\alpha, \beta]\subset I$. Therefore, $I$ is an interval. If $I\neq\Theta$, there exists $\alpha, \beta\in\Theta$ so that $\bar I = [\alpha, \beta]$. If $\beta - \alpha > \pi$, there is $\alpha', \beta' \in I$ such that $\beta' - \alpha' > \pi$, i.e., $\alpha' - \beta' < \pi$. Hence, $[\beta', \alpha'] \subset I$, which contradicts $I\neq\Theta$, and therefore $\beta - \alpha \le \pi$.
        \item Otherwise, for all $\alpha, \beta\in I$, $\beta - \alpha = \pi$, that is, $I = \{\alpha, \alpha+\pi\}$.
    \end{itemize}
\end{proof}

It follows from \Cref{d:cvx} that the closed unbounded convex subsets $K$ of $\bar\C$ whose set $I = \cond{\arg p}{p\in K\cap C_\infty}$ has length $\pi$ are precisely the half-planes. On the other hand, if $I = \{\alpha, \alpha + \pi\}$, then $K = \cond{p\in\bar\C}{\Im(p e^{-i\alpha}) \in J}$ for some bounded interval $J$ of $\R$; this is a strip of $\bar\C$.


\subsection{Support functions}\label{s:support}

Minkowski's support function of a set $U \subset \R^n$ is usually defined by $s \mapsto \sup_{p\in U} \scal{p, s}$ \cite[Sec.~13]{rockafellar1970}, where $\scal{}$ denotes the canonical dot product. We use here the complex analogue for $n=2$. In $\C$, the dot product can be written as $\Re(p)\Re(s) + \Im(p)\Im(s) = \Re(p\bar s)$, which motivates the definition
\begin{equation}\label{e:Kappa}
\Kappa_U(s) := \sup_{p\in U} \Re(p \bar s),
\end{equation}
for $U\subset\bar\C$. The \emph{polar support function} is defined analogously by
\begin{equation}\label{e:kdef}
\k_U(\t) := \Kappa_U(e^{i\t}) = \sup_{p\in U} \Re(p e^{-i\t}).
\end{equation}
The two support functions are related by
\begin{equation}\label{e:Kk}
    \Kappa_U(s) =
    \begin{cases}
        \k_U(\arg s) |s| &\com{if} s \neq 0 \\
        0 &\com{if} s = 0 \com{and} U \neq \emptyset \\
        -\infty &\com{if} s = 0 \com{and} U = \emptyset.
    \end{cases}
\end{equation}

\begin{example}\label{x:k}
    The following examples will be useful.
    \begin{itemize}
        \item Empty set: by the usual convention of the supremum, $\k_\emptyset(\t) = -\infty$.
        \item Singleton: $\k_{\{a\}}(\t) = \Re a \cos\t + \Im a \sin\t$.
        \item Disk: $\k_{D_r}(\t) = r$.
        \item Complex plane: $\k_\C(\t) = \infty$.
        \item Real line: $\k_\R(\t) = 0$ for $\t = \pm \frac\pi2$ and $\infty$ elsewhere.
        \item T-shape: $\k_{\R_+ \cup (\infty + i\R)}(\t) = \infty$ for $\t \in \bra{-\frac\pi2, \frac\pi2}$ and $0$ elsewhere.
    \end{itemize}
\end{example}

Bounded convex subsets of $\C$, together with unbounded convex subsets of $\C$ augmented by suitable points of $C_\infty$, do not exhaust all convex subsets of $\bar\C$. There are also convex subsets contained entirely in $C_\infty$, which are classified below.
\begin{theorem}[Classification of infinite convex sets]\label{t:cvxinf}
    Let $K$ be a convex subset of $\bar\C$. The following are equivalent.
    \begin{alphabetize}
        \item There exists $\t\in\Theta$ such that $\k_K(\t)= -\infty$.
        \item $K\subset C_\infty$.
        \item $K$ is of the form
        \begin{equation}\label{e:cvxinf}
            K=\cond{(\infty + iy)e^{i\t}}{\t\in[\alpha, \beta], y\in I(\t)},
        \end{equation}
        where $\beta - \alpha \le \pi$, $I(\t) = \R$ for $\t\in(\alpha, \beta)$, $I(\alpha)$ and $I(\beta)$ are intervals of $\R$, and if $\alpha \neq \beta$,
        \begin{itemize}
            \item $I(\alpha)$ is empty or unbounded from above.
            \item $I(\beta)$ is empty or unbounded from below.
            \item If $I(\alpha)$ and $I(\beta)$ are nonempty, then $\beta - \alpha < \pi$.
        \end{itemize}
        \item Either $\k_K \equiv -\infty$ or there exist $\alpha, \beta\in\Theta$ and $a, b\in\bar\R$ such that $\beta - \alpha \le \pi$, $a \le b$ when $\alpha = \beta$, and
        \begin{equation}\label{e:cvxinfk}
            \k_K(\t) =
            \begin{cases}
                -a &\com{if} \t = \alpha - \frac\pi2 \\
                \infty &\com{if} \t\in \pa{\alpha - \frac\pi2, \beta + \frac\pi2} \\
                b &\com{if} \t = \beta + \frac\pi2 \\
                -\infty &\com{if} \t\in \pa{\beta + \frac\pi2, \alpha - \frac\pi2}.
            \end{cases}
        \end{equation}
        \end{alphabetize}
\end{theorem}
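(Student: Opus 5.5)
I would prove the cycle of implications (b)$\Rightarrow$(a)$\Rightarrow$(b), then (b)$\Rightarrow$(c)$\Rightarrow$(d)$\Rightarrow$(a).

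\emph{(a)$\Leftrightarrow$(b).} First compute $\Re(pe^{-i\t})$ for an infinite point $p=(\infty+iy)e^{i\phi}$. Writing $pe^{-i\t}=(\infty+iy)e^{i(\phi-\t)}$, its real part is $\infty$ if $\dTheta(\phi,\t)<\pi/2$ and $-\infty$ if $\dTheta(\phi,\t)>\pi/2$. In the two boundary cases the real part is $\mp y$: it equals $y$ when $\t=\phi-\pi/2$, and $-y$ when $\t=\phi+\pi/2$. For a finite $p$ the value is finite. Hence if $K$ contains a finite point, then $\k_K>-\infty$ everywhere, which gives (a)$\Rightarrow$(b). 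For the converse, assume $K\subset C_\infty$. If $K=\emptyset$, (a) is trivial. Otherwise, \Cref{p:clcvxinf} applied to the argument set (with convexity used in place of closedness, via \Cref{t:geodesic}) shows that the set $I$ of arguments is an interval of length at most $\pi$ or a pair $\{\alpha,\alpha+\pi\}$. The pair case is impossible inside $C_\infty$: by (c.3), any geodesic joining antipodal infinite points passes through the finite line $(\R+iy)e^{i\alpha}$. So $I\subset[\alpha,\beta]$ with $\beta-\alpha\le\pi$. Moreover $\beta-\alpha=\pi$ is excluded, because (c.3) would again force finite points. Therefore some $\t$ lies at distance greater than $\pi/2$ from all of $I$, and $\k_K(\t)=-\infty$.

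\emph{(b)$\Rightarrow$(c).} Let $\alpha,\beta$ be the endpoints of $I$ and set $I(\t):=\{y:(\infty+iy)e^{i\t}\in K\}$.
\begin{itemize}
\item Geodesics of type (c.1) show that each $I(\t)$ is an interval.
\item For $\alpha<\alpha'<\beta'<\beta$ with $\alpha',\beta'\in I$, the geodesic \eqref{e:geoc2} gives $I(\t)=\R$ for all $\t$ strictly between; in particular $I(\t)=\R$ on $(\alpha,\beta)$. This settles whether the endpoints belong to $I$, since $I(\alpha)$ may simply be empty.
\item If $\alpha\ne\beta$ and $I(\alpha)\neq\emptyset$, joining a point of $I(\alpha)$ to a point at an interior angle via \eqref{e:geoc2} gives $[y_1,\infty)\subset I(\alpha)$. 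Symmetrically, $I(\beta)$ is unbounded below.
\item If both $I(\alpha)$ and $I(\beta)$ are nonempty and $\beta-\alpha=\pi$, then (c.3) produces a finite point, contradicting (b).
\end{itemize}

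\emph{(c)$\Rightarrow$(d).} This is a direct computation from the case analysis above. Take the supremum over $p\in K$ of $\Re(pe^{-i\t})$.
\begin{itemize}
\item For $\t\in(\alpha-\pi/2,\beta+\pi/2)$, some point of $K$ has argument within $\pi/2$ of $\t$, giving $\infty$. Unless $\alpha=\beta$, this uses the interior angles, or an endpoint whose $I$ is nonempty.
\item At $\t=\beta+\pi/2$ the supremum is $\sup(-I(\beta))$, which I name $b$. At $\t=\alpha-\pi/2$ it is $\sup I(\alpha)=:-a$.
\item Elsewhere the supremum is $-\infty$.
\end{itemize}
When $I(\alpha)$ is empty, the open interval of argument angles must be handled carefully. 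There the formula still holds with $a=\infty$: the value at $\alpha-\pi/2$ is $-\infty$. This requires a short check that interior angles only contribute $\pm\infty$ and never a finite value at $\alpha-\pi/2$. The condition $a\le b$ when $\alpha=\beta$ reflects $I(\alpha)\neq\emptyset$, or emptiness via $a=\infty$, $b=-\infty$; the case $K=\emptyset$ gives $\k_K\equiv-\infty$.

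\emph{(d)$\Rightarrow$(a).} The set $(\beta+\pi/2,\alpha-\pi/2)$ has length $\pi-(\beta-\alpha)\ge0$. It is nonempty unless $\beta-\alpha=\pi$. In that case, however, the values at the two endpoints are $-a$ and $b$, and the degenerate case must be examined. I expect this bookkeeping of endpoint and degenerate cases to be the main obstacle, more than any conceptual step. Specifically, when $\beta-\alpha=\pi$, one has to verify that (d) cannot occur for a convex $K$ containing finite points. The plan is to argue via (c.3) as above: such a $K$ would have $\k_K(\alpha-\pi/2)$ and $\k_K(\beta+\pi/2)$ both finite, whereas any finite point in $K$ forces $\k_K(\alpha-\pi/2)+\k_K(\alpha+\pi/2)\ge0$. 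This yields $-a$ finite, and together with the requirement $I(\alpha),I(\beta)$ not both nonempty it produces a contradiction. If that route fails, I would instead restrict (d) to $\beta-\alpha<\pi$, which is what (c) actually produces.
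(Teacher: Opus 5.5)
Your route is essentially the paper's cycle. A finite point forces $\k_K>-\infty$ everywhere, giving (a)$\Rightarrow$(b). Geodesics of types (c.1) and (c.2) give (b)$\Rightarrow$(c), (c)$\Rightarrow$(d) is a computation, and (d)$\Rightarrow$(a) is read off.

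The case you call bookkeeping is a genuine gap, and it cannot be closed, because the statement fails there. Your claim that $\beta-\alpha=\pi$ is excluded when $K\subset C_\infty$ is false: (c.3) only forbids having \emph{both} $\alpha$ and $\alpha+\pi$ among the arguments of $K$, not an argument set $[\alpha,\alpha+\pi)$. Take
\[
K := (\infty+i[0,\infty))e^{i\alpha} \cup \bigcup_{\t\in(\alpha,\alpha+\pi)} (\infty+i\R)e^{i\t}.
\]
Any two points of $K$ have angular gap less than $\pi$. So by \Cref{t:geodesic} every geodesic joining them is of type (c.1) or (c.2), and these stay in $K$. Thus $K$ is convex (indeed closed) and of the form (c) with $I(\alpha+\pi)=\emptyset$.

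Now compute $\k_K$. At $\t=\alpha-\frac\pi2$, the interior angles contribute $-\infty$ and the fiber at $\alpha$ contributes $\sup_{y\ge0}(-y)=0$. Every other $\t$ lies within $\frac\pi2$ of an interior angle whose fiber is all of $\R$. Hence $\k_K(\alpha-\frac\pi2)=0$ and $\k_K=\infty$ elsewhere, so $\k_K$ never equals $-\infty$: (b) holds and (a) fails.

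The paper's proof has the same hole, hidden in ``(d)$\Rightarrow$(a) is immediate''. When $\beta-\alpha=\pi$, the angles $\alpha-\frac\pi2$ and $\beta+\frac\pi2$ coincide and the interval $(\beta+\frac\pi2,\alpha-\frac\pi2)$ is empty. This $\k_K$ is also the support function of the closed half-plane $\cond{p\in\bar\C}{\Im(pe^{-i\alpha})\ge0}\supsetneq K$, so uniqueness in \Cref{t:tcvxK} fails as well.

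Consequently neither of your repairs can work. In (d)$\Rightarrow$(a) with $\beta-\alpha=\pi$, the angle $\alpha+\frac\pi2$ lies where $\k_K=\infty$, so the antipodal inequality yields nothing. Moreover, the half-plane above satisfies (d) while containing finite points. Your fallback, that (c) only produces $\beta-\alpha<\pi$, is refuted by $K$.

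What is true is that (a) holds iff $K\subset C_\infty$ and the arguments of $K$ lie in an open half-circle. Equivalently, (c) holds with the extra condition $I(\alpha)=I(\beta)=\emptyset$ whenever $\beta-\alpha=\pi$. Or (d) holds with $\beta-\alpha=\pi$ admitted only when the value at the single special angle is $-\infty$. With that correction your cycle goes through.

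Separately, your boundary signs are reversed. For $\t=\phi-\frac\pi2$ one gets $\Re\big((\infty+iy)e^{i\pi/2}\big)=\Re(i\infty-y)=-y$, and $+y$ for $\t=\phi+\frac\pi2$. So $\k_K(\alpha-\frac\pi2)=-\inf I(\alpha)=-a$ and $\k_K(\beta+\frac\pi2)=\sup I(\beta)=b$. With your signs, the unboundedness of $I(\alpha)$ from above, which you derive correctly, would force $\k_K(\alpha-\frac\pi2)=\infty$ whenever $I(\alpha)\neq\emptyset$ and $\alpha\ne\beta$. That contradicts a finite $-a$.
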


\begin{proof} \hfill
    \begin{itemize}
        \item[(a) $\imp$ (b)] We prove the contrapositive. Assume that $K\not\subset C_\infty$. Then there exists $p\in K\cap\C$, and hence, for all $\t\in\Theta$, $\k_K(\t) \ge \Re(p e^{-i\t}) > -\infty$.
        \item[(b) $\imp$ (c)] The only possible geodesics contained in $K$ are those of \cref{e:geoc1,e:geoc2}, and the claim follows.
        \item[(c) $\imp$ (d)] If $K = \emptyset$, then $\k_K \equiv -\infty$. Otherwise, $K$ is of the form \eqref{e:cvxinf}, with $I(\alpha) \neq \emptyset$ when $\alpha = \beta$.
        \begin{itemize}
            \item If $\alpha = \beta$, then $\oline{I(\alpha)} = [a, b]$ for some $a, b\in\bar\R$ with $a \le b$.
            \item If $\alpha \neq \beta$, then $\oline{I(\alpha)} = [a, \infty]$ and $\oline{I(\beta)} = [-\infty, b]$ for some $a, b\in\bar\R$, while $I(\t) = \R$ for $\t\in(\alpha, \beta)$. In addition, if $a < \infty$ and $b > -\infty$, then $\beta - \alpha < \pi$.
        \end{itemize}
        For $z\in K$,
		\[
		\Re\pa[big]{ze^{-i\pa{\alpha - \frac\pi{2}}}} = -\Im(ze^{-i\alpha}) \le -a,\quad\Re\pa[big]{ze^{-i\pa{\beta + \frac\pi{2}}}} = \Im(ze^{-i\beta}) \le b,
		\]
		and since these inequalities are attained, the claim follows by a direct computation.
        \item[(d) $\imp$ (a)] is immediate.
    \end{itemize}
\end{proof}

We state and prove several properties of support functions below and in the next section.

\begin{proposition}\label{p:kunionsum}
	Let $U,V\subset\bar\C$.
	\begin{enumerate}
		\item $\k_{U\cup V} = \max(\k_U, \k_V)$.
		\item If $U$ or $V$ is bounded and nonempty, $\k_{\lambda U + \mu V} = \lambda \k_U + \mu \k_V$ for all $\lambda, \mu > 0$.
	\end{enumerate}
\end{proposition}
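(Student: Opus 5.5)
Both parts will come straight from the definition \eqref{e:kdef}, $\k_U(\t)=\sup_{p\in U}\Re(pe^{-i\t})$. The only care needed is with the arithmetic of $\bar\C$ and $\bar\R$. In particular, we must make sure that $\Re(pe^{-i\t})$ is a well-defined element of $\bar\R$ for $p\in C_\infty$. Writing $p=(\infty+iy)e^{i\alpha}$, we have $pe^{-i\t}=(\infty+iy)e^{i(\alpha-\t)}$. Its real part is $\infty$ if $\dTheta(\alpha,\t)<\pi/2$ and $-\infty$ if $\dTheta(\alpha,\t)>\pi/2$. When $\t=\alpha\pm\pi/2$ it equals the finite value $\mp y$, the same values used in the proof of \Cref{t:cvxinf}.

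\textbf{Part 1.} The supremum over a union is the maximum of the suprema over each piece; this holds in $\bar\R$, including the conventions $\sup\emptyset=-\infty$. So $\k_{U\cup V}(\t)=\max(\k_U(\t),\k_V(\t))$ pointwise, and nothing more is needed.

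\textbf{Part 2.} Assume without loss of generality that $V$ is bounded and nonempty, so $V\subset\C$ and $\k_V$ is real-valued, with $|\k_V|\le\sup_{q\in V}|q|$. I first check that $\lambda U+\mu V$ is defined. For $p\in U$ and $q\in V$, the sum $\lambda p+\mu q$ is a genuine element of $\bar\C$, by rules (i),(iv) and (v) with $\lambda>0$, since $\mu q\in\C$. I also check the scaling identity $\Re((\lambda p)e^{-i\t})=\lambda\Re(pe^{-i\t})$ for $\lambda>0$, including when $p\in C_\infty$: in canonical form $\lambda p=(\infty+i\lambda y)e^{i\alpha}$, so the finite boundary values scale by $\lambda$ and the infinite values are unchanged.

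Next I need additivity, $\Re((\lambda p+\mu q)e^{-i\t})=\lambda\Re(pe^{-i\t})+\mu\Re(qe^{-i\t})$, where the second term is finite so the right side is always defined. If $p\in C_\infty$ with $p=(\infty+iy)e^{i\alpha}$, then $\lambda p+\mu q=(\infty+i(\lambda y+\mu\Im(qe^{-i\alpha})))e^{i\alpha}$. I then compare the three angular regimes for $\t$. In the two boundary directions $\t=\alpha\pm\pi/2$ the identity holds because $\Im(qe^{-i\alpha})=\mp\Re(qe^{-i\t})$ there. In the other regimes both sides are $\pm\infty$.

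Finally I take suprema. With $P$ and $Q$ as the relevant value sets in $\bar\R$, I use $\sup_{p,q}(a_p+b_q)=\sup_p a_p+\sup_q b_q$, which is valid because every $b_q$ is finite and $\sup_q b_q$ is finite. If $U=\emptyset$, both sides equal $-\infty$, since $-\infty+\text{finite}=-\infty$. This gives $\k_{\lambda U+\mu V}=\lambda\k_U+\mu\k_V$.

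The main obstacle is the verification at infinite points $p$ in the boundary directions $\t=\alpha\pm\pi/2$: a finite $\mu q$ shifts the offset of $p$, and I must confirm the sign conventions so that this shift reproduces exactly $\mu\Re(qe^{-i\t})$.
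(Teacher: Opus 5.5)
Your proof is correct and follows the paper's argument. Part 1 is the supremum over a union, and Part 2 is the iterated supremum $\sup_u\sup_v$ with the finite inner supremum $\mu\k_V(\t)$ pulled out; your explicit check of $\Re((\lambda p+\mu q)e^{-i\t})$ at points of $C_\infty$ makes precise what the paper leaves implicit.

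One harmless slip: at $\t=\alpha\pm\frac\pi2$ one actually has $\Re(pe^{-i\t})=\pm y$ and $\Im(qe^{-i\alpha})=\pm\Re(qe^{-i\t})$. You wrote $\mp$ in both places, which also contradicts the values in the proof of \Cref{t:cvxinf}, but the two sign errors cancel, so your additivity identity still holds.
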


\begin{proof}
	\hfill
	\begin{enumerate}
		\item Since $U\subset U\cup V$ and $V\subset U\cup V$, \Cref{l:kless} gives $\max(\k_U(\t), \k_V(\t)) \le \k_{U\cup V}(\t)$ for every $\t\in\Theta$. Conversely, if $z\in U\cup V$, then either $z\in U$ or $z\in V$, so $\Re(ze^{-i\t}) \le \max(\k_U(\t), \k_V(\t))$. Taking the supremum over $z\in U\cup V$ gives the reverse inequality.
		\item For all $\t\in\Theta$,
		      \begin{align*}
			      \k_{\lambda U + \mu V}(\t)
			       & = \sup_{(u, v)\in U\times V} \Re((\lambda u + \mu v)e^{-i\t}) = \sup_{u\in U}\sup_{v\in V} \pa{\lambda \Re(ue^{-i\t}) + \mu \Re(ve^{-i\t})} \\
			       & = \sup_{u\in U} \pa{\lambda \Re(ue^{-i\t}) + \mu \sup_{v\in V} \Re(ve^{-i\t})} = \lambda \k_U(\t) + \mu \k_V(\t).
		      \end{align*}
	\end{enumerate}
\end{proof}

\begin{lemma}\label{l:kless}
	If $U\subset V$, then $\k_U(\t) \le \k_V(\t)$ for all $\t\in\Theta$. The converse holds whenever $V$ is closed and convex.
\end{lemma}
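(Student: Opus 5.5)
The first assertion follows directly from the definition \eqref{e:kdef}. If $U\subset V$, then for every $\t\in\Theta$ the supremum of $\Re(pe^{-i\t})$ over $U$ is taken over a subset of the points used for $V$, so $\k_U(\t)\le\k_V(\t)$. This includes the case $U=\emptyset$, where $\k_U\equiv-\infty$. One small point: for $p\in C_\infty$ we read $\Re(pe^{-i\t})$ as an element of $\bar\R$. Writing $p=(\infty+iy)e^{i\alpha}$, this value is $\infty$, $-\infty$, or $\pm y$ according to $\t-\alpha$, which matches the computation in the proof of \Cref{t:cvxinf}. Monotonicity of the supremum in $\bar\R$ then applies without change.

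For the converse, assume $V$ is closed and convex and $\k_U\le\k_V$. We argue by contraposition: if $U\not\subset V$, pick $p\in U\setminus V$ and produce an angle $\t$ with $\Re(pe^{-i\t})>\k_V(\t)$. This contradicts $\k_U(\t)\ge\Re(pe^{-i\t})$. The task is therefore a separation theorem in $\bar\C$, and we split into cases.

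\emph{Case $p\in\C$ and $V\cap\C\neq\emptyset$.} The set $V\cap\C$ is closed and convex in $\C$, because geodesics between finite points are ordinary segments by \Cref{t:geodesic}(a). Classical strict separation gives $\t$ with $\Re(pe^{-i\t})>\sup_{q\in V\cap\C}\Re(qe^{-i\t})$. It remains to check that the infinite points of $V$ do not raise this supremum to $\infty$. Suppose $V$ contains $(\infty+iy)e^{i\alpha}$ with $\dTheta(\alpha,\t)<\pi/2$. Then \Cref{t:geodesic}(b) shows that $V\cap\C$ contains a ray in direction $e^{i\alpha}$, and this ray already makes the finite supremum infinite, which is impossible. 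Boundary directions with $\dTheta(\alpha,\t)=\pi/2$ contribute only offsets $\pm y$. Closedness of $V$ in $\bar\C$ together with the geodesic of type (b) should bound these offsets by the finite supremum. If not, we perturb $\t$ slightly, which is allowed since strict separation is an open condition.

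\emph{Case $p\in\C$ and $V\subset C_\infty$.} Then $\k_V$ has the form \eqref{e:cvxinfk}, so it equals $-\infty$ on some open arc unless $V=\emptyset$. Any $\t$ in that arc works, since $\Re(pe^{-i\t})$ is finite.

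\emph{Case $p=(\infty+iy)e^{i\alpha}\in C_\infty\setminus V$.} We test the angles $\t=\alpha\pm\pi/2$, where $\Re(pe^{-i\t})=\mp y$, and the angles near $\alpha$, where $\Re(pe^{-i\t})=\infty$. If $\alpha\notin\{\arg q: q\in V\cap C_\infty\}$, then by \Cref{p:clcvxinf} and closedness this set misses a neighborhood of $\alpha$. Convexity and closedness then force $\k_V(\alpha)<\infty$, and $\t=\alpha$ separates. If instead $\alpha$ is an argument of $V\cap C_\infty$, the set of offsets of $V$ in direction $\alpha$ is a closed interval that omits $y$, by \Cref{t:geodesic}(c.1) and closedness. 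Say $y$ lies above this interval. Then $\t=\alpha+\pi/2$ should give $\k_V(\t)<y$, provided no neighboring direction in $(\alpha,\alpha+\pi/2]$ contributes. Ruling that out is exactly what \Cref{t:geodesic}(c.2) does: a neighboring infinite direction would force all large offsets at $\alpha$ to lie in $V$.

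The main obstacle is the bookkeeping at infinity. We must show that boundary angles at distance exactly $\pi/2$ cannot spoil the strict inequality, which requires understanding how closedness in the topology \eqref{e:nhood} interacts with the geodesics of types (b) and (c). If this becomes messy, a cleaner route is to reduce to the classical case by approximation. One would show that a closed convex $V$ equals the intersection of the closed extended half-planes $\{z:\Re(ze^{-i\t})\le\k_V(\t)\}$ over all $\t$, computing the closure in $\bar\C$ of each half-plane explicitly and using \Cref{p:cvxdef}.
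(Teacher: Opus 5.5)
There is a genuine gap. Your strategy is the paper's: contraposition, projection onto $V\cap\C$ for a finite witness, the normal form of \Cref{t:cvxinf} when $V\subset C_\infty$, and \Cref{p:clcvxinf} for an infinite witness. However, several of your steps are false, and one of them cannot be repaired, because the converse itself fails there.

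\textbf{The case $p\in\C$, $V\subset C_\infty$.} You assert that $\k_V=-\infty$ on an open arc. This breaks down when $\beta-\alpha=\pi$, which \Cref{t:cvxinf} allows when $I(\alpha)$ or $I(\beta)$ is empty. If exactly one of them is nonempty, $\k_V$ never takes the value $-\infty$. Take $V_1:=\{(\infty+iy)e^{i\t}:\t\in(0,\pi),\ y\in\R\}\cup(\infty+i\R_+)$.
\begin{itemize}
\item It is closed: a basic neighborhood \eqref{e:nhood} of a point at infinity meets $C_\infty$ only in points with the same argument.
\item It is convex: the geodesics of types (c.1) and (c.2) between its points stay inside, since the offsets at argument $0$ are unbounded above.
\item Every $\t\neq-\frac\pi2$ lies at $\dTheta$-distance less than $\frac\pi2$ from some argument in $[0,\pi)$, so $\k_{V_1}(\t)=\infty$. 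On the other hand, $\k_{V_1}(-\frac\pi2)=\sup_{y\ge0}(-y)=0$.
\end{itemize}
Hence $\k_{\{i\}}(\t)=\sin\t\le\k_{V_1}(\t)$ for every $\t$, although $i\notin V_1$. In fact $\k_{V_1}=\k_H$ for the closed upper half-plane $H$, which strictly contains $V_1$. So your fallback $V=\W(\k_V)$ fails as well, since $\W(\k_{V_1})=H$. The converse needs an extra hypothesis excluding such sets. The paper's proof has the same hole: at exactly this point it asserts $\beta-\alpha<\pi$ without justification.

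\textbf{Two further steps fail even where the lemma is true.}

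For $p\in\C$ and $V\cap\C\neq\emptyset$, the offsets of $V$ at the perpendicular directions are not bounded by the finite supremum. Geodesic (b) gives the reverse inclusion: finite offsets occur among the infinite ones. Take $V_2:=\R_+\cup(\infty+i[0,1])$ and $p=2+\frac{i}{2}$. Projection gives $\t=\frac\pi2$, but $\k_{V_2}(\frac\pi2)=1>\frac12$.

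Nor is strict separation an open condition here, since $\k_{V_2}=\infty$ on $(-\frac\pi2,\frac\pi2)$. Only angles slightly larger than $\frac\pi2$ work, because there the points at infinity contribute $-\infty$. You must perturb away from the offending infinite direction, and you must prove one-sided continuity of $\k_{V\cap\C}$ on its domain. The paper's proof overlooks this too: \eqref{e:separation} holds only for $z\in V\cap\C$, and it fails for $z=\infty+i$ in this example.

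For $p=(\infty+iy)e^{i\alpha}$ with $\alpha\notin I:=\{\arg q:q\in V\cap C_\infty\}$, the choice $\t=\alpha$ can fail. For $H$ and $\alpha=-\frac\pi4$, we have $\k_H(\alpha)=\infty$ because $\R_+\subset H$. You need $\t$ with $\dTheta(\t,\alpha)<\frac\pi2$ and $\k_V(\t)<\infty$; here $\t=-\frac\pi2$ works.

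Moreover, $I$ need not be closed when $V\subset C_\infty$. For $V_1$ one has $I=[0,\pi)$. The point $p=(\infty+iy)e^{i\pi}$ with $y\le0$ satisfies $\k_{\{p\}}\le\k_{V_1}$ although $p\notin V_1$, which is a second failure of the converse.
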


\begin{proof}
	See \Cref{s:propsupport}.
\end{proof}

\subsection{Convex hull}\label{s:cvxhull}

The \emph{convex hull} of $U\subset\bar\C$, denoted $\conv U$, is the smallest convex set containing $U$, that is
\begin{equation}\label{e:cvxdef}
	\conv U := \bigcap_{\substack{V\,\text{convex} \\ V \supset U}} V.
\end{equation}

\begin{definition}\label{d:wulff}
    For a given function $k : \Theta \to \bar\R$, the Wulff shape\footnote{See Section 7.5 of \cite{schneider2014} and references therein.}  $\W(k)\subset\bar\C$ of $k$ is defined as the intersection of half-planes
    \begin{equation}
        \W(k) := \bigcap_{\t\in\Theta} \cond{p\in\bar\C}{\Re(pe^{-i\t}) \le k(\t)}.
    \end{equation}
\end{definition}

When $U$ is closed, it is possible to improve on \cref{e:cvxdef} by using the support function to express $\conv U$ as an intersection of closed half-planes only.

\begin{theorem}\label{t:convPi}
	If $U\subset\bar\C$, then
	\begin{equation}\label{e:cvxhull}
		\conv \bar U = \W(\k_U).
	\end{equation}
\end{theorem}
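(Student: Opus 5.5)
We show the two inclusions $\conv \bar U \subset \W(\k_U)$ and $\W(\k_U) \subset \conv \bar U$.

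\emph{First inclusion.} We check three facts.
\begin{enumerate}
\item Every closed half-plane $H_\t := \cond{p\in\bar\C}{\Re(pe^{-i\t}) \le k}$ with $k\in\bar\R$ is closed and convex in $\bar\C$. For $k=\pm\infty$ it is $\bar\C$, or the set of points with $\Re(pe^{-i\t})=-\infty$, that is $(\infty+i\R)e^{i(\t+\pi)}$, which is convex by (c.1) of \Cref{t:geodesic}. For finite $k$ we use the sequential characterization \Cref{p:cvxdef}. The functional $z\mapsto\Re(ze^{-i\t})$ is affine on $\C$ and upper semicontinuous on $\bar\C$ with values in $\bar\R$. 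Hence if $u_n,v_n\to$ points of $H_\t$, then $\limsup\Re(u_ne^{-i\t})\le k$, and likewise for $v_n$. Any limit $w$ of the convex combinations then satisfies $\Re(we^{-i\t})\le k$.
\item Intersections of convex sets are convex, and intersections of closed sets are closed, so $\W(\k_U)$ is closed and convex.
\item $\k_{\bar U}=\k_U$, by the same semicontinuity. Since $\bar U\subset \W(\k_U)$ by definition of the supremum, we get $\conv\bar U\subset\W(\k_U)$.
\end{enumerate}

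\emph{Second inclusion.} This is the real content. We want to show that every $p\notin\conv\bar U$ is separated by some $\t$ with $\Re(pe^{-i\t})>\k_U(\t)$. The first difficulty is that $\conv\bar U$ need not be closed a priori, although the statement implicitly asserts it. So we argue with $K:=\oline{\conv \bar U}$ and prove $\W(\k_U)\subset K\subset \conv\bar U$. The last inclusion requires the closure of a convex set to be convex and to equal $\conv \bar U$; we would obtain this by a diagonal argument with \Cref{p:cvxdef}. We distinguish three cases.
\begin{enumerate}
\item $K\cap\C\neq\emptyset$. Let $K_0:=K\cap\C$. It is a closed convex subset of $\C$; it is convex because geodesics between finite points are ordinary segments, by (a) of \Cref{t:geodesic}. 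For finite $p\notin K$, the classical separating hyperplane theorem gives $\t$ with $\Re(pe^{-i\t})>\k_{K_0}(\t)$. We must check that $\k_K(\t)=\k_{K_0}(\t)$ in this direction. For infinite points of $K$, geodesics of type (b) show that each $(\infty+iy)e^{i\alpha}\in K$ forces the recession direction $e^{i\alpha}$ into $K_0$. Hence $\k_{K_0}(\t)<\infty$ forces $\Re(e^{i\alpha}e^{-i\t})\le 0$. If this is strict, the infinite point contributes $-\infty$. If it is an equality, the ray $x+[0,\infty)e^{i\alpha}$ lies in $K_0$ and its offset bound gives the same value.
\item $K\cap\C\neq\emptyset$ and $p=(\infty+iy)e^{i\alpha}\in C_\infty\setminus K$. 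We separate using $\t=\alpha$ when $e^{i\alpha}$ is not a recession direction of $K_0$, since then $\k_K(\alpha)<\infty=\Re(pe^{-i\alpha})$. Otherwise we use $\t=\alpha\pm\pi/2$, employing the offset: the infinite points of $K$ in direction $\alpha$ are, by closedness and type (b) geodesics, exactly the limits of offsets of rays in $K_0$.
\item $K\subset C_\infty$. Here \Cref{t:cvxinf} gives the explicit form \eqref{e:cvxinfk} of $\k_K$, and we verify directly that $\W$ of that function is the set \eqref{e:cvxinf}.
\end{enumerate}

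The main obstacle is the second case, together with the closedness of the hull: controlling infinite points through recession rays and offsets is exactly where $\bar\C$ deviates from classical convexity. We would handle it by reducing it to the one-dimensional statement that $\cond{y}{(\infty+iy)e^{i\alpha}\in K}$ is a closed interval whose endpoints are $-\k_K(\alpha-\pi/2)$ and $\k_K(\alpha+\pi/2)$.
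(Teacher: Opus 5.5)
\textbf{The step that fails.} The problem is the step you single out yourself, $\oline{\conv\bar U}\subset\conv\bar U$. A diagonal argument with \Cref{p:cvxdef} can at most show that the closure of a convex set is convex. It cannot show that the hull of a closed set is closed, and in fact that is false, so \Cref{t:convPi} cannot be proved as written.

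Take $U:=\cond{y^2+iy}{y\ge0}$.
\begin{itemize}
\item $U$ is closed in $\bar\C$. Along $U$, $\Im(se^{-i\t})=y\cos\t-y^2\sin\t$ is unbounded whenever $\Re(se^{-i\t})\to\infty$, so no point of $C_\infty$ is a limit of $U$.
\item $1\notin\conv\bar U$. Since $U\subset\C$ and geodesics between finite points are segments (case (a) of \Cref{t:geodesic}), $\conv\bar U$ is the ordinary convex hull. That hull lies in $\cond{p\in\C}{\Im p>0}\cup\{0\}$.
\item $1\in\W(\k_U)$. We have $\k_U(\t)=\infty$ if $\cos\t>0$, and $\k_U(\t)\ge0\ge\cos\t$ otherwise. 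Indeed $1=\lim_{y\to\infty}y^{-2}(y^2+iy)$ is a limit of points of the segments $[0,y^2+iy]$.
\end{itemize}

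The paper's own proof has the same hole at the same place: it applies the converse of \Cref{l:kless} to $V=\conv\bar U$, which requires $V$ to be closed. What is true, and what your two inclusions can establish, is that $\W(\k_U)$ is the closed convex hull of $U$, that is, the intersection of all closed convex subsets of $\bar\C$ containing $U$. If you take that set as your $K$, it is closed and convex by definition, and the diagonal argument becomes unnecessary.

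\textbf{A second, repairable gap in Cases 1 and 2.} Points of $K\cap C_\infty$ in a direction $\alpha$ with $\cos(\alpha-\t)=0$ are not controlled by the offsets of the rays of $K_0$.

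For $U=\{0,\infty+i\}$, case (b) of \Cref{t:geodesic} gives $K=\conv\bar U=\R_+\cup(\infty+i[0,1])$. Every ray of $K_0=\R_+$ in direction $0$ has offset $0$, yet $K$ contains $\infty+i$, so $\k_K(\pi/2)=1>0=\k_{K_0}(\pi/2)$. For $p=i/2$, the direction $\t=\pi/2$ strictly separates $p$ from $K_0$ (it is the direction given by projecting $p$ onto $K_0$), but it does not separate $p$ from $K$. The same example refutes your description in Case 2 of these offsets as limits of offsets of rays in $K_0$.

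The fix is to rotate the separating direction slightly toward the side where those infinite points contribute $-\infty$; here $\t=\pi/2+\epsilon$ works. This needs two facts: the set of infinite directions of $K$ (the recession directions of $K_0$) is closed, and $\k_{K_0}$ is continuous from inside its domain. Only when both $\t\pm\pi/2$ are infinite directions does no rotation help. In that case the geodesics of type (c.3) force the corresponding lines into $K_0$, so $\k_K(\t)=\k_{K_0}(\t)$ does hold.
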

\Cref{e:cvxhull} is illustrated in \Cref{f:cvxhull}.

\begin{figure}[ht]
	\centering
	\begin{tikzpicture}[use Hobby shortcut]
		\newcommand{\drawLine}[4]{%
			\pgfmathsetmacro{\dx}{#1*cos(#2)}
			\pgfmathsetmacro{\dy}{#1*sin(#2)}
			\pgfmathsetmacro{\halfL}{#3/2}
			\pgfmathsetmacro{\vx}{-sin(#2)}
			\pgfmathsetmacro{\vy}{cos(#2)}
			\coordinate (A) at ({\dx - #3*\vx}, {\dy - #3*\vy});
			\coordinate (B) at ({\dx + #4*\vx}, {\dy + #4*\vy});
			\draw[dashed,->] (0,0) -- (\dx, \dy);
			\draw[thick] (A) -- (B);
		}
		\draw[->] (-3.5, 0) -- (3.7, 0) node[right] {$\Re z$};
		\draw[->] (0, -2.7) -- (0, 2.5) node[above] {$\Im z$};
		\draw[closed,pattern=north west lines] (0,-1.4).. (-1.5,-1) .. (-1.1,1.8) .. (0.2,1.2) .. (2.6,1.2) .. (2,-0.7) ..  (1.7,-1.9);
		\node[scale=1.5,below right] at (0,0) {\contour{white}{\textit{S}}};
		\drawLine{1.71}{85}{3.2}{2.6}
		\drawLine{2.56}{160}{1.5}{1.2}
		\drawLine{1.97}{220}{2.2}{0.3}
		\drawLine{1.48}{249}{1.8}{2.8}
		\drawLine{2.32}{340.5}{2}{2.8}
	\end{tikzpicture}
	\caption{Construction of the convex hull of $S$ by intersection of half-planes (here, only finitely many are drawn).}
	\label{f:cvxhull}
\end{figure}
\FloatBarrier

\begin{proof}
	Let $K := \W(\k_U)$. For each $\t\in\Theta$, the half-plane $P_\t := \cond{p\in\bar\C}{\Re(pe^{-i\t}) \le \k_U(\t)}$ contains $U$, hence also $\bar U$ because $P_\t$ is closed. Since every $P_\t$ is convex, their intersection $K$ is convex as well. It follows that $K$ contains the convex hull of $\bar U$, that is, $\conv \bar U \subset K$. Conversely, if $p\in K$, then by definition $\Re(pe^{-i\t}) \le \k_U(\t)$ for every $\t\in\Theta$. Therefore, $\k_K(\t) = \sup_{p\in K} \Re(pe^{-i\t}) \le \k_U(\t) \le \k_{\conv\bar U}(\t)$,
	where the last inequality follows from \Cref{l:kless}, since $U \subset \bar U \subset \conv\bar U$. By the converse of \Cref{l:kless}, this implies $K \subset \conv\bar U$.
\end{proof}

\begin{corollary}\label{c:kconv}
	For all $U\subset\bar\C$, $\k_{\conv U} \equiv \k_U$.
\end{corollary}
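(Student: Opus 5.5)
The plan is to squeeze $\k_{\conv U}$ between $\k_U$ and itself, using \Cref{t:convPi} together with the monotonicity in \Cref{l:kless}.

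For the lower bound, note that $U \subset \conv U$ by the definition \eqref{e:cvxdef}. The first part of \Cref{l:kless} then gives $\k_U \le \k_{\conv U}$ pointwise on $\Theta$.

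For the upper bound, I would place $\conv U$ inside a set whose support function is already controlled by $\k_U$. Since $U \subset \bar U$ and $\conv \bar U$ is a convex set containing $U$, the intersection \eqref{e:cvxdef} gives $\conv U \subset \conv\bar U$. By \Cref{t:convPi}, $\conv\bar U = \W(\k_U)$. Every $p\in\W(\k_U)$ satisfies $\Re(pe^{-i\t})\le \k_U(\t)$ for each $\t\in\Theta$, directly from \Cref{d:wulff}. Taking the supremum over $p$ yields $\k_{\W(\k_U)}\le\k_U$. Applying \Cref{l:kless} to the inclusion $\conv U\subset \W(\k_U)$ then gives $\k_{\conv U}\le \k_{\W(\k_U)} \le \k_U$.

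Combining the two inequalities gives $\k_{\conv U}\equiv\k_U$. There is no real obstacle beyond what \Cref{t:convPi} already supplies.

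The only delicate point is the extended values in $\bar\R$. I would check that the Wulff-shape inequality $\Re(pe^{-i\t})\le\k_U(\t)$ still makes sense when $p\in C_\infty$ or when $\k_U(\t)=\pm\infty$. It does, since $\Re(pe^{-i\t})$ takes values in $\bar\R$ and all comparisons there are defined. The degenerate case $U=\emptyset$ works out automatically: then $\conv U=\emptyset$, and both support functions are identically $-\infty$.
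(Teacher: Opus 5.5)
Your proposal is correct and follows essentially the paper's proof. You use $U\subset\conv U$ with \Cref{l:kless} for $\k_U\le\k_{\conv U}$, and \Cref{t:convPi} to place $\conv U$ inside $\W(\k_U)$ for the reverse inequality; you merely make explicit the inclusion $\conv U\subset\conv\bar U$ that the paper uses tacitly. In fact this step needs only the easy half of \Cref{t:convPi}: $\W(\k_U)$ is an intersection of convex half-planes containing $U$, so $\conv U\subset\W(\k_U)$ follows directly from the definition of the convex hull.
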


\begin{proof}
	Since $\conv\emptyset = \emptyset$, we may assume that $U$ is nonempty. Because $U\subset\conv U$, \Cref{l:kless} gives $\k_U(\t) \le \k_{\conv U}(\t)$ for every $\t\in\Theta$. Conversely, let $p\in\conv U$. By \Cref{t:convPi}, we have $p\in \W(\k_U)$, that is, $\Re(pe^{-i\t}) \le \k_U(\t)$ for all $\t\in\Theta$. Taking the supremum over $p\in\conv U$, we obtain $\k_{\conv U}(\t) \le \k_U(\t)$ for every $\t\in\Theta$.
\end{proof}

Blaschke observed in 1914 \cite{blaschke1914} that the polar form of Minkowski's support function of a convex set is trigonometric-convex. It is clear from \Cref{c:kconv} that this property is not limited to convex sets.

\begin{proposition}\label{p:supptcvx}
	If $U\subset\bar\C$, $\k_U$ is trigonometric-convex.
\end{proposition}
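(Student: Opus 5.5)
The plan is to verify \Cref{d:tcvx} directly from the definition \eqref{e:kdef} as a supremum, handling the infinite points of $U$ separately. If $\k_U\equiv-\infty$ there is nothing to prove, so assume otherwise. Fix $\t_1,\t_3\in\dom\k_U$ with $\t_3-\t_1\le\pi$ and $\t_2\in[\t_1,\t_3]$. The key identity is that for every $p\in\C$ the function $\t\mapsto\Re(pe^{-i\t})=\Re p\cos\t+\Im p\sin\t$ satisfies equality in \eqref{e:tcvximp}, by \Cref{p:sincos} (or a direct addition-formula computation):
\[
\Re(pe^{-i\t_2})\sin(\t_3-\t_1)=\Re(pe^{-i\t_1})\sin(\t_3-\t_2)+\Re(pe^{-i\t_3})\sin(\t_2-\t_1).
\]
Both coefficients on the right are nonnegative because $0\le\t_3-\t_2,\ \t_2-\t_1\le\t_3-\t_1\le\pi$. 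Bounding each term on the right by $\k_U(\t_1)$ and $\k_U(\t_3)$, and taking the supremum over $p\in U\cap\C$, gives \eqref{e:tcvximp} for the finite part. Note that $\k_U(\t_1),\k_U(\t_3)<\infty$, so the right-hand side is well defined. If one of them is $-\infty$ and its coefficient vanishes, I will use the convention $0\cdot(-\infty)=0$ from \Cref{s:Cbar}.

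The second step, which I expect to be the main obstacle, is the points $p=(\infty+iy)e^{i\phi}\in U\cap C_\infty$. For these, $\Re(pe^{-i\t})$ equals $+\infty$ if $\dTheta(\t,\phi)<\pi/2$, $-\infty$ if $\dTheta(\t,\phi)>\pi/2$, and $\pm y$ at $\t=\phi\mp\pi/2$.

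Since $\t_1,\t_3\in\dom\k_U$, we must have $\dTheta(\t_j,\phi)\ge\pi/2$ for $j=1,3$. I will show this forces $\Re(pe^{-i\t_2})\le\Re(pe^{-i\t_1})\sin(\t_3-\t_2)/\sin(\t_3-\t_1)+\dots$ in every case, as follows.

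\emph{Case 1: an open half-circle is excluded.} If the whole arc $[\t_1,\t_3]$ has length $<\pi$ and both endpoints lie in the closed arc $[\phi+\pi/2,\phi+3\pi/2]$, then every $\t_2$ between them also lies there. The value at $\t_2$ is then $-\infty$ unless $\t_2$ is an endpoint of that arc. In the endpoint case $\t_2=\t_1$ or $\t_2=\t_3$, and the inequality is trivial.

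\emph{Case 2: $\t_3-\t_1=\pi$.} I will instead invoke \Cref{p:diameter} and check \eqref{e:diameter}: if $\t,\t+\pi\in\dom\k_U$ and $U\ne\emptyset$, then $\k_U(\t)+\k_U(\t+\pi)\ge0$. For a finite point this is immediate. For an infinite point, finiteness at both $\t$ and $\t+\pi$ forces $\phi=\t\pm\pi/2$, and the two values are $y$ and $-y$ (or the reverse), which sum to $0$.

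To organize this cleanly, I plan to reduce to \Cref{p:diameter}: verify \eqref{e:tcvximp} only for $0<\t_3-\t_1<\pi$, where Case 1 applies to infinite points, and verify \eqref{e:diameter} separately as in Case 2. In the reduced setting, the right-hand side of \eqref{e:tcvximp} majorizes the contribution $\Re(pe^{-i\t_2})\sin(\t_3-\t_1)$ of each $p\in U$. Taking the supremum over all $p\in U$ then yields the claim.
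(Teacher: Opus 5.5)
Your proposal is correct and follows essentially the paper's route: apply the sine identity of \Cref{p:sincos} to each point of $U$, use that the coefficients $\sin(\t_3-\t_2)$ and $\sin(\t_2-\t_1)$ are nonnegative, and take the supremum. Your separate handling of $U\cap C_\infty$ (Case~1 together with the reduction to \eqref{e:diameter} through \Cref{p:diameter}) supplies a justification that the paper's one-line chain lacks, since that chain distributes $z$ over a sum even when $z\in C_\infty$, which the paper's own conventions on $\bar\C$ forbid. The only slip is a harmless sign swap: $\Re(pe^{-i\t})$ equals $-y$ at $\t=\phi-\frac\pi2$ and $+y$ at $\t=\phi+\frac\pi2$, and you only ever use that these two values sum to $0$.
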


In fact, Gelfond stated the converse in 1938 \cite{gelfond1938}: every real-valued $2\pi$-periodic trigonometric-convex function is the support function of some compact subset of $\C$. This was later proved by Chebotarev and Meiman \cite[Thm.~29]{chebotarev1949}; see also \cite[\S 5.2]{boas1954}. We now extend this characterization to periodic functions taking values in $\bar\R$.

\begin{theorem}\label{t:tcvxK}
	The function $k : \Theta\to\bar\R$ is trigonometric-convex if and only if there exists a (unique) closed convex subset $K$ of $\bar\C$ such that $k \equiv \k_K$.
\end{theorem}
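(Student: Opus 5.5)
The "if" direction is \Cref{p:supptcvx}, so the content lies in existence and uniqueness. Uniqueness follows from \Cref{l:kless}. If $K, K'$ are closed convex with $\k_K \equiv \k_{K'}$, then the converse part of that lemma, applied with $V = K'$ and with $V = K$, gives $K\subset K'$ and $K'\subset K$.

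For existence, the natural candidate is the Wulff shape $K := \W(k)$. It is an intersection of closed half-planes of $\bar\C$, hence closed and convex, as already used in the proof of \Cref{t:convPi}. By construction $\Re(pe^{-i\t})\le k(\t)$ for $p\in K$, so $\k_K\le k$. It remains to show $\k_K(\t_0)\ge k(\t_0)$ for every $\t_0$. I would split according to \Cref{p:domk} and the value of $k$.

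\textbf{Case $k\equiv-\infty$.} Here $K=\emptyset$ works.

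\textbf{Case $\dom k=\Theta$, $k$ real-valued.} This is the classical Chebotarev--Meiman theorem. To re-prove it, fix $\t_0$ and use the one-sided derivatives of $k$ (\Cref{s:tcvxprop}). For a value $d$ between the left and right derivatives at $\t_0$, trigonometric convexity should give a supporting trigonometric function $s(\t)=k(\t_0)\cos(\t-\t_0)+d\sin(\t-\t_0)$ with $s\le k$ on $[\t_0-\pi,\t_0+\pi]$. This is the analogue of a supporting line, obtained from \cref{e:tcvximp} and \cref{e:diameter}. By \Cref{x:k}, $s=\k_{\{p\}}$ with $p=(k(\t_0)+id)e^{i\t_0}$, so $p\in K$ and $\Re(pe^{-i\t_0})=k(\t_0)$.

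\textbf{Case $k$ taking $\pm\infty$.} When $\dom k$ is an interval $J$ of length $\le\pi$, or $\{\alpha,\alpha+\pi\}$, I would build points of $K$ explicitly.
\begin{itemize}
\item If $k>-\infty$ on $\dom k$, the same supporting-trig argument works at interior points of $J$, where $s\le k$ is only required on $\dom k$ since $k=\infty$ elsewhere. At endpoints of $J$, and when $\t_0\notin\dom k$, one takes limits of such points. These may escape to $C_\infty$, and I would check that the limit points lie in $K$ by closedness. For $\t_0\notin\dom k$ one needs $\k_K(\t_0)=\infty$. This should come from the infinite points $(\infty+iy)e^{i\t}$ with $\t$ in the open arc opposite to $J$ shifted by $\pi/2$, exactly as the T-shape and strip examples suggest. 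One verifies they satisfy all constraints $\Re(pe^{-i\t})\le k(\t)$, because such $\t$ lie outside $\dom k$ or the required inequality holds in the limit.
\item If $k$ takes the value $-\infty$ somewhere without being $\equiv-\infty$, then \cref{e:k>-k} forces $k(\t+\pi)=\infty$ there. Comparing with \Cref{t:cvxinf}(d), I would show $k$ must have the form \eqref{e:cvxinfk} and take $K$ from \eqref{e:cvxinf}. This requires showing that trigonometric convexity with a $-\infty$ value forces $\dom k$ to be a closed interval $[\beta+\frac\pi2,\alpha-\frac\pi2+2\pi]$ of length $\ge\pi$. On that interval, $-\infty$ must propagate to the interior by \cref{e:tcvximp}, since positive sine coefficients times $-\infty$ endpoint values push interior values to $-\infty$. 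Hence only the two endpoint values $-a,b$ remain free, subject to $a\le b$ when the length is exactly $\pi$ by \cref{e:diameter}.
\end{itemize}

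The main obstacle is the bookkeeping in the last case. The hardest part is verifying that the explicitly constructed infinite points and limit points attain the supremum at endpoints of $\dom k$, where $k$ is finite but is approached by a supporting trig function that blows up, including half-open versus closed behavior. I would first handle it by approximating $k$ from below with real-valued trigonometric-convex functions $k_n\uparrow k$, for example supremums of finitely many supporting trig functions, and passing to the limit in $\W(k_n)\supset\W(k)$. The direction of inclusion, however, needs care.
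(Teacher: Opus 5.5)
\textbf{Verdict: genuine gap at the endpoints of $\dom k$.} Your architecture is the paper's: uniqueness from \Cref{l:kless}; $K=\W(k)$, so that $\k_K\le k$ is automatic; supporting points $z(\omega)=(k(\omega)+ik'(\omega))e^{i\omega}\in K$ from \Cref{t:derivative}; and explicit infinite points for $\t\notin\dom k$. The gap is the step you flagged yourself, and neither mechanism you propose closes it.

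At an endpoint $\beta$ of $\dom k$, trigonometric convexity only gives $\limsup_{\t\to\beta^-}k(\t)\le k(\beta)$, so $k$ may jump up there. Take $k=0$ on $[0,\frac\pi2)$, $k(\frac\pi2)=1$, and $k=\infty$ elsewhere; this is trigonometric-convex.
\begin{itemize}
\item Every supporting point at an interior angle is $z(\t)=0$, and the finite part of $\W(k)$ is the closed third quadrant. Limits of supporting points therefore give $0<1$ at $\frac\pi2$.
\item In fact no real $k'(\frac\pi2)$ satisfies \eqref{e:tangent}, so \Cref{t:derivative} cannot be used at such an endpoint. The paper's own appeal to it at endpoints has the same weakness.
\item Your fallback fails for the same reason. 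With $k_n\le k$ you get $\W(k_n)\subset\W(k)$, not $\supset$; that is the useful direction. But a real-valued trigonometric-convex $k_n\le k$ is continuous by \Cref{p:tcvxcont}, so $k_n\le0$ on $[0,\frac\pi2)$ forces $k_n(\frac\pi2)\le 0$. Also, no real-valued minorant exists at all where $k=-\infty$.
\end{itemize}
In this example the value $1$ is attained only by the infinite point $-\infty+i$.

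\textbf{The missing ingredient.} For $\dom k$ with closure $[\alpha,\beta]$, use the boundary sides $(\infty+i\R)e^{i(\beta+\frac\pi2)}$ and $(\infty+i\R)e^{i(\alpha-\frac\pi2)}$, with offsets cut off by the endpoint values. These are the sides appearing in the paper's final case list.
\begin{itemize}
\item Suppose $k(\beta)\in\R$ and set $p=(\infty-ik(\beta))e^{i(\beta+\frac\pi2)}$. Then $\Re(pe^{-i\t})=-\infty$ for every other $\t\in\dom k$. The one exception is $\t=\beta-\pi$ when $\dom k$ has length $\pi$, where the value is $-k(\beta)\le k(\beta-\pi)$ by \eqref{e:diameter}. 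Hence $p\in\W(k)$ and $\Re(pe^{-i\beta})=k(\beta)$.
\item Symmetrically, $(\infty+ik(\alpha))e^{i(\alpha-\frac\pi2)}$ handles a left endpoint.
\item The same sides, with offsets unbounded in the admissible direction, give $\k_K=\infty$ at an endpoint excluded from $\dom k$.
\item When $\dom k$ is $\{\alpha,\alpha+\pi\}$ or a closed arc of length $\pi$, no whole side lies in $\W(k)$. Your open-arc points are then absent, and these restricted sides are the only infinite points available.
\end{itemize}

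\textbf{The $-\infty$ case.} The same boundary sides handle this case uniformly, once you note that $k=-\infty$ on $\dom k$ except possibly at its endpoints. Treating it separately is still sensible, since \Cref{t:derivative} is stated only for $\R\cup\{\infty\}$-valued functions. Three corrections to your description:
\begin{itemize}
\item In the notation of \eqref{e:cvxinfk}, $\dom k$ has length $\pi-(\beta-\alpha)\le\pi$, not $\ge\pi$.
\item $\dom k$ is half-open when an endpoint value is $+\infty$.
\item Taking $K$ from \eqref{e:cvxinf} requires a direct convexity check, because \Cref{t:cvxinf} presupposes that $K$ is convex.
\end{itemize}
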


\begin{proof}[of \Cref{p:supptcvx} and \Cref{t:tcvxK}]
    See \Cref{s:propsupport}.
\end{proof}

\begin{corollary}\label{c:wulffinv}
	The map $K \mapsto \k_K$ is invertible over closed convex sets of $\bar\C$ with the inverse map given by $k\mapsto\W(k)$ restricted to trigonometric-convex functions from $\Theta$ to $\bar\R$.
\end{corollary}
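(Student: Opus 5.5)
The corollary will follow by combining \Cref{t:tcvxK} with \Cref{t:convPi}; the task is to check that the two maps $K\mapsto\k_K$ (on closed convex subsets of $\bar\C$) and $k\mapsto\W(k)$ (on trigonometric-convex functions $\Theta\to\bar\R$) are mutually inverse. Call these classes $\mathcal C$ and $\mathcal T$. The forward map lands in $\mathcal T$ by \Cref{p:supptcvx}. The backward map lands in $\mathcal C$ because $\W(k)$ is an intersection of closed half-planes $\cond{p\in\bar\C}{\Re(pe^{-i\t})\le k(\t)}$. Each of these is closed and convex, and hence so is the intersection; this is the same observation as in the proof of \Cref{t:convPi}. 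The degenerate values $k(\t)=\pm\infty$ give the whole space or the set of points with $\Re(pe^{-i\t})=-\infty$, and both should be closed and convex.

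First, $\W\circ\k$ is the identity on $\mathcal C$. For $K\in\mathcal C$, apply \Cref{t:convPi} with $U=K$: then $\W(\k_K)=\conv\bar K=\conv K=K$, since $K$ is closed and convex.

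Second, $\k\circ\W$ is the identity on $\mathcal T$. For $k\in\mathcal T$, \Cref{t:tcvxK} provides a closed convex $K$ with $k\equiv\k_K$. Then $\W(k)=\W(\k_K)=K$ by the first step, so $\k_{\W(k)}=\k_K=k$.

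These two steps already show that the maps are inverse bijections between $\mathcal C$ and $\mathcal T$. In particular, $K\mapsto\k_K$ is injective on $\mathcal C$. This is also a direct consequence of the converse part of \Cref{l:kless}: $\k_K\equiv\k_{K'}$ with both sets closed and convex forces $K\subset K'$ and $K'\subset K$. This is consistent with the uniqueness claim in \Cref{t:tcvxK}.

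The only delicate point is closedness of $\W(k)$ when $k$ takes infinite values. In the argument above this is avoided entirely: $\W(k)$ is identified with a set $K$ that is already known to be closed and convex. So the plan reduces to the bookkeeping of these two steps.
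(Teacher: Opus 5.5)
Your proposal is correct and matches the paper's proof: the first composition is $\W(\k_K)=\conv\bar K=K$ by \Cref{t:convPi}, and the second takes the closed convex $K$ with $k\equiv\k_K$ from \Cref{t:tcvxK} to get $\W(k)=K$, hence $\k_{\W(k)}=k$. Your extra remarks, on where each map lands and on injectivity via \Cref{l:kless}, are harmless but not needed for the argument.
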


\begin{proof}
	We verify the two compositions. If $K$ is closed and convex, then \Cref{t:convPi} gives
	\begin{equation}\label{e:W(k)}
		\W(\k_K) = \conv K = K.
	\end{equation}
	Conversely, if $k$ is trigonometric-convex, then by \Cref{t:tcvxK} there exists a closed convex subset $K$ of $\bar\C$ such that $k = \k_K$. Applying \cref{e:W(k)} to this set $K$, we obtain $\W(k) = \W(\k_K) = K$, and hence $\k_{\W(k)} = \k_K = k$.
\end{proof}

\section{Growth of complex functions}\label{s:growth1}

This section introduces the basic tools used to measure the growth of general complex functions. In \Cref{s:growth2}, we specialize to the analytic case. We first fix the asymptotic notation used when the argument approaches a set.

\begin{definition}
    Let $f, g$ be functions on $A\subset\bar\C$ and let $B \subset \bar A$. We define the following asymptotic relations:
    \begin{itemize}
        \item $f(z) = o(g(z))$ as $z\to B$ if and only if for every $\epsilon > 0$, there exists a neighborhood $U\subset A$ of $B$ such that $|f(z)| \le \epsilon |g(z)|$ when $z\in U$.
        \item $f(z) = O(g(z))$ as $z\to B$ if and only if there exists $M > 0$ and a neighborhood $U\subset A$ of $B$ such that $|f(z)| \le M |g(z)|$ when $z\in U$.
        \item $f(z) \sim g(z)$ as $z\to B$ if and only if $f(z) - g(z) = o(g(z))$ as $z\to B$.
    \end{itemize}
    If the neighborhood $U$ in the definitions above can be chosen uniform, we say that the corresponding asymptotic relation holds uniformly as $z\to B$. If $B = \{z_0\}$, we simply write $z\to z_0$ instead of $z\to \{z_0\}$.
\end{definition}

\subsection{Order and type}

For a complex function $\phi$ and a set $U\subset\C$, define
\begin{equation}\label{e:Mphi}
    M_\phi(r; U) := \sup_{\substack{|s| = r \\ s\in U}} |\phi(s)|.
\end{equation}
For $U=\C$, this gives the classical maximum modulus function
\begin{equation}
    M_\phi(r) := M_\phi(r; \C) = \sup_{\t\in\Theta} |\phi(r e^{i\t})|.
\end{equation}

\begin{definition}\label{d:exp}
    Let $\phi$ be a complex function and let $U$ be an unbounded subset of $\C$. The \emph{order} $\rho$ of $\phi$ in $U$ is defined by
    \begin{equation}\label{e:rho}
        \rho := \inf\cond{\varrho>0}{M_\phi(r; U) = O(\exp\pa{r^\varrho})} \in \bar\R_+,
    \end{equation}
    and, for $\varrho > 0$, the \emph{$\varrho$-type} $\sigma$ of $\phi$ in $U$ is defined by
    \begin{equation}\label{e:sigma}
        \sigma := \inf\cond{\varsigma\in\R}{M_\phi(r; U) = O(\exp(\varsigma r^\varrho))} \in \bar\R.
    \end{equation}
\end{definition}

The $\varrho$-type may take the values $\pm\infty$: it is $-\infty$ when $\phi\equiv0$, and it is $\infty$ when no finite $\varsigma$ satisfies \cref{e:sigma}. Similarly, the order is $\infty$ when no $\varrho>0$ satisfies \cref{e:rho}.

Moreover, in these definitions the infimum is not necessarily a minimum: $\phi(s) = s$ has order $0$ and $1$-type $0$, but $|\phi(s)|$ is not $O(1)$.

\begin{proposition}\label{p:order}
    If $\phi$ is nonconstant and of order $\rho$ in $U$,
    \begin{equation}\label{e:rholimsup}
        \rho = \limsup_{\substack{|s|\to\infty \\ s\in U}} \frac{\log\log |\phi(s)|}{\log |s|} = \limsup_{r\to\infty} \frac{\log\log M_\phi(r; U)}{\log r}.
    \end{equation}
\end{proposition}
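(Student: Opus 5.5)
I will prove the two equalities in \cref{e:rholimsup} separately. The second is the standard statement that the order is the upper limit $\lambda := \limsup_{r\to\infty} \log\log M_\phi(r;U)/\log r$. The first says that this upper limit can equally be taken pointwise over $s\in U$.

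\emph{Equality of the two limsups.} Write $L := \limsup_{|s|\to\infty,\, s\in U} \log\log|\phi(s)|/\log|s|$, with the convention that points where $|\phi(s)|\le e$ contribute nothing (or $-\infty$).
\begin{itemize}
    \item For $|s|=r$ and $s\in U$ we have $|\phi(s)|\le M_\phi(r;U)$, so $L\le\lambda$.
    \item Conversely, for each large $r$ with $M_\phi(r;U)>e$, choose $s_r\in U\cap C_r$ with $|\phi(s_r)|\ge M_\phi(r;U)^{1/2}$. Then $\log\log|\phi(s_r)| \ge \log\log M_\phi(r;U) - \log 2$, and dividing by $\log r$ gives $L\ge\lambda$.
\end{itemize}
Two degenerate cases need separate treatment: $M_\phi(r;U)=\infty$ (pick $s_r$ with $|\phi(s_r)|$ arbitrarily large), and $M_\phi$ bounded on a sequence of radii. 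In the latter case those radii contribute at most $0$ to either limsup, since $\log\log$ of a bounded quantity over $\log r$ tends to $\le 0$.

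\emph{$\rho\le\lambda$.} If $\lambda=\infty$ there is nothing to prove. Otherwise fix $\varrho>\lambda$ with $\varrho>0$. For large $r$ we get $\log\log M_\phi(r;U)\le\varrho'\log r$ for some $\varrho'\in(\lambda,\varrho)$, hence $M_\phi(r;U)\le \exp(r^{\varrho'}) = O(\exp(r^\varrho))$. Thus $\varrho$ is admissible in \cref{e:rho}, and $\rho\le\lambda$ (and $\rho=0\le\lambda$ when $\lambda\le 0$).

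\emph{$\lambda\le\rho$.} If $\rho=\infty$ this is trivial. Otherwise take any admissible $\varrho>\rho$, so that $M_\phi(r;U)\le C\exp(r^\varrho)$. Then $\log\log M_\phi \le \log(r^\varrho+\log C) = \varrho\log r + o(\log r)$, so $\lambda\le\varrho$, and letting $\varrho\downarrow\rho$ gives $\lambda\le\rho$.

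\emph{Main obstacle.} The delicate point is the role of nonconstancy. The proposition is meant to give $\lambda\ge0$, so that it matches $\rho\in\bar\R_+$. If $\phi$ were bounded on $U$, then $\lambda$ could be $-\infty$ (or undefined), while $\rho=0$. For entire nonconstant $\phi$ with $U$ containing large circle arcs this is excluded by Liouville-type growth: $M_\phi(r)\ge cr$, so $\log\log M_\phi\ge\log\log r$ and $\lambda\ge 0$. For general $U$ and general $\phi$, I would instead adopt the convention that $\lambda$ is interpreted as $\max(\lambda,0)$, or restrict to $U$ where $|\phi|$ is unbounded. I would check which reading the paper intends, since this affects only the boundary case $\rho=0$.
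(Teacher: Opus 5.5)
Your proof is correct and follows essentially the paper's route: the paper proves the type analogue (Proposition~\ref{p:type}) by the same two inequalities read off from the definition and declares the order case identical. It obtains the first equality by commuting the supremum over $|s|=r$ with the increasing function $\log\log$, which is cleaner than your $M^{1/2}$ near-maximizer; that trick in fact works for every $1<M<\infty$, so your restriction to $M>e$ is unnecessary.

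Your caveat about the degenerate case is well founded, and the paper glosses over it. The argument only yields $\lambda\le\rho\le\max(\lambda,0)$, so your parenthetical ``$\rho=0\le\lambda$ when $\lambda\le0$'' is false for $\lambda<0$. The statement therefore genuinely needs $\lambda\ge0$. For example, $\phi(s)=e^{-s}$ on $U=\R_+$ is nonconstant with $\rho=0$, while the right-hand side is $-\infty$ or undefined. The condition $\lambda\ge0$ does hold when $U=\C$ and $\phi$ is entire and nonconstant.
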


\begin{proposition}\label{p:type}
    For $\varrho > 0$, if $\phi$ is of $\varrho$-type $\sigma$ in $U$,
    \begin{equation}\label{e:typelimsup}
        \sigma = \limsup_{\substack{|s|\to\infty \\ s\in U}} \frac{\log |\phi(s)|}{|s|^\varrho} = \limsup_{r\to\infty} \frac{\log M_\phi(r; U)}{r^\varrho}.
    \end{equation}
\end{proposition}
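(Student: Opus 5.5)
The plan is to prove the two equalities in \cref{e:typelimsup} in turn: first identify $\sigma$ with the radial limsup $L := \limsup_{r\to\infty} r^{-\varrho}\log M_\phi(r; U)$, and then show that $L$ equals the pointwise limsup over $s\in U$. Throughout I use the conventions $\log 0 = -\infty$ and $\inf\emptyset = \infty$. With these conventions $\phi\equiv0$ on $U$ is covered automatically, since then $\sigma=-\infty$ and both limsups equal $-\infty$.

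\textbf{Step 1: $L \le \sigma$.} If $\sigma=\infty$ there is nothing to prove. Otherwise fix $\varsigma>\sigma$ with $\varsigma\in\R$. Since the set in \cref{e:sigma} is upward closed, there are $C>0$ and $r_0$ with $M_\phi(r;U)\le Ce^{\varsigma r^\varrho}$ for $r\ge r_0$. Hence
\[
\frac{\log M_\phi(r;U)}{r^\varrho}\le \varsigma+\frac{\log C}{r^\varrho},
\]
so $L\le\varsigma$. Letting $\varsigma\downarrow\sigma$ gives $L\le\sigma$, including the case $\sigma=-\infty$.

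\textbf{Step 2: $\sigma\le L$.} If $L=\infty$ this is trivial. Otherwise take any real $\varsigma>L$. There is $r_0$ such that $\log M_\phi(r;U)\le\varsigma r^\varrho$ for $r\ge r_0$, that is, $M_\phi(r;U)\le e^{\varsigma r^\varrho}$ eventually. Here one must be careful about how the $O(\cdot)$ in \cref{e:sigma} is read, namely as $r\to\infty$, i.e.\ for all sufficiently large $r$. This is the only delicate point: $M_\phi(r;U)$ is only defined (finite, or $-\infty$ for empty circles) for radii $r$ at which $C_r\cap U\neq\emptyset$. I would interpret both the $O$-relation and the limsup over those radii only, consistently with \cref{e:Mphi}, where the supremum over the empty set is taken to be $0$ or $-\infty$. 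Under this reading, $\varsigma$ belongs to the set in \cref{e:sigma}, so $\sigma\le\varsigma$. Letting $\varsigma\downarrow L$ gives $\sigma\le L$.

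\textbf{Step 3: identifying $L$ with the pointwise limsup.} Let $L'$ be the limsup over $s\in U$ with $|s|\to\infty$. Since $|\phi(s)|\le M_\phi(|s|;U)$, we get $L'\le L$. For the reverse inequality, take $r_n\to\infty$ with $r_n^{-\varrho}\log M_\phi(r_n;U)\to L$. By the definition of the supremum, choose $s_n\in U\cap C_{r_n}$ with
\[
\log|\phi(s_n)|\ge \log M_\phi(r_n;U)-1
\]
when $M_\phi(r_n;U)$ is finite, or $\log|\phi(s_n)|\ge n r_n^\varrho$ when it is infinite. Then $|s_n|^{-\varrho}\log|\phi(s_n)|$ has limsup at least $L$, so $L'\ge L$.

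The main (mild) obstacle is bookkeeping of the infinite cases ($\sigma=\pm\infty$, and $M_\phi=\infty$ when $\phi$ is not assumed bounded on circles) and of radii missing $U$. The core argument is the elementary comparison $\varsigma>\sigma \Leftrightarrow$ eventual bound, exactly as in the classical proof for $U=\C$.
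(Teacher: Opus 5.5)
Your proof is correct and follows the paper's argument. Steps 1 and 2 are exactly the paper's two-sided comparison of $\sigma$ with the radial $\limsup$ via the eventual bound $M_\phi(r;U)\le Ce^{\varsigma r^\varrho}$, and Step 3 is a sequential version of the paper's one-line interchange $\sup_{|s|\ge R,\, s\in U} = \sup_{r\ge R}\sup_{|s|=r,\, s\in U}$ applied to $\log|\phi(s)|/|s|^\varrho$ (the empty-circle bookkeeping you flag is harmless, since such radii contribute $-\infty$ on both sides).
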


\begin{proof}
    Let $\ell$ be the right-hand side of \cref{e:typelimsup}. If $\phi$ has $\varrho$-type $\sigma$ in $U$, then for every $\varsigma > \sigma$,
    $M_\phi(r; U) = O(e^{\varsigma r^\varrho})$, which implies $\ell \le \varsigma$. Hence $\ell \le \sigma$. Conversely, if $l > \ell$, then $\log M_\phi(r;U)/r^\varrho \le l$ for all sufficiently large $r$. Thus $M_\phi(r; U) = O(e^{l r^\varrho})$, and the definition of the type gives $\sigma \le l$. Letting $l\to\ell$ gives $\sigma \le \ell$, and therefore $\sigma = \ell$. The first equality follows from the calculation
    \[
    \limsup_{\substack{|s|\to\infty \\ s\in U}} \frac{\log |\phi(s)|}{|s|^\varrho} = \lim_{R\to\infty} \sup_{\substack{|s| \ge R \\ s\in U}} \frac{\log |\phi(s)|}{|s|^\varrho} = \lim_{R\to\infty} \sup_{r \ge R} \sup_{\substack{|s| = r \\ s\in U}} \frac{\log |\phi(s)|}{|s|^\varrho}  = \limsup_{r\to\infty} \frac{\log M_\phi(r; U)}{r^\varrho}.
    \]
    The proof for the order is identical.
\end{proof}

In particular, for all $\varrho\ge0$, the zero function has $\varrho$-type $-\infty$. If $\phi$ has order $\rho$ and $\rho$-type greater than $-\infty$, then it has $\varrho$-type $0$ for all $\varrho > \rho$, and $\varrho$-type $\infty$ for all $\varrho < \rho$.

\subsection{Exponential type}\label{s:indic}

\begin{definition}
    A function $\phi:\C\to\C$ is said to be of \emph{exponential type} in $U\subset\C$ if there exists $A, B \ge 0$ such that
    \[
    |\phi(s)| \le A e^{B|s|},
    \]
    for $s\in U$. If $U = \C$, we simply say that $\phi$ is of exponential type.
\end{definition}
The term "exponential type" was introduced by Pólya \cite{polya1923}.

\begin{definition}
    Let $\phi:\C\to\C$.
    \begin{enumerate}
        \item The \emph{order} of $\phi$ is the order of $\phi$ in $\C$.
        \item The \emph{exponential type} of $\phi$ is the $1$-type of $\phi$ in $\C$.
    \end{enumerate}
\end{definition}

\begin{proposition}\label{p:rho}
    A continuous function $\phi:\C\to\C$ is of order $\le \rho$ if and only if, for every $\varrho > \rho$, there exists $B > 0$ such that
    \begin{equation}
        |\phi(s)| \le B e^{|s|^\varrho}
    \end{equation}
    for $s\in\C$.
\end{proposition}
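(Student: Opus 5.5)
The plan is to unwind \Cref{d:exp} with $U=\C$ in both directions, using continuity of $\phi$ only to control the behaviour on compact sets. By definition, $\phi$ has order $\le\rho$ exactly when the infimum of the set
\[
E:=\cond{\varrho>0}{M_\phi(r) = O(\exp(r^\varrho))}
\]
is at most $\rho$. The key observation is that $E$ is upward closed. Indeed, if $\varrho\in E$ and $\varrho'>\varrho$, then $\exp(r^\varrho)\le\exp(r^{\varrho'})$ for $r\ge1$, so $\varrho'\in E$. Hence order $\le\rho$ is equivalent to $(\rho,\infty)\subset E$: if $\inf E\le\rho$ and $\varrho>\rho$, some element of $E$ lies below $\varrho$, and upward closure gives $\varrho\in E$.

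For the direct implication, fix $\varrho>\rho$. Since $\varrho\in E$, there are $M>0$ and $R>0$ such that
\[
|\phi(s)|\le M_\phi(|s|)\le M e^{|s|^\varrho}\quad\text{for } |s|\ge R.
\]
On the compact disk $\bar D_R$, the continuous function $\phi$ is bounded by some constant $M'$, and $e^{|s|^\varrho}\ge1$ there. Taking $B:=\max(M,M',1)$ then gives $|\phi(s)|\le Be^{|s|^\varrho}$ for all $s\in\C$.

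For the converse, suppose the bound holds for every $\varrho>\rho$. Taking the supremum over $|s|=r$ gives
\[
M_\phi(r)\le Be^{r^\varrho},
\]
so $\varrho\in E$ for every $\varrho>\rho$. Therefore $\inf E\le\rho$, and $\phi$ is of order $\le\rho$.

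There is no real obstacle in this argument. The points that need care are the upward closedness of $E$, which is needed because the infimum need not be attained, and the interpretation of the $O$ relation as $r\to\infty$. The latter is why continuity is required to handle small $|s|$. If $\rho=\infty$, the condition on $\varrho>\rho$ is vacuous, and both sides hold trivially.
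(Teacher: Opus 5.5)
Your proof is correct, and your converse direction is the paper's. The forward direction takes a different route.

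The paper invokes the limsup formula \cref{e:rholimsup} of \Cref{p:order} to get $|\phi(s)|\le e^{|s|^\varrho}$ for $|s|>R$, and then sets $B:=\max(1,M_\phi(R))$. You instead work directly from \Cref{d:exp}, using the observation that the admissible set $E$ is upward closed.

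The paper's route is shorter once \Cref{p:order} is available. However, that proposition is stated only for nonconstant $\phi$, and it involves $\log\log|\phi(s)|$, which needs some interpretation where $|\phi(s)|\le 1$. Your argument needs neither: it covers constant functions and makes explicit why it is harmless that the infimum may not be attained.

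Your constant is also the right one for a merely continuous $\phi$, since you bound $\phi$ on the whole disk $\bar D_R$. In the paper, $M_\phi(R)$ is a supremum over the circle $|s|=R$ only. For entire functions the maximum modulus principle makes this enough, but for general continuous functions it is not.
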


\begin{proof}
    By \cref{e:rholimsup},
    \[
    \frac{\log\log |\phi(s)|}{\log |s|} \le \rho + o(1), \quad |s| \to \infty
    \]
    So for all $\varrho > \rho$, there exists $R > 0$ such that $|\phi(s)| \le e^{|s|^{\varrho}}$ for $|s| > R$. Setting $B := \max(1, M_\phi(R))$ suffices.
    
    Conversely, if the stated bound holds for every $\varrho>\rho$, then $M_\phi(r) = O(e^{r^\varrho})$ for every $\varrho>\rho$. Hence the order of $\phi$ is at most $\rho$.
\end{proof}

\begin{proposition}
    A continuous function $\phi:\C\to\C$ is of exponential type if and only if its order $\le 1$ and its $1$-type is finite.
\end{proposition}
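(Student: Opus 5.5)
The plan is to prove the two directions separately, working directly from \Cref{d:exp} and the limsup characterizations in \Cref{p:order} and \Cref{p:type}. Throughout, $U=\C$. Because $\phi$ is continuous, $M_\phi(r)$ is finite for every $r$ and bounded on compact ranges of $r$, so only large $r$ matters.

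\emph{Exponential type implies order $\le 1$ and finite type.} Suppose $|\phi(s)|\le Ae^{B|s|}$ with $A,B\ge0$. Then $M_\phi(r)\le Ae^{Br}$.
\begin{itemize}
    \item \emph{Type.} The bound gives $M_\phi(r)=O(e^{Br})$, so $B$ belongs to the set in \cref{e:sigma} with $\varrho=1$. Hence the $1$-type $\sigma$ satisfies $\sigma\le B<\infty$.
    \item \emph{Order.} For every $\varrho>1$ we have $Br\le r^\varrho$ for large $r$, so $M_\phi(r)=O(e^{r^\varrho})$. Therefore $\rho\le\varrho$ for all $\varrho>1$, and so $\rho\le1$.
\end{itemize}
If $\phi\equiv0$, the type is $-\infty$, which is still not $+\infty$. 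Here "finite" must be read as $\sigma<\infty$, consistent with the remark after \Cref{p:type}, so this case causes no trouble.

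\emph{Order $\le1$ and finite type implies exponential type.} This direction in fact uses only the type hypothesis; the order condition is automatic but harmless.
\begin{itemize}
    \item \emph{Choice of exponent.} Since $\sigma<\infty$, pick a real $\varsigma>\sigma$ with $\varsigma\ge0$. By the definition of the infimum in \cref{e:sigma}, there exists $\varsigma'\in(\sigma,\varsigma]$ in the admissible set.
    \item \emph{Large $|s|$.} Because $e^{\varsigma' r}\le e^{\varsigma r}$, we get $M_\phi(r)\le Ce^{\varsigma r}$ for all $r\ge R$, for some constants $C$ and $R$.
    \item \emph{Small $|s|$.} On $|s|\le R$, continuity of $\phi$ gives the bound $\max_{|s|\le R}|\phi|=:C'<\infty$.
\end{itemize}
Taking $A:=\max(C,C')$ and $B:=\varsigma$ then yields $|\phi(s)|\le Ae^{B|s|}$ for all $s\in\C$.

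The only delicate point is a bookkeeping one. The set in \cref{e:sigma} is upward closed in $\varsigma$, since $e^{\varsigma' r}\le e^{\varsigma r}$ whenever $\varsigma'\le\varsigma$, so any real $\varsigma>\sigma$ is itself admissible. We must also enforce $B\ge0$ when $\sigma<0$, which the choice $\varsigma\ge0$ handles. Beyond this, I expect no real obstacle.
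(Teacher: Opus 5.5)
Your proof is correct and follows the same route as the paper. For the forward direction you read order $\le 1$ and finite $1$-type directly off the bound $M_\phi(r)\le Ae^{Br}$; for the converse you take an admissible $\varsigma$ from the definition of the $1$-type and use continuity to absorb the compact range $|s|\le R$ into the constant. Your extra bookkeeping tidies small points that the paper's two-line proof passes over: choosing $\varsigma\ge 0$ so that $B\ge 0$ as the definition requires, reading ``finite'' as $\sigma<\infty$, and noting that the order hypothesis is redundant.
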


\begin{proof}
    If $\phi$ is of exponential type, then $|\phi(s)| \le A e^{B|s|}$ for some $A,B\ge0$, so its order is at most $1$ and its $1$-type is finite. Conversely, if the order of $\phi$ is at most $1$ and its $1$-type is finite, then for some $\varsigma\in\R$ we have $M_\phi(r)=O(e^{\varsigma r})$. Since $\phi$ is continuous, increasing the implicit constant if necessary gives $|\phi(s)| \le A e^{\varsigma|s|}$ for all $s\in\C$ and suitable $A\ge0$.
\end{proof}

\subsection{The Phragmén--Lindelöf indicator function}

\begin{definition}
    For $\t\in\Theta$, the \emph{indicator function} $h_\phi(\t)$ of a function $\phi:\C\to\C$ is the $1$-type of $\phi$ in $e^{i\t} \R_+$.
\end{definition}

The indicator function was first introduced by Phragmén and Lindelöf \cite{phragmen1908} in the form
\begin{equation}\label{e:hphi}
    h_\phi(\t) = \limsup_{r\to\infty} \frac{\log |\phi(re^{i\t})|}{r},
\end{equation}
which follows from \cref{e:typelimsup}. The following estimates follow immediately.
\begin{proposition}\label{p:hsumprod}
    $h_{\phi + \psi} \le \max(h_\phi, h_\psi)$ and $h_{\phi \psi} \le h_\phi + h_\psi$.
\end{proposition}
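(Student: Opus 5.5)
The plan is to work directly from the limsup representation \cref{e:hphi}, fixing $\t\in\Theta$ and writing $a(r) := \log|\phi(re^{i\t})|$ and $b(r) := \log|\psi(re^{i\t})|$, both with values in $[-\infty,\infty)$. Dividing by $r$ and taking $\limsup_{r\to\infty}$ will then give each inequality, provided the pointwise bounds hold and we keep track of the conventions in $\bar\R$.

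For the sum, we use $|\phi+\psi| \le |\phi|+|\psi| \le 2\max(|\phi|,|\psi|)$. Taking logarithms gives
\[
\log|(\phi+\psi)(re^{i\t})| \le \log 2 + \max(a(r),b(r)).
\]
We then divide by $r$, note that $\log 2/r\to 0$, and use that the limsup of a pointwise maximum equals the maximum of the limsups. This last identity holds in $\bar\R$: if $\limsup a/r = \ell_1$ and $\limsup b/r = \ell_2$, then for any $l>\max(\ell_1,\ell_2)$ both quotients are eventually below $l$. The cases $-\infty$ and $\infty$ are covered by the same argument. This yields $h_{\phi+\psi}(\t) \le \max(h_\phi(\t),h_\psi(\t))$.

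For the product, the identity $\log|\phi\psi| = a(r)+b(r)$ holds pointwise, with the value $-\infty$ allowed. Subadditivity of $\limsup$ then gives $h_{\phi\psi}(\t) \le h_\phi(\t)+h_\psi(\t)$ whenever the right-hand side is defined.

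The only delicate point is the product inequality when one indicator equals $\infty$ and the other equals $-\infty$, since the sum is undefined under the conventions of \Cref{s:Cbar}. I will state that the inequality is asserted only when the right-hand side is defined. It also holds trivially whenever the right-hand side is $+\infty$. When exactly one indicator is $-\infty$ and the other is finite, the subadditivity argument still works: given any $L$, the quotient $a/r$ is eventually below $L$ and $b/r$ is eventually below $h_\psi+1$. Hence $h_{\phi\psi}\le L+h_\psi+1$ for every $L$, which forces $h_{\phi\psi}=-\infty$.

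No earlier result is needed beyond \cref{e:hphi}, which itself comes from \Cref{p:type}. I expect the argument to be routine, with the bookkeeping of infinite values being the only real obstacle.
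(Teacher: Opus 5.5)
Your proof is correct and follows the same route as the paper, which gives no separate argument and simply says both estimates follow immediately from the limsup formula \eqref{e:hphi}. Your pointwise bounds $|\phi+\psi|\le 2\max(|\phi|,|\psi|)$ and $\log|\phi\psi|=\log|\phi|+\log|\psi|$, followed by $\limsup$, are exactly that argument; your caveat that the product inequality is asserted only when $h_\phi(\theta)+h_\psi(\theta)$ is defined (excluding $\infty+(-\infty)$) is a precision the paper leaves implicit.
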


The indicator is used to measure the growth of entire functions. The following lemmas will be useful in applying this information back to $\phi$.

\begin{proposition}\label{p:h}
    Let $\t\in\Theta$, $h\in\bar\R$, and let $\phi:e^{i\t}\R_+\to\C$ be continuous. The following are equivalent.
    \begin{alphabetize}
        \item $h_\phi(\t) \le h$.
        \item For every $h' > h$, there exists $R\ge 0$ such that $|\phi(re^{i\t})| \le e^{h' r}$ for $r > R$.
        \item For every $h' > h$, there exists $B\ge 0$ such that $|\phi(re^{i\t})| \le B e^{h' r}$ for $r \ge 0$.
    \end{alphabetize}
\end{proposition}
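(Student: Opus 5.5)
I will prove the cycle of implications (a) $\Rightarrow$ (b) $\Rightarrow$ (c) $\Rightarrow$ (a). Throughout I use the representation \eqref{e:hphi} of $h_\phi(\t)$ as a $\limsup$, which follows from \Cref{p:type} applied with $U = e^{i\t}\R_+$ and $\varrho = 1$. Note first that if $h = \infty$ then every statement is vacuous: there is no real $h' > h$. So only $h < \infty$ needs attention, and the case $h = -\infty$ will be allowed, meaning every real $h'$ must be considered.

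For (a) $\Rightarrow$ (b), fix a real $h' > h \ge h_\phi(\t)$. By \eqref{e:hphi},
\[
\lim_{R\to\infty}\sup_{r > R}\frac{\log|\phi(re^{i\t})|}{r} = h_\phi(\t) < h'.
\]
Hence there is $R \ge 0$ with $\log|\phi(re^{i\t})| \le h' r$ for all $r > R$, where $\log 0 = -\infty$ is understood. Exponentiating gives (b).

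For (b) $\Rightarrow$ (c), take the $R$ supplied by (b) for a given $h'$. The function $r \mapsto |\phi(re^{i\t})| e^{-h' r}$ is continuous on the compact interval $[0, R]$, so it is bounded there by some $B_0$. Setting $B := \max(1, B_0)$ then gives $|\phi(re^{i\t})| \le B e^{h' r}$ for all $r \ge 0$.

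For (c) $\Rightarrow$ (a), fix $h' > h$ and take $B$ from (c). Then $\log|\phi(re^{i\t})|/r \le \log B / r + h'$ whenever $B > 0$. If $B = 0$, then $\phi$ vanishes on the ray and $h_\phi(\t) = -\infty$ trivially. Taking the $\limsup$ as $r \to \infty$ gives $h_\phi(\t) \le h'$. Letting $h' \downarrow h$, or $h' \to -\infty$ when $h = -\infty$, gives (a).

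No step is a real obstacle. The only care needed is the bookkeeping with extended values: handling $h = \pm\infty$, allowing $\phi$ to vanish (so that $\log|\phi| = -\infty$), and using continuity exactly where it is needed, namely for the compact-interval bound in (b) $\Rightarrow$ (c).
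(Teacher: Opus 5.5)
Your proof is correct and follows the same cycle (a) $\Rightarrow$ (b) $\Rightarrow$ (c) $\Rightarrow$ (a) as the paper: the $\limsup$ formula \eqref{e:hphi} gives the first and last implications, and continuity on the compact interval $[0,R]$ gives the middle one. Your explicit handling of $h=\pm\infty$, of $B=0$, and of zeros of $\phi$ only spells out bookkeeping that the paper leaves implicit.
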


\begin{proof}
    \hfill
    \begin{itemize}
        \item[(a) $\imp$ (b)] This follows from \cref{e:hphi}.
        \item[(b) $\imp$ (c)] Since $|\phi(re^{i\t})| \le e^{h' r}$ for $r>R$ and $\phi$ is continuous on $0\le r\le R$, choosing $B$ large enough gives the bound for all $r\ge0$.
        \item[(c) $\imp$ (a)] Using \cref{e:hphi}, we have $h_\phi(\t) \le h'$ for all $h' > h$, hence $h_\phi(\t) \le h$.
    \end{itemize}
\end{proof}

\section{Growth of analytic functions}\label{s:growth2}

This section proves the main growth estimates needed to extend indicator diagram theory in \Cref{s:IDT}.

\subsection{Order and type for entire functions}

We begin by recalling a standard notion associated with power series. Given
\begin{equation}\label{e:phipow}
    \phi(s) = \sum_{n=0}^\infty \phi_n s^n
\end{equation}
the \emph{radius of convergence} is the value $R_\phi \in [0, \infty]$ such that the series converges for $s\in D_{R_\phi}$ and diverges for $s\notin \bar D_{R_\phi}$. The \emph{Cauchy--Hadamard formula} states that
\begin{equation}\label{e:radconv}
    R_\phi = \liminf_{n\to\infty} \inv{\sqrt[n]{|\phi_n|}}.
\end{equation}
Entire functions correspond to the case $R_\phi = \infty$.

By analogy, for a Laurent series of the form $\varphi(p) := \phi(p^{-1})$, the \emph{radius of divergence} is the value $r_\varphi \in [0, \infty]$ such that the series diverges for $p\in D_{r_\varphi}$ and converges for $p\notin \bar D_{r_\varphi}$. By the change of variables, $r_\varphi = R_\phi^{-1}$, or explicitly
\begin{equation}\label{e:raddiv}
    r_\varphi =  \limsup_{n\to\infty} \sqrt[n]{|\phi_n|}.
\end{equation}

\begin{proposition}[Hadamard]
    If $\phi$ is an entire function given by \cref{e:phipow} and of order $\rho$, then
    \begin{equation}\label{e:ordercoeff}
        \rho = \limsup_{n\to\infty} \frac{n\log n}{\log(1/|\phi_n|)}.
    \end{equation}
\end{proposition}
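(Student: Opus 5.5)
We prove the two inequalities between $\rho$ and $\mu := \limsup_{n\to\infty} \frac{n\log n}{\log(1/|\phi_n|)}$ separately. We use the characterization of \Cref{p:order}, $\rho = \limsup_{r\to\infty} \log\log M_\phi(r)/\log r$, together with Cauchy's estimate $|\phi_n| \le M_\phi(r) r^{-n}$ for every $r>0$. Terms with $\phi_n = 0$ are read as contributing $0$ to the $\limsup$. Polynomials are degenerate: both sides equal $0$, since $\phi_n=0$ eventually and $M_\phi$ grows polynomially. So we may assume $\phi$ is transcendental, in which case $\rho \ge 0$ and $\mu \ge 0$.

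\emph{Step 1: $\mu \le \rho$.} Assume $\rho<\infty$ and fix $\varrho>\rho$. By \Cref{p:rho}, $M_\phi(r) \le B e^{r^\varrho}$. Cauchy's estimate then gives
\[
\log(1/|\phi_n|) \ge n\log r - r^\varrho - \log B.
\]
We optimize in $r$ by taking $r = (n/\varrho)^{1/\varrho}$. This yields
\[
\log(1/|\phi_n|) \ge \tfrac{n}{\varrho}\log n - O(n),
\]
so $\frac{n\log n}{\log(1/|\phi_n|)} \le \varrho + o(1)$. Hence $\mu \le \varrho$, and letting $\varrho\downarrow\rho$ gives $\mu\le\rho$.

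\emph{Step 2: $\rho \le \mu$.} Assume $\mu<\infty$ and fix $\varrho>\mu$. For all large $n$, say $n\ge N$, we have $|\phi_n| \le n^{-n/\varrho}$. Then
\[
M_\phi(r) \le P(r) + \sum_{n\ge N} r^n n^{-n/\varrho},
\]
where $P$ is a polynomial. Split the sum at $n_0 := \lceil (2r)^\varrho\rceil$.
\begin{itemize}
\item For $n> n_0$, each term satisfies $r n^{-1/\varrho}\le 1/2$, so the tail is at most $1$.
\item For $n\le n_0$, each term is bounded by $\max_{x>0} (r x^{-1/\varrho})^x = \exp(r^\varrho/(e\varrho))$. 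This maximum comes from maximizing $x\log r - (x/\varrho)\log x$, which is attained at $x = r^\varrho/e$.
\end{itemize}
Therefore
\[
M_\phi(r) \le P(r) + 1 + \big((2r)^\varrho+1\big)\, e^{r^\varrho/(e\varrho)},
\]
and so $\log\log M_\phi(r) \le \varrho\log r + O(1)$. This gives $\rho\le\varrho$, and hence $\rho\le\mu$.

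\emph{Infinite cases.} If $\rho=\infty$, Step 1 is vacuous. If $\mu=\infty$, Step 2 is vacuous. So the two steps together cover every case.

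The main obstacle is the bookkeeping in Step 2. The key point is that the number of significant terms is only polynomial in $r$, so it contributes a factor negligible at the $\log\log$ scale. A secondary point is handling the sign conventions when $|\phi_n|\ge 1$ for finitely many $n$, or $\phi_n=0$ for infinitely many $n$; these do not affect the $\limsup$ because the bounds are only used for large $n$.
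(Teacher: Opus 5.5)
Your proof is correct, including the polynomial case and the infinite cases. It is the standard argument. The paper gives no proof of its own and simply refers to \cite[\S 2.2]{boas1954} and \cite[Lec.~1]{levin1996}, where the same two steps appear: Cauchy's estimate with $r=(n/\varrho)^{1/\varrho}$ gives $\mu\le\rho$, and splitting the majorant series $\sum r^n n^{-n/\varrho}$ near $n\approx(2r)^\varrho$ gives $\rho\le\mu$.
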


\begin{proof}
    We refer to \cite[\S 2.2]{boas1954} or \cite[Lec.~1]{levin1996}.
\end{proof}

\begin{proposition}[Lindelöf]
    If $\phi$ is a nonzero entire function given by \cref{e:phipow} and of $1$-type $\sigma$, then
    \begin{equation}\label{e:typecoeff}
        \sigma = \limsup_{n\to\infty} \sqrt[n]{|\phi_n|n!}.
    \end{equation}
\end{proposition}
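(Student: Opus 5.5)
We prove the two inequalities $\sigma \le \ell$ and $\ell \le \sigma$ separately, where $\ell := \limsup_{n\to\infty} \sqrt[n]{|\phi_n| n!}$. The tools are the Cauchy estimates in one direction and a direct majorization of the power series in the other. Throughout, $\sigma$ is read through \Cref{p:type}, namely $\sigma = \limsup_{r\to\infty} \log M_\phi(r)/r$. Since $\phi \not\equiv 0$, we have $M_\phi(r)\to$ a positive quantity, so $\sigma \ge 0$. Likewise $\ell \ge 0$, because infinitely many coefficients are nonzero, or else $\phi$ is a polynomial and both sides vanish. That last case will be checked directly: a nonconstant polynomial has $\log M_\phi(r)/r \to 0$, and then $\ell = 0$ as well.

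For $\ell \le \sigma$, assume $\sigma < \infty$ and fix $\varsigma > \sigma$. By \Cref{p:type} there is $A$ with $M_\phi(r) \le A e^{\varsigma r}$ for all $r$. The Cauchy inequality then gives
\[
|\phi_n| \le A e^{\varsigma r} r^{-n} \quad\text{for all } r>0 .
\]
We optimize at $r = n/\varsigma$, which yields $|\phi_n| \le A (e\varsigma/n)^n$. Stirling's formula gives $\sqrt[n]{n!} \sim n/e$, so $\sqrt[n]{|\phi_n| n!} \le A^{1/n}\, e\varsigma\, \sqrt[n]{n!}/n \to \varsigma$. Hence $\ell \le \varsigma$, and letting $\varsigma\downarrow\sigma$ gives $\ell\le\sigma$.

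For $\sigma \le \ell$, assume $\ell<\infty$ and fix $\lambda>\ell$. Then $|\phi_n| \le \lambda^n/n!$ for $n \ge N$. This gives the bound
\[
M_\phi(r) \le P(r) + \sum_{n\ge N} \frac{(\lambda r)^n}{n!} \le P(r) + e^{\lambda r},
\]
where $P$ is a polynomial. Therefore $\limsup_{r\to\infty} \log M_\phi(r)/r \le \lambda$. By \Cref{p:type} this means $\sigma\le\lambda$, and letting $\lambda\downarrow\ell$ gives $\sigma \le \ell$.

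The only real work is the Cauchy-estimate optimization together with the Stirling asymptotic in the first inequality. The infinite cases are handled automatically by the conventions: if $\sigma=\infty$ the first inequality is trivial, and if $\ell=\infty$ the second is trivial. The one delicate point is to keep the requirement $\varsigma>0$ when choosing $r=n/\varsigma$. This is harmless, since $\sigma\ge 0$ for nonzero $\phi$ means we may always take $\varsigma>0$.
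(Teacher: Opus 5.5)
Your proof is correct and follows essentially the same route as the paper: for $\sigma\le\ell$ you majorize the tail of the series by $\sum(\lambda r)^n/n!\le e^{\lambda r}$, and for $\ell\le\sigma$ you apply Cauchy's inequality at $r=n/\varsigma$ and then Stirling's formula. You also note explicitly that $\sigma\ge0$ lets you take $\varsigma>0$, which $r=n/\varsigma$ requires; the paper leaves this implicit. That fact is most cleanly justified by $M_\phi(r)\ge|\phi_k|r^k$ for some $\phi_k\neq0$, rather than by saying $M_\phi(r)$ tends to a positive quantity, since $M_\phi(r)$ may tend to $\infty$.
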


\begin{proof}
    Let $\ell$ denote the right-hand side of \cref{e:typecoeff}. Assume first that $\ell < \infty$, and let $l > \ell$. There exists $N\in\N$ such that $\sqrt[n]{|\phi_n|n!} \le l$ for all $n \ge N$, or equivalently, $|\phi_n| \le l^n/n!$. Hence,
    \[
    |\phi(s)| \le \sum_{n=0}^\infty |\phi_n| |s|^n \le \sum_{n=0}^{N-1} |\phi_n| |s|^n + \sum_{n=N}^\infty \frac{(l|s|)^n}{n!}.
    \]
    For $|s|$ large enough, the finite sum is dominated by $e^{l|s|}$, and therefore $|\phi(s)| \le 2e^{l|s|}$. It follows that $\sigma \le l$. Since this holds for every $l > \ell$, we obtain $\sigma \le \ell$. In particular, if $\sigma = \infty$, then necessarily $\ell = \infty$.

    Conversely, assume that $\sigma < \infty$, and let $\varsigma > \sigma$. By definition of the $1$-type, $M_\phi(r) \le e^{\varsigma r}$ for all sufficiently large $r$. Applying Cauchy's inequality with $r = n/\varsigma$, we obtain for all sufficiently large $n$
    \[
        |\phi_n| \le \frac{M_\phi(r)}{r^n} \le \frac{e^{\varsigma r}}{r^n} = \left(\frac{e\varsigma}{n}\right)^n.
    \]
    Multiplying by $n!$ and using Stirling's approximation, we get $\sqrt[n]{|\phi_n|n!} \le \varsigma + o(1)$, so that $\ell \le \varsigma$. Since this holds for every $\varsigma > \sigma$, it follows that $\ell \le \sigma$. In particular, if $\ell = \infty$, then necessarily $\sigma = \infty$.

    Therefore, $\sigma = \ell$ in all cases.
\end{proof}

In particular, if $\phi$ is a nonzero entire function, then $\sigma \ge 0$. Equivalently, the zero function is the only entire function with negative $1$-type, namely $-\infty$.

\subsection{The Phragmén--Lindelöf principle}

The \emph{Phragmén--Lindelöf principle}, introduced by Phragmén and Lindelöf as an extension of the \emph{maximum modulus principle} \cite{phragmen1908}, is a fundamental result in the theory of analytic functions. Roughly speaking, it allows one to propagate bounds from the boundary of a domain to its interior under mild additional assumptions. We state here the version for sectors.

\begin{theorem}[Phragmén--Lindelöf]\label{t:phrag}
    Let $\alpha,\beta \in \Theta$ with $\alpha \neq \beta$, and let $\phi$ be holomorphic in $S_{(\alpha,\beta)}$ of order $< \frac\pi{\beta - \alpha}$. If there exists $M > 0$ such that $|\phi(s)| \le M$ for every $s \in S_{\{\alpha,\beta\}}$, then $|\phi(s)| \le M$ for every $s \in S_{[\alpha,\beta]}$.
\end{theorem}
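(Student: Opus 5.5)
The plan is the classical auxiliary-function argument: reduce to a standard sector, multiply $\phi$ by a decaying factor whose order lies strictly between that of $\phi$ and $\pi/(\beta-\alpha)$, apply the maximum modulus principle on truncated sectors, and then let the auxiliary parameter tend to zero.

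\textbf{Normalisation.} Write $\gamma := \beta - \alpha \in (0, 2\pi)$, taken as the length of the interval $(\alpha,\beta)$. I would rotate by $e^{-i(\alpha+\beta)/2}$ so that the sector becomes $\{|\arg s| < \gamma/2\}$ with the argument taken in $(-\pi,\pi)$. The hypotheses are unchanged. Let $\rho' < \pi/\gamma$ be the order of $\phi$ in the sector and fix $\mu$ with $\rho' < \mu < \pi/\gamma$. I would implicitly assume $\phi$ is continuous on the closed sector $S_{[\alpha,\beta]}$ minus the origin and bounded near $0$, since this is what makes the boundary hypothesis meaningful. If needed one can instead work on $S_{[\alpha,\beta]}\cap\{|s|\ge r_0\}$ and handle the small arc separately, or read the theorem with the standard continuity-up-to-the-boundary convention.

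\textbf{Auxiliary function.} For $\epsilon > 0$ set
\[
  g_\epsilon(s) := \phi(s)\exp(-\epsilon s^\mu),
\]
where $s^\mu$ is the principal branch, holomorphic on the sector. For $s = re^{it}$ with $|t| \le \gamma/2$ we have $\Re(s^\mu) = r^\mu\cos(\mu t) \ge r^\mu\cos(\mu\gamma/2)$. Since $\mu\gamma/2 < \pi/2$, the constant $c := \cos(\mu\gamma/2)$ is positive. Hence $|g_\epsilon| \le |\phi| \le M$ on both boundary rays.

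On the arc $|s| = R$ in the sector, choose $\varrho$ with $\rho' < \varrho < \mu$. By the order hypothesis (cf. \Cref{p:rho}), $|\phi(s)| \le B e^{R^\varrho}$ there, so
\[
  |g_\epsilon(s)| \le B\exp\bigl(R^\varrho - \epsilon c R^\mu\bigr) \to 0 \quad (R\to\infty).
\]
Thus for $R$ large, $|g_\epsilon| \le M$ on the whole boundary of the truncated sector $\{|s|<R,\ |\arg s|<\gamma/2\}$. Near the vertex $0$, $g_\epsilon$ stays bounded by the same bound as $\phi$ up to a factor tending to $1$. The maximum modulus principle, applied to the truncated region with a small disc around $0$ removed, then gives $|g_\epsilon| \le M$ throughout the sector.

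\textbf{Conclusion.} Fix $s$ in the sector and let $\epsilon \to 0$. Since $|\exp(-\epsilon s^\mu)| \to 1$, we get $|\phi(s)| \le M$. This covers the open sector; the two boundary rays hold by hypothesis.

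\textbf{Main obstacle.} The only delicate points are the behaviour at the vertex $0$ and the need for continuity up to the boundary. I would handle the vertex by excising a small disc of radius $\delta$, using boundedness of $\phi$ near $0$, and letting $\delta\to0$. If $\phi$ might blow up at $0$, the vertex can be treated as an isolated boundary point: apply the same argument with the extra factor $s^{\eta}$, $\eta>0$ small, and then let $\eta\to0$.
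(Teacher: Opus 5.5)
Your main mechanism is the standard one, and it is what the paper relies on. The paper gives no proof of its own and cites Boas and Levin, where the argument is the same: multiply by $\exp(-\epsilon s^\mu)$ with $\rho'<\mu<\pi/(\beta-\alpha)$, apply the maximum principle on truncated sectors, and let $\epsilon\to0$. Your ray estimate via $\Re(s^\mu)\ge r^\mu\cos(\mu\gamma/2)>0$, the arc estimate, and the final limit are all correct. One small correction: rotate by $e^{-i(\alpha+\gamma/2)}$ rather than $e^{-i(\alpha+\beta)/2}$, since the latter is ambiguous in $\Theta$.

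The gap is at the vertex. You assume $\phi$ is continuous on $S_{[\alpha,\beta]}$ and merely bounded near $0$, say $|\phi|\le K$ there. Excising $D_\delta$ then creates a new boundary arc $|s|=\delta$ on which you only know $|g_\epsilon|\le K$. The maximum principle therefore gives $|g_\epsilon|\le\max(M,K)$, and letting $\delta\to0$ does not improve this.

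You need a factor that suppresses the inner arc. Apply the argument to $g_\epsilon(s)(s/R)^\eta$, whose modulus is at most $|g_\epsilon(s)|$ for $|s|\le R$ and at most $K(\delta/R)^\eta\to0$ on the inner arc. This gives $|g_\epsilon(s)|\le M(R/|s|)^\eta$, and then you let $\eta\to0$. So the $s^\eta$ device you mention is exactly the fix for the bounded case.

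Your claim that this device also handles a $\phi$ that blows up at $0$ is false, because the statement itself fails there. Take $\phi(s)=e^{1/s}$ on the right half-plane, so $\beta-\alpha=\pi$. Then $|\phi|=1$ on $i\R^*$ and $\phi$ has order $0<1$ in the sector, yet $\phi(1)=e$.

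The cleanest reading is the hypothesis of the cited references: $\phi$ continuous on the closed sector including $0$. This is also the situation in every application in the paper, where the function is an entire function times a multiplier continuous at $0$. Under that hypothesis, continuity along the rays gives $|\phi(0)|\le M$, and the maximum principle applies directly on the closed truncated sector with no excision at all.
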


\begin{proof}
    We refer to \cite[\S 1.4]{boas1954} or \cite[Lec.~6]{levin1996}.
\end{proof}

The Phragmén--Lindelöf principle has notable applications to uniqueness theorems, such as the following corollary due to Riesz\footnote{It is sometimes referred to as "Watson's lemma" \cite{loday-richaud2016}, since a weaker version was proved earlier by Watson \cite{watson1911}.} \cite{riesz1920}.

\begin{lemma}[Riesz]\label{l:riesz}
    Let $\phi$ be analytic and of exponential type in the half-plane $\bar\Pi_{0, 0}$. If $h_\phi\pa{\pm\frac\pi2} < 0$, then $\phi\equiv0$.
\end{lemma}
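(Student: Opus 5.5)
The plan is to multiply $\phi$ by a suitable exponential and then apply the Phragmén--Lindelöf principle (\Cref{t:phrag}) on the two quarter-planes $S_{[0,\frac\pi2]}$ and $S_{[-\frac\pi2,0]}$. Choose $c>0$ with $h_\phi(\pm\frac\pi2) < -c$. By \Cref{p:h}, there is $B\ge0$ such that $|\phi(\pm iy)| \le B e^{-cy}$ for $y\ge0$. Since $\phi$ is of exponential type in $\bar\Pi_{0,0}$, we have $|\phi(s)|\le A e^{\tau|s|}$ there for some $A,\tau\ge0$.

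\textbf{Step 1: reduce to decay along the positive axis.} For a parameter $\lambda\in\R$, set $\psi_\lambda(s) := \phi(s)e^{\lambda s}$. We want to show that $\phi$ decays arbitrarily fast along $\R_+$, i.e.\ $h_\phi(0) = -\infty$. To do this, consider $g(s) := \phi(s)\exp\pa{\lambda s + i c\, s}$ on $S_{[0,\frac\pi2]}$, and similarly with $-ic$ on $S_{[-\frac\pi2,0]}$.
\begin{itemize}
\item On the ray $s = iy$, we get $|g(iy)| = |\phi(iy)| e^{-cy}\cdot e^{0}$. We have to be careful with signs here. Since $|e^{ics}| = e^{-c\Im s}$, on $s=iy$ this gives $|g|\le Be^{-2cy}\le B$.
\item On $s = x\ge0$, we have $|g(x)| = |\phi(x)|e^{\lambda x}$.
\end{itemize}
This is not yet bounded on $\R_+$. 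The standard trick is to use the bound on the imaginary axis together with the a priori exponential type to bound $\phi$ first in the whole half-plane. So apply \Cref{t:phrag} to $g_0(s):=\phi(s)e^{ics}$ on $S_{(0,\frac\pi2)}$ with a preliminary compensation $e^{-\tau s}$ on the real ray. Concretely, $G(s):=\phi(s)e^{ics}e^{-\tau' s}$ with $\tau'$ large is bounded on both boundary rays and has order $1<2$, so $|G|\le M$ in the quadrant.

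\textbf{Step 2: bootstrap $\tau'$ downward.} The function $F_\lambda(s):=\phi(s)e^{\lambda s}$ is bounded on the imaginary axis for every real $\lambda$, since $|e^{\lambda iy}|=1$ and $\phi$ decays there. The key observation is that Step 1 shows $\phi$ is bounded by $Me^{\tau'\Re s}$ in the right half-plane. Apply the same reasoning to the quadrant function $\phi(s)e^{\lambda s}e^{ics}$, now using the \emph{two} boundary rays $\arg s=\frac\pi2$ and $\arg s = \frac\pi4$ with an angle-dependent rotation, to obtain a bound along $\R_+$ decreasing in $\lambda$. The cleaner route I would commit to is the following. Consider $u(s):=\log|\phi(s)|$, which is subharmonic. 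Along the boundary $\Re s=0$ it is at most $\log B - c|s|$, and in the half-plane it is at most $\log A+\tau|s|$. Compare $u$ with the harmonic majorant $\log B - c\,|\Im s| + \lambda\Re s$. The harmonic function $-c\Im s$ works on the upper quadrant, and for an arbitrary $\lambda>0$ the function $\log B + c'\Im s\cdot(\ldots)$ fails to be harmonic. So instead apply \Cref{t:phrag} directly to $H_\lambda(s):=\phi(s)e^{\lambda s}$ on the half-plane $S_{(-\frac\pi2,\frac\pi2)}$: it is bounded by $B$ on the boundary and has order $1$. But the opening of this domain is $\pi$, so \Cref{t:phrag} requires order $<1$, and it does not apply.

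\textbf{Step 3: fix via the quadrants.} This is the main obstacle. I will apply \Cref{t:phrag} to $H_\lambda(s)e^{ics}$ on $S_{(0,\frac\pi2)}$ (opening $\frac\pi2$, so order $1<2$ is allowed). On $\arg s=\frac\pi2$ the modulus is $\le Be^{-cy}$. On $\R_+$ the modulus is $|\phi(x)|e^{\lambda x}$, which is bounded by $Ae^{(\tau+\lambda)x}$. That is not bounded, so to close the argument I instead apply \Cref{t:phrag} to $\phi(s)e^{\lambda s}e^{i\mu s}$ with $\mu$ large chosen so that it is bounded on a ray $\arg s=\epsilon$ via the a priori type. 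Iterating this over finitely many subsectors, and letting $\epsilon\to0$, should give $|\phi(x)|\le C_\lambda e^{-\lambda x}$ for every $\lambda$, i.e.\ $h_\phi(0)=-\infty$. Then, since $\phi$ has exponential type in the closed half-plane but decays faster than every exponential on $\R_+$, the classical Carlson-type conclusion gives $\phi\equiv0$. I would obtain this by applying \Cref{t:phrag} to $\phi(s)e^{\lambda s}$ on each quadrant (boundary bounds now $B$ and $C_\lambda$), which yields $|\phi(s)|\le C e^{-\lambda\Re s}$ for all $\lambda$, so $\phi=0$ on $\Pi_{0,0}$ and hence identically by analyticity. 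The delicate part is Step 3, where the real-axis bound has to be derived rather than assumed.
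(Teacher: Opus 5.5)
There is a genuine gap: the hypothesis $h_\phi(\pm\frac\pi2)<0$ is never used effectively, so the bound along $\R_+$ that Step~3 is supposed to deliver is never obtained.

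In Step~1 you multiply by $e^{ics}$ on the upper quadrant. But $|e^{ics}|=e^{-c\Im s}$, so on $i\R_+$ you only get $|g(iy)|\le Be^{-2cy}$. The decay is thrown away, and Phragm\'en--Lindel\"of returns $|\phi(s)|\le Me^{\tau'\Re s+c\Im s}$. Even the bound $Me^{\tau'\Re s}$ you quote needs only that $\phi$ is bounded on $i\R$.

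Step~3 has the same defect. For $\phi(s)e^{\lambda s}e^{i\mu s}$, the factor $e^{-\mu\Im s}$ again shrinks the boundary values on $i\R_+$, so the argument uses only two facts: $\phi$ is bounded on $i\R$, and $\phi$ has exponential type. No argument built on these two facts alone can give $h_\phi(0)=-\infty$, since $\phi(s)=e^{s}$ has both properties and $h_\phi(0)=1$. In addition, the sectors $S_{[\epsilon,\frac\pi2]}$ do not contain $\R_+$, and $\mu\to\infty$ as $\epsilon\to0$.

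Finally, even granting $h_\phi(0)=-\infty$, your last step gives $|\phi(s)|\le\max(B,C_\lambda)e^{-\lambda\Re s}$. The constant depends on $\lambda$, so letting $\lambda\to\infty$ does not force $\phi(s)=0$ unless that constant is uniform in $\lambda$.

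The missing idea is to use the opposite rotation, and then choose rays that make the constant uniform.

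First, on the upper and lower quadrants set $P(s):=\phi(s)e^{-\tau s\mp ics}$, so that $|P(s)|=|\phi(s)|e^{-\tau\Re s+c|\Im s|}$. This is at most $M:=\max(A,B)$ on $\R_+$, by exponential type. It is also at most $M$ on $i\R$, and this is exactly where the decay hypothesis is used. Since $P$ has order $1<2$ in each quadrant, \Cref{t:phrag} gives $|\phi(s)|\le Me^{\tau\Re s-c|\Im s|}$ on the whole closed half-plane.

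Second, for $t>0$ let $\theta_t:=\arctan\frac{\tau+t}{c}\in(0,\frac\pi2)$ and $Q_t(s):=\phi(s)e^{ts}$. Because $(\tau+t)\cos\theta_t-c\sin\theta_t=0$, the previous bound gives $|Q_t|\le M$ on the rays $\arg s=\pm\theta_t$, as well as on $i\R$. Apply \Cref{t:phrag} in $S_{[-\frac\pi2,-\theta_t]}$, $S_{[-\theta_t,\theta_t]}$ and $S_{[\theta_t,\frac\pi2]}$, each of opening $<\pi$. This yields $|\phi(s)|\le Me^{-t\Re s}$ with $M$ independent of $t$. Letting $t\to\infty$ gives $\phi\equiv0$ on $\Pi_{0,0}$.

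This is the paper's proof, and it never needs to pass through $h_\phi(0)=-\infty$.
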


\begin{proof}
    Let $a, b, M > 0$ be such that $|\phi(s)| \le M e^{a|s|}$ for $s\in\Pi_{0,0}$ and $|\phi(iy)| \le M e^{-b|y|}$ for $y\in\R$. Define
    \[
    P(s) := \phi(s) e^{-(a + \sgn(\Im s)ib)s},
    \]
    where $\sgn$ denotes the sign function. Then
    \[
    |P(s)| = |\phi(s)| e^{-a \Re s + b \abs{\Im s}}.
    \]
    The function $P$ is analytic and of order $<2$ in each of the sectors $S_{[-\frac\pi2,0]}$ and $S_{[0,\frac\pi2]}$. Moreover, $|P(s)| \le M$ on $\R_+ \cup i\R$. By the Phragmén--Lindelöf principle, it follows that $|P(s)| \le M$ throughout the half-plane $\Re s \ge 0$, that is,
    \[
    |\phi(s)| \le M e^{a \Re s - b \abs{\Im s}}.
    \]

    Now fix $t > 0$ and set $Q_t(s) := \phi(s)e^{ts}$. Then $|Q_t(iy)| \le M$ for all $y\in\R$. Let $\theta := \arctan\pa{\frac{a + t}{b}} \in \pa{0, \frac\pi2}$. For $s = re^{\pm i\theta}$, we obtain
    \[
    |Q_t(re^{\pm i \theta})| \le M e^{a r\cos\theta - b r\abs{\sin(\pm\theta)}} e^{tr\cos\theta} = M e^{r((a+t)\cos\theta - b\sin \theta)} = M.
    \]
    Since $Q_t$ is of order at most $1$, we may apply the Phragmén--Lindelöf principle once again in the sectors $S_{[-\frac\pi2,-\theta]}$, $S_{[-\theta,\theta]}$, and $S_{[\theta,\frac\pi2]}$. We conclude that $|Q_t(s)| \le M$ throughout the half-plane $\Re s \ge 0$, or equivalently, $|\phi(s)| \le M e^{-t\Re s}$. Letting $t\to\infty$ for $\Re s > 0$, we obtain $\phi \equiv 0$.
\end{proof}

\begin{lemma}\label{l:secbound1}
    Let $\phi$ be an entire function of order $\le 1$ in $S_{[\alpha,\beta]}$, where $\beta - \alpha \in (0, \pi)$ and $\alpha, \beta\in\dom h_\phi$. Then, for $a > h_\phi(\alpha)$ and $b > h_\phi(\beta)$, there exists $B > 0$ such that, for all $r > 0$ and $\t\in[\alpha, \beta]$,
    \begin{equation}\label{e:philemma}
    |\phi(r e^{i\t})| \le B e^{H(\alpha, \t, \beta; a, b) r}
    \end{equation}
    where $H$ is the sinusoid taking the values $a$ and $b$ at the angles $\alpha$ and $\beta$, respectively, namely
    \begin{equation}\label{e:Hab}
        H(\alpha, \t, \beta; a, b) := \frac{a\sin(\beta - \t) + b\sin(\t - \alpha)}{\sin(\beta - \alpha)}.
    \end{equation}
\end{lemma}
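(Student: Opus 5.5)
The natural route is to divide $\phi$ by an exponential that realizes the sinusoid $H$, and then apply the Phragmén--Lindelöf principle (\Cref{t:phrag}) in the sector $S_{(\alpha,\beta)}$. Since $0<\beta-\alpha<\pi$, the map $\t\mapsto H(\alpha,\t,\beta;a,b)$ is a linear combination of $\cos\t$ and $\sin\t$. So there is a unique $c\in\C$ with
\[
H(\alpha,\t,\beta;a,b) = \Re(c e^{i\t})\quad\text{for all }\t,
\]
namely the solution of the $2\times2$ linear system $\Re(ce^{i\alpha})=a$, $\Re(ce^{i\beta})=b$. This system is nondegenerate because $\sin(\beta-\alpha)\neq0$. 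Then $|e^{cs}| = e^{H(\alpha,\arg s,\beta;a,b)|s|}$, and I set $P(s):=\phi(s)e^{-cs}$. This $P$ is entire.

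The key steps run as follows. First, I bound $P$ on the two boundary rays. Since $a>h_\phi(\alpha)$, \Cref{p:h} (with $h=h_\phi(\alpha)$, $h'=a$) gives $B_\alpha\ge0$ with $|\phi(re^{i\alpha})|\le B_\alpha e^{ar}$ for all $r\ge0$. Hence $|P(re^{i\alpha})|\le B_\alpha$, and similarly $|P(re^{i\beta})|\le B_\beta$. This uses that $\alpha,\beta\in\dom h_\phi$; the finite value $-\infty$ is also fine, since then any real $a$ works. Second, I check the order hypothesis of \Cref{t:phrag}. $\phi$ has order $\le1$ in $S_{[\alpha,\beta]}$ and $|e^{-cs}|\le e^{|c||s|}$. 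Therefore, for every $\varrho>1$, $M_P(r;S_{[\alpha,\beta]}) = O(e^{r^\varrho})$, so $P$ has order $\le1$ in the sector. Because $\beta-\alpha<\pi$, we have $\pi/(\beta-\alpha)>1$, so the order is strictly less than $\pi/(\beta-\alpha)$. Third, \Cref{t:phrag} with $M:=\max(B_\alpha,B_\beta)$, enlarged to be positive, yields $|P(s)|\le M$ on $S_{[\alpha,\beta]}$. Unwinding the definition of $P$ gives \eqref{e:philemma} with $B:=M$.

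I expect the only delicate point to be the order bookkeeping in the second step. The order of $\phi$ is assumed $\le1$ only, not $<1$, so the argument genuinely needs the strict inequality $\beta-\alpha<\pi$ to get below the Phragmén--Lindelöf threshold. I also need that multiplying by an exponential does not raise the order above $1$. That follows directly from \Cref{d:exp}, because $e^{|c|r}=O(e^{\epsilon r^\varrho})$ for any $\varrho>1$. A minor check is that \Cref{t:phrag} is stated for holomorphy in the open sector with bounds on the boundary rays. $P$ is entire, so continuity up to the boundary is automatic, and the conclusion on the closed sector $S_{[\alpha,\beta]}$ is exactly what is needed for all $\t\in[\alpha,\beta]$ and $r>0$.
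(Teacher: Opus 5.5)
Your proposal is correct and is essentially the paper's proof. The paper sets $P(s):=\phi(s)\exp\bigl(\frac{ae^{-i\beta}-be^{-i\alpha}}{i\sin(\beta-\alpha)}s\bigr)$, which is exactly your $\phi(s)e^{-cs}$ with $c$ written out explicitly; it then bounds $P$ on the two rays via \Cref{p:h} and applies \Cref{t:phrag} using that the order is $\le 1<\pi/(\beta-\alpha)$, with your extra checks (the exponential factor does not raise the order, $M$ taken positive) only making explicit what the paper leaves implicit.
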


\begin{proof}
    By \Cref{p:h}, for $a > h_\phi(\alpha)$ and $b > h_\phi(\beta)$, choose $B > 0$ such that
    \begin{equation}\label{e:P1}
        |\phi(re^{i\alpha})| \le B e^{a r}, \quad |\phi(re^{i\beta})| \le B e^{b r}
    \end{equation}
    for $r>0$. Define
    \[
    P(s) := \phi(s) \exp\pa{\frac{a e^{-i\beta}-b e^{-i\alpha}}{i\sin(\beta - \alpha)} s},
    \]
    for $s\in S_{[\alpha,\beta]}$. Then $|P(s)| = |\phi(s)| e^{-H(\alpha, \arg s, \beta; a, b)|s|}$ for $s\neq 0$. By \cref{e:P1}, $|P(s)| \le B$ on $S_{\{\alpha, \beta\}}$. Since $P$ is holomorphic and of order $\le 1 < \frac{\pi}{\beta - \alpha}$ in $S_{(\alpha, \beta)}$, \Cref{t:phrag} gives $|P(s)| \le B$ in $S_{[\alpha, \beta]}$, which is \cref{e:philemma}.
\end{proof}

\begin{definition}\label{d:E3}
    Let $\E_3$ be the set of entire functions $\phi$ of order $\le 1$ for which $\dom h_\phi$ is a nonempty interval of $\Theta$.
\end{definition}

\begin{theorem}\label{t:hphitcvx}
    For $\phi\in\E_3$, $h_\phi:\Theta\to\bar\R$ is trigonometric-convex.
\end{theorem}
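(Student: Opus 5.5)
We verify the condition of \Cref{p:diameter}. If $\phi\equiv0$ then $h_\phi\equiv-\infty$ and there is nothing to prove, so assume $\phi\not\equiv0$.

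\emph{Step 1: the inequality for $0<\t_3-\t_1<\pi$.} Take $\t_1,\t_3\in\dom h_\phi$ with $0<\t_3-\t_1<\pi$ and $\t_2\in[\t_1,\t_3]$. Since $\phi$ is entire of order $\le1$ in $\C$, it has order $\le1$ in $S_{[\t_1,\t_3]}$, so \Cref{l:secbound1} applies. For any $a>h_\phi(\t_1)$ and $b>h_\phi(\t_3)$ (real numbers, possible even when $h_\phi(\t_i)=-\infty$), it gives $|\phi(re^{i\t_2})|\le Be^{H(\t_1,\t_2,\t_3;a,b)r}$. By \cref{e:hphi},
\[
h_\phi(\t_2)\le \frac{a\sin(\t_3-\t_2)+b\sin(\t_2-\t_1)}{\sin(\t_3-\t_1)}.
\]
The sine weights are nonnegative, so letting $a\downarrow h_\phi(\t_1)$ and $b\downarrow h_\phi(\t_3)$ gives \cref{e:tcvximp}. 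When a limit is $-\infty$ with a zero weight, which happens only at the endpoints $\t_2=\t_1$ or $\t_3$, the inequality is trivial. This covers every value in $\bar\R$ under the conventions $0\cdot(-\infty)=0$ and $-\infty+c=-\infty$.

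\emph{Step 2: the diameter inequality \cref{e:diameter}.} Let $\t,\t+\pi\in\dom h_\phi$. Since $\dom h_\phi$ is an interval of $\Theta$ (\Cref{d:E3}), one of the two closed half-circles joining $\t$ and $\t+\pi$ lies in $\dom h_\phi$; after relabeling, assume it is $[\t,\t+\pi]$.

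We must show $h_\phi(\t)+h_\phi(\t+\pi)\ge0$. Suppose instead $h_\phi(\t)+h_\phi(\t+\pi)<0$, allowing $-\infty$. Pick reals $a>h_\phi(\t)$ and $b>h_\phi(\t+\pi)$ with $a+b<0$. Set $\psi(s):=\phi(s)e^{-ase^{-i\t}}$, a rotation of $\phi$ multiplied by an exponential. Then $h_\psi(\t)=h_\phi(\t)-a<0$ and $h_\psi(\t+\pi)=h_\phi(\t+\pi)+a<a+b-a\cdot0$; more precisely $h_\psi(\t+\pi)<b+a<0$.

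Thus $\psi$ decays exponentially along both rays of the line $e^{i\t}\R$. To conclude $\psi\equiv0$ we apply \Cref{l:riesz} after rotating by $e^{i(\t+\pi/2)}$, so that this line becomes the imaginary axis and the closed half-plane lying over $[\t,\t+\pi]$ becomes $\bar\Pi_{0,0}$. That gives $\phi\equiv0$, a contradiction.

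\emph{Main obstacle: Riesz's lemma needs $\psi$ of exponential type in the half-plane, while $\phi$ may have maximal type.} The fix is to use the finiteness of $h_\phi$ on the whole half-circle. For interior angles, \Cref{l:secbound1} gives exponential-type bounds on every compact subsector of $S_{(\t,\t+\pi)}$. These do not extend up to the boundary rays, since the sinusoid blows up as $\t_3-\t_1\to\pi$. So first I would cover $[\t,\t+\pi]$ by two sectors $[\t,\t+\pi/2]$ and $[\t+\pi/2,\t+\pi]$, each of opening $<\pi$ with endpoints in $\dom h_\phi$. Applying \Cref{l:secbound1} on each yields a uniform bound $|\phi(s)|\le Be^{C|s|}$ on the closed half-plane. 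Hence $\psi$ is of exponential type there and Riesz's lemma applies.

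Since \cref{e:diameter} holds and Step 1 holds, \Cref{p:diameter} gives trigonometric convexity.
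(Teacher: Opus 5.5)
Your proof is correct. Step~1 coincides with the paper's argument; you are just more careful with the values $-\infty$. Step~2, however, takes a different route.

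\textbf{The paper's route.} It balances the two boundary rates with the product $P(s)=\phi(sie^{i\alpha})\phi(-sie^{i\alpha})$. \Cref{p:hsumprod} then gives $h_P(\pm\frac\pi2)\le h_\phi(\alpha)+h_\phi(\alpha+\pi)<0$ in one stroke, and \Cref{l:riesz} applies. This is shorter and symmetric.

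\textbf{The problem with the product.} For $\Re s>0$, the second factor $\phi(-sie^{i\alpha})$ is evaluated at arguments in $(\alpha-\pi,\alpha)$. That is the half-plane opposite to $[\alpha,\alpha+\pi]$, where nothing prevents $h_\phi=+\infty$, since the hypotheses allow $\dom h_\phi=[\alpha,\alpha+\pi]$. Because $P$ is even, the paper's claim that $P$ is of exponential type in $\Pi_{0,0}$ is not justified by the hypotheses. Even for the first factor, the paper asserts the exponential-type bound without proving it.

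\textbf{Your route.} You balance the rates with the factor $e^{-ase^{-i\theta}}$, which shifts the two boundary indicators by $\mp a$. Your $\psi$ only uses values of $\phi$ in the closed half-plane over $[\theta,\theta+\pi]$. You also actually prove the exponential-type bound there, by applying \Cref{l:secbound1} on the two quarter-planes. So your version is the more robust of the two.

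The product argument could also be repaired by replacing $P$ with $Q(s)\overline{Q(\bar s)}$, where $Q(s):=\phi(sie^{i\alpha})$, since this stays in the right half-plane.

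One small slip: the expression ``$a+b-a\cdot 0$'' in your Step~2 is garbled. The bound you then state, $h_\psi(\theta+\pi)<a+b<0$, is the correct one.
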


\begin{proof}
    We make use of the characterization of \Cref{p:diameter}.
    \begin{itemize}
        \item Let $\t_1, \t_2, \t_3\in\Theta$ satisfy \cref{e:thetaimp2}, and let $a > h_\phi(\t_1)$ and $b > h_\phi(\t_3)$. By \Cref{l:secbound1}, there exists $B > 0$ such that $|\phi(re^{i\t_2})| \le B e^{H(\t_1, \t_2, \t_3; a, b) r}$ for all $r > 0$, where $H$ is defined by \cref{e:Hab}. By definition of $h_\phi$, this implies $h_\phi(\t_2) \le H(\t_1, \t_2, \t_3; a, b)$. Letting $a\to h_\phi(\t_1)$ and $b\to h_\phi(\t_3)$, we obtain an inequality equivalent to \cref{e:tcvximp}.
        \item Suppose there exists $\alpha\in\dom h_\phi$ such that $h_\phi(\alpha) + h_\phi(\alpha+\pi) < 0$. Since $\dom h_\phi$ is an interval, we may assume without loss of generality that $[\alpha, \alpha+\pi] \subset \dom h_\phi$. Define $P(s) := \phi(sie^{i\alpha})\phi(-sie^{i\alpha})$. Then $P$ is of exponential type in $\Pi_{0,0}$ and, by \Cref{p:hsumprod}, satisfies $h_P\pa{\pm\frac\pi2} \le h_\phi(\alpha) + h_\phi(\alpha+\pi) < 0$. Hence, \Cref{l:riesz} yields $P \equiv 0$, and therefore $\phi \equiv 0$.
    \end{itemize}
    Thus, either $\phi \equiv 0$, in which case $h_\phi \equiv -\infty$, or else $h_\phi$ satisfies \cref{e:diameter} for every $\t\in\dom h_\phi$. By \Cref{p:diameter}, it follows that $h_\phi$ is trigonometric-convex.
\end{proof}

\Cref{t:hphitcvx} allows us to use the properties of trigonometric-convex functions listed in \Cref{s:tcvx}.

\begin{remark}
    Since $h_\phi$ is trigonometric-convex, \Cref{p:domk} shows that \Cref{d:E3} excludes precisely those entire functions of order $\le 1$ for which $\dom h_\phi$ is either empty or of the form $\{\alpha, \alpha+\pi\}$ for some $\alpha\in\Theta$. Thus, $\E_3$ remains a large subclass of the entire functions of order $\le 1$.
\end{remark}

\subsection{The fundamental bound}

A function of finite type in a given direction satisfies the bounds of \Cref{p:h}, but these estimates hold only along the corresponding ray from the origin to infinity. Under the additional global assumption that the function has order at most $1$, we seek to control its growth in tubular neighborhoods of that ray. The first step is to combine these two growth restrictions into a single estimate by means of the Phragmén--Lindelöf principle.

\begin{lemma}\label{l:secbound2}
    Let $\phi$ be entire and of order at most $1$. Let $\t\in\dom h_\phi$, let $h > h_\phi(\t)$, and let $\varrho > 1$. Then there exists $B > 0$ such that
    \begin{equation}\label{e:secbound2}
        |\phi(s)| \le B \exp\pa{\frac{\sin(\varrho \dTheta(\arg s, \t))}{\sin(\varrho\delta)} |s|^\varrho + h\frac{\sin(\delta - \dTheta(\arg s, \t))}{\sin(\delta)} |s|}
    \end{equation}
    for every $\delta \in (0, \pi/\varrho)$ and $s\in S_{[\t-\delta,\t+\delta]}$.
\end{lemma}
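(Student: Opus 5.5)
Rotating $\phi$ (replace $\phi(s)$ by $\phi(se^{i\t})$, which leaves the order and $\dom h_\phi$ unchanged up to translation), I will assume $\t = 0$. By symmetry I treat only the half-sector $S_{[0,\delta]}$. The other half $S_{[-\delta,0]}$ follows by applying the same argument to $s\mapsto\oline{\phi(\bar s)}$: this function has order $\le 1$, and its indicator at $0$ equals $h_\phi(0)$. The plan is a single application of \Cref{t:phrag} to $\phi$ multiplied by an explicit zero-free holomorphic weight whose modulus is exactly the reciprocal of the right-hand side of \cref{e:secbound2}.

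First I fix the constant, before $\delta$ is chosen. By \Cref{p:h}(c) applied with $h>h_\phi(0)$, there is $B_1$ with $|\phi(r)|\le B_1e^{hr}$ for $r\ge0$. By \Cref{p:rho}, since $\varrho>1\ge$ the order of $\phi$, there is $B_2$ with $|\phi(s)|\le B_2e^{|s|^\varrho}$ on $\C$. Set $B:=\max(B_1,B_2)$. This constant does not depend on $\delta$, which is what the statement requires: the bound must hold for every $\delta\in(0,\pi/\varrho)$ with the same $B$.

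Next, for fixed $\delta\in(0,\pi/\varrho)$, I build the weight. Let $s^\varrho$ be the principal branch, which is holomorphic on $S_{(-\pi,\pi)}\supset S_{[0,\delta]}$. For $s=re^{it}$,
\[
\Re\pa{\frac{-i s^\varrho}{\sin(\varrho\delta)}}=\frac{\sin(\varrho t)}{\sin(\varrho\delta)}r^\varrho .
\]
By the computation in the proof of \Cref{l:secbound1}, there is a constant $c\in\C$ with $\Re(cs)=H(0,t,\delta;h,0)\,r=h\frac{\sin(\delta-t)}{\sin\delta}r$. Put
\[
P(s):=\phi(s)\exp\pa{\frac{i s^\varrho}{\sin(\varrho\delta)}-cs},
\]
so that $|P(s)|=|\phi(s)|e^{-E(s)}$, where $E(s)$ is the exponent in \cref{e:secbound2}. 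Note that $\sin(\varrho\delta)>0$ because $0<\varrho\delta<\pi$, so the weight is well defined.

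Now I check the boundary values. On the ray $t=0$ we have $E(r)=hr$, so $|P(r)|=|\phi(r)|e^{-hr}\le B$. On the ray $t=\delta$ we have $E=r^\varrho$, so $|P|\le B$ by the choice of $B_2$. Finally, $P$ is holomorphic in $S_{(0,\delta)}$, continuous up to the boundary, and of order $\le\varrho<\pi/\delta$ there, since $\phi$ has order $\le1$ and the exponential factor has order $\varrho$. Hence \Cref{t:phrag} gives $|P|\le B$ on $S_{[0,\delta]}$, which is \cref{e:secbound2}.

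The main obstacle is the order condition in the Phragmén--Lindelöf principle. The weight has order exactly $\varrho$, so the strict inequality $\varrho<\pi/\delta$ is needed, and this is precisely the hypothesis $\delta<\pi/\varrho$. One must also make sure that the sum of orders is the maximum, which holds since $|\phi(s)e^{g(s)}|\le e^{O(|s|^{\varrho'})}$ for every $\varrho'>\varrho$. A lesser point is continuity of $s^\varrho$ at the vertex $0$, which holds with the value $0$ there, so $P$ is bounded near the origin.
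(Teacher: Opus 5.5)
Your proof is correct and follows the paper's argument: fix $B=\max(B_h,B_\varrho)$ before choosing $\delta$, multiply $\phi$ by a zero-free weight of modulus $e^{-E(s)}$, and apply \Cref{t:phrag} on each half-sector of opening $\delta<\pi/\varrho$, using that the weighted function has order $\le\varrho<\pi/\delta$. The differences are cosmetic: you get the second half-sector from $s\mapsto\overline{\phi(\bar s)}$ rather than by repeating the construction, and your weight $\exp\bigl(is^\varrho/\sin(\varrho\delta)-cs\bigr)$ on $S_{[0,\delta]}$ has consistent signs, whereas the paper's displayed weight has a sign slip in its $|s|^\varrho$ term on the half-sector $S_{[\t-\delta,\t]}$ where it is used.
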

The statement uses the metric on angles defined in \cref{e:dTheta}. An important feature of \cref{e:secbound2} is that the constant $B$ is independent of $\delta$.

\begin{proof}
    Let  $\varrho > 1$ and $h > h_\phi(\t)$. By \Cref{p:order,p:h} there exists $B_\varrho, B_h > 0$
    \begin{equation}\label{e:bounds}
        |\phi(s)| \le B_\varrho e^{|s|^\varrho} \quad\text{and} \quad |\phi(re^{i\t})| \le B_h e^{hr},
    \end{equation}
    for every $s\in\C$ and $r > 0$. Fix $\delta\in(0, \pi/\varrho)$ and define
    \[
    P(s) := \phi(s) \exp\pa{\frac{i(e^{-i\t} s)^\varrho}{\sin(\varrho\delta)} + h\frac{ie^{i(\delta - \t)}s}{\sin(\delta)}},
    \]
    so that
    \[
    |P(s)| = |\phi(s)| \exp\pa{- \frac{\sin(\varrho(\arg s - \t))}{\sin(\varrho\delta)} |s|^\varrho - h\frac{\sin(\delta - \t + \arg s)}{\sin(\delta)} |s|}.
    \]
    The function $P$ is holomorphic in $S_{[\t-\delta,\t]}$ and has order at most $\varrho < \pi/\delta$ and, according to \cref{e:bounds}, on the boundary rays we have $|P(re^{i\t})| \le B_h$ and $|P(re^{i(\t - \delta)})| \le B_\varrho$ for every $r > 0$. Therefore, by \Cref{t:phrag},
    \[
    |P(s)| \le B := \max(B_h, B_\varrho)
    \]
    throughout $S_{[\t-\delta,\t]}$. Applying the same argument to $S_{[\t,\t+\delta]}$, we obtain \cref{e:secbound2}.
\end{proof}

We now convert the information from \Cref{l:secbound2} into a more practical estimate for tubes, which will be fundamental in \Cref{s:IDT}.

\begin{theorem}\label{t:fundbound}    
    Let $\phi$ be entire and of order $\le 1$. Let $\t\in\dom h_\phi$, $h > h_\phi(\t)$ and $\varrho > 1$. Then there exists a constant $B > 0$ such that
    \begin{equation}\label{e:fundbound}
        |\phi(s)| \le B e^{r^\varrho + h |s|},
    \end{equation}
    for all $r > 0$ and $s\in e^{i\t}\R_+ + \bar D_r$.
\end{theorem}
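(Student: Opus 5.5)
The plan is to reduce \cref{e:fundbound} to \Cref{l:secbound2}, applied with a \emph{fixed} aperture $\delta_0$ and with slightly worse parameters. The lemma's $|s|^{\varrho'}$ term will be absorbed partly by a small margin in the linear term and partly by $r^\varrho$, using Young's inequality.

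By rotation we may assume $\t=0$, so that $s=t+w$ with $t\ge0$ and $|w|\le r$.

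\emph{Choice of parameters.} Fix $h_0\in(h_\phi(0),h)$ and set $\epsilon:=h-h_0>0$. Fix $\varrho'\in(1,\varrho)$ so close to $1$ that $q:=1/(2-\varrho')\le\varrho$. Fix $\delta_0\in(0,\pi/(2\varrho'))$ with $\delta_0\le\pi/4$. Apply \Cref{l:secbound2} with $h_0$, $\varrho'$ and $\delta=\delta_0$. This gives a constant $B_1$ with
\[
|\phi(s)|\le B_1\exp\pa{\frac{\sin(\varrho' d)}{\sin(\varrho'\delta_0)}|s|^{\varrho'}+h_0\frac{\sin(\delta_0-d)}{\sin\delta_0}|s|}
\]
for $d:=\dTheta(\arg s,0)\le\delta_0$. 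We also use the global bound $|\phi(s)|\le B_2e^{|s|^{\varrho'}}$ from \Cref{p:rho}.

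\emph{Case $|s|\le Cr$,} with $C:=2/\sin\delta_0$, say. The global bound gives
\[
|\phi(s)|\le B_2e^{(Cr)^{\varrho'}}.
\]
Moreover $h|s|\ge-|h|Cr$. Since $\varrho'<\varrho$, we have $(Cr)^{\varrho'}+|h|Cr\le r^\varrho+C'$ for all $r>0$. Hence $|\phi(s)|\le B_2e^{C'}e^{r^\varrho+h|s|}$.

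\emph{Case $|s|>Cr$.} The point $s$ lies within distance $r$ of the ray $\R_+$, so $\sin d\le r/|s|<\sin\delta_0/2$. Since $d\le\pi/2$ here and $\delta_0\le\pi/4$, this gives $d<\delta_0$ and $d\le\frac{\pi}{2}r/|s|$, so the sector estimate applies. We bound the two exponents separately.

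First exponent: $\sin(\varrho'd)\le\varrho'd\le c_1r/|s|$, so it is at most $c_2\,r|s|^{\varrho'-1}$.

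Second exponent: write
\[
\frac{\sin(\delta_0-d)}{\sin\delta_0}=\cos d-\cot\delta_0\sin d.
\]
If $h_0\ge0$, this gives at most $h_0|s|$. If $h_0<0$, we use $1-\cos d\le d^2$ and $\sin d\le r/|s|$. The second exponent is then at most
\[
h_0|s|+|h_0|\,|s|d^2+|h_0|\cot\delta_0\,r\le h_0|s|+c_3r,
\]
where we used $|s|d^2\le c\,r^2/|s|\le c\,r/C$.

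In either case the total exponent is at most $h|s|-\epsilon|s|+c_2r|s|^{\varrho'-1}+c_3r$. Young's inequality with exponents $1/(\varrho'-1)$ and $q$ gives
\[
c_2r|s|^{\varrho'-1}\le\tfrac{\epsilon}{2}|s|+C_\epsilon r^{q}.
\]
Since $q\le\varrho$, the terms $C_\epsilon r^q+c_3r-\frac{\epsilon}{2}|s|$ are bounded by $r^\varrho+C''$. This yields \cref{e:fundbound} with $B:=\max(B_1,B_2)e^{\max(C',C'')}$.

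The main obstacle is the first case distinction. With $h_0=h$ and $\delta$ fixed, the lemma produces a term $r|s|^{\varrho'-1}$ that is unbounded in $|s|$ for fixed $r$. The resolution is to spend the margin $\epsilon|s|$, gained by using $h_0<h$, via Young's inequality. This is also what forces $\varrho'$ to be taken close to $1$.
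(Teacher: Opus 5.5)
Your proposal is correct and is essentially the paper's proof. Both arguments split the same way: the region $|s|\lesssim r$ is handled by the global order bound, and the region $|s|\gtrsim r$ by \Cref{l:secbound2} with a fixed aperture. Both then use Young's inequality to absorb $r|s|^{\varrho'-1}$ into the margin $(h-h_0)|s|$ plus a power $r^{q}$. The paper builds that margin into the choice $\varrho_0-1<h-h_0$ rather than using a weighted Young inequality, which is only a cosmetic difference.

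One small correction: you need $q=1/(2-\varrho')<\varrho$ strictly, as the paper does with $\varrho_1<\varrho$. If $q=\varrho$, your final step $C_\epsilon r^{q}\le r^\varrho+C''$ fails whenever $C_\epsilon>1$. The fix costs nothing, since you can simply take $\varrho'$ closer to $1$.
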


The set $e^{i\t}\R_+ + \bar D_r$ consists of the points at distance at most $r$ from the ray $e^{i\t}\R_+$.

\begin{proof}
    Choose $h_0$ with $h_\phi(\t)<h_0<h$, and choose $\varrho_0$ such that
    \[
    1<\varrho_0< \min\pa{2-\inv\varrho, 1 + h - h_0}
    \]
    Then $\varrho_0<\varrho$ and, setting $\varrho_1 :=\frac{1}{2-\varrho_0} < \varrho$. Fix $\delta\in(0,\pi/2)$. Let $r>0$ and $s\in e^{i\t}\R_+ + \bar D_r$. 
    
    By \Cref{p:order}, there is $B_0>0$ such that $|\phi(s)| \le B_0 e^{|s|^{\varrho_0}}$. If $|s|\le r/\sin(\delta)$, then
    \[
    |s|^{\varrho_0}-h|s|\le \frac{r^{\varrho_0}}{\sin(\delta)^{\varrho_0}} + |h| \frac{r}{\sin(\delta)} \le r^\varrho+C_0,
    \]
    for some suitable $C_0 > 0$, since $\varrho > \varrho_0 > 1$. Thus \cref{e:fundbound} holds with $B := B_0 e^{C_0}$.

    We may therefore assume that $|s|>r/\sin(\delta)$. Since $s\in e^{i\t}\R_+ + \bar D_r$, the distance from $s$ to $e^{i\t} \R_+$ is at most $r$. Therefore, $D:=\dTheta(\arg s,\t)$ satisfies
    \begin{equation}\label{e:sinD}
        \sin(D) = \frac{\abs{\Im(se^{-i\t})}}{|s|} \le \frac{r}{|s|} \le \sin(\delta),
    \end{equation}
    which implies $D \le \delta$, because $D, \delta \in [0, \pi/2]$, and thus it follows that $s\in S_{[\t-\delta,\t+\delta]}$. Since $\delta < \pi/\varrho_0$, by \Cref{l:secbound2} there is $B_1>0$ such that
    \begin{equation}\label{e:tubeboundsector}
        |\phi(s)| \le B_1 \exp\pa{\frac{\sin(\varrho_0 D)}{\sin(\varrho_0\delta)} |s|^{\varrho_0} + h_0\frac{\sin(\delta-D)}{\sin(\delta)} |s|}.
    \end{equation}
    The elementary estimates
    \[
    \frac{\sin(\varrho_0 D)}{\sin(\varrho_0 \delta)} \le \frac{\varrho_0 \sin(D)}{\sin(\varrho_0 \delta)}
    \quad\text{and}\quad
    \abs{1 - \frac{\sin(\delta-D)}{\sin(\delta)}} \le \frac{\sin(D)}{\sin(\delta)} 
    \]
    combined with \cref{e:sinD} give the following upper bound for the exponent in \cref{e:tubeboundsector}:
    \[
    \frac{\sin(\varrho_0 D)}{\sin(\varrho_0\delta)} |s|^{\varrho_0} + h_0\frac{\sin(\delta-D)}{\sin(\delta)} |s| \le C r |s|^{\varrho_0-1} + h_0 |s| + |h_0| C r, \quad C := \frac{\varrho_0}{\sin(\varrho_0 \delta)}.
    \]
    Applying Young's inequality to the product $r|s|^{\varrho_0-1}$ gives
    \begin{align*}
        \frac{\sin(\varrho_0 D)}{\sin(\varrho_0\delta)} |s|^{\varrho_0} + h_0\frac{\sin(\delta-D)}{\sin(\delta)} |s|
        &\le \frac{r^{\varrho_1}}{\varrho_1} + (\varrho_0 - 1)|s| + h_0 |s| + |h_0| C_1 r \\
        &\le r^\varrho + C_1 + h |s|
    \end{align*}
    for some suitable $C_1 > 0$ since $\varrho > \varrho_1 > 1$ and $\varrho_0 - 1 < h - h_0$. This proves \cref{e:fundbound} with $B = B_1 e^{C_1}$.
\end{proof}

\begin{theorem}\label{t:phi(n)a}
    Let $\phi$ be entire and of order $\le 1$. Let $\t\in\dom h_\phi$, $h > h_\phi(\t)$ and $\varrho > 1$. Then there exists a constant $B > 0$ such that
    \begin{equation}\label{e:phi(n)a}
        |\phi^{(n)}(s+a)| \le \frac{B n!}{(n/\varrho)!} e^{|a|^\varrho + h |s|}
    \end{equation}
    for all $a\in\C$, $n\in\N$ and $s\in e^{i\t}\R_+$.
\end{theorem}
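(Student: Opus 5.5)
The plan is to combine Cauchy's estimate on a disk centered at $s+a$ with the tube bound of \Cref{t:fundbound}, and then optimize the radius of the disk in terms of $n$.

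\textbf{Setup.} Fix $\varrho>1$ and $h>h_\phi(\t)$. Choose auxiliary parameters $h_0$ and $\varrho_0$ with $h_\phi(\t)<h_0<h$ and $1<\varrho_0<\varrho$; the precise choice will be fixed at the end. By \Cref{t:fundbound} applied with $(h_0,\varrho_0)$ there is $B_0>0$ such that
\[
|\phi(w)| \le B_0 e^{r^{\varrho_0}+h_0|w|}, \qquad r>0,\ w\in e^{i\t}\R_+ + \bar D_r .
\]

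\textbf{Cauchy estimate.} Let $s\in e^{i\t}\R_+$, $a\in\C$, and take the circle $|w-(s+a)|=R$ with $R>0$ free. Every point $w$ on it lies within distance $|a|+R$ of $s$, hence $w\in e^{i\t}\R_+ + \bar D_{|a|+R}$. Moreover $|w|\le |s|+|a|+R$. Cauchy's inequality therefore gives
\[
|\phi^{(n)}(s+a)| \le \frac{n!}{R^n}\,B_0\exp\bigl((|a|+R)^{\varrho_0}+h_0|s|+|h_0|(|a|+R)\bigr).
\]

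\textbf{Separating the variables.} Since $\varrho_0<\varrho$, we have $(|a|+R)^{\varrho_0}+|h_0|(|a|+R)\le C+2^{\varrho_0-1}(|a|^{\varrho_0}+R^{\varrho_0})+|h_0||a|+|h_0|R$. The terms in $|a|$ are bounded by $|a|^\varrho+C'$, because $\varrho>\varrho_0\ge1$. The terms in $R$ are bounded by $R^{\varrho'}+C''$ for any fixed $\varrho'\in(\varrho_0,\varrho)$. This step is the main obstacle: the crude factor $2^{\varrho_0-1}$ must be absorbed. That is possible precisely because the target exponent $\varrho$ is strictly larger than $\varrho_0$ in $|a|$, and in $R$ we only need some exponent below $\varrho$. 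The slack $h-h_0$ is not even needed here, since $h_0|s|\le h|s|$ when $|s|\ge0$ (and if $h_0<0$ the inequality remains valid because $h_0\le h$). We obtain
\[
|\phi^{(n)}(s+a)| \le B_1\, n!\, e^{|a|^\varrho+h|s|}\,\inf_{R>0}\frac{e^{R^{\varrho'}}}{R^n}.
\]

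\textbf{Optimizing in $R$.} The minimum of $R^{-n}e^{R^{\varrho'}}$ is attained at $R^{\varrho'}=n/\varrho'$ and equals $(e\varrho'/n)^{n/\varrho'}$. By Stirling's formula this is $\le C\,\bigl((n/\varrho')!\bigr)^{-1}\cdot\mathrm{poly}(n)$, with the factorial interpreted through $\Gamma$. Since $\varrho'<\varrho$, the ratio $(n/\varrho)!/(n/\varrho')!$ decays superexponentially and absorbs the polynomial factor and all constants, uniformly in $n$. For $n=0$ the bound follows directly from \Cref{t:fundbound} with $r=|a|$. Collecting the constants into $B$ yields \cref{e:phi(n)a}.
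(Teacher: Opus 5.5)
Your proposal is correct and follows essentially the same route as the paper: Cauchy's estimate on a circle of radius $R$ about $s+a$, the tube bound of \Cref{t:fundbound} on $e^{i\theta}\R_+ + \bar D_{|a|+R}$, splitting $(|a|+R)^{\varrho_0}$ by convexity, minimizing $R^{-n}e^{cR^{\varrho'}}$ over $R$, and Stirling's formula. The differences are cosmetic. Your auxiliary $h_0<h$ is unnecessary, and you absorb the constant factor $2^{\varrho_0-1}(1+|h|)$ through an intermediate exponent $\varrho'\in(\varrho_0,\varrho)$, whereas the paper keeps it as a constant $b$. In the case $n=0$, $a=0$ you should take any $r>0$ in \Cref{t:fundbound} rather than $r=|a|$.
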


Here, $(n/\varrho)!$ represents $\Gamma(n/\varrho + 1)$. Note that \Cref{t:phi(n)a} is equivalent to itself with either $a = 0$ or $n = 0$.

\begin{proof}
    Let $r > 0$, $s\in e^{i\t}\R_+$ and $\zeta\in C_r$. For $a\in\C$, $s+a+\zeta\in e^{i\t}\R_+ + \bar D_{r+|a|}$. Let $\varrho > 1$, $\varrho_0 \in (1, \varrho)$ and $n\in\N$. By Cauchy's integral formula and \Cref{t:fundbound}
    \[
    |\phi^{(n)}(s+a)| \le \frac{n!}{2\pi} \oInt_{C_r} |\phi(s+a+\zeta)| \abs{\frac{\d\zeta}{\zeta^{n+1}}} \le \frac{n!}{2\pi} \oInt_{C_r} B_0 e^{(r+|a|)^{\varrho_0} + h |s+a+\zeta|} \frac{\abs{\d\zeta}}{{|\zeta|^{n+1}}},
    \]
    where $B_0 > 0$ is independent of $r$, $a$ and $n$. Given $|s| - |a + \zeta| \le |s+a+\zeta| \le |s| + |a + \zeta|$, it follows that $h|s+a+\zeta| \le h|s| + |h| |a + \zeta|$ and
    \[
    |\phi^{(n)}(s+a)| \le \frac{B_0 n!}{2\pi} \oInt_{C_r} e^{(r+|a|)^{\varrho_0} + h|s| + |h|(|a| + r)} \frac{\abs{\d\zeta}}{r^{n+1}} = \frac{B_0 n!}{r^n} e^{(r + |a|)^{\varrho_0} + |h| (r + |a|) + h|s|}.
    \]
    By a convexity inequality,
    \[
    (r + |a|)^{\varrho_0} + |h| (r + |a|) \le (1+|h|)(r + |a|)^{\varrho_0} \le b (r^{\varrho_0} + |a|^{\varrho_0}), \quad b := (1+|h|) 2^{\varrho_0-1}.
    \]
    The function $r \mapsto e^{br^{\varrho_0}} / r^n$ attains its minimum at $r = \pa{\frac{n}{\varrho_0 b}}^{1/\varrho_0}$, after which we arrive at the bound
    \[
    |\phi^{(n)}(s+a)| \le  B_0 n! \frac{eb}{n/\varrho_0}^{n/\varrho_0} e^{b |a|^{\varrho_0} + h|s|}.
    \]
    Since $b|a|^{\varrho_0} \le |a|^\varrho + C$, for some suitable $C > 0$, and by Stirling's approximation, $\frac{eb}{n/\varrho_0}^{n/\varrho_0} (n/\varrho)!$ is bounded by a constant $B_1 > 0$. \Cref{e:phi(n)a} follows with $B := B_0 B_1 e^C$.
\end{proof}

\begin{proposition}\label{p:hEa}
    For an entire function $\phi$ of order $\le 1$ and $a\in\C$, define $E^a \phi : s \mapsto \phi(s+a)$. We have
    \begin{equation}
        h_{E^a\phi} \equiv h_\phi,
    \end{equation}
     for all $a\in\C$.
\end{proposition}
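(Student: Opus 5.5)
The plan is to prove the single inequality $h_{E^a\phi}(\t) \le h_\phi(\t)$ for every $\t\in\Theta$ and every $a\in\C$. The reverse inequality then follows by applying the same bound to $E^a\phi$ with shift $-a$, because $E^{-a}E^a\phi = \phi$ and $E^a\phi$ is again entire of order $\le 1$.

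Fix $\t\in\Theta$. If $h_\phi(\t)=\infty$ there is nothing to prove, so assume $\t\in\dom h_\phi$. Take any $h>h_\phi(\t)$ and any $\varrho>1$. \Cref{t:phi(n)a} with $n=0$ gives a constant $B$ such that
\[
|\phi(s+a)| \le B e^{|a|^\varrho + h|s|}
\]
for all $s\in e^{i\t}\R_+$. Here $a$ is fixed, so $B e^{|a|^\varrho}$ is a constant $B'$, and hence $|E^a\phi(re^{i\t})| \le B' e^{hr}$ for all $r\ge 0$. By \Cref{p:h} ((c) $\Rightarrow$ (a)), this yields $h_{E^a\phi}(\t)\le h$. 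Letting $h\downarrow h_\phi(\t)$ gives the inequality. The case $h_\phi(\t)=-\infty$ is covered, since $h$ may be any real number.

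One could instead invoke \Cref{t:fundbound} directly with $r=|a|$: the point $re^{i\t}+a$ lies in $e^{i\t}\R_+ + \bar D_{|a|}$, and $|re^{i\t}+a|\le r+|a|$. This needs a little care when $h<0$, because then $h|s|$ is not monotone in $|s|$. One uses $|s|\ge r-|a|$ so that $h|s| \le hr + |h||a|$. \Cref{t:phi(n)a} has already absorbed this point, so citing it is cleaner.

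The only real subtlety is making sure that the constant is independent of $r$ and that negative $h$, including $h_\phi(\t)=-\infty$, is handled. \Cref{t:phi(n)a} takes care of both, so I expect no serious obstacle.
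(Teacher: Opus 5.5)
Your proposal is correct and matches the paper's proof essentially verbatim: the paper also combines \Cref{t:phi(n)a} (with $n=0$) and \Cref{p:h} to get $h_{E^a\phi}(\t)\le h_\phi(\t)$ for $\t\in\dom h_\phi$, treats $\t\notin\dom h_\phi$ as trivial, and obtains the reverse inequality by symmetry. Your side remark on \Cref{t:fundbound} is also sound, with one wording fix: for $h<0$ the map $|s|\mapsto h|s|$ is decreasing rather than non-monotone, which is exactly why you need the lower bound $|s|\ge r-|a|$.
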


\begin{proof}
    Let $\t\in\Theta$. We find via \Cref{t:phi(n)a} and \Cref{p:h} that $h_{E^a\phi}(\t)\le h_\phi(\t)$, the case $\t\notin\dom h_\phi$ being trivial. The reverse inequality follows by symmetry.
\end{proof}

\section{Extension of the indicator diagram theory}\label{s:IDT}

\subsection{The directional Laplace transform}\label{s:dirlapl}

\begin{definition}
    For $\t\in\Theta$, the \emph{directional Laplace transform} of $\phi$ in direction $\t$ is
    \begin{equation}\label{e:lapltheta}
        \L_\t \phi(p) := \int_0^{\tilde\infty e^{i\t}} \phi(s) e^{-ps} \d s = \int_0^\infty \phi(t e^{i\t}) e^{-pt e^{i\t}} e^{i\t} \d t.
    \end{equation}
\end{definition}

In many contexts, the Laplace transform corresponds to the case $\t = 0$. Our definition of the nondirectional Laplace transform is given in \Cref{s:WN} and is based on the preliminary results of this subsection. The first of these describes the region of convergence of the directional Laplace transform in terms of the indicator function introduced in \Cref{s:indic}.

\begin{proposition}\label{p:laplcv}
    Let $\phi$ be entire and of order $\le 1$, and let $\t\in\Theta$. Then $\L_\t\phi(p)$ converges if
    \begin{equation}\label{e:laplcv}
        \Re(pe^{i\t}) > h_\phi(\t)
    \end{equation}
    or, in other words, $p\in \Pi_{h_\phi(\t),-\t}$.
\end{proposition}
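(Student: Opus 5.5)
The plan is a direct comparison with an exponentially decaying integrand along the ray, using the growth bound of \Cref{p:h}. First we dispose of the degenerate case. If $h_\phi(\t)=\infty$, the condition \cref{e:laplcv} is never satisfied and there is nothing to prove. If $h_\phi(\t)=-\infty$, every $p\in\C$ satisfies \cref{e:laplcv}; this case is covered by the argument below, which works for any finite $h'$ with $h' < \Re(pe^{i\t})$. We may therefore fix $p$ with $\Re(pe^{i\t}) > h_\phi(\t)$ and work with a real number $h'$ chosen strictly between the two.

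Next we bound the integrand. By \Cref{p:h} (the implication (a) $\imp$ (c)), applied to the continuous restriction of $\phi$ to $e^{i\t}\R_+$, there exists $B\ge0$ such that $|\phi(te^{i\t})|\le Be^{h't}$ for all $t\ge0$. The kernel satisfies $|e^{-pte^{i\t}}e^{i\t}| = e^{-t\Re(pe^{i\t})}$. Hence the integrand in the second expression of \cref{e:lapltheta} is bounded in modulus by $Be^{-(\Re(pe^{i\t})-h')t}$. This bound is integrable on $[0,\infty)$ because the exponent has a strictly negative coefficient.

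The integral therefore converges absolutely, and in particular it converges, as a Lebesgue integral or equivalently as an improper Riemann integral. The integrand is continuous, so there is no issue near $t=0$. Finally, we rewrite the condition as $\Re(pe^{i\t})>h_\phi(\t)$, which is $\Re(p e^{-i(-\t)})>h_\phi(\t)$, so that $p\in\Pi_{h_\phi(\t),-\t}$ by the definition of the rotated half-plane.

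We do not expect a genuine obstacle. The only point requiring care is the interpolation step: the choice of $h'$ strictly between $h_\phi(\t)$ and $\Re(pe^{i\t})$ must remain valid when $h_\phi(\t)=-\infty$, and \Cref{p:h} handles this since it is stated for $h\in\bar\R$. We also note that the order $\le1$ hypothesis is not needed for convergence itself; continuity along the ray suffices. That hypothesis will presumably matter only for the later locally uniform statements.
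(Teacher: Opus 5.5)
Your proposal is correct and follows essentially the same route as the paper: for any $h'$ with $h_\phi(\theta) < h' < \Re(pe^{i\theta})$, use \Cref{p:h} to get $|\phi(te^{i\theta})| \le B e^{h't}$ on the ray, then compare the integrand with the integrable $Be^{-(\Re(pe^{i\theta})-h')t}$. Your remark is also accurate: this argument, like the paper's, uses only continuity of $\phi$ along the ray and not the order hypothesis.
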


\begin{proof}
    By \Cref{p:h}, $|\phi(te^{i\t})| = O(e^{h t})$ for every $h > h_\phi(\t)$, and since
    \begin{equation}
        |\L_\t\phi(p)| \le \int_0^{\tilde\infty e^{i\t}} \abs{\phi(s) e^{-ps} \d s} = \int_0^\infty \abs{\phi(te^{i\t})} e^{-\Re(pe^{i\t})t} \d t,
    \end{equation}  
    the directional Laplace transform converges whenever $\Re(pe^{i\t}) > h$. Since this holds for every $h > h_\phi(\t)$, the claim follows.
\end{proof}

Directional Laplace transforms in different directions coincide on the intersection of their half-planes of convergence.

\begin{proposition}\label{p:laplAC}
    Let $\phi\in\E_3$ and let $\alpha, \beta\in\Theta$. Then
    \begin{equation}\label{e:laplAC}
        \L_\alpha \phi(p) = \L_\beta \phi(p), \quad \com{for} p\in \Pi_{h_\phi(\alpha), -\alpha} \cap \Pi_{h_\phi(\beta), -\beta}.
    \end{equation}
\end{proposition}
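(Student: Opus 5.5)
The plan is to rotate the contour of integration from the ray $e^{i\alpha}\R_+$ to the ray $e^{i\beta}\R_+$ using Cauchy's theorem. The growth of $\phi$ along every intermediate ray is controlled by \Cref{l:secbound1}, and the trigonometric convexity of $h_\phi$ (\Cref{t:hphitcvx}) lets us compare that growth with the decay of $e^{-ps}$.

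If $\alpha=\beta$ there is nothing to prove. Otherwise the intersection in \cref{e:laplAC} is nonempty only if $\alpha,\beta\in\dom h_\phi$, so assume this. Since $\dom h_\phi$ is an interval, one of the two arcs joining $\alpha$ and $\beta$ lies in $\dom h_\phi$. After relabeling, take it to be $[\alpha,\beta]$.

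\emph{Main case: $0<\beta-\alpha<\pi$.} Fix $p$ in the intersection and set $g(\t):=\Re(pe^{i\t})$. This is a sinusoid, $g(\t)=\k_{\{\bar p\}}(\t)$, with $g(\alpha)>h_\phi(\alpha)$ and $g(\beta)>h_\phi(\beta)$. Choose $a,b$ with $h_\phi(\alpha)<a<g(\alpha)$ and $h_\phi(\beta)<b<g(\beta)$.
\begin{enumerate}
\item Apply \Cref{l:secbound1} to get $|\phi(re^{i\t})|\le Be^{H(\alpha,\t,\beta;a,b)r}$ uniformly for $\t\in[\alpha,\beta]$.
\item Note that $g(\t)=H(\alpha,\t,\beta;g(\alpha),g(\beta))$, since a sinusoid is determined by its values at two angles with $0<\beta-\alpha<\pi$ (\Cref{p:sincos}).
\item The difference $g-H(\alpha,\cdot,\beta;a,b)=H(\alpha,\cdot,\beta;g(\alpha)-a,g(\beta)-b)$ is a combination of $\sin(\beta-\t)$ and $\sin(\t-\alpha)$ with positive coefficients. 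Hence it is bounded below by some $\eta>0$ on $[\alpha,\beta]$.
\item On the arc $\{Re^{i\t}:\t\in[\alpha,\beta]\}$ the integrand therefore satisfies $|\phi(s)e^{-ps}|\le Be^{-\eta R}$. The arc integral is $O(Re^{-\eta R})\to0$.
\end{enumerate}
Cauchy's theorem on the closed sector $\{|s|\le R\}\cap S_{[\alpha,\beta]}$, followed by letting $R\to\infty$, gives $\L_\alpha\phi(p)=\L_\beta\phi(p)$. The two ray integrals converge by \Cref{p:laplcv}.

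\emph{Remaining case: the arc in $\dom h_\phi$ has length $\ge\pi$.} This is the step I expect to be the main obstacle, because the intersection of the two half-planes need not lie in the half-plane of every intermediate direction. I would subdivide $[\alpha,\beta]$ into directions $\alpha=\t_0<\t_1<\dots<\t_m=\beta$ with consecutive gaps less than $\pi$. The main case then gives $\L_{\t_j}\phi=\L_{\t_{j+1}}\phi$ on the open convex set $\Pi_{h_\phi(\t_j),-\t_j}\cap\Pi_{h_\phi(\t_{j+1}),-\t_{j+1}}$, which is nonempty because $h_\phi$ is finite there.

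To transport the identity to the original intersection, I would use that each $\L_{\t}\phi$ is holomorphic on its convex half-plane. It then suffices to show that the convex set $\Pi_{h_\phi(\alpha),-\alpha}\cap\Pi_{h_\phi(\beta),-\beta}$ contains a nonempty open subset on which all the $\L_{\t_j}\phi$ are defined; the identity theorem then propagates equality to the whole intersection. To produce such a subset, take $p$ with large modulus in a direction bisecting the relevant arc, exploiting the diameter inequality $h_\phi(\t)+h_\phi(\t+\pi)\ge0$ from \Cref{p:diameter}. If this fails, the fallback is to observe that when $\beta-\alpha>\pi$ the two half-planes have nearly opposite normals in the relevant sense. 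Then either the complementary arc of length $<\pi$ also lies in $\dom h_\phi$, and the main case applies directly, or the intersection is empty and the claim is vacuous.
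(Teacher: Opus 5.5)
Your main case is correct, and it takes a slightly different route from the paper. You choose $a,b$ strictly between $h_\phi$ and $\theta\mapsto\Re(pe^{i\theta})$ at the two endpoints and use the sinusoidal majorant of \Cref{l:secbound1}. The arc integral is then $O(Re^{-\eta R})$ for \emph{every} $p$ in the intersection, so the identity holds pointwise with no continuation step. The paper uses only the crude bound $|\phi(s)|\le Be^{H|s|}$ on the closed sector. It proves the identity for $p\in\Pi_{H',-\alpha}\cap\Pi_{H',-\beta}$ with $H'>H$, where $\Re(pe^{i\theta})$ exceeds $H'$ along the whole arc by the same sinusoid interpolation as your step 2. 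It then extends the identity to $\Pi_{h_\phi(\alpha),-\alpha}\cap\Pi_{h_\phi(\beta),-\beta}$ by the identity theorem on this convex set. In short, you avoid the analytic-continuation step (holomorphy of the directional transforms, connectedness of the intersection), at the cost of needing the sharper two-endpoint growth estimate.

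The remaining case needs repair. The subdivision route cannot succeed, because there is no common point to anchor the chain of equalities. If $\alpha=\theta_0<\dots<\theta_m=\beta$ with gaps $<\pi$ and $\beta-\alpha\ge\pi$, then $\bigcap_j\Pi_{h_\phi(\theta_j),-\theta_j}=\emptyset$ whenever $\phi\not\equiv0$:
\begin{itemize}
\item for $\beta-\alpha=\pi$, the two endpoint half-planes are already disjoint by \cref{e:k>-k};
\item for $\beta-\alpha>\pi$, $h_\phi$ is finite everywhere, so there is some $z\in\mathcal K_\phi\cap\C$. A common point $p$ would give $\Re((p-\bar z)e^{i\theta_j})>0$ for all $j$. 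That places every $\theta_j$ in an open half-circle, which the gap condition forbids.
\end{itemize}

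Your fallback is the correct argument, and it is exactly the paper's reduction, but you assert the dichotomy rather than prove it. The justification is short:
\begin{itemize}
\item By \Cref{p:domk}, an interval $\dom h_\phi$ containing an arc of length $>\pi$ must be all of $\Theta$. So the complementary arc, of length $<\pi$, also lies in $\dom h_\phi$, and your main case applies after swapping $\alpha$ and $\beta$.
\item If the arc has length exactly $\pi$, a point $p$ of the intersection would satisfy $h_\phi(\alpha)<\Re(pe^{i\alpha})<-h_\phi(\alpha+\pi)$. This contradicts \cref{e:k>-k}, so the statement is vacuous (the case $\phi\equiv0$ being trivial).
\end{itemize}
Replace the subdivision paragraph with this reduction and the proof is complete.
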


\begin{proof}
    Since $h_\phi$ is trigonometric-convex, \cref{e:k>-k} shows that the intersection in \cref{e:laplAC} is empty when $\beta - \alpha = \pi$. We may therefore assume that $\beta - \alpha < \pi$. Because $\phi$ is of exponential type in $S_{[\alpha, \beta]}$, there exist $B, H \ge 0$ such that $|\phi(s)| \le B e^{H |s|}$ there. Let $H' > H$ and $p\in\Pi_{H',-\alpha}\cap\Pi_{H',-\beta}$. For $r > 0$, let $\gamma_r$ be the circular arc joining $re^{i\beta}$ to $re^{i\alpha}$. Since $\phi$ is entire, Cauchy's theorem gives
    \begin{equation}\label{e:laplcontour}
        \int_0^{re^{i\beta}} \phi(s) e^{-ps} \d s + \Int_{\gamma_r} \phi(s) e^{-ps} \d s + \int_{re^{i\alpha}}^0 \phi(s) e^{-ps} \d s = 0.
    \end{equation}
    Because the constant function equal to $1$ is trigonometric-convex, for $\t\in[\alpha,\beta]$
    \[
    \begin{gathered}
        \Re(p e^{i\t}) \sin(\beta - \alpha) = \Re(p e^{i\alpha})\sin(\beta - \t) + \Re(p e^{i\beta})\sin(\t - \alpha) \\
        \ge H' (\sin(\beta - \t) + \sin(\t - \alpha)) \ge H' \sin(\beta - \alpha),
    \end{gathered}
    \]
    hence $p \in \Pi_{H',-\t}$ for all $\t\in[\alpha,\beta]$. Therefore,
    \[
    \abs{\Int_{\gamma_r} \phi(s) e^{-ps} \d s} \le \int_\alpha^\beta B e^{(H - H') r} \d\t r \to_{r\to\infty} 0.
    \]
    Letting $r\to\infty$ in \cref{e:laplcontour} yields $\L_\alpha \phi(p) = \L_\beta \phi(p)$, an equality that analytically continues to $p\in \Pi_{h_\phi(\alpha), -\alpha} \cap \Pi_{h_\phi(\beta), -\beta}$.
\end{proof}

\subsection{A Watson--Nevanlinna-type theorem}\label{s:WN}

The following theorem is the main result of this section. It characterizes functions in $\E_3$ by the existence of an associated holomorphic function satisfying growth conditions outside a closed convex set; this function will be our definition of the Laplace transform $\L$ in \Cref{s:lapl}.

\begin{theorem}\label{t:WN}
    Let $(\phi_n)_{n\in\N}\subset\C$. The following are equivalent:
    \begin{alphabetize}
        \item The function $\phi$ defined by
        \begin{equation}\label{e:phi}
            \phi(s) := \sum_{n=0}^\infty \phi_n s^n
        \end{equation}
        belongs to $\E_3$.
        \item There exists a proper closed convex subset $K$ of $\bar\C$ with connected complement, and a holomorphic function $\Phi : \bar\C\setminus K \to \C$ such that
        \begin{equation}\label{e:rem}
            \Phi(p) = \sum_{n=0}^{N-1} \frac{\phi_n n!}{p^{n+1}} + O\pa{\frac{N!}{(N/\varrho)! p^{N+1}}}, \quad p\to C_\infty \setminus K \text{ uniformly}
        \end{equation}
        for all $\varrho > 1$ and uniformly in $N\in\N$.
    \end{alphabetize}
\end{theorem}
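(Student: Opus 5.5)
For (a) $\Rightarrow$ (b), I would take $K$ to be the conjugate of the indicator diagram, namely $K := \W(\k)$ with $\k(\t) := h_\phi(-\t)$. By \Cref{t:hphitcvx}, $h_\phi$ is trigonometric-convex, so this $\k$ is as well, and \Cref{c:wulffinv} gives $K$ closed and convex with $\k_K = \k$. Since $\dom h_\phi$ is a nonempty interval, $K \ne \bar\C$. Connectedness of the complement should follow from convexity together with the classification in \Cref{p:clcvxinf} and \Cref{t:cvxinf}.

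On $\bar\C\setminus K$, I would define $\Phi$ by patching the directional transforms. A point $p\notin K$ satisfies $\Re(pe^{i\t}) > h_\phi(\t)$ for some $\t$, so it lies in a half-plane of convergence $\Pi_{h_\phi(\t),-\t}$ from \Cref{p:laplcv}. On overlaps the transforms agree by \Cref{p:laplAC}. To extend across chains of overlapping half-planes I would use that the complement is connected and that $\dom h_\phi$ is an interval, so any two admissible directions are joined through intermediate directions $\t'$ with $p$ still admissible.

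For the expansion \eqref{e:rem}, I would integrate by parts $N$ times in $\L_\t\phi(p)$. This gives
\[
\L_\t\phi(p) = \sum_{n<N}\frac{\phi^{(n)}(0)}{p^{n+1}} + \frac{1}{p^N}\L_\t\phi^{(N)}(p),
\]
where $\phi^{(n)}(0) = \phi_n n!$. \Cref{t:phi(n)a} with $a=0$ and $h$ slightly above $h_\phi(\t)$ bounds the remainder by $B N!/(N/\varrho)!\cdot 1/(|p|^N(\Re(pe^{i\t})-h))$. The delicate point is uniformity as $p\to C_\infty\setminus K$. For a point $z=(\infty+iy)e^{i\beta}\notin K$, I need a single direction $\t$ such that $\Re(pe^{i\t})-h$ is bounded below by something like $c|p|$ on a neighborhood $U_\delta(z)$, or at least by a positive constant. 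I would try $\t=-\beta$ when $-\beta\in\dom h_\phi$, which makes $\Re(pe^{i\t})\to\infty$. When $z$ lies outside $K$ only because of its offset (the boundary directions $\alpha\mp\pi/2$ in \Cref{t:cvxinf}), I would instead use the half-plane that strictly separates $z$ and settle for a constant lower bound. That gives $O(N!/((N/\varrho)!\,|p|^{N}))$, which is weaker than the required $|p|^{-(N+1)}$. I would close the gap by running the expansion to order $N+1$ and absorbing the extra term, since $\phi_N N!/p^{N+1}$ is itself of the stated size by Lindelöf-type coefficient bounds. This uniformity step is the main obstacle.

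For (b) $\Rightarrow$ (a), I would recover $\phi$ by an inverse transform, $\phi(s) = \frac{1}{2\pi i}\oint \Phi(p)e^{ps}\,dp$ taken over a contour surrounding $K$. Since $K$ may be unbounded, I would instead use Borel-type truncations, and first verify order $\le 1$ from the coefficients. The case $N=n$ of \eqref{e:rem} yields $|\phi_n| n! \lesssim n!/(n/\varrho)!\,\cdot R^{-n}$ type bounds, which give order $\le\varrho$ for every $\varrho>1$ via \eqref{e:ordercoeff}. For a direction $\t$ with $-\t$ pointing to a side $(\infty+i\R)e^{-i\t}\not\subset K$, the asymptotic \eqref{e:rem} on a neighborhood of that side allows a Cauchy-formula estimate on $\phi(re^{i\t})$. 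The resulting bound is finite, so $\t\in\dom h_\phi$. Since $K$ is proper, such directions exist and form an interval, which shows $\phi\in\E_3$.
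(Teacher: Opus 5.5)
Your (a)$\Rightarrow$(b) is essentially the paper's argument. You glue via \Cref{p:laplAC}, use a half-plane neighborhood $\{\Re(pe^{i\t})\ge H\}$ of each point of $C_\infty\setminus\K_\phi^\dagger$ with only a constant lower bound on $\Re(pe^{i\t})-h$, and recover the missing factor $1/p$ by going one order further, which is exactly the paper's extra integration by parts. Two small corrections:
\begin{itemize}
\item The bound $|\phi_N|N!\le BN!/(N/\varrho)!$ you need comes from order $\le1$ (\Cref{t:phi(n)a} with $a=s=0$), not from Lindel\"of's formula, which says nothing when the type is infinite.
\item Connectedness of the complement does not follow from convexity (a strip is convex with disconnected complement). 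It follows from $\dom h_\phi$ being an interval rather than $\{\alpha,\alpha+\pi\}$.
\end{itemize}
The coefficient estimate that opens your (b)$\Rightarrow$(a) also matches the paper (compare \eqref{e:rem} at orders $n$ and $n+1$ and let $|p|\to\infty$ inside $\bar\C\setminus K$).

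The genuine gap is the rest of (b)$\Rightarrow$(a). You claim that the expansion of $\Phi$ near the single side $(\infty+i\R)e^{-i\t}$ yields, via some Cauchy estimate, $h_\phi(\t)<\infty$. This is false: growth along a ray depends on all of $K$ (the content of \Cref{t:polya}), not on $\Phi$ near one side. For $\phi(s)=1/\Gamma(s+1)$, (b) holds with $K=-\infty+i[-\frac\pi2,\frac\pi2]$. No side is contained in $K$, and every side other than the one at $\pi$ is disjoint from it, yet $h_\phi=\infty$ on $(\frac\pi2,\frac{3\pi}2)$.

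The correct set of directions is $\dom\k_{K^\dagger}$: those $\t$ for which $\Re(pe^{i\t})$ is bounded above on $K$. The missing idea is the inverse Laplace representation; unboundedness of $K$ does not prevent it, it only confines it to $S_{\dom\k_{K^\dagger}}$. The paper shows two things for $\Gamma$ tightly encircling $K$:
\begin{itemize}
\item $\frac1{2\pi i}\oint_\Gamma\Phi(p)e^{ps}\,dp$ converges there. In the interior this is because $\cos(\arg p+\arg s)<0$ eventually along both branches of $\Gamma$. At boundary directions it follows by integrating by parts with $\Phi'=O(p^{-2})$.
\item The integral equals the power series. Use $\frac1{2\pi i}\oint_\Gamma e^{ps}p^{-n-1}\,dp=s^n/n!$, deform $\Gamma$ to $\partial(U\cup D_R)$, and bound the truncated remainder on $C_R\setminus U$ by $2\pi BN!e^{|s|R}/((N/\varrho)!R^N)$. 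Then take $R=N/|s|$ and let $N\to\infty$ using Stirling.
\end{itemize}
Finiteness of $h_\phi$ on $\dom\k_{K^\dagger}$ then follows as in the proof of \Cref{t:polya}.

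Even then, ``these directions form an interval'' only shows that $\dom h_\phi$ contains an interval. If that interval $J$ has positive length, \Cref{l:secbound1} finishes the job: every $\t\in\dom h_\phi$ is joined to $J$ by an arc of length $<\pi$, which then lies in $\dom h_\phi$, so $\dom h_\phi$ is an interval.

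When $K$ is a half-plane, however, $\dom\k_{K^\dagger}$ is a single direction, and the step cannot be closed. Take $\phi(s)=\sum_{n\in\Z}e^{-e^{|n|}}e^{ins}$. It is entire of order $1$ with $\dom h_\phi=\{0,\pi\}$. Meanwhile $\Phi(p)=\sum_n e^{-e^{|n|}}/(p-in)$ satisfies \eqref{e:rem} uniformly on every half-plane $\Re p\ge H>0$, hence near each point of $C_\infty\setminus\{\Re p\le 0\}$. That is the same local uniformity the paper's proof of (a)$\Rightarrow$(b) delivers. The paper's proof is silent on this degenerate case too. Under that reading the implication fails there unless half-planes are excluded or a stronger uniformity is imposed.
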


The proof of \Cref{t:WN} is divided into the two implications: \textnormal{(a)} $\imp$ \textnormal{(b)} is proved in \Cref{s:lapl}, whereas \textnormal{(b)} $\imp$ \textnormal{(a)} is proved in \Cref{s:invlapl}.

\begin{remark}\label{r:Phi(infty)}
    In case \textnormal{(b)}, $\Phi$ being holomorphic in $\bar\C\setminus K$ means that $\Phi$ is holomorphic in $\C\setminus K$ and that $\Phi(p) \to c \in \C$ as $p\to C_\infty \setminus K$. We then set $\Phi(p) := c$ for $p\in C_\infty \setminus K$.

    In the present case, since $\Phi(p) = O(1/p)$ as $p\to C_\infty\setminus K$, it follows that $c=0$. In particular, if $K$ is bounded, then $\Phi$ can be defined on $\hat\C\setminus K$ with $\Phi(\hat\infty) = 0$. This case is the subject of \Cref{t:EFET} below.
\end{remark}

\begin{remark}
    \Cref{t:WN} is closely related to the \emph{Watson--Nevanlinna theorem} \cite{watson1911,nevanlinna1918} (see also \cite{hardy1949,sokal1980,loday-richaud2016}). That theorem is stated under very general assumptions, whereas our version is tailored to functions in $\E_3$ and therefore yields sharper conclusions. In particular, the estimate \eqref{e:rem} is a stronger form of a \emph{Gevrey asymptotic} \cite{loday-richaud2016}.
\end{remark}

When $K$ is bounded, one can say more. In that case, the partial sums appearing in \eqref{e:rem} actually converge for large $p$, and \Cref{t:WN} reduces to the following classical theorem of Pólya.
\begin{theorem}[Pólya]\label{t:EFET}
    The function $\phi$ defined by the power series \eqref{e:phi} is entire of exponential type $\sigma$ if and only if the Laurent series
    \begin{equation}\label{e:Phisum}
        \Phi(p) := \sum_{n=0}^\infty \frac{\phi_n n!}{p^{n+1}}
    \end{equation}
    has radius of divergence $\sigma$.
\end{theorem}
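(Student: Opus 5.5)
This is a direct translation via the Lindelöf formula \eqref{e:typecoeff} and the radius-of-divergence formula \eqref{e:raddiv}. The plan is to identify the coefficients of the Laurent series \eqref{e:Phisum}, which are $\Phi_n := \phi_n n!$ attached to $p^{-(n+1)}$. By \eqref{e:raddiv} applied to the series in the variable $1/p$, the shift by one power is harmless, so the radius of divergence of $\Phi$ is
\[
r_\Phi = \limsup_{n\to\infty} \sqrt[n]{|\phi_n| n!}.
\]
The index shift is absorbed because $\sqrt[n+1]{x_n}$ and $\sqrt[n]{x_n}$ have the same $\limsup$ whenever $\sqrt[n]{x_n}$ is bounded. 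In the unbounded case both $\limsup$s are $\infty$, since $x_n^{1/(n+1)} = (x_n^{1/n})^{n/(n+1)}$.

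For the forward direction, assume $\phi$ is entire of exponential type $\sigma$. Then $\phi$ is entire, of order $\le 1$ and of finite $1$-type $\sigma$ (by the proposition characterizing exponential type). If $\phi\not\equiv0$, the Lindelöf formula \eqref{e:typecoeff} gives $\sigma = \limsup \sqrt[n]{|\phi_n|n!} = r_\Phi$. If $\phi\equiv 0$, then $\Phi\equiv 0$. Here I would note the convention that the zero series converges everywhere, so its radius of divergence is $0$, while its type is $-\infty$. I will treat this degenerate case separately, either by reading ``type $\sigma$'' as $\max(\sigma,0)$ or by excluding the zero function, as the convention in the paper suggests.

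For the converse, assume $\Phi$ has radius of divergence $\sigma<\infty$. Then $\limsup \sqrt[n]{|\phi_n| n!} = \sigma$ is finite. In particular $|\phi_n| \le C l^n/n!$ for any $l>\sigma$, so $\phi_n^{1/n}\to0$ and \eqref{e:radconv} gives $R_\phi=\infty$; thus $\phi$ is entire. The first half of the proof of the Lindelöf proposition (the bound $|\phi(s)|\le 2e^{l|s|}$) shows that $\phi$ is of exponential type. Its type then equals $\sigma$ by \eqref{e:typecoeff} again.

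The only real obstacle is bookkeeping. One part is the index shift between $n$ and $n+1$ in the radius of divergence. The other is the zero-function convention, where the type $-\infty$ must be reconciled with a radius of divergence of $0$. Both are elementary, so I would state them explicitly and not dwell on them.
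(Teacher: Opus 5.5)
Your proposal is correct and follows the paper's proof, which is the one-line identification of the Lindelöf formula \eqref{e:typecoeff} with the radius-of-divergence formula \eqref{e:raddiv}. Your extra bookkeeping fills in details the paper leaves implicit: the zero function, whose type $-\infty$ does not match its radius of divergence $0$, and checking that $\phi$ is entire before applying \eqref{e:typecoeff} in the converse. The index shift is simpler than you make it: since $\Phi(p)=p^{-1}\sum_n \phi_n n!\,p^{-n}$, the factor $p^{-1}$ does not affect convergence.
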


\begin{proof}
    By \cref{e:typecoeff}, the exponential type of $\phi$ is $\limsup_{n\to\infty} \sqrt[n]{|\phi_n|n!}$, which, by \cref{e:raddiv}, is exactly the radius of divergence of $\Phi$.
\end{proof}

In particular, if $\sigma = \infty$, the series \eqref{e:Phisum} diverges. In this case, its information is recovered from its partial sums through \cref{e:rem}. The map $\Phi \mapsto \phi$ from the formal algebra $p^{-1}\C\bbra{p^{-1}}$ to $\C\bbra{s}$ is commonly called the \emph{Borel transform} \cite{boas1954,levin1996,loday-richaud2016}, a term introduced by Pólya \cite{polya1923,polya1929}. As Pólya himself observed, although \Cref{t:EFET} follows from results already available at the time, the connection between radius of divergence and exponential type was an important new insight \cite[\S 21]{polya1929}.

\subsubsection{The indicator diagram}

The trigonometric convexity of the indicator function, established in \Cref{t:hphitcvx}, suggests the following definition of the \emph{indicator diagram}.

\begin{definition}
    The \emph{indicator diagram} of an entire function $\phi$ of order $\le 1$ is
    \begin{equation}\label{e:Kphi}
        \K_\phi := \W(h_\phi),
    \end{equation}
    which is a closed convex subset of $\bar\C$.
\end{definition}

The following basic property is immediate.
\begin{proposition}\label{p:h=k}
    If $\phi\in\E_3$ then
    \begin{equation}\label{e:h=k}
        h_\phi \equiv \k_{\K_\phi}.
    \end{equation}
\end{proposition}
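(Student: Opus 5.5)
The identity \eqref{e:h=k} follows directly from the correspondence between trigonometric-convex functions and closed convex sets already set up in \Cref{s:cvxanalysis}. I will chain together \Cref{t:hphitcvx} and \Cref{c:wulffinv}.

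First, since $\phi\in\E_3$, \Cref{t:hphitcvx} shows that $h_\phi:\Theta\to\bar\R$ is trigonometric-convex. By \Cref{t:tcvxK}, there is a unique closed convex set $K\subset\bar\C$ with $h_\phi\equiv\k_K$.

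Next, I apply the Wulff shape to both sides. \Cref{t:convPi}, in the form \eqref{e:W(k)}, gives $\W(\k_K)=\conv K=K$, because $K$ is closed and convex. Hence
\[
\K_\phi=\W(h_\phi)=\W(\k_K)=K,
\]
and therefore $\k_{\K_\phi}=\k_K=h_\phi$, which is \eqref{e:h=k}. Equivalently, one can quote \Cref{c:wulffinv} directly: on trigonometric-convex functions, $\W$ is the inverse of $K\mapsto\k_K$, so $\k_{\W(h_\phi)}=h_\phi$.

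There is no real obstacle here, since all the work is contained in \Cref{t:hphitcvx} and \Cref{t:tcvxK}. The one point to check is that the degenerate case is covered. If $\phi\equiv0$, then $h_\phi\equiv-\infty$, which is trigonometric-convex by \Cref{d:tcvx}. In that case $\W(h_\phi)=\emptyset$, because the half-planes $\{\Re(pe^{-i\t})\le-\infty\}$ are empty (the offset $\Re(pe^{-i\t})$ of a point of $C_\infty$ is $-\infty$ only in some directions). Then $\k_\emptyset\equiv-\infty$ by \Cref{x:k}, so the identity holds trivially. This case is also already included in the general statement of \Cref{c:wulffinv}.
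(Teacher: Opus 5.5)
Your proof is correct and follows the paper's argument: \Cref{t:hphitcvx} gives trigonometric convexity of $h_\phi$, and \Cref{c:wulffinv} then yields $\k_{\W(h_\phi)}=h_\phi$. One small wording fix in the degenerate case: each individual half-plane $\{p : \Re(pe^{-i\t})\le-\infty\}$ is nonempty, since it contains points of $C_\infty$; only the intersection over all $\t$ is empty, which is what your parenthetical actually shows.
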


\begin{proof}
    By \Cref{t:hphitcvx}, the function $h_\phi$ is trigonometric-convex. Therefore, \cref{e:h=k} follows from \cref{e:Kphi} and \Cref{c:wulffinv}.
\end{proof}

\subsubsection{The Laplace transform}\label{s:lapl}

We now prove the forward implication in \Cref{t:WN}, via a construction we call the \emph{Laplace transform}.

\begin{proposition}\label{p:lapl}
    Given a function $\phi\in\E_3$, there exists a unique function $\Phi$ satisfying \textnormal{(b)} of \Cref{t:WN} for $K = \K_\phi^\dagger$ and satisfying $\L_\t \phi(p) = \Phi(p)$ in $\Pi_{h_\phi(\t),-\t}$, for all $\t\in\dom h_\phi$.
    
    We call the map $\L : \phi \mapsto \Phi$ the \emph{Laplace transform}.
\end{proposition}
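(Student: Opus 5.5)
Define $\Phi$ on $\C\setminus\K_\phi^\dagger$ by gluing directional transforms. First check that the half-planes of convergence cover the complement. By \Cref{p:h=k}, $h_\phi=\k_{\K_\phi}$. Conjugating and using $\Re(pe^{i\t})=\Re(\bar p e^{-i\t})$, we get $p\in\Pi_{h_\phi(\t),-\t}$ if and only if $\Re(\bar pe^{-i\t})>\k_{\K_\phi}(\t)$. By \Cref{c:wulffinv}, $\K_\phi=\W(h_\phi)$. Hence for $p\in\C\setminus\K_\phi^\dagger$ there is some $\t$ with $\Re(\bar p e^{-i\t})>h_\phi(\t)$, and necessarily $\t\in\dom h_\phi$. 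Thus
\[
\C\setminus\K_\phi^\dagger=\bigcup_{\t\in\dom h_\phi}\Pi_{h_\phi(\t),-\t}.
\]
On each such half-plane put $\Phi:=\L_\t\phi$, which converges by \Cref{p:laplcv} and is holomorphic by locally uniform convergence. By \Cref{p:laplAC} these definitions agree on overlaps, so $\Phi$ is a well-defined holomorphic function on the union. Uniqueness is immediate: any $\Phi$ with the required property coincides with $\L_\t\phi$ on each half-plane, and these half-planes cover the domain. The set $K=\K_\phi^\dagger$ is closed and convex. It is proper because $\dom h_\phi\neq\emptyset$. Its complement is connected, being a union of half-planes any two of which overlap whenever $\dom h_\phi$ is an interval. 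Two directions $\alpha,\beta\in\dom h_\phi$ with $0<\beta-\alpha<\pi$ give intersecting half-planes. If the gap is larger, pass through intermediate directions of the interval.

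The main work is the uniform asymptotic expansion \eqref{e:rem}. Fix $\t\in\dom h_\phi$, $h>h_\phi(\t)$ and $\varrho>1$. Taylor's formula with integral remainder gives
\[
\phi(s)=\sum_{n<N}\phi_ns^n+\frac{s^N}{(N-1)!}\int_0^1(1-u)^{N-1}\phi^{(N)}(us)\,\d u.
\]
Apply \Cref{t:phi(n)a} with $a=0$ along the ray $e^{i\t}\R_+$. This bounds the remainder by $\frac{B N!}{(N/\varrho)!}\frac{|s|^N}{N!}e^{\max(h,0)|s|}$, uniformly in $N$. Integrating term by term, using $\int_0^{\tilde\infty e^{i\t}}s^ne^{-ps}\d s=n!/p^{n+1}$, gives a remainder of size
\[
\frac{BN!}{(N/\varrho)!}\cdot\frac{1}{(\Re(pe^{i\t})-h^+)^{N+1}}.
\]

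The key obstacle is converting $(\Re(pe^{i\t})-h^+)^{-N-1}$ into $O(|p|^{-N-1})$ uniformly as $p\to C_\infty\setminus K$. The plan has three steps.

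\begin{enumerate}
\item Near a point $z=(\infty+iy)e^{-i\t_0}$ of $C_\infty\setminus K$, one has $\bar z\notin\K_\phi$. In a neighborhood $U_\delta(z)$ the ratio $\Re(pe^{i\t_0})/|p|\to1$, so it suffices to take $\t=\t_0$ when $\t_0\in\dom h_\phi$. In that case $\Re(pe^{i\t_0})-h^+\ge|p|/2$ for large $p$, and the shifted constant can be absorbed by changing $\varrho$ slightly, using $(N/\varrho')!\le C\,c^N(N/\varrho)!$.
\item When $\t_0\notin\dom h_\phi$, the point $\bar z\notin\K_\phi$ is still separated from $\K_\phi$ by some direction $\t$. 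By the description of infinite points (\Cref{t:cvxinf}, and \cref{e:cvxinfk} in particular), the separating direction is then $\t=\t_0\pm\frac\pi2$, at a boundary point of $\dom h_\phi$. In that direction $\Re(pe^{i\t})$ is only comparable to an offset, not to $|p|$.
\item For the case in step 2, I will instead use a slightly rotated direction $\t'$ near $\t$ inside $\dom h_\phi$, if one is available. Otherwise, I will replace the ray by a shifted ray $a+e^{i\t}\R_+$, which is what \Cref{t:phi(n)a} with $a\ne0$ is designed for. I will then rely on the polynomial-in-$|a|^\varrho$ loss being dominated, choosing $|a|$ proportional to $|p|^{1/\varrho}$ so that $e^{|a|^\varrho}$ and $|e^{-pa}|$ balance.
\end{enumerate}

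A finite covering argument will then give uniformity over $C_\infty\setminus K$.
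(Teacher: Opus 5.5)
Your gluing construction of $\Phi$ and your uniqueness argument are fine and agree with the paper. The gap is in the remainder estimate, exactly at the step you call the key obstacle, and neither proposed remedy closes it.

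Bounding the Taylor remainder in absolute value and integrating gives $\frac{BN!}{(N/\varrho)!}\,(\Re(pe^{i\t})-\max(h,0))^{-N-1}$. This has the required form only where $\Re(pe^{i\t})\ge c|p|$. But $C_\infty\setminus K$ contains points where no direction of $\dom h_\phi$ achieves this. For $\phi(s)=1/\Gamma(s+1)$ one has $\dom h_\phi=[-\frac\pi2,\frac\pi2]$ and $K=-\infty+i[-\frac\pi2,\frac\pi2]$. Take $z=-\infty+iy$ with $y<-\frac\pi2$. Near $z$, every $\t'\in(-\frac\pi2,\frac\pi2)$ gives $\Re(pe^{i\t'})\to-\infty$. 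The only remaining admissible direction is $\t=\frac\pi2$, where $\Re(pe^{i\t})=-\Im p$ stays bounded. So a ``slightly rotated direction'' is never available in this case.

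If moreover $h_\phi<0$ at that endpoint (e.g.\ $e^{-2is}/\Gamma(s+1)$ at $\t=-\frac\pi2$), admissible $p$ may have $\Re(pe^{i\t})\le 0$. There your bound is not even finite.

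The shifted-ray fallback is only sketched. Relating the expansion at $a$ back to the one at $0$ needs an integration by parts along $[0,a]$, which you do not set up. More seriously, for fixed $p$ the net factor $e^{-\Re(pa)+|a|^\varrho}$ is a fixed number. Meanwhile the loss $(|p|/c)^{N+1}$ must be absorbed uniformly in $N$ by passing from $(N/\varrho)!$ to $(N/\varrho')!$. At the worst $N$ this costs roughly $\exp(c|p|^{1/\eta})$ with $\eta=\frac1\varrho-\frac1{\varrho'}$. Since $1/\eta>\frac{\varrho'}{\varrho'-1}\ge 2$ when $\varrho'\le2$, the gain $\exp(O(|p|^{1+1/\varrho}))$ from $|a|\propto|p|^{1/\varrho}$ cannot pay for it.

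The missing idea is never to estimate the Taylor remainder in absolute value. Repeated integration by parts along the ray gives the exact identity $R_N(p)=p^{-N}\L_\t\phi^{(N)}(p)$. The factor is therefore $|p|^{-N}$ rather than $\Re(pe^{i\t})^{-N}$. One more integration by parts gives $\L_\t\phi^{(N)}(p)=\frac{\phi^{(N)}(0)}{p}+\frac1p\int_0^{\tilde\infty e^{i\t}}\phi^{(N+1)}(s)e^{-ps}\,\mathrm{d}s$. By \Cref{t:phi(n)a}, with a slightly smaller $\varrho$ to absorb $N\to N+1$, the last integral is at most $\frac{BN!}{(N/\varrho)!}\cdot\frac{1}{H-h}$. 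This holds on the whole closed half-plane $\Re(pe^{i\t})\ge H$, where $h_\phi(\t)<h<H$.

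Only the gap $H-h>0$ enters, not the size of $\Re(pe^{i\t})$ relative to $|p|$. Every point of $C_\infty\setminus K$, including the endpoint case above, has such a half-plane neighborhood. This is how the paper proves \Cref{p:remshift}, from which the statement follows with $a=0$.
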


\begin{example}
    The following example shows that the holomorphy domain furnished by \Cref{t:WN} is, in general, optimal. Given a closed convex subset $K$ of $\bar\C$, choose a sequence $(b_n)_{n\in\N}\subset K$ that is dense in $K$. For a sufficiently rapidly decaying sequence $(c_n)_{n\in\N}$ of nonzero real numbers, the entire function
    \[
    \phi(s) := \sum_{n=0}^\infty c_n e^{b_n s},
    \]
    satisfies
    \begin{equation}\label{e:Phidense}
        \L\phi(p) = \sum_{n=0}^\infty \frac{c_n}{p - b_n}.
    \end{equation}
    It follows that $K = \K_\phi^\dagger$ and that $\L\phi$ cannot be analytically continued further. Without additional information about $\phi$, this is \emph{a priori} the general situation, which shows the optimality of \Cref{t:WN}.
\end{example}

In fact, we prove directly the following translated form of the Gevrey remainder.

\begin{proposition}\label{p:remshift}
    For $\phi\in\E_3$
    \begin{equation}\label{e:remshift}
        \L E^a \phi(p) = \sum_{n=0}^{N-1} \frac{\phi^{(n)}(a)}{p^{n+1}} + O\pa{\frac{e^{|a|^\varrho} N!}{(N/\varrho)! p^{N+1}}}, \quad p \to C_\infty\setminus\K_\phi^\dagger \text{ uniformly}
    \end{equation}
    for all $\varrho > 1$ and uniformly in $N\in\N$ and $a\in\C$.
\end{proposition}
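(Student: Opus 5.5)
We work direction by direction and integrate by parts. Fix $\t\in\dom h_\phi$, $h>h_\phi(\t)$, and $\varrho>1$. By \Cref{p:hEa}, $h_{E^a\phi}\equiv h_\phi$, so \Cref{p:laplcv} shows that $\L_\t E^a\phi(p)$ converges on $\Pi_{h_\phi(\t),-\t}$.

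\textbf{Step 1: exact remainder.} For $\Re(pe^{i\t})>h$, we integrate by parts $N$ times along the ray $e^{i\t}\R_+$. The boundary terms at infinity vanish because \Cref{t:phi(n)a} gives $|\phi^{(n)}(s+a)|\le B_n e^{|a|^\varrho+h|s|}$ on that ray. This yields the exact identity
\[
\L_\t E^a\phi(p)=\sum_{n=0}^{N-1}\frac{\phi^{(n)}(a)}{p^{n+1}}+\frac{1}{p^N}\int_0^{\tilde\infty e^{i\t}}\phi^{(N)}(s+a)e^{-ps}\d s .
\]

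\textbf{Step 2: bound on the remainder.} Applying \Cref{t:phi(n)a} with constant $B$ independent of $N$ and $a$, the remainder integral is bounded by
\[
\frac{BN!}{(N/\varrho)!}\,e^{|a|^\varrho}\int_0^\infty e^{(h-\Re(pe^{i\t}))t}\d t=\frac{BN!\,e^{|a|^\varrho}}{(N/\varrho)!\,(\Re(pe^{i\t})-h)}.
\]
Take $p$ in the region $\Re(pe^{i\t})>h+\eta|p|$ for some $\eta>0$, i.e.\ a subsector of the half-plane of convergence near infinity. There the last factor is at most $1/(\eta|p|)$, which gives exactly the bound $O\bigl(e^{|a|^\varrho}N!/((N/\varrho)!\,p^{N+1})\bigr)$, uniformly in $N$ and $a$.

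\textbf{Step 3: gluing.} By \Cref{p:lapl} (and \Cref{p:laplAC}), the directional transforms glue into $\L E^a\phi=\L\phi$-type holomorphic functions. Since $\K_{E^a\phi}=\K_\phi$ by \Cref{p:hEa}, all the pieces live outside the same $\K_\phi^\dagger$.

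\textbf{Step 4: uniformity in $p$ (main obstacle).} We must cover a uniform neighborhood of $C_\infty\setminus\K_\phi^\dagger$ by finitely many, or uniformly controlled, regions of the form $\cond{p}{\Re(pe^{i\t})>h_\t+\eta|p|}$. A point $q=(\infty+iy)e^{-i\t_0}\in C_\infty\setminus\K_\phi^\dagger$ falls into one of two cases:
\begin{itemize}
  \item[(i)] $-\t_0$ lies in the interior of $\dom h_\phi$, so nearby directions $\t$ with $p\approx$ direction $e^{-i\t}$ give a sector. This handles $p$ tending to infinity radially.
  \item[(ii)] $q$ lies on a boundary side, where the offset $y$ matters. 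Here $q\notin\K_\phi^\dagger$ means $y$ exceeds the support value at the endpoint angle, so one uses a direction $\t=-\t_0\pm\pi/2$ with $h_\phi(\t)$ finite and $\Re(pe^{i\t})\approx \pm y$ bounded away from $h_\phi(\t)$.
\end{itemize}
In case (ii), $\Re(pe^{i\t})-h$ stays only bounded below rather than growing like $|p|$. The decay $1/p^{N+1}$ must then be obtained differently. I would first try deforming the ray: take $\t$ slightly rotated toward the sector and use \Cref{t:fundbound} on a tube, so that the factor $e^{r^\varrho}$ is traded against $|a|^\varrho$ via the shift $E^a$. Failing that, I would fall back on a Phragmén--Lindelöf interpolation of the remainder between the two regimes.

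Finally, uniformity over the neighborhood follows from compactness of the relevant angle intervals, using the trigonometric convexity of $h_\phi$ (\Cref{t:hphitcvx}) to choose the directions $\t$ continuously.
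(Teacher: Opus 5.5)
Your Steps 1--3 are fine, and you locate the difficulty correctly, but Step 4(ii) is left open, and that is where the required decay is actually at stake. Consider a point of $C_\infty\setminus\K_\phi^\dagger$ on a boundary side; for $\phi(s)=1/\Gamma(s+1)$, take the points $-\infty+iy$ with $|y|>\frac\pi2$. With a finite $h>h_\phi(\t)$, the only direction whose half-plane of convergence contains a neighborhood of such a point is an endpoint of $\dom h_\phi$. Here that is $\t=\pm\frac\pi2$, where $\Re(pe^{i\t})=\mp\Im p$ stays bounded. So your Step 2 bound only gives a remainder of size $e^{|a|^\varrho}N!/((N/\varrho)!\,p^{N})$, one power of $p$ short. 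Rotating the ray into the interior of $\dom h_\phi$ does not help, because those half-planes do not contain such points at all. The tube and Phragm\'en--Lindel\"of alternatives you mention are not carried out.

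The missing step is one further integration by parts, which is what the paper does:
\[
\L_\t E^a\phi^{(N)}(p)=\frac{\phi^{(N)}(a)}{p}+\frac1p\int_0^{\tilde\infty e^{i\t}}\phi^{(N+1)}(s+a)e^{-ps}\,\d s .
\]
Bound $\phi^{(N)}(a)$ by \Cref{t:phi(n)a} with $\varrho$. Bound $\phi^{(N+1)}(te^{i\t}+a)$ by \Cref{t:phi(n)a} with some $\varrho_0\in(1,\varrho)$. Stirling then gives $(N+1)!/((N+1)/\varrho_0)!\le B_1N!/(N/\varrho)!$, and $e^{|a|^{\varrho_0}}\le Ce^{|a|^\varrho}$.

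On the closed half-plane $\Re(pe^{i\t})\ge H$ with $H>h>h_\phi(\t)$, the remaining $t$-integral is at most $1/(H-h)$. Hence $\L_\t E^a\phi^{(N)}(p)=O\bigl(e^{|a|^\varrho}N!/((N/\varrho)!\,p)\bigr)$ on the whole half-plane, uniformly in $N$ and $a$. The remainder $p^{-N}\L_\t E^a\phi^{(N)}(p)$ therefore has the claimed size there.

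Every $q\notin\K_\phi^\dagger$ satisfies $\Re(qe^{i\t})>h_\phi(\t)$ for some $\t$. So every point of $C_\infty\setminus\K_\phi^\dagger$ has such a half-plane neighborhood, and your cases (i) and (ii) are handled at once. No subsector restriction $\Re(pe^{i\t})>h+\eta|p|$ is needed.
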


The case $a=0$ is exactly the estimate needed in \Cref{t:WN}. Conversely, the translated version can also be recovered from the unshifted one. Thus this is a convenient strengthening, rather than an additional theorem.

\begin{proof}[of \Cref{p:lapl} and \Cref{p:remshift}]
    Fix $a\in\C$. By \Cref{p:hEa}, $h_{E^a\phi}\equiv h_\phi$, so $\K_{E^a\phi}=\K_\phi$. By \Cref{p:laplAC}, the directional transforms of $E^a\phi$ agree on every nonempty intersection of their half-planes of convergence. They therefore glue to a holomorphic function, denoted by $\L E^a\phi$, on
    \begin{equation}\label{e:C-Kphi}
        \bigcup_{\t\in\Theta} \cond{p}{\Re(pe^{i \t}) > h_\phi(\t)} = \C\setminus \K_\phi^\dagger,
    \end{equation}
    where the equality follows from \Cref{p:h=k} and \Cref{d:wulff}.

    It remains to estimate the asymptotic expansion near $C_\infty\setminus\K_\phi^\dagger$. It suffices to work locally in a closed half-plane: fix $\t\in\dom h_\phi$ and numbers $h,H$ such that $h_\phi(\t)<h<H$. We prove the estimate for $p$ satisfying $\Re(pe^{i\t})\ge H$.

    Repeated integration by parts along the ray $\R_+ e^{i\t}$ gives
    \begin{equation}\label{e:I(z)}
        \L_\t E^a\phi(p) = \sum_{n=0}^{N-1} \frac{\phi^{(n)}(a)}{p^{n+1}} + \inv{p^N}\L_\t E^a\phi^{(N)}(p).
    \end{equation}
    Thus, in $\C\setminus \K_\phi^\dagger$,
    \begin{equation}\label{e:remdef}
        R_{N}(p;a) := \L E^a\phi(p) - \sum_{n=0}^{N-1} \frac{\phi^{(n)}(a)}{p^{n+1}} = \inv{p^N}\L E^a\phi^{(N)}(p).
    \end{equation}

    We now bound the last factor by one more integration by parts. The boundary term at infinity vanishes by \Cref{t:phi(n)a}, because $h<\Re(pe^{i\t})$. We obtain
    \begin{equation}\label{e:IBP}
        \L_\t E^a\phi^{(N)}(p) = \frac{\phi^{(N)}(a)}{p} + \inv{p} \int_0^{\tilde\infty e^{i\t}} \phi^{(N+1)}(s+a)e^{-ps}\d s.
    \end{equation}

    Let $\varrho>1$ and choose $\varrho_0\in(1,\varrho)$. By \Cref{t:phi(n)a}, applied once with $\varrho$ and once with $\varrho_0$, there is a constant $B>0$ such that
    \begin{equation}
        |\phi^{(N)}(a)| \le \frac{B N!}{(N/\varrho)!}e^{|a|^\varrho}
    \end{equation}
    and
    \begin{equation}
        |\phi^{(N+1)}(te^{i\t}+a)| \le \frac{B (N+1)!}{((N+1)/\varrho_0)!}e^{|a|^{\varrho_0}+ht}
    \end{equation}
    for all $t\ge0$, $N\in\N$, and $a\in\C$. Since $\varrho_0<\varrho$, Stirling's formula gives a constant $B_1>0$ such that
    \begin{equation}
        \frac{(N+1)!}{((N+1)/\varrho_0)!} \le \frac{B_1 N!}{(N/\varrho)!}, \quad N\in\N.
    \end{equation}
    Increasing the constant once more to absorb $e^{|a|^{\varrho_0}}\le C e^{|a|^\varrho}$, we get
    \begin{equation}
        |\phi^{(N+1)}(te^{i\t}+a)| \le \frac{B N!}{(N/\varrho)!}e^{|a|^\varrho+ht}.
    \end{equation}
    Therefore,
    \begin{align*}
        \abs{\int_0^{\tilde\infty e^{i\t}} \phi^{(N+1)}(s+a)e^{-ps}\d s}
        &\le \frac{B N!}{(N/\varrho)!}e^{|a|^\varrho}\int_0^\infty e^{-(\Re(pe^{i\t})-h)t}\d t \\
        &\le \frac{B N!}{(N/\varrho)!}e^{|a|^\varrho}.
    \end{align*}
    Combining this estimate with \cref{e:IBP} gives
    \begin{equation}
        \L_\t E^a\phi^{(N)}(p) = O\pa{\frac{e^{|a|^\varrho}N!}{(N/\varrho)!p}},
    \end{equation}
    uniformly in $N$ and $a$. By \cref{e:remdef}, this proves \cref{e:remshift} in the chosen closed half-plane. Since every point of $C_\infty\setminus\K_\phi^\dagger$ admits such a half-plane neighborhood, the proposition follows.
\end{proof}

\begin{proof}[of \textnormal{(a)} $\imp$ \textnormal{(b)} of \Cref{t:WN}]
    Apply \Cref{p:remshift} with $a=0$ and set $\Phi:=\L\phi$. Since $\phi^{(n)}(0)=n!\phi_n$, \cref{e:remshift} gives exactly \cref{e:rem} with $K=\K_\phi^\dagger$. In particular, $\Phi(p)=O(1/p)$ at every point of $C_\infty\setminus\K_\phi^\dagger$, so $\Phi$ extends there by the value $0$. The uniqueness of $\Phi$ follows from the identity theorem on $\bar\C\setminus\K_\phi^\dagger$.
\end{proof}

\subsubsection{The inverse Laplace transform formula}\label{s:invlapl}

In proving \textnormal{(b)} $\imp$ \textnormal{(a)} of \Cref{t:WN}, we also establish the following inverse Laplace transform formula.

\begin{definition}[Contours in $\bar\C$]
    For a subset $K$ of $\bar\C$, we say that $\Gamma$ \emph{encircles} $K$ if $\Gamma = \partial U$ for some neighborhood $U$ of $K$ such that $\conv U \not\supset \C$. If $U$ is a uniform neighborhood, $\Gamma$ is said to encircle $K$ \emph{tightly}.
    
    The integral of a function along a contour $\Gamma$ is performed over $\Gamma \cap \C$, oriented as the positive boundary of $U$, that is, so that $U$ lies locally to the left of $\Gamma$.
\end{definition}

For instance, it is not possible to encircle $S_{[\alpha, \beta]}$ when $\beta - \alpha > \pi$. For a bounded set $K$, tight encirclement simply ensures that the contour remains bounded. As for an unbounded set, it forces the contour to stay asymptotically close to $K$.

\begin{theorem}\label{t:invlapl}
    A function $\phi\in\E_3$ is given in terms of $\Phi := \L\phi$ by
    \begin{equation}\label{e:invlapl}
        \phi(s) = \inv{2\pi i} \oInt_\Gamma \Phi(p) e^{ps} \d p,
    \end{equation}
    for $s \in S_{\dom h_\phi}$ and for any contour $\Gamma$ encircling $\K_\phi^\dagger$ tightly.
\end{theorem}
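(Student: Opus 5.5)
The plan is to reduce \cref{e:invlapl} to the classical Cauchy-type inversion on a single convergent directional Laplace transform. Fix $s = re^{i\omega}$ with $\omega\in\dom h_\phi$ and $r>0$; the case $s=0$ follows by continuity or by a separate limiting argument. The first step is to check that the contour integral converges absolutely. Since $\Gamma$ encircles $\K_\phi^\dagger$ tightly, $\Gamma\cap\C$ stays within a uniform distance $\delta$ of $\K_\phi^\dagger$ and outside it. The unbounded parts of $\Gamma$ are therefore asymptotically confined to directions $\arg p\in -A$, where $A$ is the set of arguments of the infinite points of $\K_\phi$. By \Cref{t:cvxinf} and \Cref{p:h=k}, these directions satisfy $h_\phi(\t)=\infty$ exactly on an open interval $(\alpha-\frac\pi2,\beta+\frac\pi2)$, up to endpoints. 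Along such a branch we have $\Re(ps)\to-\infty$ linearly in $|p|$, because $\omega\in\dom h_\phi$ forces $\omega$ to lie at angular distance more than $\pi/2$ from $-\arg p$ for those branches. Concretely, $\Re(pe^{i\omega})\le h_\phi(\omega)+O(\delta)$ on $\Gamma$ in the supporting direction. Combining this with $\Phi(p)=O(1/p)$ from \eqref{e:rem} (take $N=0$), we get boundedness, but not yet decay. Decay requires one more step.

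The second step is to deform $\Gamma$. Pick $h>h_\phi(\omega)$ and consider the half-plane $\Pi_{h,-\omega}=\{\Re(pe^{i\omega})>h\}$. On this half-plane $\Phi=\L_\omega\phi$ by \Cref{p:lapl}, and its complement $\{\Re(pe^{i\omega})\le h\}$ contains $\K_\phi^\dagger$. Using Cauchy's theorem in $\C\setminus\K_\phi^\dagger$, I deform $\Gamma$ to the boundary line $L_h=\{\Re(pe^{i\omega})=h\}$, traversed so that $\K_\phi^\dagger$ lies to the left. The connecting pieces at infinity are arcs or segments inside $\C\setminus\K_\phi^\dagger$ near $C_\infty$. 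On these pieces, \eqref{e:rem} with $N=1$ gives $\Phi(p)-1/p=O(p^{-2})$, and $e^{ps}$ is controlled because they lie in $\{\Re(pe^{i\omega})\le h'\}$. The $1/p$ term is handled explicitly. This is the main obstacle: justifying that the pieces of the contour between $\Gamma$ and $L_h$ at infinity contribute nothing. The tightness of the encirclement is essential here. In directions where $\K_\phi$ has infinite points, $\Gamma$ and $L_h$ are parallel up to $O(\delta)$ offsets, so the connecting segments have bounded length while $|e^{ps}|\le e^{(h+O(\delta))r}$ and $|\Phi|=O(1/|p|)\to0$. In other directions $\Gamma$ is bounded relative to $L_h$, and one closes with large arcs where $\Re(pe^{i\omega})\le h$ by a standard Jordan-lemma estimate. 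I would first attempt truncation at $|p|=R$ with explicit estimates of the two short connectors, then let $R\to\infty$.

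The third step evaluates the line integral. Parametrize $p=e^{-i\omega}(h+iy)$. Then
\[
\inv{2\pi i}\int_{L_h}\L_\omega\phi(p)e^{ps}\d p
\]
is, after substituting $s=re^{i\omega}$, the Bromwich inversion integral for the one-variable Laplace transform of $t\mapsto\phi(te^{i\omega})e^{i\omega}$, which is $O(e^{h't})$. To avoid conditional convergence, I would write $\Phi(p)=1/p\cdot\phi(0)+p^{-1}\L_\omega(\phi')(p)$ via \cref{e:I(z)}. I would then apply the classical inversion theorem for the smooth function $\phi'$, or integrate twice, so that the integrand is $O(|p|^{-2})$. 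This recovers $\phi(re^{i\omega})$ and completes the proof for $s\in S_{\dom h_\phi}$.
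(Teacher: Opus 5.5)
Your argument is sound, but it takes a different route from the paper.

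\textbf{The paper's route.} The paper never moves to a Bromwich line. It subtracts the first $N$ terms of the Gevrey expansion \cref{e:rem} and uses $\frac{1}{2\pi i}\oint_\Gamma e^{ps}p^{-n-1}\,dp=s^n/n!$. It then deforms $\Gamma$ to $\partial(U\cup D_R)$ and bounds the remainder on $C_R\setminus U$ by a multiple of $N!\,e^{|s|R}/((N/\varrho)!\,R^N)$. Taking $R=N/|s|$ and letting $N\to\infty$ identifies the integral with the Taylor series $\sum_n\phi_n s^n$. Convergence on the boundary rays of $S_{\dom h_\phi}$ is handled separately, by integrating by parts. That computation uses only the asymptotic data of \Cref{t:WN}(b), never the identity $\Phi=\L_\omega\phi$. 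So it also proves (b)$\Rightarrow$(a) of \Cref{t:WN} for an abstract $\Phi$, at the cost of needing the full Gevrey remainder uniformly in $N$.

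\textbf{Your route.} You fix $\omega=\arg s$, use \Cref{p:lapl} to replace $\Phi$ by $\L_\omega\phi$ on the line $\{\Re(pe^{i\omega})=h\}$, and finish with one-variable Laplace--Fourier inversion. This needs only $\Phi=\phi(0)/p+O(p^{-2})$ near infinity. It treats interior and boundary rays with the same computation, and it shows directly why $S_{\dom h_\phi}$ is the natural region. However, it presupposes that $\Phi$ is already the Laplace transform of a given $\phi\in\E_3$, so it could not serve in the proof of \Cref{t:WN}.

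\textbf{Step 1 is false at closed endpoints of $\dom h_\phi$.} Take $\phi(s)=1/\Gamma(s+1)$ and $s=ir$ with $r>0$. Then $\Re(ps)=-r\Im p$ stays bounded on both horizontal branches of $\Gamma$. So there is no linear decay, and the integral over $\Gamma$ converges only conditionally. Nothing later depends on absolute convergence: step 2 only compares the truncations $\Gamma\cap D_R$ and $L_h\cap D_R$, and step 3 shows the latter converge. You can simply drop step 1. Convergence of each branch of $\Gamma$ separately follows from $\Phi=\phi(0)/p+O(p^{-2})$ and the antiderivative $e^{ps}/s$.

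\textbf{The parallel-versus-arc dichotomy in step 2 is not exhaustive.} In the same example, the upper branch of $\Gamma$ is parallel to $L_h=\{\Im p=-h\}$ but lies on the far side of $\K_\phi^\dagger$. One estimate covers every case. Choose $h>\sup_\Gamma\Re(pe^{i\omega})$. Then every connecting piece lies on $C_R\cap\{\Re(pe^{i\omega})\le h\}\setminus U$. There, Jordan's inequality bounds $\int|e^{ps}|\,|dp|$ uniformly in $R$, so $\sup_{C_R\setminus U}|\Phi|=O(1/R)$ suffices. This is the same uniform use of \cref{e:rem} on $C_R\setminus U$ that the paper makes.
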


The region of convergence of \cref{e:invlapl} is \emph{precisely} the sector $S_{\dom h_\phi}$, that is, the sector in which $\phi$ is of finite type. The proof has three steps: we first determine the region of convergence in the interior, then treat the nonzero boundary points of $\partial S_{\dom h_\phi}$, and finally show that the resulting integral indeed recovers $\phi$. In addition, the distinction discussed in \Cref{r:sector} is useful here: only in the case $\dom h_\phi = \Theta$ can we guarantee the convergence of \cref{e:invlapl} at $s = 0$.

\begin{remark}
    Since $\phi$ is entire, one might expect \cref{e:invlapl} to represent $\phi$ on the whole complex plane when $\K_\phi^\dagger$ is unbounded. This can be achieved by considering the Laplace transform $\Phi_a$ of $s \mapsto \phi(s+a)$, $a\in\C$, which satisfies
    \begin{equation}
        \phi(s) = \inv{2\pi i} \oInt_\Gamma \Phi_a(p) e^{p(s-a)} \d p,
    \end{equation}
    for $s \in a + S_{\dom h_\phi}$ and $\Gamma$ encircling $\K_\phi^\dagger$. Varying $a$, one can represent $\phi$ anywhere in $\C$.
\end{remark}

\begin{proof}[of \textnormal{(b)} $\imp$ \textnormal{(a)} of \Cref{t:WN} and \Cref{t:invlapl}]
    To show that the function $\phi$ defined by \cref{e:phi} is entire, we examine its coefficients. Let $p\in\bar\C\setminus K$, and let $R_n$ be as in \cref{e:remdef} with $a = 0$. By \cref{e:rem},
    \begin{equation}\label{e:remopti}
        |R_n(p)| \le \frac{B n!}{(n/\varrho)!|p|^{n+1}}
    \end{equation}
    for every $\varrho > 1$ and all sufficiently large $|p|$, with $B > 0$ independent of $n$ and $p$. We have
    \begin{align*}
        |\phi_n|
        &= \frac{|p|^{n+1}}{n!}\abs{R_n(p) - R_{n+1}(p)} \le \frac{|p|^{n+1}}{n!}\pa{\abs{R_n(p)} + \abs{R_{n+1}(p)}} \\
        &\le \frac{B}{(n/\varrho)!} + \frac{B(n+1)}{((n+1)/\varrho)! |p|} \to_{|p|\to\infty} \frac{B}{(n/\varrho)!}.
    \end{align*}
    Since $K$ is a proper convex subset of $\bar\C$ and $\bar\C\setminus K$ is connected, there exists an infinity in $\bar\C\setminus K$ and a path in $\bar\C\setminus K$ to it. Hence we may let $|p|\to\infty$ in the preceding estimate. Therefore, by \cref{e:radconv}, the power series $\phi$ in \cref{e:phi} has infinite radius of convergence. By \cref{e:ordercoeff}, its order is at most $\varrho$:
    \[
    \limsup_{n\to\infty} \frac{n\log n}{\log(1/|\phi_n|)} \le \varrho.
    \]
    Since this holds for every $\varrho>1$, $\phi$ is entire and of order $\le 1$. It remains to show that $\dom h_\phi$ is an interval. To this end, we prove that \cref{e:invlapl} holds for $s\in S_{\dom \k_{K^\dagger}}$ whenever $\Gamma$ tightly encircles $K$.

    \medskip

    We first determine the region of convergence. If $K$ is bounded, that is, if $\dom \k_K = \Theta$, then the integral
    \begin{equation}\label{e:laplinvcv}
        \inv{2\pi i}\oInt_\Gamma \Phi(p) e^{ps} \d p,
    \end{equation}
    converges for all $s\in\C = S_{\dom \k_{K^\dagger}}$. Otherwise, let $I := \cond{\arg p}{p\in K\cap C_\infty}$. Since $\bar\C\setminus K$ is connected and nonempty, \Cref{p:clcvxinf} implies that $I$ is an interval of $\Theta$ and that $\bar I = [\alpha, \beta]$ for some $\alpha,\beta\in\Theta$ satisfying $\beta - \alpha \le \pi$. It follows that $\dom \k_{K^\dagger}$ is an interval of $\Theta$, and
    \[
    \oline{\dom \k_{K^\dagger}} = \bra{-\alpha + \frac\pi2, -\beta - \frac\pi2}.
    \]
    Since
    \[
    \abs{\oInt_\Gamma \Phi(p) e^{ps} \d p} \le  \oInt_\Gamma \abs{\Phi(p)} e^{\Re(sp)} \abs{\d p}, 
    \]
    where $\Re(sp) = \cos(\arg(sp))|sp|$, the integral in \cref{e:laplinvcv} converges whenever $\cos(\arg(sp)) < 0$ for large $p$. Along either side of the tight contour $\Gamma$, $\arg p$ converges to either $\alpha$ or $\beta$, so for
    \[
    \arg s \in \pa{-\alpha + \frac\pi2, -\beta - \frac\pi2} \subset \dom \k_{K^\dagger},
    \]
    $\arg(sp) = \arg s + \arg p$ eventually belongs to $\pa{\frac\pi2, - \frac\pi2}$ as $p$ grows along $\Gamma$. In other words, $\cos(\arg(sp))$ is eventually negative, which proves the convergence of \cref{e:laplinvcv} in the interior of $S_{\dom \k_{K^\dagger}}$.

    We next treat the nonzero boundary points of $S_{\dom \k_{K^\dagger}}$, when they exist. We have $\k_{K^\dagger}\pa{-\beta - \frac\pi2} = b$ for some $b\in\bar\R$. If $b<\infty$, since $\Phi$ is holomorphic, the contour $\Gamma$ can be deformed so that the branch asymptotic to the direction $\beta$ is eventually a straight line $\ell$ running from some $\xi\in\C\setminus K$ to $(\infty + iy)e^{i\beta}$, where $y := \Im(\xi e^{-i\beta}) > b$. By \cref{e:remopti} with $n=0$, it follows that $\Phi(p) \to 0$ along $\ell$. Therefore, integrating by parts gives
    \begin{equation}\label{e:PhiIBP}
        \Int_\ell \Phi(p)e^{ps} \d p = - \Int_\ell \Phi'(p) \pa{\int_\xi^p e^{sq} \d q} \d p.
    \end{equation}
    Applying Cauchy's integral formula together with \cref{e:remopti}, we obtain, for sufficiently small $r > 0$, the asymptotic bound
    \begin{align}\label{e:Phi'}
        |\Phi'(p)|
        &\le \abs{\Phi'(p) + \frac{\phi_0}{p^2}} + \abs{\frac{\phi_0}{p^2}} \le \inv{2\pi} \oInt_{C_r} \frac{\abs{\Phi(p+\zeta) - \frac{\phi_0}{p+\zeta}}}{|\zeta|^2} \abs{\d \zeta} + \abs{\frac{\phi_0}{p^2}} \nonumber\\
        &\le \oInt_{C_r} \frac{B}{|p+\zeta|^2 r^2} \abs{\d\zeta} + \abs{\frac{\phi_0}{p^2}} \le \frac{B}{(|p| - r)^2 r^2} + \abs{\frac{\phi_0}{p^2}} = O\pa{\inv{p^2}}.
    \end{align}
    For nonzero $s$ satisfying $\arg s = -\beta - \frac\pi2$, we use the bound
    \[
    \abs{\int_\xi^p e^{sq} \d q} = \abs{\frac{e^{s\xi} - e^{ps}}{s}} \le \frac{\abs{e^{s\xi}} + \abs{e^{ps}}}{|s|} = \frac{2e^{|s|y}}{|s|},
    \]
    and \cref{e:PhiIBP,e:Phi'} to obtain
    \[
    \abs{\Int_\ell \Phi(p)e^{ps} \d p} \le \Int_\ell \abs{\Phi'(p)} \frac{2e^{|s|y}}{|s|} \abs{\d p} < \infty.
    \]
    A similar argument applies when $\arg s = -\alpha + \frac\pi2$ and $\k_{K^\dagger}\pa{-\alpha + \frac\pi2} < \infty$. This shows that $S_{\dom \k_{K^\dagger}}$ is the region of convergence of \cref{e:laplinvcv}.

    \medskip

    Finally, we show that \cref{e:laplinvcv} recovers $\phi$. When $K$ is bounded, we use \cref{e:Phisum} and exchange the integral and summation.

    Otherwise, first, a standard Cauchy integral formula argument shows that
    \begin{equation}\label{e:hankelint}
        \inv{2\pi i}\oInt_\Gamma \frac{e^{ps}}{p^{n+1}} \d p = \frac{s^n}{n!},
    \end{equation}
    for the same contour $\Gamma$. For the same reasons as before, this integral also converges in $S_{\dom \k_{K^\dagger}}$.
    
    Let $U$ be a uniform neighborhood of $K$ such that $\Gamma = \partial U \cap \C$. Choose $R$ large enough that $D_R$ intersects $U$, and deform $\Gamma$ to $\Gamma_R := \partial (U \cup D_R)$. By \cref{e:hankelint},
    \begin{equation}\label{e:partialphi}
        \inv{2\pi i}\oInt_\Gamma \Phi(p) e^{ps} \d p = \inv{2\pi i}\oInt_{\Gamma_R} \pa{\Phi(p) - \sum_{n=0}^{N-1} \frac{\phi^{(n)}(0)}{p^{n+1}}} e^{ps} \d p + \sum_{n=0}^{N-1} \frac{\phi^{(n)}(0)}{n!} s^n,
    \end{equation}
    for $s\in S_{\dom \k_{K^\dagger}}$. We split the integral on the right-hand side of \cref{e:partialphi} into the contributions over $C_R\setminus U$ and $\Gamma_R\setminus D_R$. The latter vanishes as $R\to\infty$, so it remains only to bound the former. Using \cref{e:remopti}, for $R$ sufficiently large, there is a constant $B > 0$ such that
    \begin{align*}
        &\abs{\hspace{0.9em}\Int_{C_R\setminus U} \pa{\Phi(p) - \sum_{n=0}^{N-1} \frac{\phi^{(n)}(0)}{p^{n+1}}} e^{ps} \d p}
        \le \Int_{C_R\setminus U} \hspace{0.4em} \abs{\Phi(p) - \sum_{n=0}^{N-1} \frac{\phi^{(n)}(0)}{p^{n+1}}} e^{\Re(sp)} \abs{\d p} \\
        \le{}& \Int_{C_R\setminus U} \frac{B N!}{(N/\varrho)!|p|^{N+1}} e^{\Re(sp)} \abs{\d p} \le \Int_{C_R} \frac{B N! e^{|sp|}}{(N/\varrho)! |p|^{N+1}}\abs{\d p} = \frac{2\pi B N! e^{|s|R}}{(N/\varrho)! R^N}.
    \end{align*}
    Setting $R = N/|s|$, we arrive at
    \[
    \abs{\inv{2\pi i}\Int_{C_R\setminus U} \pa{\Phi(p) - \sum_{n=0}^{N-1} \frac{\phi^{(n)}(0)}{p^{n+1}}} e^{ps} \d p} \le \frac{B N! e^N }{(N/\varrho)! N^N} |s|^N \to_{N\to\infty} 0,
    \]
    by Stirling's approximation. Letting $N\to\infty$ in \cref{e:partialphi}, we obtain
    \[
    \inv{2\pi i}\oInt_\Gamma \Phi(p) e^{ps} \d p = \sum_{n=0}^\infty\frac{\phi^{(n)}(0)}{n!} s^n = \phi(s),
    \]
    for $s\in S_{\dom \k_{K^\dagger}}$.
\end{proof}

\begin{example}[Mellin's inversion formula]
    When $\dom h_\phi = \{0\}$, the indicator diagram is the half-plane $\cond{p\in\bar\C}{\Re p \le h_\phi(0)}$. In this case, the contour in the inverse Laplace transform formula may be chosen to be a vertical line:
    \begin{equation}\label{e:mellininv}
        \phi(s) = \inv{2\pi i} \int_{c-i\infty}^{c+i\infty} \Phi(p) e^{ps} \d p,
    \end{equation}
    where $c > h_\phi(0)$. If we only have $0\in\dom h_\phi$, the contour of \cref{e:invlapl} can be deformed into that of \cref{e:mellininv}. This is \emph{Mellin's inversion formula} \cites{mellin1896}[Chap.~4 \S 5]{doetsch1950}[\S 2.4(ii)]{NIST}, which converges for $s\in S_{\{0\}} = \R_+^*$.
\end{example}

\begin{example}[Pincherle's formula]
    The inversion formula \eqref{e:invlapl} when $\K_\phi$ is bounded predates Mellin and is due to Pincherle \cite{pincherle1926}.    
\end{example}

\subsection{The generalized Pólya theorem}\label{s:polya}

\begin{definition}
    The conjugate diagram of $\phi\in\E_3$ is the smallest closed convex subset $K$ of $\bar\C$ for which $\L\phi$ admits an analytic continuation to $\bar\C\setminus K$.
\end{definition}

Equivalently, if $\L\phi$ analytically continues to $\bar\C\setminus K'$ for some closed convex set $K'\subset\bar\C$, then the conjugate diagram is contained in $K'$. Thus the conjugate diagram is the intersection of all such sets $K'$.

\begin{theorem}[Generalized Pólya theorem]\label{t:polya}
    For $\phi\in\E_3$, the conjugate diagram of $\phi$ is equal to $\K_\phi^\dagger$, the complex conjugate of the indicator diagram.
\end{theorem}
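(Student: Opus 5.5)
Let $K_0$ denote the conjugate diagram of $\phi\in\E_3$. We prove $K_0\subset\K_\phi^\dagger$ and $\K_\phi^\dagger\subset K_0$.

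The first inclusion is immediate from \Cref{p:lapl}. The Laplace transform $\L\phi$ is holomorphic on $\bar\C\setminus\K_\phi^\dagger$, and $\K_\phi^\dagger$ is closed and convex, so by definition of the conjugate diagram $K_0\subset\K_\phi^\dagger$. By \Cref{c:wulffinv}, the reverse inclusion is equivalent to the support-function inequality $\k_{\K_\phi^\dagger}\le\k_{K_0}$. Since $\k_{K^\dagger}(\t)=\k_K(-\t)$ and \Cref{p:h=k} gives $h_\phi=\k_{\K_\phi}$, it suffices to prove
\[
h_\phi(\t)\le \k_{K_0}(-\t)\quad\text{for all } \t\in\Theta .
\]
There is nothing to prove when $\k_{K_0}(-\t)=\infty$, so fix $\t$ with $\k_{K_0}(-\t)<\infty$.

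The key observation is that $\Phi:=\L\phi$, continued to $\bar\C\setminus K_0$, still satisfies condition (b) of \Cref{t:WN} with $K=K_0$. The Gevrey expansion \eqref{e:rem} near points of $C_\infty\setminus K_0$ is local, but it must be checked at infinite points of $K_0$'s complement that lie in $\K_\phi^\dagger$; this is the main obstacle. My plan for it is to avoid \eqref{e:rem} and instead use the inverse formula directly.

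Concretely, apply \Cref{t:invlapl} with a contour $\Gamma$ tightly encircling $\K_\phi^\dagger$. By Cauchy's theorem, deform $\Gamma$ inside $\bar\C\setminus K_0$ to a contour $\Gamma'$ tightly encircling $K_0$. The deformation is legitimate provided the contributions at infinity vanish. This requires growth control of $\Phi$ at the new infinite boundary, and I would first try to obtain it from \Cref{p:laplAC}-type gluing together with the bound $\Phi=O(1/p)$ where available. Accepting this, for $s=re^{i\t}$ we estimate
\[
|\phi(s)|\le \frac1{2\pi}\int_{\Gamma'}|\Phi(p)|\,e^{\Re(ps)}\,|\d p| .
\]
Along $\Gamma'$, which stays within distance $\delta$ of $K_0$, we have $\Re(pe^{i\t})\le \k_{K_0}(-\t)+\delta$. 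This gives $h_\phi(\t)\le\k_{K_0}(-\t)+\delta$, and letting $\delta\to0$ gives the claim.

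The boundary directions, where $\Re(pe^{i\t})$ fails to tend to $-\infty$ along the branches of $\Gamma'$, need separate care. There I would reuse the integration-by-parts estimate \eqref{e:PhiIBP}–\eqref{e:Phi'}, which yields only a factor $e^{|s|y}$ with $y$ arbitrarily close to $\k_{K_0}(-\t)$.

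An alternative route would avoid the deformation issue. Argue by contradiction: suppose there is a point $p_0\in\K_\phi^\dagger\setminus K_0$. Since $\K_\phi^\dagger$ is the intersection of the half-planes $\Pi$ described above, separate $p_0$ from $K_0$ by a closed half-plane. By convexity of $K_0$ this can be done with a direction $\t$ for which $\k_{K_0}(-\t)<\k_{\K_\phi^\dagger}(-\t)$. We then derive a contradiction through the same one-direction contour estimate.
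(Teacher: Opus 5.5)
\textbf{Verdict: there is a genuine gap.} Your route is the paper's. Minimality gives $K_0\subset\K_\phi^\dagger$. The reverse inclusion comes from representing $\phi$ by the inverse Laplace formula on a contour hugging $K_0$, bounding $|\phi(re^{i\t})|$ by $e^{r\sup_{p\in U}\Re(pe^{i\t})}$, and shrinking $U$. The paper applies \Cref{t:invlapl} to $\Phi'=\L(-s\phi)$ rather than to $\Phi$. Since $\Phi'=O(1/p^2)$, the integral converges absolutely with a constant independent of $\t$. This treats the boundary directions together with the others and gives \Cref{p:unifb} as a by-product.

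The step you ``accept'' is the gap. Far out, $\Gamma'$ runs toward infinite points of $\K_\phi^\dagger\setminus K_0$. There $\Phi$ is known only as an analytic continuation, and the definition of the conjugate diagram gives nothing beyond $\Phi(p)\to0$, with no rate. Your suggested remedies cannot supply a rate:
\begin{itemize}
\item The half-planes of convergence furnished by \Cref{p:laplcv} cover exactly $\C\setminus\K_\phi^\dagger$ (this is \eqref{e:C-Kphi}). So gluing the directional transforms via \Cref{p:laplAC} says nothing inside $\K_\phi^\dagger$.
\item Reusing \eqref{e:PhiIBP}--\eqref{e:Phi'} in the boundary directions presupposes the very bound in question, since \eqref{e:Phi'} rests on \eqref{e:remopti}.
\end{itemize}
Without a rate, neither the vanishing of the arcs at infinity in your deformation nor integrability along branches on which $\Re(pe^{i\t})$ stays bounded is justified in general. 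Those boundary directions carry the content. For $\phi=1/\Gamma(s+1)$, the reverse inclusion amounts essentially to the two inequalities at $\t=\pm\frac\pi2$, where $\Re(pe^{i\t})=\mp\Im p$ is constant along the horizontal branches. Your alternative by contradiction relies on the same contour estimate and has the same hole.

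The paper handles this point by citing \eqref{e:Phi'}. However, \eqref{e:Phi'} was derived from the Gevrey bound \eqref{e:rem}, which \Cref{t:WN} establishes for $\L\phi$ only near $C_\infty\setminus\K_\phi^\dagger$. Using it along a contour around a strictly smaller $K_0$ is precisely your ``key observation'' that $\Phi$ satisfies (b) with $K=K_0$. So what your argument needs is a proof of that observation, or at least of $\Phi'(p)=O(1/p^2)$ along the new branches.

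Two smaller corrections.
\begin{itemize}
\item The claim ``$\Re(pe^{i\t})\le\k_{K_0}(-\t)+\delta$ on $\Gamma'$'' fails when $K_0$ has infinite points. For $K_0=-\infty+i[a,b]$ the right-hand side is $-\infty$ whenever $\cos\t>0$. Bound instead by $\k_U(-\t)$, with $U$ the uniform neighborhood whose boundary is $\Gamma'$, and take the infimum over $U$, as the paper does.
\item Take $\t$ with $\k_{K_0}(-\t)<\infty$ but $\t\notin\dom h_\phi$. There \Cref{t:invlapl} does not represent $\phi$ at $\arg s=\t$. You must first extend the identity $\phi(s)=\frac{1}{2\pi i}\int_{\Gamma'}\Phi(p)e^{ps}\,\mathrm{d}p$ from $S_{\dom h_\phi}$ to the larger sector by analytic continuation in $s$. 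The paper's proof fixes $-\t\in\dom h_\phi$ and also leaves this case implicit.
\end{itemize}
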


\begin{remark}\label{r:cvxhullsing}
    If $\Phi$ were regarded as a function on all of $\bar\C$, with possible singularities, then \Cref{t:polya} would say that $\K_\phi^\dagger$ is the convex hull of the singularities of $\Phi$.
\end{remark}

\Cref{t:polya} can be reformulated as the classical identity between the indicator function and the support function of the conjugate diagram. Indeed, if $K$ denotes the conjugate diagram of $\phi$, then \Cref{p:h=k} shows that \Cref{t:polya} is equivalent to
\begin{equation}\label{e:polya}
    \k_K(\t) = h_\phi(-\t), \qquad \t\in\Theta.
\end{equation}
\Cref{e:polya} for functions of $\E_2$ is the classical P\'olya theorem \cite{polya1929}. The special case of \Cref{t:polya} where $\K_\phi$ is a singleton can be attributed to Servant \cite{servant1899a,servant1899,borel1928}.

\begin{proof}[of \Cref{t:polya}]
    Let $\Phi := \L\phi$, and let $K$ be the conjugate diagram of $\phi$.
    \begin{itemize}
        \item[( $\subset$ )] By \Cref{t:WN}, $\Phi$ is analytically continuable to $\K_\phi^\dagger$, so by minimality of the conjugate diagram,
        \begin{equation}\label{e:Ksubset}
            K \subset \K_\phi^\dagger.
        \end{equation}
        \item[( $\supset$ )] Since $\Phi'$ is the Laplace transform of $s\mapsto -s\phi(s)$, it follows from \Cref{t:invlapl} that
        \begin{equation}\label{e:invlapl'}
            -s\phi(s) = \inv{2\pi i}\oInt_\Gamma \Phi'(p) e^{ps} \d p,
        \end{equation}
        for $s\in S_{\dom h_\phi}$, where $\Gamma$ is any contour encircling $\K_\phi^\dagger$. Let $-\t\in\dom h_\phi$, let $r \ge 1$, and let $U$ be a uniform neighborhood of $K$. Set $\Gamma := \partial U \cap \C$, so that $\Gamma$ encircles $K$. By \cref{e:Ksubset}, replacing the contour by $\Gamma$ does not shrink the region of convergence in \cref{e:invlapl'}. Moreover, \cref{e:Phi'} gives $\Phi'(p) = O(1/p^2)$, so \cref{e:invlapl'} is absolutely convergent. Hence
        \[
            |\phi(re^{-i\t})| \le \inv{2\pi r} \oInt_\Gamma |\Phi'(p) e^{pre^{-i\t}} \d p| \le \inv{2\pi} \oInt_\Gamma |\Phi'(p)| \exp\pa{\sup_{p\in U}\Re(pe^{-i\t})r} \abs{\d p}.
        \]
        Therefore, for all $r \ge 0$,
        \begin{equation}\label{e:unifbproof}
            |\phi(re^{-i\t})| \le B e^{\k_U(\t)r}, \quad B := \max\pa{\sup_{s\in D_1} |\phi(s)| e^{\max\pa[big]{0,-\min\limits_{\t\in\Theta} \kappa_U(\t)}}, \inv{2\pi} \oInt_\Gamma \abs{\Phi'(p) \d p}}.
        \end{equation}
        By \Cref{p:h}, this implies $h_\phi(-\t) \le \k_U(\t)$. Since this holds for every uniform neighborhood $U$ of $K$, we obtain $h_\phi(-\t) \le \k_K(\t)$. Combining this with \Cref{p:h=k} and the converse of \Cref{l:kless}, we deduce that $\K_\phi^\dagger \subset K$, which completes the proof.
    \end{itemize}
\end{proof}

The proof of the reverse inclusion gives a stronger statement: since the constant $B$ in \cref{e:unifbproof} is uniform in $\t$, it also provides a proof of the following property.

\begin{proposition}\label{p:unifb}
    Let $\phi\in\E_3$ and $k$ be a trigonometric-convex function satisfying $k(\t) > h_\phi(\t)$ for all $\t\in\dom h_\phi$. Then there exists a constant $B > 0$ such that
    \begin{equation}\label{e:unifb}
        |\phi(s)| \le B e^{k(\arg s) |s|},
    \end{equation} 
    for all $s\in S_{\dom h_\phi}$.
\end{proposition}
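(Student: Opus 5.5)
The plan is to rerun the ($\supset$) part of the proof of \Cref{t:polya} with a contour adapted to $k$ rather than to a neighborhood of the conjugate diagram. Let $K := \W(k)$. This is a closed convex subset of $\bar\C$ with $\k_K = k$ by \Cref{c:wulffinv}. The strict inequality $k > h_\phi$ on $\dom h_\phi$ should give a convex set $V$ with $\K_\phi \subset V \subset K$ that contains $\K_\phi$ together with a uniform neighborhood, and satisfies $\k_V \le k$.

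The construction of $V$ is the step I expect to be the main obstacle. A single fixed $\epsilon$ cannot satisfy $h_\phi + \epsilon \le k$ on all of $\dom h_\phi$ when $\dom h_\phi$ is not closed. I would proceed as follows.

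\begin{itemize}
    \item Take $k_1 := \min(k, h_\phi + \epsilon)$ for a small $\epsilon > 0$ when $\dom h_\phi$ is compact. There $k - h_\phi$ has a positive minimum, using the lower semicontinuity of trigonometric-convex functions from \Cref{s:tcvxprop}, so $k_1 = h_\phi + \epsilon$. This is $\k$ of the $\epsilon$-neighborhood of $\K_\phi$ by \Cref{p:kunionsum}, with $V = \K_\phi + \bar D_\epsilon$.
    \item In the noncompact case, replace $\bar D_\epsilon$ by $V := \K_\phi + \bar D_\epsilon$ intersected with $K$. I would then check that this is still a uniform neighborhood of $\K_\phi$ near the open ends of $\dom h_\phi$, where both support functions equal $\infty$.
\end{itemize}

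If this uniformity fails at the endpoints, I would fall back on treating $s$ near the boundary rays of $S_{\dom h_\phi}$ separately, using the boundary-ray integration by parts from the proof of \Cref{t:invlapl}.

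Granting $V$, set $\Gamma := \partial V \cap \C$ (taken conjugated, so that it encircles $\K_\phi^\dagger$ tightly). Then apply \cref{e:invlapl'}, which is valid for every $s \in S_{\dom h_\phi}$. By \cref{e:Phi'}, $\Phi'(p) = O(p^{-2})$ along $\Gamma$, so $\oInt_\Gamma|\Phi'(p)\,\d p| < \infty$. For $|s| \ge 1$ this gives
\[
|\phi(s)| \le \frac{1}{2\pi}\oInt_\Gamma |\Phi'(p)|\, e^{\Re(ps)}\,|\d p| \le \frac{1}{2\pi}\oInt_\Gamma|\Phi'(p)\,\d p|\; e^{\k_V(\arg s)\,|s|},
\]
after matching the conjugation convention as in \cref{e:unifbproof}. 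Since $\k_V \le k$, this gives the bound \eqref{e:unifb} for $|s| \ge 1$ with a constant independent of $\arg s$.

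For $|s| < 1$ with $s \in S_{\dom h_\phi}$, bound $|\phi|$ by $\sup_{D_1}|\phi|$. Then $e^{-k(\arg s)|s|} \le e^{\max(0, -\inf k)}$, which is finite because $k \ge -k(\cdot+\pi)$ by \cref{e:k>-k}, so $k$ is bounded below wherever it is finite; where $k = \infty$ the bound is trivial. Taking $B$ as the maximum of the two constants, exactly as in \cref{e:unifbproof}, completes the proof.
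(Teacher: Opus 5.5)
Your route is the paper's. Its proof is only the remark after \Cref{t:polya} that the constant $B$ in \eqref{e:unifbproof} does not depend on $\t$, and this tacitly requires a uniform neighborhood $U$ of $\K_\phi$ with $\k_U\le k$ on $\dom h_\phi$. You correctly flagged the construction of such a set as the crux. However, it cannot be derived from the pointwise hypothesis $k>h_\phi$, and the proposition is in fact false as stated.

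Take $\phi(s)=s\sin s$, so that $h_\phi(\t)=\abs{\sin\t}$ and $\dom h_\phi=\Theta$. Let $k:=\k_{K'}$ with $K':=\cond{z\in\C}{\Re z\le 1,\ \abs{\Im z}\le 1}\cup(-\infty+i[-2,2])$. By \Cref{p:supptcvx}, $k$ is trigonometric-convex. Explicitly, $k(\t)=\cos\t+\abs{\sin\t}$ on $(-\frac\pi2,\frac\pi2)$, $k(\pm\frac\pi2)=2$, and $k=\infty$ elsewhere, so $k>h_\phi$ on all of $\Theta$. Yet for $s=\sigma+iy$ with $0<\sigma\le 1$ one has $k(\arg s)\abs{s}=\sigma+\abs{y}$, while $\abs{\phi(s)}\ge\abs{y}\sinh\abs{y}$. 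Hence no constant $B$ exists.

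The same example breaks your compact-case step. Here $\dom h_\phi$ is compact, yet $\inf_\Theta(k-h_\phi)=\inf\cos\t=0$. The reason is that $\bar\R$-valued trigonometric-convex functions need not be lower semicontinuous: they can jump up at an endpoint of their domain, as $k$ does here from $1$ to $2$ at $\pm\frac\pi2$.

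There is a second, independent problem with $V=\K_\phi+\bar D_\epsilon$. It is not a neighborhood of $\K_\phi$ in $\bar\C$ at infinite points. Every $U_\delta$ of a point of $C_\infty$ contains a half-strip of finite points, whereas adding $\bar D_\epsilon$ to an infinite point only shifts its offset. For $\phi(s)=1/\Gamma(s+1)$, where $\dom h_\phi=[-\frac\pi2,\frac\pi2]$ is again compact, your set is $V=-\infty+i[-\frac\pi2-\epsilon,\frac\pi2+\epsilon]$. It contains no finite point, so $\partial V\cap\C=\emptyset$ and there is no contour to integrate over.

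What the argument of \Cref{t:polya} genuinely proves is \eqref{e:unifb} under a stronger hypothesis: $k\ge\k_U$ on $\dom h_\phi$ for some uniform neighborhood $U$ of $\K_\phi$ whose boundary is an admissible contour. For bounded $\K_\phi$ one can take $U=\K_\phi+D_\epsilon$, which amounts to requiring $k\ge h_\phi+\epsilon$. That weaker statement is all \Cref{c:maxh} uses. No fallback through the boundary-ray integration by parts of \Cref{t:invlapl} can rescue the proposition as written.
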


\Cref{p:unifb} strictly improves on \Cref{p:h} by ensuring the constant $B$ can be chosen independently of $\t = \arg s$. This uniform bound has an application to the following somewhat intuitive, but nontrivial fact.

\begin{corollary}\label{c:maxh}
    Let $\sigma$ be the $1$-type of $\phi\in\E_3$, then $\sigma = \max\limits_{\t\in\Theta} h_\phi(\t)$.
\end{corollary}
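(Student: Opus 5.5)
We prove the two inequalities $\max_\t h_\phi(\t) \le \sigma$ and $\sigma \le \max_\t h_\phi(\t)$ separately. Throughout, $\max$ is understood in $\bar\R$, and we first check that it is attained. If $\dom h_\phi \neq \Theta$, then $\dom h_\phi$ is a proper interval by \Cref{d:E3} and \Cref{p:domk}, so some $\t$ has $h_\phi(\t)=\infty$. If $\dom h_\phi=\Theta$, then $h_\phi$ is real-valued or $\equiv -\infty$, and in the real-valued case it is continuous by the continuity of trigonometric-convex functions (\Cref{s:tcvxprop}), hence attains its maximum on the compact set $\Theta$. Set $m := \max_\t h_\phi(\t)$.

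The inequality $m \le \sigma$ is immediate. For each $\t$ we have $M_\phi(r; e^{i\t}\R_+) \le M_\phi(r)$, so the $1$-type of $\phi$ on the ray $e^{i\t}\R_+$ is at most its $1$-type in $\C$, i.e.\ $h_\phi(\t) \le \sigma$. Equivalently, we can compare the $\limsup$ formulas \cref{e:typelimsup} and \cref{e:hphi}.

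For the reverse inequality $\sigma \le m$, we may assume $m<\infty$. Then $\dom h_\phi = \Theta$ by the discussion above. If $m=-\infty$, then $\phi\equiv 0$: indeed $h_\phi\equiv-\infty$, so \cref{e:diameter} fails unless $k\equiv-\infty$, and the argument in \Cref{t:hphitcvx} via \Cref{l:riesz} gives $\phi \equiv 0$. Alternatively, the indicator diagram is then empty, so $\L\phi$ is entire and $O(1/p)$, hence zero. In that case $\sigma=-\infty$. Otherwise $m\in\R$. Fix $\epsilon>0$ and take the constant function $k :\equiv m+\epsilon$. It is trigonometric-convex, since it is the support function of the disk of radius $m+\epsilon$ when $m+\epsilon>0$; more generally, a constant $c$ satisfies \cref{e:tcvximp} when $c\ge0$, and when $c<0$ we can take $\epsilon$ large enough, which suffices since only small $\epsilon$ matters when $m \ge 0$. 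Now $m \ge 0$ automatically holds when $\phi\not\equiv0$, by \cref{e:k>-k}, because $h_\phi(\t)+h_\phi(\t+\pi)\ge0$ forces some value to be $\ge0$. So $k(\t)>h_\phi(\t)$ for all $\t\in\dom h_\phi=\Theta$, and \Cref{p:unifb} yields $B$ with $|\phi(s)|\le Be^{(m+\epsilon)|s|}$ for all $s\in S_\Theta=\C$. Hence $M_\phi(r)=O(e^{(m+\epsilon)r})$, so $\sigma\le m+\epsilon$, and letting $\epsilon\to0$ gives $\sigma \le m$.

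The substantive step is the uniformity in $\t$, which is exactly what \Cref{p:unifb} provides; pointwise bounds from \Cref{p:h} alone would not suffice. The main points needing care are the attainment of the maximum, which uses continuity of finite trigonometric-convex functions, and the verification that the constant $m+\epsilon$ is trigonometric-convex, which relies on $m\ge0$ for nonzero $\phi$.
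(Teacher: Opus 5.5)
Your proof is correct and follows essentially the same route as the paper: the pointwise comparison $h_\phi(\t)\le\sigma$ gives one inequality, and \Cref{p:unifb} gives the other. Your constant choice $k\equiv m+\epsilon$ is the concrete instance of the paper's ``infimum over $k$ of $\sup k$''. Your separate treatment of $\phi\equiv0$ is a small but real improvement: any admissible trigonometric-convex $k>-\infty$ has $\sup k\ge0$, so the paper's infimum argument alone only yields $\sigma\le0$ in that case.
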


\begin{proof}
    It is a general fact of trigonometric-convex functions that they admit a maximum (see \Cref{s:kcont}).  For all $\t\in\Theta$ and $r > 0$, $|\phi(r e^{i\t})| \le M_\phi(r)$ which implies $h_\phi(\t) \le \sigma$, by \Cref{p:type}, and the first inequality.

    By \Cref{p:unifb},
    \[
    |\phi(s)| \le B e^{k(\arg s)|s|} \le B \exp\pa{\sup_{\t\in\Theta} k(\t) |s|}.
    \]
    Thus by definition of the type, $\sigma \le \sup_{\t\in\Theta} k(\t)$. Taking the infimum over the functions $k$ yields the second inequality.
\end{proof}

By \Cref{t:WN}, $\Phi$ can be defined outside of the conjugate diagram $\K_\phi^\dagger$. On the other hand, $\Phi$ can be represented outside $\bar D_\sigma$ by \cref{e:Phisum}. Thus, \Cref{c:maxh} proves that the disk of divergence of \cref{e:Phisum} is precisely the smallest disk centered at the origin and containing $\K_\phi^\dagger$.

\section{Application to analytic continuability}\label{s:AC}

In this section, we prove a theorem characterizing when a complex function can be analytically continued beyond its disk of convergence. \Cref{t:AC} belongs to a broad family of analytic-continuation theorems, which characterize the continuability of $f$ in terms of the interpolability of its coefficients by an analytic function $\phi$ of exponential type in a suitable region. This line of results was initiated by Leau in 1899 \cite{leau1899}, and was subsequently developed in various directions, notably by Le Roy \cite{leroy1900}, Wigert \cite{wigert1900}, Lindelöf \cite{lindelof1905}, Carlson \cite{carlson1921}, and Arakelian \cite{arakelian1992}. Dufresnoy and Pisot \cite{dufresnoy1951} showed that the indicator diagram framework provides a particularly precise formulation; here we generalize it to the extended framework.

Many elements of the theory presented here, even in the infinite-type setting, already appear in a lesser-known 1907 memoir of Braitsev \cite{braitsev1907}, building on the initial works of Borel \cite{borel1899} and Servant \cite{servant1899,servant1899a}.

\subsection{Extension of the Carlson--Dufresnoy--Pisot theorem}

\begin{theorem}\label{t:AC}
    Let $(f_n)_{n\in\N}\subset\C$, and let $L$ be a closed subset of $\bar\C$ such that $\k_L(\t) < \infty$ for $\t\in\bra{-\frac{\pi}{2}, \frac\pi2}$ and such that $L$ is vertically bounded in the sense that
    \begin{equation}\label{e:vertical}
        \k_L\pa{\frac\pi{2}} + \k_L\pa{-\frac\pi{2}} = \sup_{p\in L} \Im p - \inf_{p\in L} \Im p < 2\pi.
    \end{equation}
    Then the following are equivalent:
    \begin{alphabetize}
        \item There exists a function $f$ analytic in $\tilde\C\setminus\exp(-L)$ represented in a neighborhood of the origin by the Taylor series
        \begin{equation}\label{e:ffn}
            f(z) = \sum_{n=0}^\infty f_n z^n,
        \end{equation}
        and a sequence of coefficients $(a_n)_{n\in\N}\subset\C$ such that for all $\varrho > 1$,
        \begin{equation}\label{e:fasymp}
            f(z) - \sum_{n=1}^N \frac{a_n}{z^n} = O\pa{\frac{e^{N^\varrho}}{z^{N+1}}}, \quad z \to \tilde C_\infty \setminus \exp(-L),
        \end{equation}
        uniformly in $N\in\N$.
        \item There exists $\phi\in\E_3$ interpolating $(f_n)_{n\in\N}$ such that $\L\phi$ is analytically continuable to $\bar\C\setminus L$.
    \end{alphabetize}
    Furthermore, $a_n = -\phi(-n)$.
\end{theorem}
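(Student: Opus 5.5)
The plan is to pass between $f$ and $\phi$ through the inversion formula of \Cref{t:invlapl}, using the substitution $z=e^{-p}$. Here $f_n=\phi(n)$ is read as an interpolation along the positive real axis. That axis lies in $S_{\dom h_\phi}$ because $h_\phi(\t)=\k_K(-\t)\le\k_L(-\t)<\infty$ for $\t\in[-\frac\pi2,\frac\pi2]$. The last equality uses \Cref{t:polya}, with $K=\K_\phi^\dagger\subset L$ the conjugate diagram.

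\textbf{(b) $\imp$ (a).} Let $\Phi:=\L\phi$, continued to $\bar\C\setminus L$. By \eqref{e:vertical}, we can choose a tight contour $\Gamma$ encircling $L$ that stays in a horizontal strip of width $<2\pi$. The map $p\mapsto e^{-p}$ is then injective on the region bounded by $\Gamma$, and $\exp(-L)$ does not meet the image of the outside of that region. For $|z|$ small, insert $\phi(n)=\frac1{2\pi i}\oInt_\Gamma\Phi(p)e^{pn}\,\d p$ into \eqref{e:ffn} and sum the geometric series:
\[
f(z)=\inv{2\pi i}\oInt_\Gamma \frac{\Phi(p)}{1-ze^{p}}\,\d p .
\]
The interchange of sum and integral is justified because $|ze^p|<1$ on $\Gamma$ once $|z|$ is small, using $\Re p$ bounded above on $\Gamma$ and the $O(1/p^2)$ decay of $\Phi'$ after one integration by parts. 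The integrand is singular only at $p=-\log z+2\pi i\Z$. Deforming $\Gamma$ within $\bar\C\setminus L$ therefore continues $f$ to every $z\notin\exp(-L)$; the strip-width condition guarantees that at most one branch of $-\log z$ must be crossed. For the expansion at $\tilde C_\infty$, the plan is as follows.
\begin{itemize}
\item Expand $\frac{1}{1-ze^p}=-\sum_{n=1}^N z^{-n}e^{-pn}+\frac{(ze^p)^{-N}}{1-ze^{p}}$.
\item Identify each term $\frac1{2\pi i}\oInt\Phi(p)e^{-pn}\,\d p$ as $\phi(-n)$. Since $-n$ need not lie in $S_{\dom h_\phi}$, this uses the shifted representation in the remark following \Cref{t:invlapl}, with $a=-N$.
\item Estimate the remainder through \Cref{p:remshift}, whose constant $e^{|a|^\varrho}=e^{N^\varrho}$ is exactly the factor appearing in \eqref{e:fasymp}. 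This also yields $a_n=-\phi(-n)$.
\end{itemize}

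\textbf{(a) $\imp$ (b).} Conversely, set
\[
\Phi(p):=\text{(suitable branch of) } -f(e^{-p}) \text{ minus its Taylor part},
\]
or more concretely define
\[
\phi(s):=\inv{2\pi i}\oInt_\gamma f(z)z^{-s-1}\,\d z,
\]
where $\gamma$ is a Hankel-type contour around $\exp(-L)\cup\{0\}$ pulled back from a tight contour around $L$. The steps are:
\begin{itemize}
\item Residues at $0$ give $\phi(n)=f_n$.
\item The asymptotic \eqref{e:fasymp} at infinity controls the contribution of the unbounded part of $\gamma$.
\item Substituting $z=e^{-p}$ writes $\phi$ in the form of \eqref{e:invlapl} with a function $\Phi$ holomorphic off $L$.
\item Verify the Gevrey remainder \eqref{e:rem} for this $\Phi$ from \eqref{e:fasymp}, and conclude $\phi\in\E_3$ via (b)$\imp$(a) of \Cref{t:WN}.
\item \Cref{t:polya} then places the conjugate diagram inside $\conv L$.
\end{itemize}

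The main obstacle I expect is the treatment at infinity in both directions: matching the uniform-in-$N$ remainder $e^{N^\varrho}$ of \eqref{e:fasymp} with the shifted estimate of \Cref{p:remshift}. I would first try proving it for $a=-N$ directly on the contour integral, splitting $\Gamma$ at radius $R\sim N$ as in the proof of \Cref{t:invlapl}.
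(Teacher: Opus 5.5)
Your (b)$\Rightarrow$(a) follows the paper. You define $f$ through the kernel $1/(1-ze^{p})$, split off $\sum_{n\le N}\phi(-n)z^{-n}$ with the finite geometric sum, and bound the remainder by \eqref{e:remshift}. One repair is needed. With the unshifted $\Phi$, and also with the shift $a=-N$ exactly, the integrands are only $O(1/p)$ along the left-going branches of $\Gamma$, since $0\notin S_{\dom h_\phi}$ unless $\K_\phi$ is bounded. So the $n=0$ interchange and the absolute bound on the remainder do not go through as stated. The paper puts $\Phi_{-\xi}(p)e^{\xi p}$ with $\xi\in S_{\dom h_\phi}$ in the kernel and takes $\xi=N+\epsilon$. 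Then $e^{\epsilon p}$ gives exponential decay, and \eqref{e:remshift} with $N=0$ yields $\Phi_{-N-\epsilon}(p)=O(e^{N^\varrho}/p)$ at once, with no splitting at $R\sim N$.

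The genuine gap is in (a)$\Rightarrow$(b), and it has two parts. First, your $\phi(s)=\frac{1}{2\pi i}\oint_\Gamma f(e^{-p})e^{ps}\,\mathrm{d}p$ converges only for $\Re s>-1$, because \eqref{e:fasymp} with $N=0$ gives only $f(z)=O(1/z)$. Nothing in your plan continues it to an entire function. The missing idea is to subtract the asymptotic terms inside the integral:
$\phi_N(s):=\frac{1}{2\pi i}\oint_\Gamma\bigl(f(e^{-p})-\sum_{n=1}^N a_ne^{np}\bigr)e^{ps}\,\mathrm{d}p$.
This converges for $\Re s>-N-1$. The $\phi_N$ agree on common half-planes, because the subtracted exponentials are entire and integrate to zero there. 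Choosing $N=\max(0,-\lfloor\Re s\rfloor)$ and using the uniformity in $N$ of \eqref{e:fasymp} gives $|\phi(s)|\le B'e^{|\Re s+1|^\varrho+d|\Im s|}$, so $\phi\in\E_3$ follows directly. This is where the factor $e^{N^\varrho}$ is actually used.

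Second, substituting $z=e^{-p}$ gives $f(e^{-p})$, possibly minus exponentials, and this is not $\L\phi$. It is $2\pi i$-periodic, so its singularities repeat on $L+2\pi i\Z$, and it tends to $f_0$ as $\Re p\to+\infty$. Hence it is neither holomorphic on $\bar\C\setminus L$ nor compatible with \eqref{e:rem}, and the route through (b)$\Rightarrow$(a) of \Cref{t:WN} fails. Even with the correct $\Phi$, \eqref{e:fasymp} only controls $\Re p\to-\infty$. By contrast, \eqref{e:rem} on the rest of $C_\infty\setminus L$ is a Gevrey bound on the Taylor coefficients of $\phi$ at $0$, which is essentially the order bound you are trying to prove, and you give no way to extract it.

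The paper instead computes the transform by applying $\L_\t$ under the integral sign, using $\L_\t(s\mapsto e^{us})(p)=1/(p-u)$ and Fubini. This gives $\L\phi(p)=\frac{1}{2\pi i}\oint_\Gamma\frac{f(e^{-u})}{p-u}\,\mathrm{d}u$, which is holomorphic off any tight $\Gamma$ and hence on $\bar\C\setminus L$. This Cauchy transform differs from $f(e^{-p})$ by a function holomorphic across $L$ (\Cref{p:sing}). That is why both reproduce $\phi$ in the inversion integral, but only the Cauchy transform has the right behaviour at infinity. \Cref{t:polya} is not needed in this direction.
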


\begin{remark}
    We record four comments on the statement.
    \begin{enumerate}
        \item The transform $\L\phi$ is defined on a subset of $\bar\C$, whereas $f$ is defined on a subset of $\tilde\C$. This difference arises from the natural definitions $\exp(\infty + iy) := \tilde\infty e^{iy}$.
        \item The asymptotic condition \eqref{e:fasymp} implies $f(z) = O(1/z)$ as $z\to\tilde C_\infty\setminus\exp(-L)$. Thus $f$ vanishes on $\tilde C_\infty\setminus\exp(-L)$. In particular, if $\exp(-L)\subset\tilde C_\infty$, meaning that all singularities of $f$ lie at infinity, then $f$ is entire.
        \item In the special case $L = \K_\phi^\dagger$, the theorem can be formulated without explicitly mentioning the Laplace transform, since \Cref{t:WN} already gives the analytic continuation of $\L\phi$ to $\bar\C\setminus\K_\phi^\dagger$.
        \item When $L \subset \K_\phi^\dagger$, the assumptions on $\k_L$ can be expressed in terms of $h_\phi$ through \Cref{p:h=k}: they become $h_\phi(\t) < \infty$ for $\t\in\bra{-\frac{\pi}{2}, \frac\pi2}$ and
        \begin{equation}\label{e:hphi2pi}
            h_\phi\pa{\frac\pi{2}} + h_\phi\pa{-\frac\pi{2}} < 2\pi.
        \end{equation}
    \end{enumerate}
\end{remark}

The next proposition records the representations in the extended analytic domains that are obtained in the proof of \Cref{t:AC}. 

\begin{proposition}\label{p:ACformulas}
    For $a\in\C$, let $\Phi_a := \L E^a \phi$ and $\Phi := \Phi_0$. Under the assumptions of \Cref{t:AC},
    \begin{equation}\label{e:phif}
        \phi(s) = \inv{2\pi i}\oInt_\Gamma \pa{f(e^{-u}) + \sum_{\Re s \le n < 0} \phi(n) e^{-nu}} e^{su} \d u,
    \end{equation}
    for all $s\in\C$.
    \begin{equation}\label{e:Phif}
        \Phi(p) = \inv{2\pi i} \oInt_\Gamma \frac{f(e^{-u})}{p-u} \odif u,
    \end{equation}
    for all $p\notin L$.
    \begin{equation}\label{e:fPhi}
        f(z) = \frac{1}{2\pi i}\oInt_\Gamma \frac{\Phi_{-\xi}(p) e^{\xi p}}{1-ze^p} \d p,
    \end{equation}
    for all $z\notin S$ and $\xi\in S_{\dom h_\phi}$, where $\Gamma$ encircles $L$ tightly.
\end{proposition}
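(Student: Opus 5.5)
The plan is to prove the three formulas in the order \cref{e:Phif}, \cref{e:phif}, \cref{e:fPhi}, all from the identity $f(z)=\sum_n \phi(n) z^n$ near $0$ (which is (b) of \Cref{t:AC}) together with \Cref{t:invlapl}. The basic tool is a Hankel/Plana-type identity. For $p\notin L$ and $\Re p$ large, I would expand $\Phi(p)=\sum_{n\ge0}\int_n^{n+1}$-type contributions. A cleaner route is to write $f(e^{-u})=\sum_{n\ge0}\phi(n)e^{-nu}$, valid for $\Re u$ large since $e^{-u}$ is then near $0$. I would then insert the inverse Laplace formula $\phi(n)=\frac1{2\pi i}\oint_{\Gamma'}\Phi(q)e^{qn}\d q$, with $\Gamma'$ tightly encircling $L$ and $0\in S_{\dom h_\phi}$ after a translation via $E^a$ if necessary. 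Summing the geometric series gives
\[
f(e^{-u})=\frac1{2\pi i}\oint_{\Gamma'}\frac{\Phi(q)}{1-e^{q-u}}\d q .
\]
This representation continues analytically in $u$ as long as $e^{q-u}\neq1$, i.e.\ $u\notin L+2\pi i\Z$. The vertical bound \cref{e:vertical} guarantees that the translates $L+2\pi i k$ are disjoint strips apart from $L$, which is exactly why $f$ lives on $\tilde\C\setminus\exp(-L)$.

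From this representation, \cref{e:Phif} follows by residues. Integrate $f(e^{-u})/(p-u)$ over a contour $\Gamma$ tightly encircling $L$. Substituting the double-integral form and using $\frac{1}{1-e^{q-u}}=\frac{1}{u-q}+(\text{holomorphic near }u=q)$, the only singularity inside $\Gamma$ from the kernel is the pole at $u=q$. The $2\pi i$-periodic copies lie outside $\Gamma$, because $\Gamma$ stays in the strip of height $<2\pi$. Applying the Cauchy formula for $\Phi$ on the complement of $L$ then recovers $\Phi(p)$. For \cref{e:fPhi}, I would run the same computation in reverse with the shifted transforms. Using \Cref{p:remshift} and the inverse formula for $E^{-\xi}\phi$, one gets $\phi(n)=\frac1{2\pi i}\oint\Phi_{-\xi}(p)e^{p(n+\xi)}\d p$, which is valid for all $n\ge0$ when $\xi\in S_{\dom h_\phi}$. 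Summing the geometric series in $z e^p$ yields the formula for $|z|$ small, and analytic continuation in $z$ extends it to $z\notin S=\exp(-L)$.

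For \cref{e:phif}, I would apply \Cref{t:invlapl} after deforming. The function $u\mapsto f(e^{-u})$ has the same "principal parts" along $L$ as $\Phi(u)$, by the representation above, so $\oint_\Gamma f(e^{-u})e^{su}\d u=\oint_\Gamma\Phi(u)e^{su}\d u=2\pi i\,\phi(s)$ for $s\in S_{\dom h_\phi}$. Extending to all $s\in\C$ requires convergence of the contour integral for $\Re s$ very negative. There the decay comes from \cref{e:fasymp}: $f(e^{-u})\sim\sum a_n e^{nu}$ as $\Re u\to-\infty$ along $\Gamma$, with $a_n=-\phi(-n)$. Subtracting the terms $-\phi(n)e^{-nu}$ for $\Re s\le n<0$ makes the integrand $O(e^{(s-N)u})$-small in the relevant direction, and each subtracted exponential integrates to zero against $e^{su}$ over $\Gamma$ in the region where it converges. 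The identity $a_n=-\phi(-n)$ would be obtained in the same step by evaluating the residue at $u=\infty$ of $\Phi$ versus the expansion of $f$.

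The main obstacle is this last extension to all $s\in\C$. It needs the uniform-in-$N$ estimate \cref{e:fasymp} with the $e^{N^\varrho}$ factor, balanced against $e^{\Re(su)}$ on the left branches of $\Gamma$, to justify convergence and the vanishing of the contributions from the subtracted terms. My first choice is to fix $N=\lceil-\Re s\rceil$ and deform $\Gamma$'s left part to a vertical segment of height $<2\pi$ pushed to $\Re u\to-\infty$.
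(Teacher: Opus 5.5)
Your route is viable but genuinely different from the paper's, and one step needs repair.

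\textbf{How the paper argues.} In the paper the three formulas are by-products of the two halves of the proof of \Cref{t:AC}:
\begin{itemize}
\item In (a)$\Rightarrow$(b), \cref{e:phif} is the \emph{definition} of $\phi$. It is built from the $\phi_N$, which glue because the subtracted exponentials integrate to zero. Then \cref{e:Phif} follows by Fubini from $\mathcal{L}_\theta\exp_u(p)=1/(p-u)$.
\item In (b)$\Rightarrow$(a), \cref{e:fPhi} is the \emph{definition} of $f$. Its Taylor coefficients are matched with $\phi(n)$ via the inverse Laplace formula.
\end{itemize}

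\textbf{How your route differs.} You start from the given pair and derive everything from the $\phi$ side:
\begin{itemize}
\item \cref{e:fPhi} by summing the geometric series that the paper differentiates;
\item \cref{e:Phif} by a residue computation in $u$, followed by the exterior Cauchy formula \eqref{e:PhiPhi};
\item \cref{e:phif} through the principal-part comparison that the paper records only afterwards as \Cref{p:sing}, followed by the paper's subtraction argument to reach all $s$.
\end{itemize}
The paper's construction of $\phi$ from $f$ gets absolute convergence for free, since $f(e^{-u})=O(e^{u})$ on the branches of $\Gamma$. However, it produces \emph{a} $\phi$, so applying all three formulas to one and the same pair $(f,\phi)$ tacitly uses uniqueness of the interpolant. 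Your route works directly with the given $f$ and $\phi$ through the identity theorem. The price is that you must integrate $\Phi$, which only decays like $1/p$.

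\textbf{Where your plan fails as written.} Your first display $f(e^{-u})=\frac{1}{2\pi i}\oint_{\Gamma'}\frac{\Phi(q)}{1-e^{q-u}}\,dq$ uses \Cref{t:invlapl} at $s=n=0$. That is unavailable exactly in the new case where $\mathcal{K}_\phi$ is unbounded: then $\dom h_\phi\neq\Theta$, so $0\notin S_{\dom h_\phi}$. Translating by $E^a$ does not help, because $h_{E^a\phi}\equiv h_\phi$ (\Cref{p:hEa}). Concretely:
\begin{itemize}
\item The $n=0$ term has integrand $\sim\phi(0)/q$ on both branches of $\Gamma'$, so the sum--integral interchange is not dominated.
\item In your residue computation for \cref{e:Phif}, the double integral is not absolutely convergent: for fixed $q$, the $u$-integral of $|(p-u)^{-1}(1-e^{q-u})^{-1}|$ grows like $\log|q|$.
\end{itemize}

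\textbf{The repair.} Use throughout the shifted kernel you already invoke for \cref{e:fPhi}, namely $\Psi(q):=\Phi_{-\xi}(q)e^{\xi q}$ with $\xi>0$. It gains the factor $e^{\xi\Re q}$ on the branches, which is why \cref{e:fPhi} is stated only for $\xi\in S_{\dom h_\phi}$. You also need one observation missing from your plan: $\Psi(q)-\Phi(q)=\int_{-\xi}^{0}\phi(t)e^{-qt}\,dt$ is entire and $O(1/|\Re q|)$ as $\Re q\to-\infty$ in the strip. Hence it contributes nothing to $\frac{1}{2\pi i}\oint_{\Gamma'}\frac{\Psi(q)}{p-q}\,dq$, and it does not change the principal parts of $f(e^{-u})$ along $L$. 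With this, and with \cref{e:phif} first proved for $\Re s>0$ (where $e^{su}$ decays to the left and \Cref{t:invlapl} applies), your argument goes through. The identity $a_n=-\phi(-n)$ can simply be quoted from \Cref{t:AC}.

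\textbf{On the step you flagged.} The step you call the main obstacle is the easy one:
\begin{itemize}
\item For fixed $s$ a single $N$ suffices.
\item No deformation of $\Gamma$ is needed; closing it by a vertical segment is impossible anyway when $L$ reaches $-\infty$.
\item Uniformity in $N$ serves in the paper only to prove $\phi\in\mathcal{E}_3$, which you already know.
\end{itemize}
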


\begin{proof}[of \Cref{t:AC}, \Cref{p:braitsev,p:ACformulas}]
The set $S := \exp(-L)$ does not contain $0$. In addition, \cref{e:vertical} implies that $L$ is contained in a horizontal strip of width less than $2\pi$, on which the map $p \mapsto e^{-p}$ is injective; hence it is bijective from $L$ to $S$.
\begin{itemize}
    \item[( $\imp$ )]
    Suppose that $f$ has the asymptotic expansion \eqref{e:fasymp}. Define $\phi_N$ by
    \begin{equation}\label{e:phia_n}
         \phi_N(s) := \inv{2\pi i}\oInt_\Gamma \pa{f(e^{-p}) - \sum_{n=1}^N a_n e^{np}} e^{ps} \d p,
    \end{equation}
    where $\Gamma$ encircles $L$ tightly. Since $L$ is vertically bounded, $\Im p$ is uniformly bounded by some constant $d > 0$ for $p\in\Gamma$. In addition, $\Re p \to -\infty$ as $|p| \to \infty$ along $\Gamma$, so we can apply \cref{e:fasymp}:
    \begin{align}\label{e:phifbound}
        |\phi_N(s)| \le \inv{2\pi}\oInt_\Gamma \abs{\pa{f(e^{-p}) - \sum_{n=1}^N a_n e^{np}} e^{ps} \d p} \le \frac{B}{2\pi}\oInt_\Gamma e^{N^\varrho + \Re((N+1+s)p)} \abs{\d p},
    \end{align}
    where $B > 0$ is independent of $N$. Given that
    \begin{equation}\label{e:phifcv}
        \Re((N+1+s)p) = (N+1 + \Re s)\Re p - \Im s\Im p \le (N+1+\Re s)\Re p + d\abs{\Im s},
    \end{equation}
    the integral absolutely converges for $\Re s > -N-1$.
    
    The sum of exponentials is entire and is present only to improve convergence. If $N\ge M$ and we restrict $\phi_N$ and $\phi_M$ to the common half-plane $\Re s > -M-1$, then the remaining exponential terms may be integrated separately and their integrals vanish. Hence $\phi_N=\phi_M$ on this common domain. In particular, $\phi_N$ analytically continues $\phi_0$, initially defined for $\Re s>-1$, to the half-plane $\Re s > -N-1$. These compatible continuations define an entire function $\phi$.
    
    Thus $\phi(s)$ can be expressed as $\phi_N(s)$ with $N = \max(0, \floor{-\Re s})$, and the representation \eqref{e:phif} is valid for all $s\in\C$. To obtain the ensuing bound, we instead choose $N = \max(0, -\floor{\Re s})$. According to \cref{e:phifbound,e:phifcv},
    \begin{align*}
        |\phi(s)|
        &\le \frac{B}{2\pi}\oInt_\Gamma e^{\max(0,-\floor{\Re s})^\varrho + \min(0,\Re p) + d \abs{\Im s}} \abs{\d p} \\
        &\le B' e^{\abs{\Re s + 1}^\varrho + d \abs{\Im s}}, \quad B' :=  \frac{B}{2\pi}\oInt_\Gamma e^{\min(0,\Re p)} \abs{\d p}.
    \end{align*}
    It follows that $\phi\in\E_3$. Let $\exp_u(s) = e^{us}$.  By Fubini's theorem, for $p\in \Pi_{\k_L(-\t),\t}$,
    \begin{align*}
        \L_\t\phi(p)
        &= \inv{2\pi i}\oInt_\Gamma \pa{f(e^{-u}) + \sum_{n=1}^N \phi(-n) e^{-nu}} \L_\t \exp_u(p) \d u \\
        &= \inv{2\pi i}\oInt_\Gamma \pa{f(e^{-u}) + \sum_{n=1}^N \phi(-n) e^{-nu}} \inv{p-u} \d u \\
        &= \inv{2\pi i}\oInt_\Gamma \frac{f(e^{-u})}{p-u} \d u,
    \end{align*}
    so \cref{e:Phif} follows. Moreover, the contour $\Gamma$ can be taken arbitrarily close to $L$, so $\Phi$ is defined for all $p\in\C\setminus L$.

    Now we show that $\phi$ interpolates $(f_n)_{n\in\N}$. Let $U$ be such that $\partial U = \exp(-\Gamma)$ and $r > 0$. For every $n\in\N$,
    \begin{equation}\label{e:phi(n)}
        \oInt_{\partial(U \cup D_r)} \frac{f(z)}{z^{n+1}} \d z = \oInt_{\partial U \setminus \bar D_r} \frac{f(z)}{z^{n+1}} \d z + \oInt_{C_r \setminus U} \frac{f(z)}{z^{n+1}} \d z.
    \end{equation}
    The first integral on the right-hand side of \cref{e:phi(n)} tends to $0$ as $r\to\infty$. For the second integral, \cref{e:fasymp} gives
    \[
    \abs{\,\oint\limits_{C_r \setminus U} \frac{f(z)}{z^{n+1}} \d z} \le \oInt_{C_r \setminus U} \frac{O(1/r)}{r^{n+1}} \abs{\d z} = O\pa{\inv{r^{n+1}}} \to_{r\to\infty} 0.
    \]
    Since no additional singularities are encircled as $r$ increases, the contour integral in \cref{e:phi(n)} does not depend on $r$, so it vanishes identically. On the other hand, since $0\notin S$ and $S$ is closed, we can choose $\epsilon > 0$ sufficiently small so that $D_\epsilon \cap U = \emptyset$, and then
    \[
    0 = \oInt_{\partial(U \cup D_r)} \frac{f(z)}{z^{n+1}} = \oInt_{C_\epsilon} \frac{f(z)}{z^{n+1}} + \oInt_{\partial U} \frac{f(z)}{z^{n+1}}.
    \]
    Applying Cauchy's integral formula to the first integral and making the bijective change of variables $z = e^{-p}$ in the second, which maps $\partial U$ to $\Gamma$, gives
    \[
    0 = 2\pi i f_n - \oInt_\Gamma f(e^{-p}) e^{np} \d p = 2\pi i(f_n - \phi(n)),
    \]
    i.e., $\phi(n) = f_n$.
    
    \item[( $\isimp$ )] Let $f$ be defined by \cref{e:fPhi}. The integral converges for $\xi\in S_{\dom h_\phi}$ by the same argument used to determine the convergence region of the inverse Laplace transform formula in \Cref{t:invlapl}. Moreover, $f$ is holomorphic for all $z$ such that $\Gamma$ does not encircle a pole of the integrand. If $e^{-u}\notin S$, we can choose a contour $\Gamma$ encircling $L$ but not $u$. Hence $f$ is holomorphic in $\C\setminus S$. Its derivatives are given by
    \begin{equation}
        f^{(n)}(z) = \frac{n!}{2\pi i}\oInt_\Gamma \frac{\Phi_{-\xi}(p) e^{(n+\xi)p}}{1-ze^p} \d p,
    \end{equation}
    Therefore, by \Cref{t:WN}, $f^{(n)}(0)/n! = \phi(n)$ for all $n\in\N$. Hence, by Taylor's theorem, \cref{e:ffn} holds for $|z| < R_f$, where $R_f$ is the radius of convergence of $f$. By \cref{e:radconv},
    \begin{equation}\label{e:hphi(0)}
        \log(1/R_f) = \limsup_{n\to\infty} \frac{\log|\phi(n)|}{n} \le \limsup_{r\to\infty} \frac{\log|\phi(r)|}{r} = h_\phi(0),
    \end{equation}
    where $n$ ranges over the integers and $r$ ranges over the reals. It follows that $R_f \ge e^{-h_\phi(0)} > 0$.
    
    For $N\in\N$ and $\xi\in N + S_{\dom h_\phi}$,
    \begin{align*}
        \sum_{n=1}^N \phi(-n) z^{-n}
        &= \sum_{n=1}^N \pa{\inv{2\pi i}\oInt_\Gamma \Phi_{-\xi}(p) e^{(-n+\xi)p} \d p} z^{-n} \\
        &= \inv{2\pi i}\oInt_\Gamma \Phi_{-\xi}(p) e^{\xi p} \frac{(ze^p)^{-N}-1}{1-ze^p} \d p,
    \end{align*}
    where the interchange is justified by dominated convergence. Thus
    \begin{align*}
        f(z) + \sum_{n=1}^N \phi(-n) z^{-n} = \frac{z^{-N}}{2\pi i}\oInt_\Gamma \Phi_{-\xi}(p) \frac{e^{(\xi-N)p}}{1-ze^p} \d p.
    \end{align*}
    Let $\epsilon > 0$, $\varrho > 1$, and set $\xi = N+\epsilon$. For $\varrho_0\in(1, \varrho)$, \cref{e:remshift} gives $|\Phi_{-N-\epsilon}(p)| = O(e^{(N+\epsilon)^{\varrho_0}}/p) = O(e^{N^\varrho}/p)$. Therefore,
    \begin{align*}
        \abs{f(z) + \sum_{n=1}^N \phi(-n) z^{-n}}
        &\le \frac{|z|^{-N}}{2\pi}\oInt_\Gamma \abs{\Phi_{-N-\epsilon}(p)} \abs{\frac{e^{\epsilon p}}{1-ze^p}} \abs{\d p} \\
        &\le \frac{|z|^{-N-1}}{2\pi}\oInt_\Gamma B \frac{e^{N^\varrho}}{|p|} \abs{\frac{e^{\epsilon p}}{z^{-1}-e^p}} \abs{\d p} \\
        &= O\pa{\frac{e^{N^\varrho}}{z^{N+1}}}.
    \end{align*}
    
    Hence $f$ admits the asymptotic expansion of \cref{e:fasymp} with $a_n = -\phi(-n)$.
\end{itemize}
\end{proof}

\begin{remark}
    While \cref{e:rem} gives a precise form of Gevrey asymptotics, \cref{e:fasymp} is a uniform \emph{Poincaré asymptotic expansion} \cite{loday-richaud2016,NIST}.
\end{remark}

\begin{theorem}[Carlson]\label{t:carlson}
    Let $\phi(s)$ be analytic and of exponential type for $\Re s \ge 0$. If \cref{e:hphi2pi} is satisfied and $\phi(n) = 0$ for all $n\in\N$, then $\phi \equiv 0$.
\end{theorem}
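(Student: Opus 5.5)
The plan is to derive Carlson's theorem from \Cref{t:AC}, applied with $L := \K_\phi^\dagger$, after first reducing to an entire function so that $\phi\in\E_3$. The natural route avoids general half-plane functions: note that \Cref{t:carlson} as stated only assumes analyticity in $\Re s\ge 0$. I would first treat the case where $\phi$ is entire of order $\le1$ with $[-\frac\pi2,\frac\pi2]\subset\dom h_\phi$, and then explain the reduction.

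\textbf{Entire case.} Here $\phi\in\E_3$, since $\dom h_\phi$ is an interval by \Cref{p:domk}, and is nonempty. The hypotheses on $L=\K_\phi^\dagger$ in \Cref{t:AC} are exactly \cref{e:hphi2pi} and finiteness of $h_\phi$ on $[-\frac\pi2,\frac\pi2]$, by the fourth comment in the remark after \Cref{t:AC}. Also, $\L\phi$ continues to $\bar\C\setminus L$ by \Cref{t:WN}. Hence (b) of \Cref{t:AC} holds with $f_n=\phi(n)=0$, and (a) gives a function $f$ analytic in $\tilde\C\setminus\exp(-L)$ whose Taylor series at $0$ is identically zero. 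Since $\tilde\C\setminus\exp(-L)$ is connected (vertical boundedness makes $p\mapsto e^{-p}$ a homeomorphism onto its image), $f\equiv0$, and therefore all $a_n=0$.

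To conclude, I would use \cref{e:Phif}: $\Phi(p)=\frac1{2\pi i}\oint_\Gamma \frac{f(e^{-u})}{p-u}\,du=0$. So $\L\phi\equiv0$, and the inversion formula \Cref{t:invlapl} gives $\phi\equiv0$ on $S_{\dom h_\phi}$, hence everywhere. Alternatively, \Cref{t:polya} forces the conjugate diagram to be empty, so $h_\phi\equiv-\infty$.

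\textbf{Reduction from the half-plane setting.} This is the main obstacle. I would try to avoid it by noting that the proof of \Cref{t:AC} (b)$\Rightarrow$(a) only uses the representation \cref{e:fPhi} and the Laplace transform along directions in $[-\frac\pi2,\frac\pi2]$. For $\phi$ analytic of exponential type in $\Re s\ge0$, the directional transforms $\L_\t\phi$ for $\t\in[-\frac\pi2,\frac\pi2]$ still glue by the contour argument of \Cref{p:laplAC}, which uses only the closed sector. They give $\Phi$ holomorphic outside the convex set $L:=\W(k)$, where $k=h_\phi$ on $[-\frac\pi2,\frac\pi2]$ and $k=\infty$ elsewhere; by Phragmén--Lindelöf (\Cref{l:secbound1}) this $k$ is trigonometric-convex. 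The set $L$ is a left half-strip of height $h_\phi(\frac\pi2)+h_\phi(-\frac\pi2)<2\pi$.

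One then defines $f$ by \cref{e:fPhi} with $\xi>0$ real and checks, as in the proof, that its Taylor coefficients are $\phi(n)=0$. The function $f$ is analytic on the connected set $\tilde\C\setminus\exp(-L)$, so $f\equiv0$. Then \cref{e:Phif}, whose derivation via Fubini needs only $\Re s\ge0$ data, gives $\Phi\equiv0$. Finally, Laplace uniqueness along the ray $\t=0$ (injectivity of the one-sided Laplace transform) yields $\phi\equiv0$ on $\R_+$, hence on the half-plane by analytic continuation.

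The delicate point is verifying that \cref{e:fPhi} and \cref{e:Phif} are legitimately available in this half-plane setting; the entire case is immediate.
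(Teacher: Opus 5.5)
Your entire-function case is the paper's proof. You apply \Cref{t:AC} with $L=\K_\phi^\dagger$, observe that $f\equiv0$, and read off $\phi\equiv0$. The paper does this in one step from \cref{e:phif}. Going through \cref{e:Phif} and then \Cref{t:invlapl} or \Cref{t:polya} is equivalent, and relies on the same \Cref{p:ACformulas}. The paper proves only this weaker version ($\phi\in\E_3$) and says so explicitly. One small correction: do not justify $\phi\in\E_3$ by \Cref{p:domk}. Trigonometric convexity of $h_\phi$ (\Cref{t:hphitcvx}) is established only for functions already in $\E_3$, so that citation is circular. Instead, the three-point bound of \Cref{l:secbound1}, valid for any entire function of order $\le1$, shows directly that $\dom h_\phi$ is an interval containing $[-\frac\pi2,\frac\pi2]$.

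The half-plane reduction, which the paper does not attempt, has a genuine gap exactly where you flag it. In \cref{e:fPhi}, $\Phi_{-\xi}=\L E^{-\xi}\phi$ is the transform of $s\mapsto\phi(s-\xi)$. For $\xi>0$ its rays in the directions $\theta\in[-\frac\pi2,\frac\pi2]$ start at $-\xi$, and for $\theta=\pm\frac\pi2$ they lie entirely on $\Re s=-\xi$, where $\phi$ is not defined. The shift is not cosmetic. It provides the factor $e^{\xi p}$ that makes \cref{e:fPhi} converge on the branches of $\Gamma$ running to $-\infty$. It also places $n+\xi$, including $n=0$, inside the open sector where the inversion formula applies.

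The supporting results do not transfer either. \Cref{e:remshift} rests on \Cref{t:phi(n)a}, i.e.\ Cauchy estimates in tubes around rays, and these tubes leave the half-plane for $\theta=\pm\frac\pi2$. The proof of \Cref{t:invlapl} recovers $\phi$ by summing its Taylor series at $0$, which requires $\phi$ to be entire. Finally, \cref{e:Phif} is derived for the entire function reconstructed from $f$ through \cref{e:phif}. For a given half-plane $\phi$ you would have to show separately that this reconstruction returns $\phi$; otherwise the step assumes uniqueness of the interpolant, which is the theorem itself.

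The route the paper points to is much shorter. Set $g(s):=\phi(s)/\sin(\pi s)$; it is analytic in $\Re s\ge0$, since the zeros of $\phi$ at $\N$ cancel the simple zeros of the sine there. It is of exponential type there, using $|\sin(\pi s)|\ge c\,e^{\pi|\Im s|}$ away from the integers and the maximum principle near them. Moreover $h_g(\pm\frac\pi2)=h_\phi(\pm\frac\pi2)-\pi$, hence $h_g(\frac\pi2)+h_g(-\frac\pi2)<0$. For real $c$ with $h_g(\frac\pi2)<c<-h_g(-\frac\pi2)$, the function $g(s)e^{ics}$ has negative indicator at $\pm\frac\pi2$, so \Cref{l:riesz} gives $g\equiv0$.
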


This is Carlson's theorem \cite{carlson1914}; see also \cite{boas1954,levin1996}. Our theory yields only a weaker version, in which $\phi\in\E_3$. A simple proof of the general statement follows from \Cref{l:riesz} \cite{riesz1920}.

\begin{proof}
    We use \Cref{t:AC} with $L = \K_\phi^\dagger$ and, here, $f(z) = 0$ for $z\notin S$. So by \cref{e:phif}, $\phi\equiv0$.
\end{proof}

\begin{proposition}[Braitsev]\label{p:sing}
    Under the assumptions of \Cref{t:AC}, the function $p \mapsto f(e^{-p}) - \Phi(p)$ admits analytic continuation across $L$.
\end{proposition}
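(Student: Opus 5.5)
The approach is to compare the two functions through the Cauchy-type representation \cref{e:Phif} of \Cref{p:ACformulas}. That identity states
\[
\Phi(p) = \inv{2\pi i}\oInt_\Gamma \frac{f(e^{-u})}{p-u}\d u
\]
for $p\notin L$, where $\Gamma$ encircles $L$ tightly and, by our orientation convention, $L$ lies locally to the left of $\Gamma$. Set $g(u) := f(e^{-u})$. By \Cref{t:AC}(a), $f$ is analytic on $\tilde\C\setminus\exp(-L)$. Since $p\mapsto e^{-p}$ is injective on a horizontal strip of width $<2\pi$ containing $L$ (by \cref{e:vertical}), $g$ is holomorphic on a neighborhood of $\Gamma$ and on the region between $\Gamma$ and $L$, but not across $L$.

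The key step is a contour-deformation (Sokhotski-type) argument. Fix a point $p_0$ outside $L$ and close to it, and choose two contours. The first is $\Gamma_1$, encircling $L$ tightly with $p_0$ lying outside the enclosed region $U_1$. The second is $\Gamma_2$, obtained from $\Gamma_1$ by pushing it slightly outward near $p_0$, so that $p_0\in U_2$ but $U_2\setminus U_1$ is a small compact region on which $g$ is holomorphic. Then $\Gamma_2-\Gamma_1$ bounds $U_2\setminus U_1$, and the residue theorem gives
\[
\inv{2\pi i}\oInt_{\Gamma_2}\frac{g(u)}{p-u}\d u = \inv{2\pi i}\oInt_{\Gamma_1}\frac{g(u)}{p-u}\d u - g(p)
\]
for $p\in U_2\setminus \bar U_1$. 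The sign depends on orientation and is to be checked: $1/(p-u) = -1/(u-p)$, so one expects $-g(p)$. The right-hand side equals $\Phi(p)-g(p)$ near $p_0$. The left-hand side is holomorphic in $p$ throughout $\C\setminus\Gamma_2$, and in particular on the part of $U_2$ adjacent to $L$. Hence $\Phi-g$ coincides near $p_0$ with $-\frac{1}{2\pi i}\oint_{\Gamma_2}\frac{g(u)}{u-p}\d u$ up to sign. That integral is a Cauchy integral holomorphic inside $U_2$ up to $L$.

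More globally, I would exhibit one continuation. For any tight contour $\Gamma'$ lying between $L$ and $\Gamma$, the function
\[
F_{\Gamma'}(p) := \inv{2\pi i}\oInt_{\Gamma'}\frac{g(u)}{p-u}\d u
\]
is holomorphic inside $\Gamma'$, that is, on a neighborhood of $L$ minus $\Gamma'$. Outside $\Gamma'$ it equals $\Phi$. The jump across $\Gamma'$ is exactly $g$, so $F_{\Gamma'}$ on the inside equals $\Phi-g$ continued inward. Letting $\Gamma'$ shrink toward $L$, these inner functions agree on overlaps and define the continuation of $\Phi-g$ across $\partial L$ into a neighborhood of $L$ (into all of $L\cap\C$ when $L$ has interior). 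A point $p\in L\cap\C$ is always enclosed by $\Gamma'$ with $g$ unused there, so $F_{\Gamma'}$ is defined at $p$.

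The main obstacle is the convergence and legitimacy of these Cauchy integrals when $L$ is unbounded. The contours run to infinity with $\Re u\to-\infty$. To justify both the deformation of $\Gamma$ to $\Gamma'$ and the holomorphy in $p$, one needs a decay bound for $g(u)/(p-u)$ along $\Gamma'$ that is uniform under small deformations.

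1. For the decay, I would use \cref{e:fasymp} with $N=0$: $f(z)=O(1/z)$ as $z\to\tilde C_\infty\setminus\exp(-L)$, so $g(u)=O(e^{\Re u})$, which decays exponentially as $\Re u\to-\infty$.
2. For the deformation, the connecting arcs at large $|u|$ then contribute vanishing amounts, because the contour stays in a strip of bounded imaginary part by \cref{e:vertical}.
3. Holomorphy in $p$ then follows by differentiation under the integral with dominated convergence.
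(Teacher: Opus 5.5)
Your argument is correct, but it runs in the opposite direction from the paper's.

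\textbf{The paper's route.} It starts from the representation \cref{e:fPhi} of $f$ through $\Phi$, taken with $\xi=0$. It subtracts the Cauchy formula \cref{e:PhiPhi}, $\Phi(p)=\frac{1}{2\pi i}\oint_\Gamma\frac{\Phi(u)}{p-u}\,du$. It then observes that the kernel $\frac{1}{1-e^{u-p}}-\frac{1}{p-u}$ has no pole at $u=p$; its poles lie at $p+2\pi i\Z^*$, which \cref{e:vertical} keeps away from a tight $\Gamma$. So a single fixed contour gives a function holomorphic in $p$ on a strip-like region straddling $L$, and the contour never has to be moved past $p$.

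\textbf{Your route.} You start from \cref{e:Phif}, which writes $\Phi$ as a Cauchy integral of the density $g(u)=f(e^{-u})$. The residue $-g(p)$ at $u=p$ (equivalently, the jump of a Cauchy integral across its contour) shows that the same integral, evaluated inside $\Gamma$, equals $\Phi-g$. The cost is the bookkeeping of choosing contours relative to $p$, which you handle correctly.

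\textbf{What each buys.} Your density satisfies $g(u)=O(e^{\Re u})$ along the branches, so every integral you write is absolutely convergent and the deformations at infinity are straightforward. In the paper's route, the kernel difference tends to $1$ as $\Re u\to-\infty$ while $\Phi(u)$ decays only like $1/u$. Moreover, \cref{e:fPhi} is stated only for $\xi\in S_{\dom h_\phi}$, which excludes $\xi=0$ when $\K_\phi$ is unbounded. So in the genuinely unbounded case your version is, if anything, the more robust one, while the paper's is shorter and avoids moving contours relative to $p$.
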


Put differently, $f(e^{-p})$ and $\Phi(p)$ have exactly the same singularities.

\begin{proof}
    Applying the Laplace transform to \cref{e:invlapl} gives
    \begin{equation}\label{e:PhiPhi}
        \Phi(p) = \inv{2\pi i} \oInt_\Gamma \frac{\Phi(u)}{p-u} \d u,
    \end{equation}
    where $\Gamma$ encircles $L$ tightly. Subtracting \cref{e:PhiPhi} from the representation of $f(e^{-p})$ in \cref{e:fPhi} gives
    \[
    f(e^{-p}) - \Phi(p) = \inv{2\pi i}\oInt_\Gamma \Phi(u) \pa{\inv{1-e^{u-p}} - \frac{1}{p-u}} \d u,
    \]
    which extends holomorphically across $L$, since $\inv{1-e^{u-p}} - \frac{1}{p-u}$ is holomorphic there.
\end{proof}

\begin{proposition}[Braitsev]\label{p:braitsev}
    Under the assumptions of \Cref{t:AC}, the radius of convergence of \cref{e:ffn} is $e^{-h_\phi(0)}$. Equivalently, for $\phi\in\E_3$ satisfying \cref{e:hphi2pi},
    \begin{equation}\label{e:braitsev}
        \limsup_{n\to\infty} \frac{\log |\phi(n)|}{n} = \limsup_{r\to\infty} \frac{\log |\phi(r)|}{r},
    \end{equation}
    where $n$ ranges over the integers and $r$ ranges over the reals.
\end{proposition}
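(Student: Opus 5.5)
The inequality $R_f \ge e^{-h_\phi(0)}$, equivalently ``$\le$'' in \cref{e:braitsev}, is already contained in \cref{e:hphi(0)} from the proof of \Cref{t:AC}. The plan is therefore to prove the reverse inequality $R_f \le e^{-h_\phi(0)}$. The natural tool is \Cref{p:sing} combined with the generalized Pólya theorem \Cref{t:polya}.

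Take $L = \K_\phi^\dagger$, which is allowed by the third and fourth comments of the remark after \Cref{t:AC} because \cref{e:hphi2pi} holds. By \Cref{p:h=k}, $h_\phi(0) = \k_{\K_\phi^\dagger}(0) = \sup_{p\in\K_\phi^\dagger}\Re p$.

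Suppose, for contradiction, that $R_f > e^{-h_\phi(0)}$. Then $f$ is holomorphic on $D_{R_f}$, so $p\mapsto f(e^{-p})$ is holomorphic on the half-plane $\Re p > -\log R_f$. This half-plane contains the open half-plane $\Re p > h_\phi(0) - \eta$ for some $\eta>0$. By \Cref{p:sing}, the function $f(e^{-p}) - \Phi(p)$ continues analytically across $L$. I would interpret this as: it extends holomorphically to a neighbourhood of $L$, hence to all of $\C$ near $L$, being already holomorphic off $L$.

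It follows that $\Phi$ continues analytically to $\bar\C\setminus K'$, where $K' := \K_\phi^\dagger \cap \{\Re p \le h_\phi(0)-\eta\}$. This set $K'$ is closed and convex, as the intersection of two closed convex sets. One also has to check that the continuation still makes sense at the infinite points of $\K_\phi^\dagger$ with $\Re p$ large. The requirement is a uniform $O(1/p)$-type decay, as in \Cref{r:Phi(infty)}, near $C_\infty\cap\{\Re p>h_\phi(0)-\eta\}$. Since $h_\phi(0)<\infty$, the set $\K_\phi^\dagger$ has no infinite points of argument $0$. Its infinite points near the right edge therefore have arguments in $(-\pi/2,\pi/2)$ only if $h_\phi(0)=\infty$, which is excluded. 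So in the direction of positive real part only finitely many points are involved, and the issue disappears.

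By minimality of the conjugate diagram and \Cref{t:polya}, $\K_\phi^\dagger \subset K'$. Hence $h_\phi(0) = \k_{K'}(0) \le h_\phi(0)-\eta$, which is a contradiction. Therefore $R_f = e^{-h_\phi(0)}$, and \cref{e:braitsev} follows from the Cauchy--Hadamard formula \eqref{e:radconv}, since $f_n = \phi(n)$.

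The main obstacle is making rigorous the phrase ``continues across $L$'' in \Cref{p:sing}. Its proof writes the difference as a contour integral over $\Gamma$ tightly encircling $L$, with a kernel $\inv{1-e^{u-p}} - \inv{p-u}$. This kernel is holomorphic only when $p-u\notin 2\pi i\Z^*$, so the continuation is a priori valid only in a strip around $L$. Here I would use the vertical bound \eqref{e:vertical}: choose $\Gamma$ within distance less than $\pi - \tfrac12(\k_L(\pi/2)+\k_L(-\pi/2))$ of $L$. Then for $p$ in a neighbourhood of $L$ the kernel has no poles, since $|\Im(p-u)|<2\pi$ and $p\ne u$ is handled by the cancellation. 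This gives holomorphy on a full neighbourhood of $L$, which suffices for the argument above.
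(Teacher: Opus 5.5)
Your proposal is correct and follows the paper's own argument. The paper transfers the half-plane of analyticity $\Re p > \log(1/R_f)$ of $f(e^{-p})$ to $\Phi$ via \Cref{p:sing}, uses \Cref{t:polya} to forbid analyticity of $\Phi$ beyond $\Re p > h_\phi(0)$ (your set $K'$ just makes the minimality step explicit), and gets the other inequality from \cref{e:hphi(0)}.

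Your remarks on the strip of validity in \Cref{p:sing} and on infinite points only make explicit what the paper leaves implicit. One correction there: the relevant fact is that $\K_\phi^\dagger \cap \{\Re p > h_\phi(0)-\eta\}$ is a bounded subset of $\C$, not that it has finitely many points. This holds because all infinite points of $\K_\phi^\dagger$ are of the form $-\infty + iy$, since $h_\phi < \infty$ on all of $[-\frac\pi2,\frac\pi2]$ and not merely at $0$.
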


\begin{proof}
    The function $f(e^{-p})$ is analytic for $\Re p > \log(1/R_f)$. By \Cref{p:sing}, $f(e^{-p})$ and $\Phi(p)$ have the same singularities, so $\Phi$ is analytic in the same half-plane. On the other hand, by \Cref{t:polya}, the largest half-plane of convergence of $\Phi$ is $\Re p > h_\phi(0)$. Hence $\log(1/R_f) \ge h_\phi(0)$. Together with the reverse inequality in \cref{e:hphi(0)}, this proves \cref{e:braitsev}.
\end{proof}

\subsection{The special case of finite type}

We now state the special case in which $\phi$ is of exponential type. In this setting, the Poincaré asymptotic expansion is no longer needed.

\begin{theorem}[Dufresnoy--Pisot]\label{t:ACexp}
    Let $L\subset\C$ satisfy \cref{e:vertical}, and let $(f_n)_{n\in\N}\subset\C$. Then the following are equivalent:
    \begin{alphabetize}
        \item There exists a function $f$ analytic in $\hat\C\setminus \exp(-L)$, represented near the origin by the Taylor series of \cref{e:ffn} with nonzero radius of convergence $R_f$, and satisfying $f(\hat\infty) = 0$.
        \item There exists $\phi\in\E_2$ interpolating $(f_n)_{n\in\N}$ such that $\L\phi$ is analytically continuable to $\C\setminus L$.
    \end{alphabetize}
\end{theorem}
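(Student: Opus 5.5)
The plan is to derive \Cref{t:ACexp} from \Cref{t:AC}, applied with $L$ replaced by the closed convex set $L' := \oline{\conv L} \cap \C$ adjoined with its points at infinity. Equivalently, we work with the conjugate diagram in each direction. The passage between the two statements is handled by \Cref{t:EFET}, \Cref{c:maxh}, and the removable-singularity theorem at $\hat\infty$.

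\textbf{(b) $\imp$ (a).} Let $\phi\in\E_2$. Then $\K_\phi$ is bounded, and $\L\phi$ continues to $\C\setminus L$. Replacing $L$ by $L\cap\K_\phi^\dagger$ if necessary (via \Cref{t:polya}), we may take $L$ bounded. It still satisfies \cref{e:vertical}, and $\k_L<\infty$ everywhere. \Cref{t:AC} then yields $f$ analytic in $\tilde\C\setminus\exp(-L)$ with expansion \eqref{e:fasymp}. Since $L$ is bounded, $\exp(-L)$ is a compact subset of $\C^*$, so $f$ is analytic in $\{|z|>R\}$ for some $R$. By remark~(2) after \Cref{t:AC}, $f(z)=O(1/z)$ there. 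Hence $\hat\infty$ is a removable singularity with $f(\hat\infty)=0$. Moreover $R_f\ge e^{-h_\phi(0)}>0$ by \cref{e:hphi(0)}. Alternatively, one can use \cref{e:fPhi} directly: $\Phi$ is given by the convergent series \eqref{e:Phisum} outside $\bar D_\sigma$, and $f(z)=\sum_{n\ge0}\phi(n)z^n$ follows from exchanging sum and integral.

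\textbf{(a) $\imp$ (b).} Let $f$ be analytic in $\hat\C\setminus\exp(-L)$ with $f(\hat\infty)=0$. Then $f$ has a Laurent expansion $f(z)=\sum_{n\ge1}a_nz^{-n}$, convergent for $|z|>\rho_\infty$, where $\rho_\infty<\infty$ because $\exp(-L)$ is closed and avoids $\hat\infty$. Here I am implicitly assuming $L$ is bounded, or at least that $\exp(-L)$ stays away from $\hat\infty$. This is where the hypotheses must be read carefully: $f$ analytic at $\hat\infty$ forces $\Re p$ to be bounded below on $L$, and $0\notin\exp(-L)$, or more precisely the positive radius $R_f$, forces $\Re p$ to be bounded above. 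Together with \cref{e:vertical}, this forces $L$ to be bounded. Cauchy estimates give $|a_n|\le C\rho^n$ for any $\rho>\rho_\infty$. Hence the remainder satisfies $f(z)-\sum_{n=1}^Na_nz^{-n}=O(\rho^{N+1}/z^{N+1})$ uniformly in $N$ for $|z|\ge 2\rho$, which is stronger than \eqref{e:fasymp} because $\rho^{N}=O(e^{N^\varrho})$. \Cref{t:AC} then produces $\phi\in\E_3$ interpolating $(f_n)$, with $\L\phi$ continuable to $\bar\C\setminus L$.

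It remains to upgrade $\phi$ to $\E_2$. Using \cref{e:phif}, the contour $\Gamma$ encircling the bounded set $L$ can be taken bounded, and the correction terms $\phi(n)e^{-nu}$ are absent if we instead use the Laurent coefficients, since $a_n=-\phi(-n)$. The direct bound $|\phi(s)|\le \frac{1}{2\pi}\oint_\Gamma|f(e^{-u})|e^{\Re(su)}|\d u|\le Be^{\max_{u\in\Gamma}|u|\,|s|}$ then holds, at least for $\Re s>-1$. Extending this to all $s$ is the main obstacle. My first approach is to apply \Cref{t:invlapl} directly. Once $\L\phi$ is known to be holomorphic outside the bounded set $L$ and $O(1/p)$ at infinity, \Cref{t:polya} gives $\K_\phi^\dagger\subset \conv L$, which is bounded. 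Then $h_\phi$ is finite everywhere, and \Cref{c:maxh} gives finite type, so $\phi\in\E_2$.
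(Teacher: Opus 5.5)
Your proof is correct and follows the paper's route. Both directions are reduced to \Cref{t:AC}: the boundedness of $\exp(-L)$ together with $f(z)=O(1/z)$ gives $f(\hat\infty)=0$, and conversely the convergent Laurent expansion at $\hat\infty$ is a stronger instance of \eqref{e:fasymp}; your upgrade of $\phi$ from $\E_3$ to $\E_2$ via \Cref{t:polya} and \Cref{c:maxh} is valid and supplies a step the paper leaves implicit. The ``main obstacle'' you flag is not one, though: for a bounded closed $\Gamma$ the entire terms $e^{(s-n)u}$ integrate to zero, and $s\mapsto\frac{1}{2\pi i}\oint_\Gamma f(e^{-u})e^{su}\,\mathrm{d}u$ is entire and agrees with $\phi$ on a half-plane, so your bound $|\phi(s)|\le Be^{\max_{\Gamma}|u|\,|s|}$ already holds for all $s\in\C$ (this is \eqref{e:phifexp}).
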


\begin{proof}
    Since $S$ is bounded, \cref{e:fasymp} gives $f(\tilde\infty e^{i\t}) = 0$ for all $\t\in\Theta$. In other words, $f(\hat\infty) = 0$, so $\tilde\C$ can be replaced by the Riemann sphere $\hat\C$.
    
    Conversely, there exists $r > 0$ such that $f$ is analytic in $\C\setminus\bar D_r$. Together with the condition $f(\hat\infty) = 0$, this implies that $g(z) := f(1/z)$ satisfies $g(0) = 0$ and admits a power series with radius of convergence $R_g \ge 1/r$. Hence,
    \begin{equation}\label{e:fa_n}
        f(z) = \sum_{n=1}^\infty a_n z^{-n},
    \end{equation}
    for $|z| > r$, where $a_n := g^{(n)}(0)/n!$. Thus \cref{e:fa_n} is the simplified form of \cref{e:fasymp} in the present case.
\end{proof}

This is the special case of \Cref{t:AC} due to Dufresnoy and Pisot \cite{dufresnoy1951}. The case $L = \K_\phi^\dagger$ was found by Carlson in 1914 \cite{carlson1914,carlson1921}; see also \cite[\S 1.3]{bieberbach1955}.

\begin{proposition}
    Under the assumptions of \Cref{t:ACexp}, let $\Phi := \L\phi$. Then
    \begin{equation}
        f(z) = \sum_{n=0}^\infty \phi(n) z^n, \quad |z| < R_f = e^{-h_\phi(0)},
    \end{equation}
    \begin{equation}\label{e:fphi(-n)}
        f(z) = -\sum_{n=1}^\infty \phi(-n) z^{-n}, \quad |z| > r_f = e^{h_\phi(\pi)},
    \end{equation}
    \begin{equation}\label{e:phifexp}
        \phi(s) = \inv{2\pi i}\oInt_\Gamma f(e^{-u}) e^{su} \d u,
    \end{equation}
    for all $s\in\C$,
    \begin{equation}\label{e:fPhiexp}
        f(z) = \frac{1}{2\pi i}\oInt_\Gamma \frac{\Phi(p)}{1-ze^p} \d p,
    \end{equation}
    for all $z\notin S$ and $\Gamma$ encircling $L$ sufficiently tightly.
\end{proposition}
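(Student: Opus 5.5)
The plan is to specialize the general machinery of \Cref{t:AC} and \Cref{p:ACformulas} to the bounded case, where every Poincaré or Gevrey expansion becomes a convergent series. We work with $\phi\in\E_2$ and $L=\K_\phi^\dagger$, which may be taken as the conjugate diagram by \Cref{t:polya}; it is a compact convex set.

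For the first formula we use \Cref{p:braitsev}. It says that the Taylor series of $f$ at the origin has radius $R_f=e^{-h_\phi(0)}$. The coefficients are $f_n=\phi(n)$ by the interpolation part of \Cref{t:AC}.

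For \cref{e:fphi(-n)}, recall from the proof of \Cref{t:ACexp} that $f$ is analytic outside a disk and that $f(z)=\sum_{n\ge1}a_nz^{-n}$ there. The last clause of \Cref{t:AC} identifies $a_n=-\phi(-n)$. The radius of this Laurent expansion at infinity is obtained by applying \Cref{p:braitsev} to $g(w):=f(1/w)$, whose coefficients are interpolated by $\psi(s):=-\phi(-s)$. We have $h_\psi(\t)=h_\phi(\t+\pi)$, and $\psi$ still satisfies \cref{e:hphi2pi}, since the left side is unchanged by $\t\mapsto\t+\pi$. So $g$ has radius $e^{-h_\phi(\pi)}$, which gives $r_f=e^{h_\phi(\pi)}$. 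The one subtlety is to check that $g$ sits in the setting of \Cref{t:ACexp}, namely that $g(0)=0$ is handled correctly. I expect this to be harmless: if needed one studies $g(w)-g(0)$, or shifts the index by one, since $\psi(0)$ only affects the constant term.

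For \cref{e:phifexp}, take \cref{e:phif}. Because $L$ is bounded, we may choose the contour $\Gamma$ to be a compact loop around $L$. Then each term $e^{(s-n)u}$ in the correction sum is entire, so its integral over $\Gamma$ vanishes, and the sum disappears for every $s\in\C$. Here $\dom h_\phi=\Theta$, so $S_{\dom h_\phi}=\C$. Similarly, \cref{e:fPhiexp} is \cref{e:fPhi} with $\xi=0$. This choice is legitimate because $0\in S_\Theta=\C$ by \Cref{r:sector}, and $\Phi_0=\Phi$.

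The main obstacle is the contour requirement. The loop $\Gamma$ must encircle $L$ tightly enough that $\exp(-\cdot)$ remains injective on the region it bounds. This needs the strip condition \cref{e:vertical} with a margin, so that no pole of $1/(1-ze^p)$ at $p=-\log z+2\pi ik$ other than the intended one enters. For $z\notin S$ we also need $\Gamma$ to separate $-\log z$ from $L$. I would handle both by taking $\Gamma$ inside a strip of width strictly less than $2\pi$ and within a small uniform neighborhood of $L$, as in the $(\Leftarrow)$ part of the proof of \Cref{t:AC}.
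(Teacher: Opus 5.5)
Your proposal is correct and follows the paper's own proof: the two radii come from \Cref{p:braitsev} applied to $\phi$ and to the reflected function (the paper phrases the second step as applying \Cref{t:AC} to $s\mapsto\phi(-s)$). The constant term is indeed harmless, though the right adjustment is $g(w)+\psi(0)$, since $g(0)=0$. Formula \cref{e:phifexp} is \cref{e:phif} with the entire correction terms dropped, because they integrate to zero over a compact contour, and \cref{e:fPhiexp} is \cref{e:fPhi} with $\xi=0$.

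There are two small clean-ups. First, you should not replace $L$ by $\K_\phi^\dagger$: the set $S=\exp(-L)$, and hence the range of \cref{e:fPhiexp}, depend on $L$, and everything you use holds verbatim for general $L$. Second, no pole of $1/(1-ze^p)$ should lie inside $\Gamma$ at all. For $z\notin S$ these poles form a discrete set at positive distance from $L$, so tightness of $\Gamma$ alone excludes them, with no injectivity or extra strip margin needed; this is already arranged in \Cref{p:ACformulas}.
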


\begin{proof}
    These formulas follow from \Cref{p:ACformulas}: \cref{e:phifexp} is the simplification of \cref{e:phif} in which the asymptotic terms are no longer needed for convergence, and \cref{e:fPhiexp} is \cref{e:fPhi} with $\xi = 0$. Moreover, \cref{e:fphi(-n)} is obtained by applying \Cref{t:AC} to $s\mapsto\phi(-s)$.
\end{proof}

In the setting of \Cref{t:AC}, the Laurent series in \cref{e:fphi(-n)} diverges when $h_\phi(\pi) = \infty$, that is, when the indicator diagram is unbounded to the left. This is why it must be replaced by the Poincaré asymptotic condition \eqref{e:fasymp}.

\Cref{e:fPhiexp} is due to Le Roy \cite[\S 28]{leroy1900}, who briefly mentioned the formula, which was later studied by Braitsev \cite{braitsev1907}. 

\subsection{Ramanujan's master theorem}\label{s:ram1}

For $z\in\C$ and $\phi\in\E_3$, let $(S_N(z))_{N\in\Z}$ be the sequence of functions satisfying $S_0 = 0$ and $S_{N+1}(z) = S_N(z) + \phi(N) z^N$ for all $N\in\Z$. Thus
\begin{equation}
    S_N(z) = \begin{dcases}
        \sum_{n=0}^{N-1} \phi(n) z^n &\com{if} N \ge 0\\
        -\sum_{n=1}^{-N} \phi(-n) z^{-n} &\com{if} N \le 0.
    \end{dcases}
\end{equation}
If $h_\phi(\pm\frac\pi2) < \pi$, then by \Cref{t:AC} the power series $f(z) := \lim\limits_{N\to\infty} S_N(z)$ analytically continues to $\tilde\C\setminus S$, where $S := \exp(-\K_\phi^\dagger)$, and
\begin{equation}\label{e:fSNO}
    f(z) = S_N(z) + O(z^{N-1}), \quad z\to \tilde C_\infty \setminus S
\end{equation}
for all $N\in\Z$. In addition, we have the following formula for $\phi$ in terms of $f_N := f - S_N$:
\begin{equation}\label{e:phiSN1}
    \phi(s) = \inv{2\pi i}\oInt_\Gamma f_N(e^{-u}) e^{su} \d u,
\end{equation}
where $\Gamma$ encircles $\K_\phi^\dagger$. Let $y_\pm \in (h_\phi(\pm\frac\pi2), \pi]$ and deform $\Gamma$ so that $\abs{\Im u} < \pi$ for $u\in\Gamma$, while the branches of $\Gamma$ with positive and negative imaginary parts are asymptotic to $-\infty + iy_+$ and $-\infty - iy_-$, respectively. In that case, \cref{e:phiSN1} converges for $\Re s > N-1$. Making the change of variables $z = e^{-u}$ in \cref{e:phiSN1} gives
\begin{equation}\label{e:hankel}
    \phi(s) = \inv{2\pi i}\oInt_H \frac{f_N(z)}{z^{s+1}} \d z,
\end{equation}
where the principal branch of $z^{s+1}$ is used and $H$ is a contour from $\tilde\infty e^{-iy_+}$ to $\tilde\infty e^{iy_-}$. Moreover, if $H$ crosses the real axis, it can only do so at a positive value. In other words, $H$ encircles the negative real axis, though not necessarily tightly. Such a contour is known as a \emph{Hankel contour}, especially when $y_+ = y_- = \pi$ where $H$ encircles the negative real axis tightly. Ramanujan's master theorem is obtained by letting $H$ tend to the negative real axis. Since $f(z) - S_N(z) = O(z^N)$ as $z\to 0$ and \cref{e:fSNO} controls the behavior at infinity, for $N-1 < \Re s < N$ we may set $x = -z$ and obtain
\begin{align}\label{e:RMT1}
   \phi(s)
   &= -\inv{2\pi i}\oInt_{-H} \frac{f_N(-x)}{(-x)^{s+1}} \d x \nonumber\\
   &= \inv{2\pi i} \pa{\int_0^\infty \frac{f_N(-x)}{(e^{-i\pi} x)^{s+1}} \d x - \int_0^\infty \frac{f_N(-x)}{(e^{i\pi} x)^{s+1}} \d x} \nonumber\\
   &= \inv{2\pi i}\int_0^\infty \frac{f_N(-x)}{x^{s+1}} (e^{i\pi(s+1)} - e^{-i\pi(s+1)}) \d x \nonumber\\
   &= -\frac{\sin(\pi s)}{\pi}\int_0^\infty \frac{f_N(-x)}{x^{s+1}}  \d x.
\end{align}
\Cref{e:RMT1} is often written as
\begin{equation}\label{e:RMT2}
     \int_0^\infty f_N(-x) x^{s-1}  \d x = \frac\pi{\sin(\pi s)} \phi(-s)
\end{equation}
for $-N < \Re s < -N+1$. In the case $N=0$, this is Ramanujan's master theorem \cite{amdeberhan2012}. The extension \eqref{e:RMT2} is known for functions $\phi$ defined in a half-plane and for positive values of $N$ \cite[Thm.~7.1]{amdeberhan2012}.

\section{Examples}\label{s:ex}

\subsection{Arccotangent and the cardinal sine function}

For $t > 0$, let $\sinc_t(s) := \frac{\sin(t s)}{s}$. Its singularity at the origin is removable, and setting $\sinc_t(0) := t$ makes $\sinc_t$ an entire function of exponential type $t$. Its indicator function is
\begin{equation}
    h_{\sinc_t}(\t) = t \abs{\sin\t},
\end{equation}
and the associated convex body, i.e., the indicator diagram of $\sinc_t$, is the segment
\begin{equation}
    \K_{\sinc_t} = [-it, it].
\end{equation}

To compute the Laplace transform of $\sinc_t$, first we find its derivative:
\[
-(\L_0\sinc_t)'(p) = \int_0^\infty \sin(ts) e^{-ps} \d s = \inv{2i}\pa{\inv{p - it} - \inv{p + it}} = \frac{t}{p^2 + t^2},
\]
for $\Re p\ge 0$. Hence, by analytic continuation, $(\L\sinc_t)' =-\frac{t}{p^2 + t^2}$ is valid for all $p\notin\K_{\sinc_t}$. An antiderivative of $(\L\sinc_t)'$ is
\[
\inv{2i} \log\pa{\frac{p+it}{p-it}} = \cot^{-1}\pa{\frac{p}{t}}.
\]
By \Cref{r:Phi(infty)}, we must have $\L\sinc_t(\hat\infty) = 0$. Therefore
\begin{equation}
    \L\sinc_t(p) = \cot^{-1}\pa{\frac{p}{t}}, \quad p\notin\K_{\sinc_t}.
\end{equation}
According to \Cref{r:cvxhullsing}, $\K_{\sinc_t}$ is the convex hull of the singularities of $\L\sinc_t$. Indeed, $\L\sinc_t$ has two branch points at $\pm it$, and $\K_{\sinc_t}$ is precisely the corresponding branch cut \cite[Fig.~4.23.1]{NIST}. Thus, as predicted by \Cref{t:polya}, $\L\sinc_t$ cannot be further analytically continued. Its Laurent expansion is
\begin{equation}
    \L\sinc_t(p) = \sum_{n=0}^\infty \frac{(-1)^n}{2n+1} \pa{\frac{t}{p}}^{2n+1}.
\end{equation}
It converges when $|p| > t$. Thus $\bar D_t$ is the disk of divergence of $\L\sinc_t$, and it is the smallest closed disk centered at $0$ that contains $\K_{\sinc_t}^\dagger$.

\medskip

We now apply the results of \Cref{s:AC}. Let $f$ be defined by the power series
\begin{equation}
    f(z) := \sum_{n=0}^\infty \sinc_t(n) z^n,
\end{equation}
for $|z| < 1$. The vertical boundedness condition on $\K_{\sinc_t}$ forces us to restrict to $t\in[0,\pi)$ and direct calculations give
\begin{equation}\label{e:fsinc}
    f(z) = t + \inv{2i} \log\pa{\frac{1-ze^{-it}}{1-ze^{it}}} = \cot^{-1}\pa{\frac{\cos t - z}{\sin t}} = \inv{2}\int_{-t}^t \frac{\d\t}{1 - ze^{i\t}}.
\end{equation}
By \Cref{t:ACexp}, $f$ can be analytically continued outside $\exp(-\K_{\sinc_t}) = \cond{e^{i\t}}{\t \in \bra{-t, t}}$, which is its branch cut. On this domain,
\begin{equation}
    f(z) = \inv{2}\int_{-t}^t \frac{\d\t}{1 - ze^{i\t}},
\end{equation}
whereas the other expressions in \cref{e:fsinc}, taken with the principal branches of $\log$ and $\cot^{-1}$, do not have the correct branch cut for every $t\in(0,\pi)$. The representation \eqref{e:fphi(-n)} gives another form of $f$ outside its disk of convergence:
\begin{equation}
    f(z) = -\sum_{n=1}^\infty \sinc_t(-n) z^{-n} = t - f(1/z),
\end{equation}
for $|z| > 1$.

\subsection{The exponential function and the reciprocal Gamma function}\label{s:RGamma}

We next consider $f := \exp$:
\[
\exp(z) = \sum_{n=0}^\infty \inv{n!} z^n.
\]
The function $\exp$ is entire, but it has singularities at infinity in the sens of the radial compactification. More precisely, its singular set in $\tilde\C$ is
\[
S := \cond{\tilde\infty e^{i\t}}{\t\in \bra{-\frac\pi2, \frac\pi2}}.
\]
Moreover, $\exp(z) = o(z^{-N-1})$ as $z\to\tilde\infty e^{i\t}$ with $\t\in\pa{\frac\pi2, -\frac\pi2}$, for every $N\in\N$. Thus \cref{e:fasymp} holds with $a_n = 0$ for all $n\in\N$. Applying \Cref{t:AC}, and using the Hankel-contour form discussed in \Cref{s:ram1}, gives
\[
\phi(s) = \inv{2\pi i} \oInt_H \frac{e^z}{z^{s+1}} \d z,
\]
where $H$ is a Hankel contour around the negative real axis. By Hankel's reciprocal Gamma formula \cites{hankel1864}[eq.~5.9.2]{NIST}, this is $\phi(s) = \frac{1}{\Gamma(s+1)}$. In particular, $\phi$ is entire, $\phi(n) = 1/n!$ for nonnegative integers $n$, and $\phi(-n) = 0$ for $n\ge 1$, as required.

The second consequence of \Cref{t:AC} is that the Laplace transform of the reciprocal Gamma function has singularities contained in $L = -\infty + i[-\frac\pi2,\frac\pi2]$. The same set can also be recovered from the indicator function of $\phi$. By Stirling's approximation over $\C$ \cite[eq.~5.11.7]{NIST},
\[
\inv{\Gamma(s+1)} \sim \frac{e^s}{\sqrt{2\pi}} s^{\inv2 - s} = \sqrt{\frac{s}{2\pi}} e^{s - s\log s},
\]
so $\phi$ is of order $1$ and
\[
\abs{\inv{\Gamma(re^{i\t}+1)}} \sim \sqrt{\frac{r}{2\pi}} e^{\Re (re^{i\t} - re^{i\t}\log(re^{i\t}))} =\sqrt{\frac{r}{2\pi}} e^{r\cos\t-r\log r\cos\t + r \t \sin\t}.
\]
It follows that
\begin{equation}\label{e:hRGamma}
    h_\phi(\t) =
    \begin{cases}
        -\infty &\com{if} \t \in \pa{-\frac\pi{2},\frac\pi{2}}, \\
        \frac\pi2 &\com{if} \t = \pm \frac\pi{2}, \\
        \infty &\com{if} \t \in \bra{\frac\pi{2},-\frac\pi{2}},
    \end{cases}
\end{equation}
and hence, by \Cref{t:polya}, the indicator diagram is $\K_\phi = -\infty + i[-\frac\pi2,\frac\pi2]$. In particular, since $h_\phi(0) = -\infty$, the Laplace transform of the reciprocal Gamma function is given explicitly for all $p\in\C$ by
\begin{equation}
    \L\phi(p) = \int_0^\infty \frac{e^{-pt}}{\Gamma(t+1)} \d t.
\end{equation}
This transform does not seem to have been particularly studied in the literature.

\subsection{The Bernoulli generating function and the Riemann zeta function}\label{s:zeta1}

We now consider the closely related function
\[
f(z) := \inv{1-e^{-z}} - \inv{z} = \sum_{n=0}^\infty\frac{B_{n+1}}{(n+1)!} z^n,
\]
where $(B_n)_{n\in\N}$ are the \emph{Bernoulli numbers} \cite[eq.~24.2.1]{NIST} under the convention $B_1 = 1/2$. The function $f$ has the same singularities at infinity as $\exp$, because it has the same exponential growth in the relevant directions. Unlike $\exp$, however, its Taylor series has finite radius of convergence. Indeed, $f$ has poles at $2\pi i\Z^*$, while the pole at the origin is removable. The closest poles to the origin are at $\pm 2\pi i$, so the largest disk of holomorphy of $f$ has radius $2\pi$. Thus
\[
S := 2\pi i\Z^* \cup \cond{\tilde\infty e^{i\t}}{\t\in \bra{-\frac\pi2,\frac\pi2}}.
\]

In this case the asymptotic coefficients are $a_1 = -1$ and $a_n = 0$ for $n > 1$. Using \Cref{t:AC} and the same reasoning as in \Cref{s:RGamma}, we are led to
\begin{equation}\label{e:zeta/Gamma1}
    \phi(s) = \inv{2\pi i} \oInt_H \frac{f(z) + \inv z}{z^{s+1}} \d z = \inv{2\pi i} \oInt_H \inv{(1 - e^{-z})z^{s+1}} \d z
\end{equation}
where $H$ is a Hankel contour that encircles the negative real axis, but not the singularities $S$ of $f$. According to \cite[eq.~25.5.20]{NIST}, $\phi(s) = -\zeta(-s)/\Gamma(s+1)$. This is essentially the formula Riemann used to analytically continue the zeta function in his landmark paper \cite{riemann1859}, using a Hankel contour before Hankel's own work.

The function $\phi$ interpolates the coefficients of $f$. This lets us recover the value of $\zeta$ at negative integers \cite[eq.~25.6.3]{NIST}:
\begin{equation}\label{e:zeta(-n)}
    \zeta(-n) = -\frac{B_{n+1}}{n+1}.
\end{equation}
By \cref{e:hRGamma} and Riemann's functional equation \cite{riemann1859,titchmarsh1988,NIST}
\begin{equation}\label{e:zeta/Gamma2}
    \phi(s) = 2(2\pi)^{-s-1} \sin\pa{\frac\pi{2} s} \zeta(1+s).
\end{equation}
it follows that $\phi$ is of order $1$ (see \cite{titchmarsh1988}) and that
\begin{equation}
    h_\phi(\t) = \begin{dcases}
    \frac\pi{2} \abs{\sin\t} - \log(2\pi) \cos\t &\com{if} \t\in \bra{-\frac\pi{2}, \frac\pi{2}} \\
    \infty &\com{if} \t\in \pa{\frac\pi{2}, -\frac\pi{2}}.
    \end{dcases}
\end{equation}
For the cases $\t = \pm\frac\pi2$, some classical analytic number theory bounds for $\zeta$ are needed \cite{titchmarsh1988}. Here, $h_\phi$ is the support function of the strip
\[
\K_\phi = \K_\phi^\dagger = \cond{p\in\bar\C}{\abs{\Im p} \le \frac\pi{2}, \Re p \le -\log(2\pi)},
\]
which is the convex hull $\conv L$ of
\begin{equation}\label{e:Lzeta/Gamma}
    L := \pa{-\log(2\pi \N^*) \pm i\frac\pi{2}} \cup \pa{-\infty + i \bra{-\frac\pi2,\frac\pi2}},
\end{equation}
that satisfies $\exp(-L) = S$.

From \cref{e:zeta/Gamma2}, we compute the Laplace transform of $\phi$, for $\Re p > -\log(2\pi)$:
\begin{align*}
    \L_0\phi(p)
    &= \int_0^\infty 2(2\pi)^{-t-1} \sin\pa{\frac\pi{2} t} \sum_{n=1}^\infty \inv{n^{1+t}} e^{-pt} \d t \\
    &= \sum_{n=1}^\infty \inv{2\pi in} \int_0^\infty  \pa{e^{-\pa{\log(2\pi n) - i\frac\pi{2} + p}t} - e^{-\pa{\log(2\pi n) + i\frac\pi{2} + p}t}} \d t \\
    &= \sum_{n=1}^\infty \inv{2\pi in} \pa{\inv{\log(2\pi n) - i\frac\pi{2} + p} - \inv{\log(2\pi n) + i\frac\pi{2} + p}} \\
    &= \sum_{n=1}^\infty\inv{2n \pa{(p + \log(2\pi n))^2 + (\frac\pi2)^2}}.
\end{align*}
By the usual convergence tests, the latter series converges absolutely for all $p\in\C$ away from its poles. Therefore, by analytic continuation,
\begin{equation}
    \L\phi(p) = \sum_{n=1}^\infty\inv{2n \pa{(p + \log(2\pi n))^2 + (\frac\pi2)^2}},
\end{equation}
and the poles of $\L\phi$ are precisely described by the set $L$ given by \cref{e:Lzeta/Gamma}.

\subsection{Trigamma function and the Riemann zeta function}\label{s:zeta2}

Let $\phi(s) = s\zeta(1+s)$. The pole of $\phi$ at $s = 0$ is removable, and setting $\phi(0) = 1$ defines a function of $\E_3$ by the arguments of \Cref{s:zeta1}. For its Laplace transform, we first compute, for $\Re p > 0$,
\begin{align}\label{e:Phi3}
    &\L_0\phi(p)
    = \int_0^\infty t\zeta(1+t) e^{-pt} \d t = \sum_{n=1}^\infty \inv{n} \int_0^\infty \frac{t}{n^t} e^{-pt} \d t \nonumber\\
    ={}& \sum_{n=1}^\infty \inv{n(p+\log n)}\int_0^\infty e^{-(p+\log n)t} \d t = \sum_{n=1}^\infty \inv{n(p+\log n)^2}.
\end{align}
Since \cref{e:Phi3} converges absolutely whenever $p\notin -\log \N^*$, analytic continuation gives, for such $p$,
\begin{equation} 
    \L\phi(p) = \sum_{n=1}^\infty \inv{n(p+\log n)^2}.
\end{equation}
The set of singularities of $\L\phi$ must be closed, so we let $L = -\log\N^*\cup\{-\infty\}$. According to \Cref{t:AC}, the power series
\begin{equation}
    f(z) := \sum_{n=0}^\infty \phi(n) z^n
\end{equation}
analytically continues outside $S := \exp(-L) = \N^* \cup \{\tilde\infty\}$. In fact, $f(z) = -z\psi'(1-z)$, where $\psi = \Gamma'/\Gamma$ is the \emph{digamma function} \cite[eq.~5.7.4]{NIST} and $\psi'$ is the \emph{trigamma function} \cite{NIST}. Thus $S$ corresponds precisely to the singularities of $f$ \cite[eq.~5.2.2]{NIST}.

The second consequence of \Cref{t:AC} is the Poincaré asymptotic expansion for $f$. According to \cref{e:zeta(-n)}, $a_n = -\phi(-n) = -n\zeta(1-n) = B_n$, so for $\varrho > 1$
\begin{equation}\label{e:digasymp}
    -z\psi'(1-z) = \sum_{n=1}^N \frac{B_n}{z^n} + O\pa{\frac{e^{N^\varrho}}{z^{N+1}}}, \quad z\to \tilde C_\infty \setminus S,
\end{equation}
uniformly in $N\in\N$. This expansion is consistent with standard results \cite[eq.~5.11.8, 5.15.8]{NIST}, typically derived via the Euler--Maclaurin summation formula \cite[Sec.~2.10(i)]{NIST}.

\subsection{Further examples}

We conclude with several further examples, stated without proof, to illustrate the scope of the theory. As in \Cref{s:RGamma}, these examples come from hypergeometric series. Their corresponding functions $\phi$ do not seem to admit closed-form Laplace transforms, in contrast with the zeta function examples which are not hypergeometric coefficients.

\subsubsection{Incomplete Gamma function and the falling factorial}

For $a\in\C$, let
\[
\gamma(a, z) := \int_0^z t^{a-1} e^{-t} \d t
\]
be the \emph{incomplete Gamma function} \cite{NIST} and
\[
(a)_s := \frac{a!}{(a-s)!}
\]
for complex $s$; this is the \emph{falling factorial} function. It is known that \cite[eq.~8.7.1]{NIST}
\[
e^z z^{-a-1} \gamma(a+1, z) = \sum_{n=0}^\infty (a)_{-n-1} z^n
\]
for all $z\in\C$, where $(a)_{-n} = \inv{(a+1) \cdots (a+n)}$. Since the function $\phi(s) := (a)_{-s-1}$ is in $\E_3$, \Cref{t:AC} yields
\[
e^z z^{-a-1} \gamma(a+1, z) = -\sum_{n=1}^N \frac{(a)_{n-1}}{z^n} + O\pa{\frac{e^{N^\varrho}}{z^{N+1}}} \quad z\to \tilde C_\infty \setminus S_{\bra{-\frac\pi{2}, \frac\pi{2}}},
\]
for all $\varrho > 1$ and $N\in\N$, where $(a)_n = a(a-1) \cdots (a-n+1)$. This is consistent with the asymptotic expansion of the complementary incomplete Gamma function $\Gamma(a+1,z) := \Gamma(a+1) - \gamma(a+1, z)$; see \cite[eq.~8.11.2]{NIST}.

\subsubsection{The Scorer functions}

The \emph{Scorer function} $\Hi(z)$, which is closely related to the \emph{Airy functions}, is a particular solution of the differential equation $f''(z) - z f(z) = \inv{\pi}$ \cite{NIST}. Its power series is given by \cite[eq.~9.12.17]{NIST}
\begin{equation}
    \Hi(z) = \inv\pi \sum_{n=0}^\infty \Gamma\pa{\frac{n+1}{3}} 3^{\frac{n+1}{3}} \frac{z^n}{n!}.
\end{equation}
A function of $\E_3$ interpolating the coefficients of $\Hi$ is given by
\begin{equation}\label{e:phiHi}
    \phi(s) := \frac{\Gamma\pa{\frac{s+1}{3}} 3^{\frac{s+1}{3}}}{\pi \Gamma(s+1)}.
\end{equation}
From \cref{e:hRGamma}, it follows that
\begin{equation}\label{hphiHi}
    h_\phi(\t) =
    \begin{cases}
        -\infty &\com{if} \t \in \pa{-\frac\pi{2},\frac\pi{2}}, \\
        \frac\pi3 &\com{if} \t = \pm \frac\pi{2}, \\
        \infty &\com{if} \t \in \bra{\frac\pi{2},-\frac\pi{2}}.
    \end{cases}
\end{equation}
Therefore, by Carlson's theorem \Cref{t:carlson}, the function in \cref{e:phiHi} is the unique interpolant of the coefficients of $\Hi$ satisfying $h_\phi(\frac{\pi}{2}) + h_\phi(-\frac{\pi}{2}) < 2\pi$. The associated indicator diagram is $\K_\phi = -\infty + i \bra{-\frac{\pi}{3}, \frac{\pi}{3}}$, and the corresponding (purely infinite) set of singularities of $\Hi$ is
\[
S := \exp(-\K_\phi^\dagger) = \cond{\tilde\infty e^{i\t}}{\t\in\bra{-\frac\pi{3}, \frac{\pi}{3}}}.
\]
It then follows from \Cref{t:AC} that
\[
\Hi(z) = -\inv{\pi z} \sum_{k=0}^N \frac{(3k)!}{k! (3z)^{3k}} + O\pa{\frac{e^{N^\varrho}}{z^{3N+1}}}, \quad z\to \tilde C_\infty \setminus S_{\bra{-\frac\pi{3}, \frac\pi{3}}},
\]
for all $\varrho > 1$ and $N\in\N$, since, on the real axis, $\phi$ vanishes only at the negative integers congruent to $0$ or $1$ modulo $3$. This agrees with \cite[eq.~9.12.27]{NIST}. A similar analysis can be carried out for the Scorer function $\Gi$ \cite{NIST}, but it fails for the Airy functions $\Ai$ and $\Bi$ for the same reason as in the example discussed in the next section.

\subsubsection{Bessel function of the first kind}
Let $z,\alpha\in\C$, and
\[
 z^{-\alpha} J_\alpha(z) := \sum_{m=0}^\infty \frac{(-1)^m z^{2m+\alpha}}{m!(m+\alpha)!2^{2m}}
\]
be the Bessel function of the first kind \cite{NIST}. A function of $\E_3$ interpolating the coefficients of $z^{-\alpha} J_\alpha(z)$ is
\begin{equation}
    \phi(s) = \frac{\cos\pa{\frac\pi{2} s}}{\pa{\frac{s}{2}}!\pa{\frac{s}{2}+\alpha}!2^{s+\alpha}},
\end{equation}
The function $z^{-\alpha} J_\alpha(z)$ provides a non-trivial example of the failure of \Cref{t:AC} when \cref{e:vertical} is not satisfied. From \cref{e:hRGamma}, it follows that
\begin{equation}
    h_\phi(\t) =
    \begin{cases}
        -\infty &\com{if} \t \in \pa{-\frac\pi{2},\frac\pi{2}}, \\
        \pi &\com{if} \t = \pm \frac\pi{2}, \\
        \infty &\com{otherwise}.
    \end{cases}
\end{equation}
Hence Carlson's theorem (\Cref{t:carlson}) does not apply to the uniqueness of the interpolation $\phi$ of the coefficients, and we do not obtain a general asymptotic expansion of the form \eqref{e:fasymp}. Instead, an asymptotic expansion in terms of square roots, sines, and cosines is given in \cite[eq.~10.17.2]{NIST}.

\section{Conclusion and perspectives}

In this paper, we extended indicator diagram theory beyond the classical finite-type setting. The main step was to replace compact convex subsets of $\C$ by closed convex subsets of the extended plane $\bar\C$, allowing unbounded diagrams to be treated as closed objects. This led to an extension of Pólya's theorem and to a characterization of analytic continuability for power series.

The theory developed here still leaves several classes of examples outside its scope. For instance, stability under antidifferentiation in \Cref{t:AC} would require an indicator-diagram theory for meromorphic functions; in that direction, \emph{Nevanlinna theory} \cite{rubel1996,goldberg1997} provides the natural notion of order. Multivalued functions form another extension, where the interaction with \emph{resurgence theory} \cite{loday-richaud2016} should be investigated.

The analytic continuation theorem also places Ramanujan's master theorem within the same framework. A further problem is to determine which other classical integral formulas in complex analysis admit a similar interpretation \cite{NIST,lindelof1905}.

\subsection*{Acknowledgements}

I would like to thank my advisor, Olivier Rioul, for his guidance and support in the early stages of this work.

\appendix
\appendixpage

\section{Proof of the classification of geodesic segments}\label{s:cvxinf}

The proof of \Cref{t:geodesic} is mostly a bookkeeping argument. A segment in $\bar\C$ is obtained as a limit of ordinary Euclidean segments, so the problem is to decide which limits can occur when one or both endpoints escape to infinity.

\begin{proof}[of \Cref{t:geodesic}]
    Let $u_n \to u\in\bar\C$, $v_n \to v\in\bar\C$, and let $t_n\in[0,1]$ satisfy $t_n\to t$. Set
    \[
    w_n := t_n u_n + (1-t_n)v_n \to w\in\bar\C.
    \]
    In each case, we first determine the possible limits $w$, and then show that every such limit is attained for suitable choices of $u_n$, $v_n$, and $t_n$.
    \begin{alphabetize}
        \item In this case, $w = tu + (1-t)v$, so $[u_n,v_n]\to \bra{u,v}$.
        \item Without loss of generality, assume $y_1 \le y_2$ and $\t=0$.
        \begin{itemize}
            \item If $t<1$, then $\Re w=\infty$ and $\Im w = ty_1 + (1-t)y_2 \in (y_1,y_2]$. Given $y_0\in (y_1,y_2]$, choose
            \[
            t = \frac{y_2-y_0}{y_2-y_1}\in[0,1)
            \]
            to obtain $w=\infty+iy_0$.
            \item If $t=1$, then $\Im w = y_1$ and $\Re w \in [x,\infty]$. For $x_0\in[x,\infty)$, choose
            \[
            t_n = \frac{\Re v_n - x_0}{\Re v_n - x},
            \]
            while for $x_0=\infty$, choose a sequence $\epsilon_n\to0$ such that $\epsilon_n\Re v_n\to\infty$ and set $t_n=1-\epsilon_n$. In either case, $w=x_0+iy_1$.
        \end{itemize}
        \item[(c.1)] Again assume $y_1 \le y_2$. Then $\Re(we^{-i\alpha})=\infty$ and
        \[
        \Im(we^{-i\alpha}) = ty_1 + (1-t)y_2 \in [y_1,y_2].
        \]
        Given $y_0\in[y_1,y_2]$, choose
        \[
        t = \frac{y_2-y_0}{y_2-y_1}\in[0,1]
        \]
        when $y_1<y_2$; this yields $w=(\infty+iy_0)e^{-i\alpha}$.
        \item[(c.2)] Since $\beta-\alpha<\pi$ and $\arg w_n$ is a weighted angular mean of $\arg u_n$ and $\arg v_n$, we have $\arg w_n\in[\arg u_n,\arg v_n]$ for all sufficiently large $n$. Passing to the limit gives $\arg w\in[\alpha,\beta]$. Moreover, since $0<\beta-\alpha<\pi$,
        \[
        \Im(v e^{-i\alpha}) = \infty \sin(\beta - \alpha) - y_2 \cos(\beta - \alpha) = \infty.
        \]
        Since $\Im(u e^{-i\alpha}) = y_1 < \infty$, it follows that for all sufficiently large $n$,
        \begin{align*}
            \Im(w_n e^{-i\alpha})
            &= t_n\Im(u_n e^{-i\alpha}) + (1-t_n)\Im(v_n e^{-i\alpha}) \\
            &\ge t_n\Im(u_n e^{-i\alpha}) + (1-t_n)\Im(u_n e^{-i\alpha}) = \Im(u_n e^{-i\alpha}).
        \end{align*}
        Therefore, if $\arg w = \alpha$, then passing to the limit gives $\Im(w e^{-i\alpha}) \ge \Im(u e^{-i\alpha}) = y_1$. Similarly, if $\arg w = \beta$, then $\Im(w e^{-i\beta}) \le y_2$.
        
        We set $u_n = (n + iy_1)e^{i\alpha}$, $v_n = (n + iy_2)e^{i\beta}$.
        \begin{itemize}
        \item Let $\t_0\in(\alpha,\beta)$ and $y_0\in\R$. Set
        \[
        t := \frac{\sin(\beta - \t_0)}{\sin(\beta - \t_0) + \sin(\t_0 - \alpha)}\in(0, 1), \quad 1-t = \frac{\sin(\t_0 - \alpha)}{\sin(\beta - \t_0) + \sin(\t_0 - \alpha)},
        \]
        and define $t_n := t - \frac{c}{n}$, for some $c\in\R$. We claim there exists $c\in\R$ such that $w = (\infty + iy_0) e^{i\t_0}$. For all sufficiently large $n$, $|c|/n \le \min(t, 1-t)$, hence $t_n\in[0,1]$. We have
        \[
        w_n = \pa{t - \frac{c}{n}}(n + iy_1) e^{i\alpha} + \pa{1-t + \frac{c}{n}}(n + iy_2) e^{i\beta},
        \]
        and therefore
        \[
        w_n \sim n(te^{i\alpha} + (1-t)e^{i\beta}) = n\frac{\sin(\beta - \alpha) e^{i\t_0}}{\sin(\beta - \t_0) + \sin(\t_0 - \alpha)},
        \]
        by \Cref{p:sincos}. Thus $|w|=\infty$ and $\arg w=\t_0$. Moreover,
        \begin{align*}
            \Im(w_n e^{-i\t_0})
            &= c(\sin(\t_0 - \alpha) + \sin(\beta - \t_0)) + ty_1 \cos(\alpha - \t_0) + (1-t)y_2\cos(\beta - \t_0) \\
            &- \frac{c}{n} (y_1 \cos(\alpha - \t_0) - y_2\cos(\beta - \t_0)),
        \end{align*}
        so if we set
        \[
        c = \frac{y_0 - ty_1 \cos(\alpha - \t_0) - (1-t)y_2\cos(\beta - \t_0)}{\sin(\t_0 - \alpha) + \sin(\beta - \t_0)}.
        \]
        $\Im(we^{-i\t_0}) = y_0$.
        \item For $\t_0 = \alpha$, we have
        \[
        \Im(w_n e^{-i\alpha}) = t_n y_1 + (1-t_n)(n \sin(\beta - \alpha) + y_2 \cos(\beta - \alpha)).
        \]
        Given $y_0 \ge y_1$, choose
        \[
        t_n =1 - \frac{y_0 - y_1}{n \sin(\beta - \alpha)},
        \]
        which eventually lies in $[0,1]$. Then $\Im(we^{-i\alpha}) = y_0$.
        \item The case $\t_0 = \beta$ with $y_0 \le y_2$ is analogous.
        \end{itemize}
        \item[(c.3)] Without loss of generality, assume $y_1 \le y_2$ and $\alpha = 0$. Clearly, $ y_1 \le \Im w \le y_2$. Fix sequences $(u_n)_{n\in\N}$ and $(v_n)_{n\in\N}$, and let $(\tau_n)_{n\in\N}$ be a sequence in $[0,1]$ converging to $\tau$ such that
        \[
        \tau_n u_n + (1-\tau_n)v_n \to x+iy,
        \]
        for some $x\in\R$ and $y\in[y_1,y_2]$. Then necessarily $\tau\in(0,1)$, and
        \begin{equation}\label{e:wntau}
            w_n = \tau_n u_n + (1 - \tau_n) v_n + (t_n - \tau_n)(u_n - v_n).
        \end{equation}
        We show that the existence of such a sequence $(\tau_n)_{n\in\N}$ restricts the possible limits $w$ of $(w_n)_{n\in\N}$.
        \begin{itemize}
            \item If $\abs{\Re w} < \infty$, then \cref{e:wntau} implies that the sequence $(t_n - \tau_n)(u_n - v_n)$ converges. Since $\Re(u_n - v_n)\to\infty$, we obtain $t_n - \tau_n \to 0$, hence also $(t_n - \tau_n)\Im(u_n - v_n) \to 0$. Therefore $\Im w = y$.
            \item If $\Re w = \pm\infty$, then \cref{e:wntau} gives $(t_n - \tau_n)\Re(u_n - v_n) \to \pm\infty$. Since $\Re(u_n - v_n)\to\infty$, this implies $\pm(t-\tau)\ge0$. Consequently,
            \[
            \pm\Im w = \pm y \pm (t - \tau)(y_1 - y_2) \le \pm y.
            \]
        \end{itemize}
        Next, we show that all these possible limits are attained:
        \begin{itemize}
            \item Let $x_0\in\R$ and set
            \[
            t_n = \tau_n + \frac{x_0 - x}{\Re(u_n - v_n)},
            \]
            which eventually belongs to $[0,1]$. Then \cref{e:wntau} gives $w = x_0 + iy$, so all possible finite limits can be attained.
            \item The case $y_1 = y_2$ is immediate.
            \item If $y_1 < y_2$, let $y_0\in[y_1,y_2]$, define
            \[
            \Delta := \frac{y - y_0}{y_2 - y_1}, \qquad \Delta_n := \Delta\pa{1 - \frac{1}{\sqrt n}}\in(-1,1),
            \]
            and set $u_n = (\Delta_n + 1) n + iy_1$ and $v_n = (\Delta_n - 1) n + iy_2$. We have $\Re u_n\to\infty$ and $\Re v_n\to-\infty$, even when $\Delta = \pm1$. Since $\Re(\tau_n u_n + (1-\tau_n)v_n) \to x$,
            \[
            \tau_n = \frac{x/n - (\Delta_n-1)}{(\Delta_n + 1) - (\Delta_n-1)} + o\pa{\inv n} = \frac{1 - \Delta_n}{2} + O\pa{\inv n}.
            \]
            Now set
            \[
            t_n := \tau_n + \Delta_n = \frac{1+\Delta_n}{2}+ O\pa{\inv n},
            \]
            which eventually lies in $[0,1]$. Then, when $\pm\Delta > 0$, we obtain $w = \pm\infty + iy_0$.
        \end{itemize}
    \end{alphabetize}
\end{proof}

\section{Properties of trigonometric-convex functions}\label{s:tcvxprop}

This appendix collects the elementary properties of trigonometric-convex functions that are used in the paper. The point of including the proofs is not novelty, but compatibility: the functions considered here may take the values $\pm\infty$, and the domain may be a proper subset of the circle.

Trigonometric convexity first appeared in the work of Phragmén and Lindelöf \cite{phragmen1908}, as a structural property of the indicator function they introduced. The same inequality already had a natural geometric meaning for support functions; in that setting it was studied by Blaschke \cite{blaschke1914}. The name itself was coined by Gelfond \cite{gelfond1938} and later became standard through Levin's book \cite{levin1964}.

Several results below are often associated with Levin \cite{levin1964}. Historically, however, many of them entered the literature earlier through convex geometry, especially in the work of Meissner \cite{meissner1909}, Blaschke \cite{blaschke1914}, and Bonnesen \cite{bonnesen1920} on support functions and curvature.

The defining inequality \eqref{e:tcvx} is the trigonometric analog of the increasing-chords property of an ordinary convex function $g$. In one of its symmetric forms, this property says that, for $x_1 < x_2 < x_3$,
\[
g(x_1) (x_2 - x_3) + g(x_2)(x_3 - x_1) + g(x_3)(x_1 - x_2) \leq 0.
\]
Because of this parallel, ordinary convex functions and trigonometric-convex functions share many formal properties. We point out the analogies when they help explain the proof.

\subsection{Basic properties}

\begin{proposition}\label{p:tcvxgeneral}
	If $k\not\equiv-\infty$ is trigonometric-convex, \cref{e:tcvximp} holds more generally for
	\begin{equation}\label{e:thetageneral}
		\t_1, \t_3\in \dom k, \enspace\text{such that}\quad \t_2 - \t_1\le\pi,\enspace \t_3 - \t_2\le\pi, \enspace\text{and}\quad \t_2 \in [\t_1, \t_3]
	\end{equation}
\end{proposition}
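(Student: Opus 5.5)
The plan is to reduce everything to the real-valued situation and then normalize by a sinusoid, so that the claim becomes a sign statement that follows from two applications of the ordinary inequality \cref{e:tcvximp} together with the diameter inequality \cref{e:diameter}.

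\textbf{Reduction.} If $\t_3-\t_1\le\pi$, the claim is exactly \Cref{d:tcvx}, so assume $\t_3-\t_1\in(\pi,2\pi)$. Then $\dom k$ contains two points at angular distance strictly between $\pi$ and $2\pi$, hence it is neither an interval of length at most $\pi$ nor of the form $\{\alpha,\alpha+\pi\}$. By \Cref{p:domk}, $\dom k=\Theta$, so $k<\infty$ everywhere. Since $k\not\equiv-\infty$, \Cref{p:diameter} gives $k(\t)+k(\t+\pi)\ge0$ for all $\t$. With both terms $<\infty$, this excludes the value $-\infty$, so $k$ is real-valued.

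\textbf{Normalization.} Because $\sin(\t_3-\t_1)\neq0$, there is a unique sinusoid $s(\t)=A\cos\t+B\sin\t$ with $A,B\in\R$ and $s(\t_1)=k(\t_1)$, $s(\t_3)=k(\t_3)$. By \Cref{p:sincos}, $s$ satisfies \cref{e:tcvximp} with equality for all admissible triples. Hence $g:=k-s$ is still trigonometric-convex, still satisfies \cref{e:diameter}, and the target inequality is equivalent to the same inequality for $g$. Since $g(\t_1)=g(\t_3)=0$, the target reads $g(\t_2)\sin(\t_3-\t_1)\le0$. As $\sin(\t_3-\t_1)<0$, it suffices to show $g(\t_2)\ge0$; the cases $\t_2\in\{\t_1,\t_3\}$ are trivial.

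\textbf{Key step.} Set $\psi:=\t_3-\pi$. From $\t_3-\t_1>\pi$ and $\t_3-\t_2\le\pi$ we get $\psi\in(\t_1,\t_2]$. The diameter inequality gives $g(\psi)\ge-g(\t_3)=0$. If $\psi=\t_2$ we are done. Otherwise, apply \cref{e:tcvximp} to the triple $(\t_1,\psi,\t_2)$, which is admissible because $\t_2-\t_1\le\pi$ by hypothesis:
\[
0\le g(\psi)\sin(\t_2-\t_1)\le g(\t_1)\sin(\t_2-\psi)+g(\t_2)\sin(\psi-\t_1)=g(\t_2)\sin(\psi-\t_1).
\]
Since $0<\psi-\t_1<\pi$, we have $\sin(\psi-\t_1)>0$, so $g(\t_2)\ge0$. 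This uses $\sin(\t_2-\t_1)\ge0$; the strict positivity needed is that of $\sin(\psi-\t_1)$, which holds as stated.

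The main obstacle I expect is the reduction step, namely ruling out infinite values so that the sinusoid subtraction is legitimate. \Cref{p:domk} combined with \cref{e:diameter} handles this. The remaining work is the angle bookkeeping that places $\psi$ inside $(\t_1,\t_2]$, which is where both hypotheses $\t_2-\t_1\le\pi$ and $\t_3-\t_2\le\pi$ are used.
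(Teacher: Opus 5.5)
Your reduction step is false, and it discards a genuine case. From $\t_3-\t_1\in(\pi,2\pi)$ you infer that $\dom k$ cannot be an interval of length at most $\pi$. But then the reverse difference $\t_1-\t_3$ lies in $(0,\pi)$, so in fact $\dTheta(\t_1,\t_3)<\pi$. The arc $[\t_3,\t_1]$ is therefore an interval of length less than $\pi$ containing both points.

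The paper's own main example shows that this happens. For $\phi(s)=1/\Gamma(s+1)$, \cref{e:hRGamma} gives $\dom h_\phi=[-\frac\pi2,\frac\pi2]$. Take $\t_1=\frac\pi4$ and $\t_3=-\frac\pi4$: then $\t_3-\t_1=\frac{3\pi}{2}>\pi$, yet $\dom h_\phi\neq\Theta$ and $h_\phi(\t_1)=h_\phi(\t_3)=-\infty$. In such cases $k$ need not be real-valued, so no interpolating sinusoid exists. Your key step is also unavailable, because it uses $\t_2$ as an endpoint in \cref{e:tcvximp} and so needs $\t_2\in\dom k$. What you have is a correct proof under the extra hypothesis $\dom k=\Theta$; the normalization and the argument with $\psi=\t_3-\pi$ are fine there. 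It is not yet a proof of the proposition.

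The missing case is short but must be argued. If $\dom k\neq\Theta$, then by \Cref{p:domk} it is an arc of length at most $\pi$, and it must contain $[\t_3,\t_1]$. An admissible $\t_2\notin\{\t_1,\t_3\}$ then lies outside $\dom k$ unless $\t_2\in\{\t_1+\pi,\ \t_3-\pi\}$.

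If $k(\t_2)=\infty$, the left side of \cref{e:tcvximp} is $-\infty$ because $\sin(\t_3-\t_1)<0$. The right side is a well-defined element of $[-\infty,\infty)$, since both sines there are nonnegative and $k(\t_1),k(\t_3)<\infty$. Otherwise $\t_2\in\{\t_1+\pi,\ \t_3-\pi\}$; one sine on the right vanishes, and the inequality reduces to \cref{e:k>-k}.

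Alternatively, the paper's proof avoids any analysis of $\dom k$. Multiplying \cref{e:k>-k} at $\t_2$ by $\sin(\t_3-\t_1)<0$ gives $k(\t_2)\sin(\t_3-\t_1)\le k(\t_2+\pi)\sin(\t_1-\t_3)$. The triple $(\t_3,\t_2+\pi,\t_1)$ is admissible: $\t_1-\t_3<\pi$, and your two hypotheses place $\t_2+\pi$ in $[\t_3,\t_1]$. The defining inequality for this triple bounds the right side by $k(\t_1)\sin(\t_3-\t_2)+k(\t_3)\sin(\t_2-\t_1)$. This argument is valid in extended arithmetic.
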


This extension of the basic three-point inequality is due to Phragmén and Lindelöf \cite{phragmen1908}. 

\begin{proof}
    The case $\t_3 - \t_1 \le \pi$ follows by definition. When $\t_3 - \t_1 > \pi$, $\t_1 - \t_3 < \pi$ and $\t_2 + \pi \in [\t_3, \t_1]$. Using \cref{e:k>-k} and \cref{e:tcvximp} with $\t_3, \t_2+\pi, \t_1$, we find
    \begin{align*}
        k(\t_2) \sin(\t_3 - \t_1)
        &\le k(\t_2 + \pi) \sin(\t_1 - \t_3) \\
        &\le k(\t_3) \sin(\t_3 - \t_2 - \pi) + k(\t_1) \sin(\t_2 + \pi - \t_3) \\
        &\quad= k(\t_1) \sin(\t_3 - \t_2) + k(\t_3) \sin(\t_2 - \t_3).
    \end{align*}
\end{proof}

\begin{proposition}\label{p:tcvxneg}
	For a trigonometric-convex function $k\not\equiv-\infty$, $\cond{\t\in\Theta}{k(\t) < 0}$ is an interval of length at most $\pi$.
\end{proposition}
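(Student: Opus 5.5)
Let $N := \cond{\t\in\Theta}{k(\t) < 0}$. The plan is to show two things: that $N$ contains no pair of antipodal points, and that $N$ is an interval. Together these give the claim. A subset of $\Theta$ that is an interval and contains no two points at angular distance exactly $\pi$ has length at most $\pi$. Indeed, if its length exceeded $\pi$, then by \cref{e:intervals} it would contain some $\t$ and $\t+\pi$.

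\textbf{Step 1 (no antipodal pair).} Suppose $\t,\t+\pi\in N$. Both then lie in $\dom k$, and $k(\t)+k(\t+\pi)<0$. This contradicts \cref{e:diameter}, which is available through \Cref{p:diameter} because $k\not\equiv-\infty$. (The inequality \cref{e:k>-k} gives the same conclusion directly.)

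\textbf{Step 2 ($N$ is an interval).} It suffices to show the following. For $\t_1,\t_3\in N$ with $0<\t_3-\t_1<\pi$, every $\t_2\in[\t_1,\t_3]$ lies in $N$. This holds because $\t_1,\t_3\in\dom k$, and \cref{e:tcvximp} gives
\[
k(\t_2)\sin(\t_3-\t_1)\le k(\t_1)\sin(\t_3-\t_2)+k(\t_3)\sin(\t_2-\t_1).
\]
On the right-hand side, both sines are nonnegative and do not vanish simultaneously, while $k(\t_1),k(\t_3)<0$ (possibly $-\infty$). So the right-hand side is $<0$. Since $\sin(\t_3-\t_1)>0$, we get $k(\t_2)<0$. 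The case $\t_3-\t_1=\pi$ cannot occur by Step 1.

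It remains to pass from this \enquote{short-arc closure} to $N$ being an interval. Take the points of $N$ in cyclic order; by Step 1 no two of them are antipodal. If $N$ were not an interval, there would be points $\t_1,\t_3\in N$ and gaps $g,g'\notin N$ separating them on both arcs between $\t_1$ and $\t_3$. One of those two arcs has length $<\pi$, because the lengths sum to $2\pi$ and neither equals $\pi$. Step 2 applied to that arc puts the gap point lying on it into $N$, which is a contradiction. Hence $N$ is an interval (possibly empty), and so $N$ is an interval of length at most $\pi$.

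The main obstacle is bookkeeping rather than analysis: handling the values $\pm\infty$ correctly in the three-point inequality, and the topological argument that closure under short arcs together with the absence of antipodes yields an interval in the sense of \cref{e:intervals}. The argument above treats both.
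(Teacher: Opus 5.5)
Your proof is correct and uses the same two ingredients as the paper: the three-point inequality \cref{e:tcvximp} shows that $N$ is closed under arcs shorter than $\pi$, and the diameter inequality (\cref{e:tcvximp} with $\t_2=\t_1+\pi/2$) excludes antipodal pairs. The only difference is that you exclude antipodes first, which makes the step ``$N$ is an interval'' fully rigorous; short-arc closure alone would not rule out a set like $\{\alpha,\alpha+\pi\}$, and the paper passes over that point quickly.
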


\begin{proof}
    If $0 \leq \t_3 - \t_1 < \pi$ and $k(\t_1), k(\t_3) < 0$ then by \cref{e:tcvx}, for all $\t_2 \in [\t_1, \t_3]$
    \[
    k(\t_2) \leq \frac{k(\t_1) \sin(\t_3 - \t_2) + k(\t_3) \sin(\t_2 - \t_1)}{\sin(\t_3 - \t_1)},
    \]
    so $k(\t_2) < 0$. Thus $N := \cond{\t}{k(\t) < 0}$ is an interval, and $\bar N = [\alpha, \beta]$ for some $\alpha, \beta\in\Theta$. If $\beta - \alpha > \pi$, there exists $\t_1\in N$ such that $\t_3 := \t_1+\pi\in N$. With $\t_2 := \t_1 + \pi/2$, \cref{e:tcvximp} leads to a contradiction. Hence, $\beta - \alpha \leq \pi$.
\end{proof}

\begin{proof}[of \Cref{p:domk}]
    Let $\alpha, \beta\in \dom k$.
    \begin{itemize}
        \item If $\beta - \alpha < \pi$, it follows from \cref{e:tcvximp} that $[\alpha, \beta]\subset \dom k$. Therefore, $\dom k$ is an interval. If $\dom k\neq\Theta$, there exists $\alpha, \beta\in\Theta$ so that $\oline{\dom k} = [\alpha, \beta]$. If $\beta - \alpha > \pi$, there is $\alpha', \beta' \in \dom k$ such that $\beta' - \alpha' > \pi$, i.e., $\alpha' - \beta' < \pi$. Hence, $[\beta', \alpha'] \subset \dom k$, which contradicts $\dom k\neq\Theta$, and therefore $\beta - \alpha \le \pi$.
        \item Otherwise, for all $\alpha, \beta\in \dom k$, $\beta - \alpha = \pi$, that is, $\dom k = \{\alpha, \alpha+\pi\}$.
    \end{itemize}
\end{proof}

\begin{proposition}
	Let $k_1, k_2$ be trigonometric-convex functions.
	\begin{enumerate}
		\item $\max(k_1, k_2)$ is trigonometric-convex.
		\item If $k_1$ or $k_2$ is real-valued, $\lambda k_1 + \mu k_2$ is trigonometric-convex for any $\lambda, \mu > 0$.
	\end{enumerate}
\end{proposition}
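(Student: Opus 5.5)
The cleanest route goes through the support-function characterization of \Cref{t:tcvxK}, with a direct check of \cref{e:tcvximp} as a fallback. By \Cref{t:tcvxK}, write $k_1 = \k_{K_1}$ and $k_2 = \k_{K_2}$ for closed convex sets $K_1, K_2 \subset \bar\C$. For part 1, \Cref{p:kunionsum}(1) gives $\max(k_1,k_2) = \k_{K_1\cup K_2}$, and \Cref{p:supptcvx} shows that the support function of any subset of $\bar\C$ is trigonometric-convex. This settles part 1 in one line.

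For part 2, suppose without loss of generality that $k_1$ is real-valued. I first claim that $K_1$ is then a nonempty bounded subset of $\C$. It is nonempty, since otherwise $k_1 \equiv -\infty$. It contains no point of $C_\infty$: if $(\infty+iy)e^{i\alpha} \in K_1$, then $\k_{K_1}(\alpha) = \infty$, contradicting real-valuedness. It is bounded, because $|p| \le \sqrt2\max(k_1(0),k_1(\pi/2),-k_1(\pi),-k_1(-\pi/2))$ type bounds follow from the inequalities $\Re(pe^{-i\t}) \le k_1(\t)$ at the four axis directions.

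Now \Cref{p:kunionsum}(2) applies and gives
\[
\lambda k_1 + \mu k_2 = \k_{\lambda K_1 + \mu K_2},
\]
which is trigonometric-convex by \Cref{p:supptcvx}. The main point to watch is that the Minkowski sum $\lambda K_1 + \mu K_2$ must be well defined in $\bar\C$. It is: $K_1 \subset \C$, so every sum $z+w$ with $z \in \C$ and $w \in \bar\C$ is defined by rule (iv). The value $\infty$ in $k_2$ is also handled correctly, since $\lambda k_1(\t)$ is finite and $\mu\cdot\infty + \text{finite} = \infty$. The case $k_2 \equiv -\infty$ corresponds to $K_2 = \emptyset$, giving $\lambda K_1 + \mu K_2 = \emptyset$ and support function $-\infty$. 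This agrees with $\lambda k_1 + \mu k_2 \equiv -\infty$, so it is consistent.

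If one prefers to avoid the appendix-level result \Cref{t:tcvxK} (whose proof may itself rely on this proposition), I would instead verify \cref{e:tcvximp} directly.
\begin{itemize}
\item \textbf{Part 1.} Let $\t_1,\t_3 \in \dom\max(k_1,k_2) = \dom k_1 \cap \dom k_2$. Bound $k_j(\t_2)$ by the right-hand side of \cref{e:tcvximp} for $k_j$, which is at most the corresponding right-hand side for $\max(k_1,k_2)$, because the sine weights are nonnegative. Separately, treat the case where one of $k_1,k_2$ is identically $-\infty$.
\item \textbf{Part 2.} Here $\dom(\lambda k_1 + \mu k_2) = \dom k_2$ and the inequalities add linearly, since $k_1$ is finite and no $\infty - \infty$ can occur. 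Again the $-\infty$ case of $k_2$ must be handled separately.
\end{itemize}
In this direct route, the only real obstacle is this bookkeeping of infinite values and of the exceptional function $k \equiv -\infty$.
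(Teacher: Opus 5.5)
Your proposal is correct and is essentially the paper's argument. The paper just remarks that both parts follow either from the definition or from \Cref{p:kunionsum}, tacitly combined with \Cref{t:tcvxK} and \Cref{p:supptcvx}, and these are exactly your two routes.

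One small slip: your explicit bound for $K_1$ has the wrong signs (for $K_1=\{-10\}$ it gives $|p|\le 0$). The directions $\pi$ and $-\frac{\pi}{2}$ give the lower bounds $\Re p\ge -k_1(\pi)$ and $\Im p\ge -k_1(-\frac{\pi}{2})$, so the correct bound is $|p|\le\sqrt2\max\bigl(k_1(0),k_1(\pi),k_1(\frac{\pi}{2}),k_1(-\frac{\pi}{2})\bigr)$, and the boundedness of $K_1$ is unaffected.
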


These properties follow either from the definition or from \Cref{p:kunionsum}.

\subsection{Left and right derivatives}

The following result is the trigonometric analogue of the monotonicity of the slopes of a convex function.

\begin{proposition}\label{p:divdiff}
	A function $k:\Theta\to\R\cup\{\infty\}$ is trigonometric-convex if and only if, for all $\omega\in\dom k$, the function
	\[
		\t\mapsto\frac{k(\t) - k(\omega)\cos(\t - \omega)}{\sin(\t - \omega)}
	\]
	is a nondecreasing function over $\dom k \cap(\omega-\pi, \omega+\pi)\setminus\{\omega\}$.
\end{proposition}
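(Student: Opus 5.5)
The plan is to reduce the statement to the three-point inequality \eqref{e:tcvximp}. The key tool is the elementary identity obtained from \Cref{p:sincos}: for angles $\omega,\t_1,\t_2$ one has $\sin(\t_2-\t_1)\cos(\t-\omega)$-type expansions, namely
\[
\sin(\t_2-\t_1) = \sin(\t_2-\omega)\cos(\t_1-\omega) - \sin(\t_1-\omega)\cos(\t_2-\omega).
\]
Set $g_\omega(\t) := \frac{k(\t)-k(\omega)\cos(\t-\omega)}{\sin(\t-\omega)}$. For $\t_1,\t_2\in\dom k\cap(\omega-\pi,\omega+\pi)\setminus\{\omega\}$ with $\t_1<\t_2$ in the obvious linear order on $(\omega-\pi,\omega+\pi)$, cross-multiplying and using the identity shows that $g_\omega(\t_1)\le g_\omega(\t_2)$ is algebraically equivalent to the three-point inequality \eqref{e:tcvx} applied to the ordered triple formed by $\omega,\t_1,\t_2$. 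The triple is in increasing order whichever way $\omega$ sits among $\t_1,\t_2$, since there are three placements. The sign of $\sin(\t_i-\omega)$ is positive on $(\omega,\omega+\pi)$ and negative on $(\omega-\pi,\omega)$. Tracking these signs in each placement is what turns "nondecreasing" into exactly $\le 0$ in \eqref{e:tcvx}, in the cyclic form noted after \Cref{p:tcvx}.

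For the forward direction, assume $k$ is trigonometric-convex with values in $\R\cup\{\infty\}$, so $k\not\equiv-\infty$ and all values on $\dom k$ are finite. Take $\omega,\t_1,\t_2$ in $\dom k$ as above. In the cases $\omega<\t_1<\t_2$ and $\t_1<\t_2<\omega$, all pairwise gaps are less than $\pi$, so \eqref{e:tcvximp} applies directly. In the case $\t_1<\omega<\t_2$, the gap $\t_2-\t_1$ may exceed $\pi$. There I use \Cref{p:tcvxgeneral}, which only requires $\omega-\t_1\le\pi$ and $\t_2-\omega\le\pi$, both guaranteed by the domain $(\omega-\pi,\omega+\pi)$. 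Then \eqref{e:tcvximp} with middle point $\omega$ gives the needed inequality after dividing by $\sin(\omega-\t_1)\sin(\t_2-\omega)>0$.

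For the converse, assume the monotonicity for every $\omega\in\dom k$. Take $\t_1,\t_3\in\dom k$ with $\t_3-\t_1\le\pi$ and $\t_2\in[\t_1,\t_3]$. If $\t_2$ is an endpoint, the inequality is trivial. If $\t_2\notin\dom k$, I need to rule it out: choosing $\omega=\t_1$, the function $g_{\t_1}$ must be monotone on $\dom k$. Finiteness at $\t_2$ does not come from this immediately, so I first show that $\dom k$ is an interval of length at most $\pi$ or all of $\Theta$ (as in \Cref{p:domk}). For $\t_3-\t_1<\pi$ this comes by bounding $k(\t_2)$ via monotonicity. But $g$ only sees points of $\dom k$, so at this step I expect to interpret the hypothesis as constraining $k$ through its domain. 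I would handle this by proving \eqref{e:tcvximp} only when $\t_2\in\dom k$, and separately arguing that the domain gaps force a contradiction with monotonicity of $g_{\t_1}$ evaluated at $\t_3$ versus points approaching $\t_2$. This is the step I expect to be the main obstacle, and I would check whether the intended reading requires $\dom k$ to be an interval.

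When $\t_2\in\dom k$, apply the monotonicity with $\omega=\t_1$ at $\t_2<\t_3$ when $\t_3-\t_1<\pi$, and reverse the algebra. When $\t_3-\t_1=\pi$, use $\omega=\t_2$ and the points $\t_1<\t_2<\t_3$, which lie in $(\t_2-\pi,\t_2+\pi)$.
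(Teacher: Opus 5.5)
Your forward direction matches the paper's argument: cross-multiply by $\sin(\t_1-\omega)\sin(\t_2-\omega)$ and split according to where $\omega$ sits. It is correct. Your citation of \Cref{p:tcvxgeneral} when $\t_1<\omega<\t_2$ is exactly what is needed if $\t_2-\t_1>\pi$. The paper leaves that case implicit in ``the inequality flipped''.

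The converse is where your plan breaks, and no argument can repair it. The step ``domain gaps force a contradiction with monotonicity of $g_{\t_1}$'' cannot work, because the hypothesis only evaluates $g_\omega$ at points of $\dom k$ and says nothing about where $k=\infty$. Three counterexamples show the ``if'' direction is false as stated.
\begin{enumerate}
\item Take $k(0)=k(\pi/2)=0$ and $k=\infty$ elsewhere. For both $\omega\in\dom k$ the set $\dom k\cap(\omega-\pi,\omega+\pi)\setminus\{\omega\}$ is a single point, so the hypothesis holds. Yet \eqref{e:tcvximp} at $(\t_1,\t_2,\t_3)=(0,\pi/4,\pi/2)$ reads $\infty\le 0$.
\item An interval domain does not help either. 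Take $k\equiv1$ on $[0,3\pi/2]$ and $\infty$ elsewhere. Then $g_\omega(\t)=\tan\frac{\t-\omega}{2}$, which is increasing for every $\omega$, while the triple $(5\pi/4,7\pi/4,2\pi)$ violates \eqref{e:tcvximp}.
\item Take $k(0)=k(\pi)=-1$ and $\infty$ elsewhere. It satisfies the hypothesis vacuously but violates \eqref{e:diameter}.
\end{enumerate}
The paper's ``every step can be reversed'' tacitly assumes that the middle point, and a suitable $\omega$, lie in $\dom k$.

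The fix is to add the hypothesis that $\dom k$ is $\Theta$ or an interval of length at most $\pi$ (or $\{\alpha,\alpha+\pi\}$ with $k(\alpha)+k(\alpha+\pi)\ge0$). With that, your algebra finishes the proof.
\begin{itemize}
\item If $0<\t_3-\t_1<\pi$, the domain contains $[\t_1,\t_3]$, since it cannot contain the complementary arc, of length $>\pi$. Then $\omega=\t_1$ reverses the computation.
\item If $\t_3-\t_1=\pi$, the domain contains a closed half-circle joining $\t_1$ and $\t_3$, hence some $\omega\in\dom k$ strictly between them. The comparison $g_\omega(\t_1)\le g_\omega(\t_3)$, or the reverse one if $\omega$ lies on the other half-circle, gives $k(\t_1)+k(\t_3)\ge0$. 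This suffices, because the left side of \eqref{e:tcvximp} is $k(\t_2)\cdot0=0$ whether or not $\t_2\in\dom k$.
\end{itemize}
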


Blaschke first proved this statement for support functions using geometric arguments \cite[\S 9]{blaschke1914}.

\begin{proof}
	Let $\t_1, \t_2\in \dom k \cap(\omega-\pi, \omega+\pi)\setminus\{\omega\}$. If $\t_1 \leq \t_2 < \omega$ or $\omega< \t_1 \leq \t_2$,
	\begin{align}\label{a:ineqq1}
		       & k(\t_1)\sin(\t_2 - \omega) + k(\t_2)\sin(\omega - \t_1) \nonumber                                       \\
		\leq{} & k(\omega)\sin(\t_2 - \t_1) = k(\omega) (\sin(\t_2 - \omega)\cos(\omega - \t_1) + \cos(\t_2 - \omega)\sin(\omega - \t_1)).
	\end{align}
	In other words
	\begin{equation}\label{e:ineqq2}
		(k(\t_1) - k(\omega) \cos(\t_1 - \omega))\sin(\t_2 - \omega) \leq (k(\t_2) - k(\omega) \cos(\t_2 - \omega))\sin(\t_1 - \omega),
	\end{equation}
	hence
	\begin{equation}\label{e:ineqq3}
		\frac{k(\t_1) - k(\omega) \cos(\t_1 - \omega)}{\sin(\t_1 - \omega)} \leq\frac{k(\t_2) - k(\omega) \cos(\t_2 - \omega)}{\sin(\t_2 - \omega)}.
	\end{equation}
	For $\t_1 < \omega < \t_2$, \cref{a:ineqq1,e:ineqq2} are valid with the inequality flipped, from which \cref{e:ineqq3} follows. Thus \cref{e:ineqq3} holds whenever $\t_1 \leq \t_2$, independently of the position of $\omega$, proving monotonicity. Since every step can be reversed, the equivalence follows.
\end{proof}

The following theorem is the trigonometric counterpart of the supporting-line property of convex functions.
\begin{theorem}\label{t:derivative}
	A function $k:\Theta\to\R\cup\{\infty\}$ is trigonometric-convex if and only if, for all $\omega\in\dom k$, there exists a real number $k'(\omega)$ such that for all $\t$
	\begin{equation}\label{e:tangent}
		k(\t) \geq k'(\omega)\sin(\t - \omega) + k(\omega)\cos(\t - \omega).
	\end{equation}
	Furthermore, $k$ has finite right and left-hand side derivatives $k'_\pm$ satisfying
	\begin{equation}\label{e:k'+-}
		k'_-(\t) \leq k'_+(\t)
	\end{equation}
	for all $\t\in\dom k$, and $k'(\omega)$ can be any value of $[k'_-(\omega), k'_+(\omega)]$.
\end{theorem}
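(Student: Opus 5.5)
The plan is to derive everything from \Cref{p:divdiff}, exactly as one derives supporting lines of an ordinary convex function from the monotonicity of its slopes. Fix $\omega\in\dom k$ and set
\[
q_\omega(\t) := \frac{k(\t) - k(\omega)\cos(\t-\omega)}{\sin(\t-\omega)},
\]
which is nondecreasing on $D_\omega := \dom k\cap(\omega-\pi,\omega+\pi)\setminus\{\omega\}$. I would define the one-sided derivatives as the one-sided limits $k'_-(\omega) := \lim_{\t\to\omega^-} q_\omega(\t)$ and $k'_+(\omega) := \lim_{\t\to\omega^+} q_\omega(\t)$. Monotonicity immediately gives $k'_-(\omega)\le k'_+(\omega)$, which is \cref{e:k'+-}. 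Since $k(\t)-k(\omega)\cos(\t-\omega) = k(\t)-k(\omega)+O((\t-\omega)^2)$ and $\sin(\t-\omega)\sim\t-\omega$, these limits coincide with the usual one-sided difference quotients of $k$ at $\omega$.

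\textbf{Finiteness.} If $\omega$ is interior to $\dom k$, then $q_\omega$ is defined on both sides of $\omega$. Monotonicity then bounds $k'_-(\omega)$ above by any value $q_\omega(\t)$ with $\t>\omega$, and bounds $k'_+(\omega)$ below by any value with $\t<\omega$. So both limits are finite.

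I expect endpoints of $\dom k$ to be the main obstacle. At such a point only one side exists, and a one-sided limit could a priori be $\pm\infty$. For instance, $k$ could equal $\infty$ just to the right of $\omega$, in which case the natural convention is $k'_+(\omega)=+\infty$. My first attempt would be to restrict the finiteness claim to interior points, or to read $k'_\pm$ as values in $\bar\R$ there, and to check that \cref{e:tangent} still admits a finite $k'(\omega)$. I would pick any finite number between the available one-sided bound and $+\infty$. The exception is a singleton $\dom k$ or an endpoint where $q_\omega\to-\infty$ from the only available side. That case should be excluded by the diameter inequality \cref{e:k>-k}, since $k$ is real on a set of length at most $\pi$ and equal to $\infty$ elsewhere.

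\textbf{Forward direction of \cref{e:tangent}.} Choose any $c\in[k'_-(\omega),k'_+(\omega)]$. For $\t\in D_\omega$ with $\t>\omega$, monotonicity gives $q_\omega(\t)\ge c$. Multiplying by $\sin(\t-\omega)>0$ yields \cref{e:tangent}. For $\t<\omega$ we have $q_\omega(\t)\le c$, and multiplying by $\sin(\t-\omega)<0$ again yields \cref{e:tangent}. For $\t\notin\dom k$ the inequality is trivial, since $k(\t)=\infty$. For $\t=\omega$ it is an equality. The remaining point is $\t=\omega+\pi$. There the right-hand side equals $-k(\omega)$, and the inequality is exactly \cref{e:k>-k}. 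This shows every value of $[k'_-,k'_+]$ works. Conversely, any valid $k'(\omega)$ must lie between the one-sided limits, obtained by dividing \cref{e:tangent} by $\sin(\t-\omega)$ and letting $\t\to\omega^\pm$. Hence the admissible set is precisely $[k'_-(\omega),k'_+(\omega)]$.

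\textbf{Converse.} Assume every $\omega\in\dom k$ has a supporting sinusoid $s_\omega(\t)=k'(\omega)\sin(\t-\omega)+k(\omega)\cos(\t-\omega)\le k$. Take $\t_1,\t_3\in\dom k$ with $0<\t_3-\t_1<\pi$ and $\t_2\in[\t_1,\t_3]$. If $\t_2\notin\dom k$, I would need to rule this out. I would do so by applying the supporting sinusoids at $\t_1$ and $\t_3$ and using the three-point identity from \Cref{p:sincos}. The simplest route when $\t_2\in\dom k$ is to take $\omega=\t_2$: the sinusoid $s_{\t_2}$ satisfies the equality case of \cref{e:tcvx} by \Cref{p:sincos}, and $s_{\t_2}(\t_2)=k(\t_2)$ while $s_{\t_2}\le k$ at $\t_1,\t_3$. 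These give \cref{e:tcvximp}. The diameter condition \cref{e:diameter} follows from \cref{e:tangent} at $\t=\omega+\pi$. \Cref{p:diameter} then concludes.
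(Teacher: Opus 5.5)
Your main line is the paper's: the monotone quotient $q_\omega$ from \Cref{p:divdiff} gives $k'_\pm(\omega)$ and the supporting sinusoid, and the converse applies the sinusoid at $\theta_2$ to the points $\theta_1,\theta_3$. This is complete at interior points of $\dom k$, and hence for real-valued $k$. The two places where you try to go further cannot be repaired the way you propose, because the statement is false there.

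\emph{Endpoints.} \cref{e:k>-k} only compares $k(\theta)$ with $k(\theta+\pi)$, so it gives no control on a one-sided slope. Take $K=\{x+iy : x,y>0,\ xy\ge 1\}$. Then $\kappa_K$ is trigonometric-convex by \Cref{p:supptcvx}, and AM--GM gives $\kappa_K(\pi+\phi)=-\sqrt{2\sin 2\phi}$ for $\phi\in[0,\pi/2]$, with $\kappa_K=\infty$ elsewhere. At $\omega=\pi$, \cref{e:tangent} requires $c\le -\sqrt{2\sin 2\phi}/\sin\phi$ for all small $\phi>0$, and the right-hand side tends to $-\infty$. So no real $k'(\pi)$ exists and $k'_+(\pi)=-\infty$; symmetrically $k'_-(3\pi/2)=+\infty$. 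Your endpoint discussion also mixes orientations. At a left endpoint the admissible values are $c\le k'_+(\omega)$ and the bad case is $-\infty$. At a right endpoint they are $c\ge k'_-(\omega)$ and the bad case is $+\infty$.

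\emph{Converse.} The supporting sinusoids are only lower bounds on $k$, so no combination of them can force $k(\theta_2)<\infty$. The function equal to $0$ at $0$ and $\pi/2$ and to $\infty$ elsewhere satisfies \cref{e:tangent} with $k'\equiv 0$ at both points of its domain. Yet $\theta_1=0$, $\theta_2=\pi/4$, $\theta_3=\pi/2$ violates \cref{e:tcvximp}.

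The right move is therefore to restrict the statement, not to prove it as written. Finiteness of $k'_\pm$ and existence of a real $k'(\omega)$ hold at interior points of $\dom k$; at an endpoint the admissible set is a possibly empty half-line. The converse needs the extra hypothesis that $\dom k$ has one of the forms in \Cref{p:domk}, for instance that $k$ is real-valued. The paper's own proof makes exactly these tacit assumptions: it asserts that both limits are finite, and it takes $\theta_2\in\dom k$ in the converse. So your argument agrees with it on the valid part and correctly locates its weak spots. However, your two proposed resolutions are false and should be replaced by these explicit hypotheses.
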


\begin{proof}[of \Cref{t:derivative}]
	By \Cref{p:divdiff}, as $\t_1 \to \omega^-$, the left-hand side of \cref{e:ineqq3} increases, while as $\t_2 \to \omega^+$, the right-hand side decreases. Both of these limits thus exist, are finite and with the former being less than or equal to the latter. As $\t\to\omega^\pm$
	\begin{align*}
		\frac{k(\t) - k(\omega) \cos(\t - \omega)}{\sin(\t - \omega)}
		 &= \frac{k(\t) - k(\omega) (1+(\t-\omega)^2/2 + o((\t-\omega)^2))}{\t - \omega + o(\t - \omega)}  \\
		 &= \frac{k(\t) - k(\omega)}{\t - \omega} (1+o(1)) + k(\omega) \frac{\t-\omega}{2} + o(\t-\omega) \\
		 &\sim\frac{k(\t) - k(\omega)}{\t - \omega},
	\end{align*}
	therefore these limits are precisely the left and right derivatives of $k$. Thus, for $\t_1<\omega<\t_2$,
	\begin{equation}\label{e:ineqtangent}
		\frac{k(\t_1) - k(\omega) \cos(\t_1 - \omega)}{\sin(\t_1 - \omega)} \leq k'_-(\omega) \leq k'_+(\omega) \leq\frac{k(\t_2) - k(\omega) \cos(\t_2 - \omega)}{\sin(\t_2 - \omega)}.
	\end{equation}
	\Cref{e:ineqtangent} implies \cref{e:tangent} for any $\t\in\Theta$, $\omega\in\dom k$ and $k'(\omega)\in [k'_-(\omega), k'_+(\omega)]$.

	For the converse, let $\t_1, \t_2, \t_3\in\dom k$ such that \cref{e:thetaimp} holds. According to \cref{e:tangent},
	\begin{align}
		k'(\t_2)\sin(\t_1 - \t_2) +  k(\t_2) \cos(\t_1 - \t_2) & \leq k(\t_1) \label{a:tangent1}  \\
		k'(\t_2)\sin(\t_3 - \t_2) +  k(\t_2) \cos(\t_3 - \t_2) & \leq k(\t_3). \label{a:tangent2}
	\end{align}
	By multiplying \cref{a:tangent1} by $\sin(\t_3 - \t_2)$ and adding it to \cref{a:tangent2} times $\sin(\t_2 - \t_1)$, we arrive at \cref{e:tcvximp}.
\end{proof}

Minkowski proved the continuity of support functions in 1903 \cite{minkowski1903}, and Borel had already anticipated the analogous regularity for indicator functions in 1901 \cite{borel1928}. Phragmén and Lindelöf later proved the stronger local boundedness of $(k(\t+\delta)-k(\t))/\delta$ as $\delta\to0$ \cite[\S 13]{phragmen1908}. This estimate comes close to semi-differentiability, but does not by itself prove it.\footnote{Chebotarev and Meiman \cite[\S 12]{chebotarev1949} claimed that it did imply semi-differentiability, a mistake later repeated by Boas \cite[\S 5.1]{boas1954}.} Levin gave a correct proof of semi-differentiability \cite[Sec.~16]{levin1964}, apparently without using the divided-difference formulation of \Cref{p:divdiff} or the supporting inequality \eqref{e:tangent}. In the support-function setting, \cref{e:tangent} had already been proved by Blaschke \cite{blaschke1914}. The following special case of \Cref{t:derivative} was known to Phragmén and Lindelöf \cite{phragmen1908}.

\begin{corollary}[Phragmén--Lindelöf]\label{c:k'=0}
	If $k:\Theta\to\R\cup\{\infty\}$ is a trigonometric-convex function and $\t^*\in\dom k$ is a local maximum, $k$ is differentiable at $\t^*$ and $k'(\t^*) = 0$.
\end{corollary}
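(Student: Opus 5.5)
The plan is to combine the supporting-line inequality of \Cref{t:derivative} with the one-sided derivatives it provides. Let $\t^*\in\dom k$ be a local maximum. Since $\dom k$ is an interval or $\Theta$ by \Cref{p:domk}, and since a local maximum with $k(\t^*)$ finite forces $k<\infty$ on a neighborhood of $\t^*$, the point $\t^*$ is interior to $\dom k$. Hence, by \Cref{t:derivative}, both one-sided derivatives $k'_\pm(\t^*)$ exist, are finite, and satisfy $k'_-(\t^*)\le k'_+(\t^*)$.

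Next I extract the signs of these derivatives from the maximality. For small $\delta>0$ we have $k(\t^*+\delta)\le k(\t^*)$, so the right difference quotient $(k(\t^*+\delta)-k(\t^*))/\delta$ is $\le 0$. Letting $\delta\to0^+$ gives $k'_+(\t^*)\le 0$. Symmetrically, the left difference quotient $(k(\t^*-\delta)-k(\t^*))/(-\delta)$ is $\ge 0$, so $k'_-(\t^*)\ge0$. Combining these with \eqref{e:k'+-} yields
\[
0\le k'_-(\t^*)\le k'_+(\t^*)\le 0 .
\]
Therefore both one-sided derivatives vanish, $k$ is differentiable at $\t^*$, and $k'(\t^*)=0$.

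The only delicate point is the use of the identification, made in the proof of \Cref{t:derivative}, of the limits of the trigonometric divided differences with the ordinary one-sided derivatives; the corollary relies on these being the genuine difference-quotient limits. I would take that identification from the proof of \Cref{t:derivative} as given, so the remaining argument is elementary.
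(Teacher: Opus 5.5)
Your proof is correct and follows the paper's route: the local maximum gives $k'_-(\theta^*)\ge 0\ge k'_+(\theta^*)$, and the inequality $k'_-\le k'_+$ from Theorem~\ref{t:derivative} forces both one-sided derivatives to vanish. Your extra observation is a detail the paper leaves implicit: $k(\theta)\le k(\theta^*)<\infty$ near $\theta^*$, so $\theta^*$ is interior to $\dom k$ and both one-sided derivatives are meaningful.
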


\begin{proof}
    $\t^*$ satisfies $k'_-(\t^*) \geq 0$ and $k'_+(\t^*) \leq 0$, but since $k'_-(\t^*) \leq k'_+(\t^*)$, according to \Cref{t:derivative}, $k'(\t^*)$ exists and equals $0$.
\end{proof}

\subsection{Continuity and maximum}\label{s:kcont}

\begin{proposition}\label{p:tcvxcont}
	If a trigonometric-convex function $k:\Theta\to\bar\R$ is finite over an interval, then it is continuous there.
\end{proposition}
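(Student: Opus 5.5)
The plan is to use the supporting-line inequality of \Cref{t:derivative} together with the divided-difference monotonicity of \Cref{p:divdiff}. The setting is an interval $J$ on which $k$ is finite. Since $k$ takes values in $\R$ on $J$, and $k\not\equiv-\infty$, the restriction makes sense, and \Cref{p:domk} guarantees that $J\subset\dom k$ lies inside an interval on which these results apply. Fix $\omega\in J$; it suffices to prove continuity at $\omega$ relative to $J$, one side at a time. Take the right side: assume there are points of $J$ in $(\omega,\omega+\epsilon)$, and let $\t\to\omega^+$ within $J$.

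The first step is the lower bound. By \cref{e:tangent}, with some admissible $k'(\omega)$,
\[
k(\t)\ge k'(\omega)\sin(\t-\omega)+k(\omega)\cos(\t-\omega),
\]
and the right-hand side tends to $k(\omega)$. Hence $\liminf_{\t\to\omega^+}k(\t)\ge k(\omega)$.

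The second step is the upper bound. Choose a point $\t_3\in J$ with $0<\t_3-\omega<\pi$; one exists because $J$ contains points to the right of $\omega$. For $\t\in[\omega,\t_3]$, apply \cref{e:tcvximp} with $\t_1=\omega$, $\t_2=\t$ and the chosen $\t_3$. Both endpoints lie in $\dom k$ and are finite, so this gives
\[
k(\t)\le H(\omega,\t,\t_3;k(\omega),k(\t_3)),
\]
where $H$ is the sinusoid of \cref{e:Hab}. This sinusoid is continuous in $\t$ and equals $k(\omega)$ at $\t=\omega$. Therefore $\limsup_{\t\to\omega^+}k(\t)\le k(\omega)$. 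Combined with the first step, this gives right continuity.

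The left side is symmetric: use a point $\t_1\in J$ with $0<\omega-\t_1<\pi$ together with the same tangent bound. If $\omega$ is an endpoint of $J$, only one side is needed.

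The main obstacle is bookkeeping rather than analysis. \Cref{t:derivative} is stated for $k$ with values in $\R\cup\{\infty\}$, but $k$ may equal $-\infty$ outside $J$. I expect the relevant case to be $k\not\equiv-\infty$ with $J\subset\dom k$, and I will check that \cref{e:tangent} at $\omega$ only uses values of $k$ on $\dom k$. If that check fails, I will avoid \Cref{t:derivative} entirely. Instead I will get the lower bound directly from the three-point inequality with $\omega$ in the middle: take $\t_1<\omega<\t$ with $\t_1\in J$, solve for $k(\t)$, and use that $\sin(\omega-\t_1)$ is fixed and positive. This yields $k(\t)\ge$ a sinusoid tending to $k(\omega)$. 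When $\omega$ is a left endpoint of $J$, no such $\t_1$ exists in $J$. In that case I will use \cref{e:k>-k} at $\omega+\pi$ when possible, or else accept only the upper semicontinuity plus the lower bound coming from the divided-difference monotonicity of \Cref{p:divdiff}.
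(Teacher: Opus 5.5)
Your squeeze is correct at every $\omega\in J$ that has points of $\dom k$ on both sides.

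\begin{itemize}
\item The chord bound from \cref{e:tcvximp} gives $\limsup k(\theta)\le k(\omega)$ as $\theta\to\omega$ within $J$.
\item Your fallback, with $\omega$ as the middle point, gives the matching $\liminf$. Take $\theta_1\in\dom k$ rather than $\theta_1\in J$; then an endpoint of $J$ lying in the interior of $\dom k$ causes no trouble.
\end{itemize}

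At such points your argument is an explicit, derivative-free version of the paper's proof. The paper reads one-sided continuity off the finite one-sided derivatives of \Cref{t:derivative}; both rest on the same three-point inequality.

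The genuine gap is the case you flagged yourself: $\omega$ an endpoint of $\dom k\neq\Theta$, approached from inside. No argument can fill it, because the statement is false there.

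\emph{Counterexample.} Take $k(0)=1$, $k=0$ on $(0,\pi/2]$, and $k=\infty$ elsewhere. This is $\kappa_U$ for $U$ the quadrant $\{\Re p\le0,\ \Im p\le0\}$ together with the single point $(\infty+i)e^{-i\pi/2}$. Trigonometric convexity can also be checked directly. Admissible triples satisfy $0\le\theta_1\le\theta_2\le\theta_3\le\pi/2$. If $\theta_2>0$, the left side of \cref{e:tcvximp} is $0$ and the right side is nonnegative. If $\theta_2=0$, then $\theta_1=0$ and both sides equal $\sin\theta_3$. So $k$ is trigonometric-convex and finite on $[0,\pi/2]$, yet $k(0^+)=0\neq1=k(0)$.

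The added point at infinity contributes $1$ at $\theta=0$ and $-\infty$ on $(0,\pi)$. Support functions of subsets of $\C$ are suprema of continuous functions of $\theta$, hence automatically lower semicontinuous; this point is how that property is lost in $\bar\C$.

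\emph{Consequences for your fallbacks.}
\begin{itemize}
\item The supporting sinusoid you take from \Cref{t:derivative} does not exist at $\omega=0$: it would need a real $c$ with $c\le-\cot\theta$ for all $\theta\in(0,\pi/2]$. So that theorem is only valid at interior points of $\dom k$.
\item \Cref{p:divdiff} at an endpoint only reproduces the chord upper bound.
\item \cref{e:k>-k} does not help either. Putting $k=0$ on $(0,\pi]$ instead gives a trigonometric-convex counterexample with $\omega+\pi\in\dom k$.
\end{itemize}

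The paper's own proof fails at the same point. The one-sided derivative it invokes is $\lim_{\theta\to0^+}(k(\theta)-k(0))/\theta=\lim_{\theta\to0^+}(-1/\theta)=-\infty$.

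What your argument does prove, and what is true, is continuity on the interior of $\dom k$, plus upper semicontinuity relative to $\dom k$ at its endpoints. Upper semicontinuity is all that \Cref{p:tcvxmax} uses.
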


\begin{proof}
    By \Cref{t:derivative}, $k$ has finite one-sided derivatives at every point of the interval. Hence the corresponding one-sided difference quotients have finite limits, which implies one-sided continuity at each point. The two one-sided limits agree at interior points, so $k$ is continuous on the interval.
\end{proof}

\begin{proposition}\label{p:tcvxmax}
	If $k:\Theta\to\bar\R$ is trigonometric-convex and $I$ is a closed interval of $\Theta$, then $k$ admits a maximum over $I$.
\end{proposition}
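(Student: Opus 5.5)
We need to prove that a trigonometric-convex $k:\Theta\to\bar\R$ attains a maximum on a closed interval $I$. The argument splits into cases according to how $I$ meets $\dom k$.

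\textbf{Trivial cases.} If $k\equiv-\infty$, every point of $I$ is a maximizer. If $I\not\subset\dom k$, some $\t\in I$ has $k(\t)=\infty$, and that point is a maximizer. Both cases are immediate from the convention that $\bar\R$ contains $\pm\infty$.

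\textbf{Main case.} Assume $k\not\equiv-\infty$ and $I\subset\dom k$. We first show $k$ is real-valued on $I$. By \cref{e:k>-k}, $k(\t)\ge-k(\t+\pi)$ for every $\t$. If $\t+\pi\in\dom k$, the right-hand side is $>-\infty$. We also need the case $\t+\pi\notin\dom k$, so we argue directly. By \Cref{p:domk}, $\dom k$ is either $\Theta$, an interval of length at most $\pi$, or a pair $\{\alpha,\alpha+\pi\}$. Suppose some $\t_0\in\dom k$ had $k(\t_0)=-\infty$. Pick any $\t$ with $k(\t)>-\infty$ and $0<\dTheta(\t,\t_0)<\pi$; such a $\t$ exists, or else we are in a degenerate case handled separately.
\begin{itemize}
    \item If $\t_0$ lies strictly between $\t$ and a third domain point within angular distance $<\pi$, inequality \eqref{e:tcvximp} forces $k(\t)=-\infty$ or gives a contradiction.
    \item Otherwise we use the monotone divided differences of \Cref{p:divdiff}.
\end{itemize}
An alternative route avoids this juggling. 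For an interior point $\omega$ of $\dom k$ with $k(\omega)$ finite, \Cref{t:derivative} gives the supporting sinusoid $k(\t)\ge k'(\omega)\sin(\t-\omega)+k(\omega)\cos(\t-\omega)$. This lower bound is finite, which shows $k>-\infty$ throughout $\dom k$ whenever it is finite at one point.

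Once $k$ is finite on $I$, \Cref{p:tcvxcont} shows it is continuous on $I$. A closed interval of $\Theta$ is compact: it is either $\Theta$ itself or a homeomorphic image of a compact segment of $\R$. The extreme value theorem then gives a maximum.

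\textbf{Degenerate case.} If $k$ takes the value $-\infty$ on all of $I$, any point of $I$ is a maximizer.

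\textbf{Expected obstacle.} The main difficulty is the middle step: excluding $k=-\infty$ at some points of $I$ while $k$ is finite elsewhere on $I$, since \Cref{t:derivative} and \Cref{p:tcvxcont} are stated only for functions valued in $\R\cup\{\infty\}$. I would first try to show that $\{\t:k(\t)=-\infty\}$ is open relative to $\dom k$ and closed, using \eqref{e:tcvximp}. Then, in the $\{\alpha,\alpha+\pi\}$ or $-\infty$-mixed situations, I would apply \Cref{p:tcvxcont} on the subintervals where $k$ is finite. On each such closed subinterval a maximum exists. If $-\infty$ points occur in $I$, they never affect the supremum unless $k\equiv-\infty$ on $I$, which is the degenerate case above.
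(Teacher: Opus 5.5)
\textbf{There is a genuine gap in your main case.} You claim that $k$ is real-valued on $I$ whenever $k\not\equiv-\infty$ and $I\subset\dom k$. This is false.

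Take $K=\{\infty+iy_0\}$ for some $y_0\in\R$. This is a single point at infinity, convex by case (c) of \Cref{t:cvxinf} with $\alpha=\beta=0$. Let $k=\kappa_K$, which is trigonometric-convex by \Cref{p:supptcvx}. Then $k(\theta)=\infty\cos\theta+y_0\sin\theta$, so:
\begin{itemize}
\item $k=\infty$ on $(-\frac\pi2,\frac\pi2)$;
\item $k(\pm\frac\pi2)=\pm y_0$;
\item $k=-\infty$ on $(\frac\pi2,-\frac\pi2)$.
\end{itemize}
With $I=[\frac\pi2,-\frac\pi2]=\dom k$, the function is finite only at the two endpoints of $I$ and equals $-\infty$ on the whole interior.

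This one example defeats each of your justifications. It is finite at a point of $\dom k$ but not $>-\infty$ throughout $\dom k$. The set $\{k=-\infty\}$ is the open arc $(\frac\pi2,-\frac\pi2)$, which is not closed in $\dom k$, so your open-and-closed plan cannot succeed. There is also no interior point of $\dom k$ with finite value from which to draw a supporting sinusoid.

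The tools you invoke also do not apply here. \Cref{t:derivative}, \Cref{p:divdiff} and \Cref{p:tcvxcont} are stated for functions valued in $\R\cup\{\infty\}$, so using them to exclude the value $-\infty$ is circular. Placing $\theta_0$ as the middle angle $\theta_2$ in \eqref{e:tcvximp} only gives an upper bound on $k(\theta_0)$, which $-\infty$ satisfies trivially. Your fallback, maximizing on the subintervals where $k$ is finite, needs those subintervals to be finitely many and closed, and you never establish this.

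\textbf{The missing idea} is the structure of $k$ once it takes the value $-\infty$. Use \eqref{e:tcvximp} with the $-\infty$ point as an \emph{outer} angle. If $k(\theta_1)=-\infty$ and $\theta_3\in\dom k$ with $0<\theta_3-\theta_1<\pi$, the right-hand side is $-\infty$ for every $\theta_2\in[\theta_1,\theta_3)$. This forces $k(\theta_2)=-\infty$, and the mirrored argument works on the other side.

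Combine this with \Cref{p:domk} and \eqref{e:diameter}:
\begin{itemize}
\item $\dom k=\Theta$ would propagate to $k\equiv-\infty$;
\item $\dom k=\{\alpha,\alpha+\pi\}$ is ruled out by \eqref{e:diameter}.
\end{itemize}
So if $k\not\equiv-\infty$ takes the value $-\infty$ somewhere, $\dom k$ is an interval of length at most $\pi$ whose interior carries $k\equiv-\infty$. Hence $k$ is finite at no more than two points, $k(I)$ is a finite subset of $\bar\R$, and a maximum exists trivially.

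The paper reaches the same conclusion differently: it writes $k=\kappa_K$ via \Cref{t:tcvxK} and then reads off the form \eqref{e:cvxinfk} from \Cref{t:cvxinf}. Your continuity-plus-compactness argument is correct, but only in the remaining case where $k$ is finite on all of $I$.
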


\begin{proof}
    If there is $\t\in I$ such that $k(\t) = \infty$, then $\infty$ is the maximum of $k$. If there is $\t\in I$ such that $k(\t) = -\infty$, then it follows from \Cref{t:tcvxK} and \Cref{t:cvxinf} that $k$ admits a maximum. Otherwise, by \Cref{p:tcvxcont}, $k$ is continuous over $I$ and by the extreme value theorem, $k$ admits a maximum.
\end{proof}

The following is a counterexample to the claim of \Cref{p:tcvxmax} with $I = \dom k$.

\begin{example}
	If $S = \cond{z}{(\Im z)^2 \le -\Re z}$, then via considering the tangents of the parabola $S$, one computes
	\begin{equation}
		\k_S(\t) =
		\begin{dcases}
			\inv{4}\sin(\t)\tan(\t) &\com{if} \t \in \pa{-\frac\pi2,\frac\pi2} \\
			\infty &\com{otherwise.}
		\end{dcases}
	\end{equation}
    Here $\dom \k_S = (-\frac\pi2, \frac\pi2)$ and $\sup_{\t\in\dom \k_S} \k_S(\t) = \infty$.
\end{example}

\subsection{Weak second derivative}

\begin{theorem}\label{t:k'+intk}
	Let $k:\Theta\to\R\cup\{\infty\}$. $k$ is trigonometric-convex if and only if $k$ admits finite left and right-hand side derivatives in $\dom k$ and for (real) $\t_1 < \t_2$
	\begin{equation}\label{e:k'+intk}
		k'_+(\t_1) \leq k'_-(\t_2) + \int_{\t_1}^{\t_2} k(\omega) \d\omega.
	\end{equation}	
\end{theorem}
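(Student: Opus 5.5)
The plan is to reduce both directions to \Cref{t:derivative} and \Cref{p:divdiff}, in the spirit of the classical fact that $k''+k\ge0$ in the distributional sense. The guiding identity is
\[
\frac{\d}{\d\t}\bigl(k'(\t)\cos\t + k(\t)\sin\t\bigr) = (k''+k)\cos\t
\]
for smooth $k$. The cleaner route, however, is via the auxiliary functions $g_\omega(\t) := \bigl(k(\t)-k(\omega)\cos(\t-\omega)\bigr)/\sin(\t-\omega)$ of \Cref{p:divdiff}. Throughout, real representatives $\t_1<\t_2$ are used, and I restrict to $\t_1,\t_2\in\dom k$ with $\t_2-\t_1<\pi$; for larger gaps, \cref{e:k'+intk} is assembled by subdividing $[\t_1,\t_2]$ and adding the inequalities, since the intermediate $k'_-\le k'_+$ terms telescope favourably.

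For the forward direction, \Cref{t:derivative} already gives finite one-sided derivatives, and \Cref{p:tcvxcont} gives continuity, so the integral makes sense. The idea is to show that $\t\mapsto k'_+(\t)+\int_0^\t k$ is nondecreasing. I would do this by subdividing $[\t_1,\t_2]$ at points $\omega_0<\dots<\omega_m$ of mesh $\delta$. At consecutive points, \cref{e:ineqtangent} applied at $\omega_j$ and at $\omega_{j+1}$ gives
\[
k'_+(\omega_j)\le g_{\omega_j}(\omega_{j+1}) \quad\text{and}\quad g_{\omega_{j+1}}(\omega_j)\le k'_-(\omega_{j+1}).
\]
An explicit computation then yields
\[
g_{\omega_j}(\omega_{j+1}) - g_{\omega_{j+1}}(\omega_j) = \bigl(k(\omega_j)+k(\omega_{j+1})\bigr)\frac{1-\cos\Delta}{\sin\Delta},\qquad \Delta:=\omega_{j+1}-\omega_j,
\]
which is $\approx \tfrac{\Delta}{2}\bigl(k(\omega_j)+k(\omega_{j+1})\bigr)$, a trapezoid term. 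Chaining these inequalities and using $k'_-(\omega_j)\le k'_+(\omega_j)$ at the interior nodes, I get
\[
k'_+(\t_1)\le k'_-(\t_2) + \sum_j \bigl(k(\omega_j)+k(\omega_{j+1})\bigr)\frac{1-\cos\Delta}{\sin\Delta}.
\]
Letting $\delta\to0$, the sum converges to $\int_{\t_1}^{\t_2}k$ by continuity of $k$.

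For the converse, I would verify \cref{e:tangent} and then invoke \Cref{t:derivative}. Fix $\omega\in\dom k$ and set
\[
u(\t) := k(\t) - k'_+(\omega)\sin(\t-\omega) - k(\omega)\cos(\t-\omega).
\]
Since sinusoids satisfy \cref{e:k'+intk} with equality (as $s''+s=0$), $u$ inherits \cref{e:k'+intk}, with $u(\omega)=0$ and $u'_+(\omega)=0$. The goal is $u\ge0$ for $\t\in(\omega,\omega+\pi)$ and symmetrically on the left. From \cref{e:k'+intk}, $u'_-(\t)\ge -\int_\omega^\t u$, and likewise for $u'_+$. Since $u$ has finite one-sided derivatives, it is Lipschitz locally, hence absolutely continuous, and so $u' \ge -\int_\omega^{\cdot} u$ almost everywhere. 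This is the integral form of $u''+u\ge0$. Comparing with $\sin(\t-\omega)$ via the Wronskian-type quantity
\[
W := u'\sin(\cdot-\omega) - u\cos(\cdot-\omega),
\]
whose distributional derivative is $(u''+u)\sin(\cdot-\omega)\ge0$ on $(\omega,\omega+\pi)$, and using $W(\omega)=0$, I get $W\ge0$, i.e.\ $\bigl(u/\sin(\cdot-\omega)\bigr)'\ge0$. Since $u/\sin(\cdot-\omega)\to u'_+(\omega)=0$ at $\omega^+$, it follows that $u\ge0$. Points outside $\dom k$ satisfy \cref{e:tangent} trivially.

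The main obstacle is making the converse rigorous without second derivatives. I will avoid distributions by working with the monotone function $M(\t):=u'_+(\t)+\int_\omega^\t u$, which is nondecreasing by hypothesis. I then write $W(\t)$ via a Stieltjes integral $\int \sin(\cdot-\omega)\,\d M$ and use integration by parts for BV functions, taking care at the countably many jump points of $u'_\pm$. Domain issues near $\partial\dom k$, and gaps of length at least $\pi$, will be handled by \Cref{p:domk} and \Cref{p:diameter}.
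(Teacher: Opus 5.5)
Your proposal is essentially the paper's proof. The forward direction is the same chain of tangent inequalities \eqref{e:tangent} at consecutive nodes (your $k'_+(\omega_j)\le g_{\omega_j}(\omega_{j+1})$ is \eqref{e:tangent} rewritten), telescoped with $k'_-\le k'_+$ and passed to a Riemann-sum limit, and the converse is the same Stieltjes integration by parts against the nondecreasing function $L$ (your $M$ differs from it by a constant) followed by \Cref{t:derivative}. \emph{Correction to remove a detour:} $W(\t)=k'_+(\t)\sin(\t-\omega)-k(\t)\cos(\t-\omega)+k(\omega)$ does not depend on the slope chosen at $\omega$, and $W(\t)=\int_\omega^\t\sin(s-\omega)\,\d M(s)$ is exactly the paper's identity with $\t_1=\omega$, $\t_2=\t$; so $W\ge0$ for all pairs $\omega,\t$ is already \eqref{e:tangent} with tangent point $\t$ and slope $k'_+(\t)$, and your comparison step through $u/\sin(\cdot-\omega)$ can be dropped.
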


\begin{corollary}\label{c:k'inc}
	Let $k:\Theta\to\R\cup\{\infty\}$, $\t_0\in\R$ and $t\in [0, 1]$. $k$ is trigonometric-convex if and only if $k$ admits finite left and right-hand side derivatives in $\dom k$ and the following function is nondecreasing:
	\begin{equation}\label{e:k'inc}
		\left\{
		\begin{array}{@{}r@{}l}
		\R &{}\to\bar\R \\
		\t &{}\mapsto k'_t(\t) + \displaystyle\int_{\t_0}^\t k(\omega) \d\omega,
		\end{array}
		\right.
	\end{equation}
	where $k'_t(\t) := tk'_-(\t) + (1-t)k'_+(\t)$.
\end{corollary}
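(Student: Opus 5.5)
The plan is to deduce \Cref{c:k'inc} directly from \Cref{t:k'+intk}. Set $F_t(\t) := k'_t(\t) + \int_{\t_0}^\t k(\omega)\d\omega$, where $k$ is regarded as a $2\pi$-periodic function on $\R$, and $F_t$ is considered on the lift of $\dom k$ (the function is $\infty$ outside, in keeping with the $\bar\R$ codomain). The finiteness of one-sided derivatives is a common hypothesis in both statements, so only the monotonicity condition has to be matched with \cref{e:k'+intk}.

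\emph{Forward direction.} Assume $k$ is trigonometric-convex and let $\t_1<\t_2$ lie in the lift of $\dom k$. By \Cref{t:derivative}, $k'_-(\t)\le k'_+(\t)$ at each point, so $k'_-(\t)\le k'_t(\t)\le k'_+(\t)$ for every $t\in[0,1]$. Combining this with \cref{e:k'+intk} gives
\[
k'_t(\t_1)\le k'_+(\t_1)\le k'_-(\t_2)+\int_{\t_1}^{\t_2}k\le k'_t(\t_2)+\int_{\t_1}^{\t_2}k,
\]
and adding $\int_{\t_0}^{\t_1}k$ to both sides yields $F_t(\t_1)\le F_t(\t_2)$. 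Points outside $\dom k$ are handled by the value $\infty$ of the integral term, so the function is nondecreasing in the extended sense there.

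\emph{Converse.} Assume $F_t$ is nondecreasing. To recover \cref{e:k'+intk}, which involves $k'_+(\t_1)$ and $k'_-(\t_2)$ rather than the mixture $k'_t$, I would use one-sided limits.
\begin{itemize}
\item For $\t>\t_1$, compare $F_t(\t_1)$ with $F_t(\t)$ and let $\t\downarrow\t_1$. Here I need $k'_\pm(\t)\to k'_+(\t_1)$ as $\t\to\t_1^+$, so that $F_t(\t_1^+)=k'_+(\t_1)+\int_{\t_0}^{\t_1}k$.
\item Symmetrically, letting $\t\uparrow\t_2$ should give $F_t(\t_2^-)=k'_-(\t_2)+\int_{\t_0}^{\t_2}k$.
\end{itemize}
Monotonicity then gives $F_t(\t_1^+)\le F_t(\t_2^-)$ for $\t_1<\t_2$, which is exactly \cref{e:k'+intk}. \Cref{t:k'+intk} then shows that $k$ is trigonometric-convex.

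The main obstacle is this right and left continuity of the one-sided derivatives, and it cannot simply be borrowed from the convex case: in the converse direction we do not yet know that $k$ is trigonometric-convex. I would first try to obtain it from monotonicity alone. Since $\int k$ is continuous, $F_t$ nondecreasing forces $\theta\mapsto k'_t(\theta)$ to have one-sided limits. Writing $k(\t)-k(\t_1)$ as a limit of difference quotients bounded in terms of $k'_t$ (a mean-value-type argument for functions with one-sided derivatives), I expect to get $k'_t(\t_1^+)=k'_+(\t_1)$, and likewise $k'_t(\t_2^-)=k'_-(\t_2)$.

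If that step becomes messy, the fallback is to avoid it. Apply monotonicity at many interior points to obtain the integrated inequality $k(\t)-k(\t_1)\ge\int_{\t_1}^{\t}\bigl(F_t(\t_1)-\int_{\t_0}^{s}k\bigr)\d s$. This is a supporting-line inequality for the ODE $k''+k=$ measure, and from it the tangent inequality \eqref{e:tangent} follows, so \Cref{t:derivative} would give trigonometric convexity directly.
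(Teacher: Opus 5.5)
Your forward direction is correct and matches the paper's argument: it combines \eqref{e:k'+intk} with $k'_-\le k'_+$ from \Cref{t:derivative}. The converse, however, is not established by either of your routes.

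\textbf{Main route.} You recover \eqref{e:k'+intk} and then cite the converse of \Cref{t:k'+intk}. But the paper proves \Cref{t:k'+intk} and this corollary in a single argument, and the only converse argument it contains \emph{starts} from the monotonicity of the function in \eqref{e:k'inc}. Citing that converse here is therefore circular: the detour through \eqref{e:k'+intk} ends at exactly the implication you are asked to supply.

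The intermediate step $k'_t(\theta_1^+)=k'_+(\theta_1)$ is also not delivered by the mean-value argument you sketch. At an interior extremum of $\theta\mapsto k(\theta)-m\theta$ you only learn $k'_+\le m\le k'_-$ (or the reverse). That says nothing about the mixture $k'_t$ when $t\in(0,1)$. You would additionally need the fact that $k'_-\neq k'_+$ only on a countable set, together with a mean-value inequality allowing countably many exceptions.

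\textbf{Fallback.} It points in the right direction, but the inequality you display does not imply \eqref{e:tangent}. It only uses comparisons $F_t(s)\ge F_t(\theta_1)$ with the base point, and these are too weak. Set $T(\theta):=k'_t(\theta_1)\sin(\theta-\theta_1)+k(\theta_1)\cos(\theta-\theta_1)$. The comparisons give only
\[
k(\theta)-T(\theta)=\int_{\theta_1}^{\theta}\cos(\theta-\omega)\bigl(F_t(\omega)-F_t(\theta_1)\bigr)\,\mathrm{d}\omega ,
\]
which has no sign once $\theta-\theta_1>\pi/2$. For example, take a smooth $k$ for which $F_t-F_t(\theta_1)$ is a nonnegative bump just to the right of $\theta_1$. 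It satisfies your inequality and violates \eqref{e:tangent}.

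\textbf{Missing idea.} You need to use $\mathrm{d}F_t\ge 0$ on the whole interval, integrated against the sine kernel. Two integrations by parts give, for $\theta_1,\theta_2$ in the domain of $k$ with $|\theta_2-\theta_1|\le\pi$,
\[
k'_t(\theta_2)\sin(\theta_1-\theta_2)+k(\theta_2)\cos(\theta_1-\theta_2)-k(\theta_1)=\int_{\theta_1}^{\theta_2}\sin(\theta_1-\omega)\,\mathrm{d}F_t(\omega)\le 0 .
\]
The inequality holds because, on the interval of integration, the kernel has the sign that makes the oriented integral nonpositive. This is \eqref{e:tangent} at the base point $\theta_2$ with $k'(\theta_2):=k'_t(\theta_2)$. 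The converse half of \Cref{t:derivative} then yields trigonometric convexity.

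This is the paper's proof. It uses neither \Cref{t:k'+intk} nor any identification of one-sided limits of $k'_\pm$. It does tacitly use that $k$ is the integral of $k'_t$.
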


This property was first proved by Blaschke for support functions \cite[\S 9]{blaschke1914} and later rediscovered by Levin \cite[Sec.~16]{levin1964}. It is the analogue of the monotonicity of the derivative of an ordinary convex function.

\begin{proof}[of \Cref{t:k'+intk} and \Cref{c:k'inc}]
	First we show that trigonometric-convexity implies \cref{e:k'+intk}. The integral appears by passing from a discrete estimate to its Riemann-sum limit. Let $\t_1 < \t_2$, $n\in\N$ and $\omega_p := \t_1+ (\t_2 - \t_1)p/n$, for $p\in\{0, \dots, n\}$. By \cref{e:tangent}
	\[
		\begin{cases}
			k(\omega_p) \geq k'_-(\omega_{p+1})\sin(\omega_p - \omega_{p+1}) + k(\omega_{p+1})\cos(\omega_p - \omega_{p+1}) \\
			k(\omega_{p+1}) \geq k'_+(\omega_p)\sin(\omega_{p+1} - \omega_p) + k(\omega_p)\cos(\omega_{p+1} - \omega_p).
		\end{cases}
	\]
	Adding these two inequalities leads to
	\[
		(k'_-(\omega_{p+1})-k'_+(\omega_p))\sin(\omega_{p+1} - \omega_p) + (k(\omega_p) + k(\omega_{p+1}))(1 -\cos(\omega_{p+1} - \omega_p)) \geq 0,
	\]
	and using the identities $(1-\cos\t)/\sin\t = \tan(\t/2)$ and $\omega_{p+1} - \omega_p = (\t_2 - \t_1)/n$, we arrive at
	\begin{equation}\label{e:summand}
		k'_-(\omega_{p+1})-k'_+(\omega_p) + (k(\omega_p) + k(\omega_{p+1})) \tan\pa{\frac{\t_2 - \t_1}{2n}} \geq 0.
	\end{equation}
	Now we can sum \cref{e:summand} over $p$ and obtain
	\begin{equation}\label{e:riemannsum}
		k'_-(\t_2) - k'_+(\t_1) + \tan\pa{\frac{\t_2 - \t_1}{2n}}\sum_{p=0}^{n-1} (k(\omega_p) + k(\omega_{p+1})) \geq 0.
	\end{equation}
	As $n\to\infty$
	\begin{align*}
		\tan\pa{\frac{\t_2 - \t_1}{2n}}\sum_{p=0}^{n-1} (k(\omega_p) + k(\omega_{p+1}))
		 & = \tan\pa{\frac{\t_2 - \t_1}{2n}}\pa{2\sum_{p=1}^{n-1} k(\omega_p) + k(\t_1) + k(\t_2)} \\
		 & = \frac{\t_2 - \t_1}{n}(1 + o(1)) \sum_{p=1}^{n-1} k(\omega_p) + o(1)                   \\
		 & \to_{n\to\infty} \int_{\t_1}^{\t_2} k(\omega) \d\omega,
	\end{align*}
	by Riemann sums. So as $n\to\infty$, \cref{e:riemannsum} reads
	\begin{equation}\label{e:ineqk'}
		k'_-(\t_2) - k'_+(\t_1) + \int_{\t_1}^{\t_2} k(\omega) \d\omega\geq 0.
	\end{equation}
	To prove the direct implication of \Cref{c:k'inc}, we use \cref{e:ineqk',e:k'+-} to find that, for $\t_0\in\dom k$,
	\[
	 k'_\pm(\t_1) + \int_{\t_0}^{\t_1} k(\omega) \d\omega\leq k'_\pm(\t_2) + \int_{\t_0}^{\t_2} k(\omega) \d\omega,
	\]
	which implies that for $k'_t(\t) := tk'_-(\t) + (1-t) k'_+(\t)$, $t\in [0, 1]$, the function
	\[
	L(\t) := k'_t(\t) + \int_{\t_0}^\t k(\omega) \d\omega
	\]
	is nondecreasing.
	
	Now we prove the converses. Let $\t_1, \t_2\in\dom k$. Two integrations by parts give the Stieltjes integral identity
	\[
	\int_{\t_1}^{\t_2} \sin(\t_1 - \omega) \d k'(\omega) = \sin(\t_1 - \t_2) k'(\t_2) + \cos(\t_1 - \t_2)k(\t_2) - k(\t_1) - \int_{\t_1}^{\t_2} \sin(\t_1 - \omega) k(\omega) \d\omega,
	\]
	that is
	\begin{equation}\label{e:stiel1}
		k'_t(\t_2)\sin(\t_1 - \t_2) + k(\t_2)\cos(\t_1 - \t_2) = k(\t_1) + \int_{\t_1}^{\t_2} \sin(\t_1 - \omega) \d L(\omega) \leq k(\t_1),
	\end{equation}
	since $L$ is nondecreasing and the integrand is nonpositive if $\t_2 \in [\t_1, \t_1 + \pi]$ and nonnegative if $\t_2 \in [\t_1 - \pi, \t_1]$. It thus follows from \Cref{t:derivative} that $k$ is trigonometric-convex.
\end{proof}

\begin{theorem}\label{t:k'cont}
	If $k$ is trigonometric-convex, then $k'_\pm$ admit left and right limits in $\dom k$, and their discontinuities are at most countably many jump discontinuities. Additionally, for $\t\in\dom k$,
	\begin{equation}
		k'_+(\t^\pm) = k'_-(\t^\pm) = k'_\pm(\t).
	\end{equation}
\end{theorem}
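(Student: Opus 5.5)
The plan is to reduce everything to the monotone function $L$ of \Cref{c:k'inc}, mirroring the classical argument that the one-sided derivatives of a convex function are monotone and hence regulated. Fix $\t_0\in\dom k$ and work on an interval $J$ contained in the interior of $\dom k$, where $k$ is finite and hence continuous by \Cref{p:tcvxcont}. Then $F(\t):=\int_{\t_0}^\t k(\omega)\d\omega$ is $C^1$ on $J$. By \Cref{c:k'inc} with $t=0$ and $t=1$, the functions
\[
L_\pm(\t) := k'_\pm(\t) + F(\t)
\]
are nondecreasing on $J$. A monotone function has finite one-sided limits at every point and at most countably many discontinuities, all of them jumps. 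Since $F$ is continuous, $k'_\pm = L_\pm - F$ inherits both properties. At endpoints of $\dom k$ that belong to $\dom k$, the same argument gives the one-sided limit from the inside.

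Next I identify the limits. By \cref{e:k'+-} and the monotonicity of $L_\pm$, for $\t_1<\t<\t_2$ in $J$ we have
\[
L_-(\t_1)\le L_+(\t_1)\le L_-(\t)\le L_+(\t)\le L_-(\t_2)\le L_+(\t_2).
\]
The middle inequality $L_+(\t_1)\le L_-(\t)$ is \cref{e:ineqk'}. Letting $\t_1\to\t^-$ shows that $L_+(\t^-)=L_-(\t^-)$ and that this common value is at most $L_-(\t)$. Letting $\t_2\to\t^+$ shows that $L_+(\t^+)=L_-(\t^+)$ and that this value is at least $L_+(\t)$. Subtracting the continuous $F$ then gives $k'_+(\t^\pm)=k'_-(\t^\pm)$, together with the inequalities $k'_\pm(\t^-)\le k'_-(\t)$ and $k'_\pm(\t^+)\ge k'_+(\t)$.

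The main obstacle is the two reverse inequalities, $k'_-(\t)\le k'_-(\t^-)$ and $k'_+(\t)\ge k'_+(\t^+)$; monotonicity alone cannot supply them. For the second, I would take the supporting inequality \cref{e:tangent} at $\omega=\t_2>\t$, evaluate it at the point $\t$, and use it in the form
\[
\frac{k(\t)-k(\t_2)\cos(\t-\t_2)}{\sin(\t-\t_2)} \le k'_-(\t_2),
\]
which is \cref{e:ineqtangent} with $\omega=\t_2$. Letting $\t_2\to\t^+$ and using the continuity of $k$, the left side behaves like the difference quotient $\frac{k(\t_2)-k(\t)}{\t_2-\t}$ up to $o(1)$, so it tends to $k'_+(\t)$. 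The right side tends to $k'_-(\t^+)=k'_+(\t^+)$. This gives $k'_+(\t)\le k'_+(\t^+)$, and the needed direction comes from the separate estimate below.

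For the genuinely reverse direction $k'_+(\t^+)\le k'_+(\t)$, I would use \Cref{p:divdiff} with $\omega=\t$. For any $\t_3>\t_2>\t$, the quotient from $\t_2$ to $\t_3$ bounds $k'_+(\t_2)$ from above. Letting $\t_2\to\t^+$ with $\t_3$ fixed, continuity of $k$ turns the bound into the quotient from $\t$ to $\t_3$, so $k'_+(\t^+)$ is at most the divided difference from $\t$ to $\t_3$. Letting $\t_3\to\t^+$ then yields $k'_+(\t^+)\le k'_+(\t)$. The case of $k'_-(\t^-)$ is symmetric. Combining this with the previous step gives the stated equalities.
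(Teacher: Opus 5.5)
Your proof is correct and follows the paper's own argument. The monotonicity of $k'_\pm+\int k$ from \Cref{c:k'inc} and \cref{e:ineqk'} gives existence of the one-sided limits, countability of the jumps, and the inequalities $k'_\pm(\t^-)\le k'_-(\t)$, $k'_\pm(\t^+)\ge k'_+(\t)$; the reverse inequalities then come, exactly as in \cref{e:k'-1,e:k'+1,e:k'-2,e:k'+2}, from bounding $k'_\pm(\t_2)$ by the divided difference based at $\t_2$, letting $\t_2\to\t^\pm$ using continuity of $k$, and then letting the remaining point tend to $\t$. The paragraph beginning ``For the second'' only re-derives $k'_+(\t)\le k'_+(\t^+)$, which your monotonicity chain had already given, so it can be deleted.
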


\begin{proof}
	Let $\t_1, \t_2\in\dom k$ such that $0 < \t_2 - \t_1 < \pi$. By \Cref{c:k'inc},
	\[
	\t\mapsto k'_\pm(\t) + \displaystyle\int_{\theta_1}^\t k(\omega) \d\omega
	\]
	is nondecreasing over $[\t_1, \t_2]$. By standard theorems in real analysis \cite[Ths.~4.29-4.30]{rudin2008}, it has left and right limits at all points and at most countably many discontinuities, properties that transfer to $k'_\pm$ by continuity of the integral.

	Let $\t_1, \t_3\in\dom k$ such that $0 < \t_3 - \t_1 < \pi$, and let $\t_2\in (\t_1, \t_3)$. By \Cref{t:derivative} and \Cref{t:k'+intk},
	\begin{equation}\label{e:k'-1}
		\frac{k(\t_1) - k(\t_2) \cos(\t_1 - \t_2)}{\sin(\t_1 - \t_2)} \leq k'_\pm(\t_2) + \int_ {\t_2}^{\t_2} k(\omega) \d\omega\leq k'_-(\t_3) + \int_ {\t_2}^{\t_3} k(\omega) \d\omega,
	\end{equation}
	and
	\begin{equation}\label{e:k'+1}
		k'_+(\t_1) + \int_ {\t_2}^{\t_1} k(\omega) \d\omega\leq k'_\pm(\t_2) + \int_ {\t_2}^{\t_2} k(\omega) \d\omega\leq\frac{k(\t_3) - k(\t_2) \cos(\t_3 - \t_2)}{\sin(\t_3 - \t_2)}.
	\end{equation}
	As $\t_2\to\t_3^-$ in \cref{e:k'-1}, the middle term increases, so the left limit exists and as $\t_2\to\t_1^+$ in \cref{e:k'+1}, the middle term decreases, so the right limit exists. After applying the respective limits, we obtain, by continuity of $k$
	\begin{equation}\label{e:k'-2}
		\frac{k(\t_1) - k(\t_3) \cos(\t_1 - \t_3)}{\sin(\t_1 - \t_3)} \leq k'_\pm(\t_3^-) \leq k'_-(\t_3),
	\end{equation}
	and
	\begin{equation}\label{e:k'+2}
		k'_+(\t_1) \leq k'_\pm(\t_1^+) \leq\frac{k(\t_3) - k(\t_1) \cos(\t_3 - \t_1)}{\sin(\t_3 - \t_1)}.
	\end{equation}
	Now we respectively let $\t_1\to\t_3^-$ and $\t_3\to\t_1^+$ in \cref{e:k'-2,e:k'+2} to find
	\[
	k'_-(\t_3)\leq k'_\pm(\t_3^-) \leq k'_-(\t_3) \quad\text{and}\quad k'_+(\t_1) \leq k'_\pm(\t_1^+) \leq k'_+(\t_1),
	\]
	hence the equalities.
\end{proof}

\Cref{c:k'inc} suggests the following second-order characterization, which is most naturally stated in the language of distributions.

\begin{theorem}\label{t:k''}
	A trigonometric-convex function $k:\Theta\to\R\cup\{\infty\}$ satisfies
	\begin{equation}\label{e:k''}
		k''(\t) + k(\t) \geq 0,
	\end{equation}
	on the interior of $\dom k$ and in the distributional sense.
\end{theorem}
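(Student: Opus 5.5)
The plan is to read \cref{e:k''} as an inequality between distributions on the open interval $J := \operatorname{int}(\dom k)$. Concretely, it means
\[
\int_J k(\t)\,\big(\psi''(\t) + \psi(\t)\big)\,\d\t \ge 0
\]
for every nonnegative test function $\psi \in C_c^\infty(J)$.

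First I reduce to a compact subinterval. Fix $\psi \ge 0$ with $\operatorname{supp}\psi \subset [\t_1, \t_2] \subset J$, where $0 < \t_2 - \t_1$. By \Cref{p:tcvxcont}, $k$ is finite and continuous on $[\t_1, \t_2]$. By \Cref{t:derivative}, it has finite one-sided derivatives there. By \Cref{t:k'cont}, $k'_+$ and $k'_-$ agree except at countably many points. Hence $k$ is locally Lipschitz on $J$, and therefore absolutely continuous with a.e. derivative $k'_+$.

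Next I introduce the monotone function from \Cref{c:k'inc}:
\[
L(\t) := k'_+(\t) + \int_{\t_1}^{\t} k(\omega)\,\d\omega ,
\]
which is nondecreasing on $[\t_1, \t_2]$. (If $\t_2 - \t_1 \ge \pi$, I would cover $\operatorname{supp}\psi$ by shorter intervals using a partition of unity; the inequality is linear in $\psi$, so this reduction is harmless.)

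Then I integrate by parts once, which is legitimate because $k$ is absolutely continuous and $\psi$ vanishes near the endpoints:
\[
\int k\psi'' = -\int k'_+\,\psi' .
\]
Substituting $k'_+ = L - \int_{\t_1}^{\cdot} k$ gives
\[
\int k\psi'' = -\int L\,\psi' + \int \Big(\int_{\t_1}^{\t} k\Big)\psi'(\t)\,\d\t .
\]
Integrating the last term by parts yields $-\int k\psi$. Therefore
\[
\int k(\psi'' + \psi) = -\int L\,\psi' = \int \psi\,\d L \ge 0 ,
\]
where the final equality is the Riemann--Stieltjes integration by parts for the monotone function $L$ against the smooth, compactly supported $\psi$. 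The inequality holds because $\d L$ is a nonnegative measure and $\psi \ge 0$.

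I expect the only delicate point to be justifying $\int k\psi'' = -\int k'_+\psi'$. This requires knowing that $k$ is absolutely continuous with derivative $k'_+$ almost everywhere. That in turn follows from the local boundedness of the one-sided derivatives (\Cref{t:derivative}, \Cref{t:k'cont}): a locally Lipschitz function is absolutely continuous, and its a.e. derivative coincides with the right derivative. The rest is bookkeeping with \Cref{c:k'inc}.
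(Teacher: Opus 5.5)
Your proof is correct and follows the same route as the paper. Both arguments rest on the monotonicity of $k'_+ + \int k$ from \Cref{c:k'inc}, whose distributional derivative is $k''+k$ and is therefore a nonnegative measure. Your explicit check that $k$ is locally Lipschitz, so that its distributional derivative really is $k'_+$, fills in a step the paper's short proof takes for granted.
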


This result is suggested by the work of Meissner \cite{meissner1909} and Bonnesen \cite{bonnesen1920} on support functions. In that setting, the quantity $k''+k$ represents a \emph{radius of curvature}, which is necessarily nonnegative along the boundary of a convex body. \Cref{e:k''} was also observed by Levin \cite{levin1964}, although he did not use the formalism of distributions.

Apparently independently of the Phragmén--Lindelöf indicator and of the later terminology of trigonometric convexity, Carlson introduced what he called $l$-functions in order to control indicators in the study of entire functions \cite{carlson1914,carlson1921}. His axioms may look somewhat artificial from the analytic side: the functions were required to be twice differentiable and to satisfy the strict form of \eqref{e:k''}. From the present viewpoint, this condition is exactly the curvature inequality behind trigonometric convexity. Carlson was therefore very close to the same concept, reached through a different route.

\begin{proof}
    By \Cref{t:derivative}, $k$ admits finite left and right derivatives on the interior of $\dom k$, and these derivatives are locally integrable there. Hence $k'_\pm$ have a distributional derivative $k''$. By \Cref{c:k'inc}, the function defined in \eqref{e:k'inc} is nondecreasing. Therefore, its distributional derivative, which is $k''+k$, is nonnegative on the interior of $\dom k$.
\end{proof}

A converse follows under additional conditions on the regularity of $k$ and \Cref{c:k'inc}.

\subsection{Correspondence between convex and trigonometric convex functions}\label{s:cvxtcvx}

Following Rockafellar \cite[Sec.~4]{rockafellar1970}, we define convex functions $g$ by means of their \emph{epigraph}
\[
\epi g := \cond{(z, x)\in\C\times\R}{g(z) \le x}.
\]

\begin{definition}[Convex function]\label{d:cvxf}
	$g:\C\to\bar\R$ is said to be \emph{convex} if $\epi g$ is a convex set.
\end{definition}

The familiar characterization of convex functions with values in $\bar\R$ follows.

\begin{proposition}\label{p:cvxf}
	$g:\C\to\bar\R$ is convex if and only if for all $z_1, z_2\in\dom g$ and $t\in [0, 1]$,
	\begin{equation}\label{e:cvxf}
		g(tz_1 + (1-t)z_2) \le t g(z_1) + (1-t)g(z_2).
	\end{equation}
\end{proposition}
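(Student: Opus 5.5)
We prove the two directions separately, working directly from \Cref{d:cvxf}. Throughout, the epigraph lives in $\C\times\R$. The convention that $\infty-\infty$ is undefined causes no trouble, because the inequality \eqref{e:cvxf} is only required for $z_1,z_2\in\dom g$. There $g(z_1),g(z_2)<\infty$, so the right-hand side is a well-defined element of $[-\infty,\infty)$. We use the convention $0\cdot(-\infty)=0$, in line with rule (iii) of \Cref{s:Cbar}.

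\textbf{($\Rightarrow$)} Assume $\epi g$ is convex, and let $z_1,z_2\in\dom g$ and $t\in[0,1]$. Choose real numbers $x_1>g(z_1)$ and $x_2>g(z_2)$; this is possible because both values are $<\infty$, and if $g(z_j)=-\infty$ then any real $x_j$ works. The points $(z_1,x_1)$ and $(z_2,x_2)$ lie in $\epi g$, so by convexity so does
\[
(tz_1+(1-t)z_2,\; tx_1+(1-t)x_2).
\]
Hence $g(tz_1+(1-t)z_2)\le tx_1+(1-t)x_2$. We then let $x_j\downarrow g(z_j)$, or $x_j\to-\infty$ when $g(z_j)=-\infty$. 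This step is the one requiring care: when $t\in\{0,1\}$ the relevant weight vanishes and the corresponding term drops out, consistent with $0\cdot(-\infty)=0$. We obtain \eqref{e:cvxf}.

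\textbf{($\Leftarrow$)} Assume \eqref{e:cvxf}, and let $(z_1,x_1),(z_2,x_2)\in\epi g$ and $t\in[0,1]$. Since $g(z_j)\le x_j<\infty$, both $z_j$ lie in $\dom g$. By \eqref{e:cvxf},
\[
g(tz_1+(1-t)z_2)\le tg(z_1)+(1-t)g(z_2)\le tx_1+(1-t)x_2,
\]
where the second inequality is monotonicity in $[-\infty,\infty)$. Thus the convex combination $(tz_1+(1-t)z_2,\,tx_1+(1-t)x_2)$ lies in $\epi g$, which is therefore convex.

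The only real obstacle is the bookkeeping with the value $-\infty$ in the forward direction. It is handled by approximating with real $x_j$ and passing to the limit, as above.
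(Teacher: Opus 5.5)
Your proof is correct and follows essentially the same route as the paper. In the forward direction you apply epigraph convexity to real $x_j \ge g(z_j)$ and pass to the limit $x_j\downarrow g(z_j)$; for the converse you use monotonicity of $t g(z_1)+(1-t)g(z_2)$. Your explicit treatment of the cases $g(z_j)=-\infty$ and $t\in\{0,1\}$ just spells out what the paper covers in one line by invoking the convention $0\cdot\infty=0$.
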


\begin{proof}
    The convexity of the epigraph of $g$ means that, for all $t\in[0, 1]$, $g(tz_1 + (1-t)z_2) \le t x_1 + (1-t) x_2$ whenever $g(z_1) \le x_1\in\R$ and $g(z_2) \le x_2\in\R$. For $z_1,z_2\in\dom g$, one may take $x_j\downarrow g(z_j)$ for $j=1,2$; this yields \cref{e:cvxf}. Conversely, if \cref{e:cvxf} holds then the epigraph is closed under convex combinations, and with the convention $0\cdot\infty = 0$ the set $\epi g$ is convex.
\end{proof}

\begin{definition}[Positive homogeneity]\label{d:ph}
	$g:\C\to\bar\R$ is said to be \emph{positive homogeneous} if for all $\lambda>0$ and $z\in\C$, $g(\lambda z) = \lambda g(z)$.
\end{definition}

The combination of convexity and positive homogeneity gives a different natural formulation of trigonometric convexity.

\begin{theorem}\label{t:cvxtcvx}
	The function $-\infty\not\equiv k : \Theta \to\bar\R$ is trigonometric-convex if and only if there exists a (unique) positive homogeneous convex function $g:\C\to\bar\R$ such that $g(0) = 0$ and $k(\t) = g(e^{i\t})$ for all $\t\in\Theta$.
\end{theorem}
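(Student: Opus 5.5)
The plan is to define $g$ explicitly as the positively homogeneous extension of $k$:
\[
g(z) := |z|\,k(\arg z) \quad (z\neq 0), \qquad g(0) := 0.
\]
This is the only possible choice. Positive homogeneity forces $g(re^{i\t}) = r\,g(e^{i\t}) = r k(\t)$ for $r>0$, and $g(0)$ is prescribed, so uniqueness is immediate. With the convention $0\cdot\infty=0$, the function $g$ is well defined with values in $\bar\R$.

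Both directions then reduce to comparing the three-point inequality \eqref{e:tcvximp} with the convexity inequality \eqref{e:cvxf} of \Cref{p:cvxf}.

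For the direction ``convex $\Rightarrow$ trigonometric-convex'', assume $g$ is convex and positively homogeneous with $g(0)=0$. Take $\t_1,\t_3\in\dom k$ with $0<\t_3-\t_1<\pi$ and $\t_2\in[\t_1,\t_3]$. By \Cref{p:sincos} (the identity behind \cref{e:tcvxdet}),
\[
\sin(\t_3-\t_1)\,e^{i\t_2} = \sin(\t_3-\t_2)\,e^{i\t_1} + \sin(\t_2-\t_1)\,e^{i\t_3},
\]
with nonnegative coefficients. Divide by the sum of the coefficients so that $e^{i\t_2}$ becomes a positive multiple of a convex combination of $e^{i\t_1}$ and $e^{i\t_3}$. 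Apply \eqref{e:cvxf} together with homogeneity; this gives \eqref{e:tcvximp}. The endpoint case $\t_3-\t_1=\pi$ is handled through \Cref{p:diameter}. Convexity with $z_1=e^{i\t}$, $z_2=-e^{i\t}$, $t=\frac12$ gives $0=g(0)\le \frac12(k(\t)+k(\t+\pi))$, which is exactly \eqref{e:diameter}.

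For the converse, the cleanest route uses what has already been proved. Since $k\not\equiv-\infty$ is trigonometric-convex, \Cref{t:tcvxK} provides a nonempty closed convex $K\subset\bar\C$ with $k=\k_K$. Then, for $z\neq 0$,
\[
g(z)=\Kappa_K(z)=\sup_{p\in K}\Re(p\bar z)
\]
by \cref{e:Kk}, and $g(0)=0$ as well, since $K\neq\emptyset$. Each map $z\mapsto\Re(p\bar z)$ with $p\in K\cap\C$ is real-linear. A pointwise supremum of linear functions is convex and positively homogeneous, so it suffices to make sense of the contributions of the infinite points $p\in K\cap C_\infty$.

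This is the step I expect to be the main obstacle. For $p=(\infty+iy)e^{i\alpha}$, the quantity $\Re(p\bar z)$ equals $+\infty$, $-\infty$, or an affine-type value $\pm y|z|$ according to whether $\arg z-\alpha$ lies in $(-\frac\pi2,\frac\pi2)$, lies in $(\frac\pi2,-\frac\pi2)$, or equals $\pm\frac\pi2$. Such a function is not linear.

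To get around this, I would instead verify \eqref{e:cvxf} for $g$ directly from $k$. Take $z_1,z_2\in\dom g$ and $t\in(0,1)$, and set $w=tz_1+(1-t)z_2$. The degenerate cases are handled separately:
\begin{itemize}
\item if $w=0$, or one of $z_j=0$, or $z_1,z_2$ are opposite, use homogeneity and \eqref{e:diameter};
\item if $z_1,z_2$ are collinear in the same direction, the inequality holds with equality.
\end{itemize}
Otherwise the arguments $\t_1=\arg z_1$ and $\t_3=\arg z_2$ satisfy $0<|\t_3-\t_1|<\pi$ and $\t_2=\arg w\in[\t_1,\t_3]$. By the sine rule, the identity $|w|\sin(\t_3-\t_1)=t|z_1|\sin(\t_3-\t_2)+(1-t)|z_2|\sin(\t_2-\t_1)$ holds. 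Multiplying \eqref{e:tcvximp} by $|w|/\sin(\t_3-\t_1)$ and substituting then gives exactly $g(w)\le t g(z_1)+(1-t)g(z_2)$.

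The care needed lies in the infinite values. Since $z_1,z_2\in\dom g$, the values $k(\t_1)$ and $k(\t_3)$ are finite or $-\infty$, and \eqref{e:tcvximp} is assumed only on $\dom k$, which suffices. The case $k(\t_j)=-\infty$ propagates correctly because the coefficients are positive.

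Finally, \Cref{p:cvxf} turns this inequality into convexity of $\epi g$.
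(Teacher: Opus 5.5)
Your plan is the paper's own. You use the same explicit extension $g(z)=|z|\,k(\arg z)$, with uniqueness from homogeneity. You use the same argument via \Cref{p:sincos} and \Cref{p:diameter} for the direction from $g$ to $k$. For the converse you use the same split into degenerate, collinear and noncollinear pairs.

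The first direction and the degenerate and collinear cases are fine. The noncollinear step, however, fails as written, because the displayed ``sine rule'' identity
\[
|w|\sin(\theta_3-\theta_1)=t|z_1|\sin(\theta_3-\theta_2)+(1-t)|z_2|\sin(\theta_2-\theta_1)
\]
is false. Take $z_1=1$, $z_2=e^{i\pi/3}$, $t=\tfrac12$. Then $w=\tfrac{\sqrt3}{2}e^{i\pi/6}$, so the left side is $\tfrac34$ while the right side is $\tfrac14+\tfrac14=\tfrac12$. More fundamentally, no single scalar relation can do the job. After multiplying \eqref{e:tcvximp} by $|w|/\sin(\theta_3-\theta_1)$, each weight must be converted separately, since $k(\theta_1)$ and $k(\theta_3)$ are arbitrary.

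What you actually need are the two identities
\[
|w|\sin(\theta_3-\theta_2)=t|z_1|\sin(\theta_3-\theta_1),\qquad |w|\sin(\theta_2-\theta_1)=(1-t)|z_2|\sin(\theta_3-\theta_1).
\]
These are what the sine rule in the triangle with vertices $0$, $tz_1$, $w$ really gives. Equivalently, multiply the identity from \Cref{p:sincos}, $\sin(\theta_3-\theta_1)e^{i\theta_2}=\sin(\theta_3-\theta_2)e^{i\theta_1}+\sin(\theta_2-\theta_1)e^{i\theta_3}$, by $|w|$. Compare the result with $w=t|z_1|e^{i\theta_1}+(1-t)|z_2|e^{i\theta_3}$, and match coefficients in the real basis $e^{i\theta_1},e^{i\theta_3}$; this basis is independent because the points are noncollinear.

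With these identities, multiplying \eqref{e:tcvximp} by $|w|/\sin(\theta_3-\theta_1)>0$ gives $|w|k(\theta_2)\le t|z_1|k(\theta_1)+(1-t)|z_2|k(\theta_3)$, that is $g(w)\le tg(z_1)+(1-t)g(z_2)$. This is exactly how the paper concludes. Your remarks on $-\infty$ values then go through unchanged, since both weights $t|z_1|$ and $(1-t)|z_2|$ are strictly positive.
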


\begin{proof}[of \Cref{t:cvxtcvx}]
	\begin{itemize}
		\item[( $\isimp$ )] \makeatletter\@itemdepth=0\makeatother
        Let $g$ be positive homogeneous and convex such that $g(0) = 0$ and $k(\t) = g(e^{i\t})$. We prove that $k$ is trigonometric-convex through the characterization of \Cref{p:diameter}. Let $\t_1, \t_2, \t_3 \in \Theta$ satisfy \cref{e:thetaimp2}, and set
        \begin{equation}\label{e:t}
            t := \frac{\sin(\t_3 - \t_2)}{\sin(\t_3 - \t_2) + \sin(\t_2 - \t_1)} \in [0, 1], \quad 1 - t = \frac{\sin(\t_2 - \t_1)}{\sin(\t_3 - \t_2) + \sin(\t_2 - \t_1)}
        \end{equation}
        and, using \Cref{p:sincos} with $s(\t) = e^{i\t}$, we compute
        \begin{align}
            & e^{i\t_2} \sin(\t_3 - \t_1) = e^{i\t_1}\sin(\t_3 - \t_2) + e^{i\t_3}\sin(\t_2 - \t_1) \nonumber \\
            ={}& (te^{i\t_1} + (1-t) e^{i\t_3})(\sin(\t_3 - \t_2) + \sin(\t_2 - \t_1)). \label{a:t}
        \end{align}
        Therefore, by positive homogeneity and convexity in the sense of \Cref{p:cvxf},
        \begin{align*}
        & k(\t_2)\sin(\t_3 - \t_1) = g(e^{i\t_2} \sin(\t_3 - \t_1)) \\
        ={}& g((te^{i\t_1} + (1-t) e^{i\t_3})(\sin(\t_3 - \t_2) + \sin(\t_2 - \t_1))) \\
        ={}& g(te^{i\t_1} + (1-t) e^{i\t_3})(\sin(\t_3 - \t_2) + \sin(\t_2 - \t_1)) \\
        \le{}& (t g(e^{i\t_1}) + (1-t) g(e^{i\t_3}))(\sin(\t_3 - \t_2) + \sin(\t_2 - \t_1)) \\
        &= k(\t_1)\sin(\t_3 - \t_2) + k(\t_3) \sin(\t_2 - \t_1), \tag{$*$}
        \end{align*}
        so the functional inequality \eqref{e:tcvximp} is satisfied. If $\t,\t+\pi\in\dom k$, then by \cref{e:cvxf}
        \[
        0 \le g(0) = g\pa{\inv2 e^{i\t} + \inv2 e^{i(\t+\pi)}} \le \inv2 k(\t) + \inv2 k(\t+\pi),
        \]
        which implies \cref{e:diameter}.
		\item[( $\imp$ )] \makeatletter\@itemdepth=0\makeatother
        Define $g(z) := |z| k(\arg z)$ for $z\neq0$ and $g(0) := 0$. Then $g$ is positive homogeneous and satisfies $k(\t) = g(e^{i\t})$. It remains to prove that $g$ is convex.

        Let $z_1,z_3\in\dom g$ and $t\in[0,1]$. If $z_1=0$ or $z_3=0$, the convexity inequality follows immediately from positive homogeneity and $g(0)=0$. We may therefore assume that $z_1,z_3\neq0$.

        Let $z_1 := |z_1| e^{i\t_1}, z_3 := |z_3| e^{i\t_3}$ be noncolinear, meaning that $\t_3 - \t_1 \notin \{0, \pi\}$. Without loss of generality, assume $0 < \t_3 - \t_1 < \pi$. Let $z_2 = |z_2| e^{i\t_2} := t z_1 + (1-t) z_3$.
        We compute
        \begin{align*}
            |z_2| \sin(\t_3 - \t_2)
            & = |z_2| (\sin(\t_3)\cos(\t_2) - \cos(\t_3)\sin(\t_2)) \\
            & = \sin(\t_3)\Re(z_2) - \cos(\t_3)\Im(z_2) \\
            & = \sin(\t_3)t\Re(z_1) - \cos(\t_3)t\Im(z_1) \\
            & = t|z_1| \sin(\t_3 - \t_1)
        \end{align*}
        and, similarly
        \[
        |z_2| \sin(\t_2 - \t_1) = (1-t)|z_3| \sin(\t_3 - \t_1).
        \]
        Since $\t_2\in[\t_1, \t_3]$, by \cref{e:tcvximp},
        \begin{align*}
        g(z_2)
        &= \frac{|z_2|}{\sin(\t_3 - \t_1)}k(\t_2)\sin(\t_3 - \t_1) \\
        &\le \frac{k(\t_1) |z_2|\sin(\t_3 - \t_2) + k(\t_3) |z_2|\sin(\t_2 - \t_1)}{\sin(\t_3 - \t_1)} \\
        &\quad= \frac{k(\t_1)t|z_1|\sin(\t_3 - \t_1) + k(\t_3)(1-t)|z_3| \sin(\t_3 - \t_1)}{\sin(\t_3 - \t_1)} \\
        &\quad= t g(z_1) + (1-t) g(z_3).
        \end{align*}
        It remains to verify the cases where $z_1$ and $z_3$ are colinear.
        
        Let $\lambda\in\R$ such that $z_3 = \lambda z_1$. Then $z_2 = (t + \lambda(1-t)) z_1$.
        \begin{itemize}
            \item If $\lambda > 0$, $g(z_2) = (t + \lambda (1-t))g(z_1) = t g(z_1) + (1-t)g(z_3)$.
            \item In the case $\lambda < 0$, set $\mu := t + \lambda(1-t)$.
            \begin{itemize}
                \item If $\mu = 0$, by \cref{e:diameter}
                \[
                g(z_2) = 0 \le t|z_1| (k(\t_1) + k(\t_1+\pi)) = t g(z_1) + (1-t)g(z_3).
                \]
                \item If $\mu > 0$, by \cref{e:k>-k}
                \begin{align*}
                    g(z_2)
                    &= \mu |z_1| k(\t_1) = t g(z_1) + \lambda(1-t)|z_1| k(\t_1) \\
                    &\le t g(z_1) - \lambda(1-t)|z_1| k(\t_1+\pi) = tg(z_1) + (1-t)g(z_3).
                \end{align*}
                \item If $\mu < 0$, by \cref{e:k>-k}
                \begin{align*}
                    g(z_2)
                    &= -\mu |z_1| k(\t_1+\pi) = -t |z_1| k(\t_1+\pi) - \lambda(1-t)|z_1| k(\t_1+\pi) \\
                    &\le t |z_1| k(\t_1) + (1-t)g(\lambda z_1) = tg(z_1) + (1-t)g(z_3).
                \end{align*}
            \end{itemize}
        \end{itemize}
        Finally, uniqueness follows from positive homogeneity for $z\neq0$ and from the normalization $g(0)=0$.
	\end{itemize}
\end{proof}

\begin{example}
    We now describe the natural applications of \Cref{t:cvxtcvx} that arise in this paper.
    \begin{enumerate}
        \item By \Cref{p:supptcvx}, the support function $\k_U$ of a convex subset $U$ of $\bar\C$ is trigonometric-convex. In view of \Cref{t:cvxtcvx}, this is equivalent to the convexity of the function $\Kappa_U$ defined in \cref{e:Kappa}, a classical fact \cite{rockafellar1970}.
        \item Let $\phi\in\E_3$ be an entire function. Among the positive-homogeneous convex functions associated with $h_\phi$ via \Cref{t:cvxtcvx}, the most natural one is
        \begin{equation}
            \H_\phi(s) := \limsup_{r\to\infty} \frac{\log\abs{\phi(rs)}}{r}.
        \end{equation}
        This is the so-called \emph{radial indicator} \cite{lelong1968}. It satisfies
        \begin{equation}
            \H_\phi(s) =
            \begin{cases}
                h_\phi(\arg s) \abs{s} &\com{if} s \neq 0 \\
                0 &\com{if} s = 0 \com{and} \phi(0) \neq 0 \\
                -\infty &\com{if} s = 0 \com{and} \phi(0) = 0,
            \end{cases}
        \end{equation}
        in analogy with \cref{e:Kk}. The function $\H_\phi$ is the form of the indicator used in several complex variables \cite{lelong1968}.
    \end{enumerate}
\end{example}

\section{Properties of support functions}\label{s:propsupport}

This appendix contains the support-function facts used in the main text. They are standard in finite-dimensional convex analysis, but $\bar\C$ introduces purely infinite convex sets and requires a few endpoint cases. The proofs below keep those cases explicit.

\begin{proof}[of \Cref{l:kless}]
    The first assertion is clear from \eqref{e:kdef}. For the converse, the cases $V = \bar\C$ and $V = \emptyset$ are immediate. Otherwise, we prove the contrapositive: if $U \not\subset V$, then there is a $\t$ for which $\k_U(\t) > \k_V(\t)$.
    \begin{itemize}
        \item If $U \subset C_\infty$, let $(\infty + iy_0)e^{i\t_0} \in U \setminus V$. By \Cref{p:clcvxinf}, the set $\cond{\arg p}{p\in C_\infty\setminus V}$ contains an interval $\pa{\t - \frac{\pi}{2}, \t + \frac\pi2}$, for some $\t\in\Theta$, that contains $\t_0$. Since $\t \in \pa{\t_0 - \frac{\pi}{2}, \t_0 + \frac\pi2}$, we have $\infty = \k_U(\t) > \k_V(\t)$.
        \item Otherwise, by \Cref{t:cvxinf}, $\k_U$ never assumes the value $-\infty$, and we may choose $z_0\in \C\cap(U\setminus V)$.
        \begin{itemize}
            \item If $V\subset C_\infty$, then, since $V$ is closed and convex, $V$ and $\k_V$ have the forms described in \cref{e:cvxinf,e:cvxinfk}, with $\beta - \alpha < \pi$. It follows that $\k_U(\t) > \k_V(\t) = -\infty$ for $\t\in\pa{\beta + \frac\pi2, \alpha - \frac\pi2}$.
            \item Assume now that $V\not\subset C_\infty$. Then the projection $z_p\in\C\cap V$ of $z_0$ onto $V$ satisfies the separation inequality \cite[Thm.~3.1.1]{hiriart-urruty2001}
            \begin{equation}\label{e:separation}
                \Re((z-z_p)\overline{(z_0-z_p)}) = \scal{z-z_p, z_0-z_p} \le 0
            \end{equation}
            for all $z\in V$. Let $\t := \arg(z_0 - z_p)$. Dividing \cref{e:separation} by $|z_0 - z_p| > 0$, we obtain $\Re(z_p e^{-i\t}) \ge \Re(z e^{-i\t})$. Taking the supremum over $z\in V$ gives $\Re(z_p e^{-i\t}) \ge \k_V(\t)$. Finally,
            \[
            \k_U(\t) \ge \Re(z_0 e^{-i\t}) = |z_0 - z_p| + \Re(z_p e^{-i\t}) > \Re(z_p e^{-i\t}) \ge \k_V(\t).
            \]
        \end{itemize}
    \end{itemize}
\end{proof}

\begin{proof}[of \Cref{p:supptcvx}]
	If $U = \emptyset$, $\k_U \equiv -\infty$ is trigonometric-convex by \Cref{d:tcvx}. Else, let $\t_1, \t_2, \t_3\in\dom \k_U$ such that \cref{e:thetaimp} is satisfied and $z\in U$. For the function $s(\t) := e^{-i\t} = \cos\t - i \sin\t$, we apply \Cref{p:sincos} in the following: 
		\begin{align*}
			& \Re(z e^{-i\t_2}) \sin(\t_3 - \t_1) = \Re(z e^{-i\t_2} \sin(\t_3 - \t_1)) \\
			={} & \Re(z (e^{-i\t_1}\sin(\t_3 - \t_2) + e^{-i\t_3}\sin(\t_2 - \t_1)))        \\
			={} & \Re(z e^{-i\t_1})\sin(\t_3 - \t_2) + \Re(z e^{-i\t_3})\sin(\t_2 - \t_1)   \\
			\le{} & \k_U(\t_1)\sin(\t_3 - \t_2) + \k_U(\t_3)\sin(\t_2 - \t_1).
		\end{align*}
		Taking the supremum over $z\in U$ yields \cref{e:tcvximp} with $k = \k_U$.
\end{proof}

\begin{proof}[of \Cref{t:tcvxK}]
	The uniqueness of $K$ follows from \Cref{l:kless}.
	\begin{itemize}
		\item[( $\isimp$ )] It follows from \Cref{p:supptcvx} with $U = K$.
		\item[( $\imp$ )] \makeatletter\@itemdepth=0\makeatother
        Let
		\begin{equation}\label{e:Kproof}
            K := \bigcap_{\t\in\Theta} \Pi_{k(\t), \t} = \cond{z\in\bar\C}{\forall\t\in\Theta,\; \Re(ze^{-i\t}) \leq k(\t)}
        \end{equation}
		which is closed and convex. We prove that $\k_K = k$. If $k\equiv\pm\infty$, then $K = \bar\C$ or $K = \emptyset$, and $\k_K = k$. Otherwise, $K$ could still be empty, but we show that this is not the case. Since $\dom k\neq\emptyset$, \Cref{t:derivative} gives, for all $\t\in\Theta$ and $\omega\in\dom k$,
		\[
		k(\t) \geq k'(\omega)\sin(\t - \omega) + k(\omega)\cos(\t - \omega) = \Re((k(\omega) + i k'(\omega))e^{i\omega}e^{-i\t}),
		\]
		where $k'(\omega)\in [k'_-(\omega), k'_+(\omega)]$. By \cref{e:Kproof}, this means that $z(\omega) := (k(\omega) + ik'(\omega))e^{i\omega}\in K$. Thus $K\neq\emptyset$. Moreover, for all $z\in K$, we have $\Re(ze^{-i\t}) \leq k(\t)$ for all $\t$, so $\k_K(\t) \le k(\t)$. Conversely, for all $\t\in\dom k$, $k(\t) = \Re(z(\t)e^{-i\t}) \leq \k_K(\t)$. If $k$ is unbounded, it remains to show that $\k_K(\t) = \infty$ for $\t\notin\dom k$. This follows from the following cases:
		\begin{itemize}
			\item If $\dom k = \{\alpha, \alpha + \pi\}$, $(\pm i\infty - y)e^{i\alpha}\in K$ for $y\in [-k(\alpha), k(\alpha+\pi)]$.
			\item If $\oline{\dom k} = [\alpha, \beta]$ with $\beta - \alpha \leq\pi$, let $y\in\R$.
			\begin{itemize}
				\item If $\beta - \alpha = \pi$, $(-i\infty + y)e^{i\alpha}\in K$ when $-k(\alpha) \leq y \leq k(\beta)$.
				\item If $\beta - \alpha < \pi$
				\begin{itemize}
					\item $(+i\infty - y)e^{i\beta}\in K$ when $y \leq k(\beta)$.
					\item $(\infty + iy)e^{i\t}\in K$ for $\t\in\pa{\beta + \frac\pi2, \alpha - \frac\pi2}$.
					\item $(-i\infty + y)e^{i\alpha}\in K$ when $-k(\alpha) \leq y$.
				\end{itemize}
			\end{itemize}
		\end{itemize}
		According to \Cref{p:domk}, these are all the possible cases for $\dom k$. We prove only the first case, since the others are similar. We need to show that
		\begin{equation}\label{e:alpha}
			\Re((\pm i\infty - y)e^{i\alpha}e^{-i\t}) = \pm\infty\sin(\t - \alpha) - y\cos(\t - \alpha) \leq k(\t)
		\end{equation}
		for all $\t$, where $y\in [-k(\alpha), k(\alpha+\pi)]$. There are two cases:
		\begin{itemize}
			\item $\t = \alpha$: \cref{e:alpha} becomes $-y \leq k(\alpha)$, which holds thanks to \cref{e:diameter}.
			\item $\t = \alpha+\pi$: \cref{e:alpha} becomes $y \leq k(\alpha+\pi)$, which is true.
			\item $\t\notin\dom k$: \cref{e:alpha} is trivial.
		\end{itemize}
		Hence, $(\pm i\infty - y)e^{i\alpha}\in K$ and $\Re((\pm i\infty - y)e^{i\alpha}e^{-i\t}) \leq \k_K(\t)$, which shows that $k(\t) = \k_K(\t) = \infty$ for $\t\notin\dom k$.
	\end{itemize}
\end{proof}

For a compact convex set $K$ with finite support function, the preceding proof recovers the usual normal parametrization of the boundary. When $\kappa_K$ is differentiable at $\t$, the boundary point whose outward normal direction is $e^{i\t}$ is
\[
(\kappa_K(\t) + i\kappa_K'(\t))e^{i\t},
\]
as in Blaschke's treatment of support functions \cite[\S 9]{blaschke1914}. If $\kappa_K$ is not differentiable at $\t$, then the admissible values of $\kappa_K'(\t)$ fill the interval $[(\kappa_K)_-'(\t),(\kappa_K)_+'(\t)]$. The corresponding points
\[
(\kappa_K(\t) + iy)e^{i\t}, \quad y\in[(\kappa_K)_-'(\t),(\kappa_K)_+'(\t)],
\]
form the boundary segment of $K$ with outward normal direction $e^{i\t}$.

The quantity
\[
\rho_K(\t) := \kappa_K''(\t) + \kappa_K(\t)
\]
is the radius of curvature in the smooth case. More generally, $\kappa_K''+\kappa_K$ is a nonnegative measure; this is another expression of the convexity of $K$ \cite{meissner1909,bonnesen1920}. In the smooth case,
\[
\kappa'_K(\t) - \kappa'_K(\t_0) + \int_{\t_0}^\t\kappa_K(\omega) \d\omega
\]
is the arc length between the boundary points with normal directions $e^{i\t_0}$ and $e^{i\t}$, which explains the monotonicity appearing in \Cref{c:k'inc}. In particular, the perimeter of $K$ is
\begin{equation}
	P(K) = \int_0^{2\pi} \kappa_K(\t) \d\t.
\end{equation}
The area of $K$ can also be expressed as follows
\begin{equation}
	A(K) = \inv{2} \int_0^{2\pi} \kappa_K(\t) \rho_K(\t) \d\t = \inv{2} \int_0^{2\pi} (\kappa_K(\t)^2 - \kappa'_K(\t)^2) \d\t,
\end{equation}
see \cite{blaschke1914,bonnesen1920}.

\printbibliography[notkeyword={cvxtopo}, title={References}]
\defbibnote{otherRefs}{See also \cite{valiron1932,phragmen1908,carlson1914,carlson1921,boller1932,gelfond1938,chebotarev1949,boas1954,cartwright1956,levin1964,levin1996,maergoiz2003}.}
\printbibliography[keyword=cvxtopo, title={References on convex analysis and topology}, postnote=otherRefs]

\end{document}